\documentclass[10pt]{amsart}
\usepackage[utf8]{inputenc}

\usepackage{amsmath}
\usepackage{amsfonts}
\usepackage{amssymb}
\usepackage{amsthm}
\usepackage{mathrsfs}
\usepackage{dsfont}
\usepackage{chngcntr}
\usepackage{mathtools}
\usepackage[all,cmtip]{xy}
\usepackage{tikz}
\usepackage[colorlinks]{hyperref}
\hypersetup{linkcolor=blue,citecolor=green}
\usepackage{cite}
\usepackage{fancyhdr}
\usepackage{stmaryrd}
\usepackage{yfonts}
\usepackage{adjustbox}

\tikzset{double line with arrow/.style args={#1,#2}{decorate,decoration={markings,%
mark=at position 0 with {\coordinate (ta-base-1) at (0,1pt);\coordinate (ta-base-2) at (0,-1pt);},
mark=at position 1 with {\draw[#1] (ta-base-1) -- (0,1pt);
\draw[#2] (ta-base-2) -- (0,-1pt);
}}}}

\font\wncyr=wncyr9.8
\newcommand{\sha}{\text{\wncyr{W}}}

\usepackage[top=2.6cm, bottom=2.4cm, left=2.4cm, right=2.4cm]{geometry}

\usepackage{tikz}
\usepackage{tikz-cd}


\numberwithin{equation}{section}

\newtheorem{theorem}{Theorem}
\newtheorem{lemma}[theorem]{Lemma}
\newtheorem{prop}[theorem]{Proposition}
\newtheorem{cor}[theorem]{Corollary}
\newtheorem{conj}[theorem]{Conjecture}

\theoremstyle{definition}
\newtheorem{definition}[theorem]{Definition}
\newtheorem{remark}[theorem]{Remark}

\theoremstyle{remark}
\newtheorem*{rem}{Remark}



\newcommand{\Z}{\mathbb{Z}}

\DeclareRobustCommand{\qed}{%
  \ifmmode
    \eqno \def\@badmath{$$}
    \let\eqno\relax \let\leqno\relax \let\veqno\relax
    \hbox{\openbox}%
  \else
    \leavevmode\unskip\penalty9999 \hbox{}\nobreak\hfill
    \quad\hbox{\openbox}%
  \fi
}

\setcounter{tocdepth}{2}

\renewcommand{\thetheorem}{\arabic{section}.\arabic{subsection}.\arabic{theorem}}

\newcommand{\cL}{\mathscr{L}}
\newcommand{\cN}{\mathscr{N}}

\newcommand{\Lcal}{\mathcal{L}}

\newcommand{\Fcal}{\mathcal{F}}
\newcommand{\cF}{\mathcal{F}}
\newcommand{\bZ}{\mathbb{Z}}
\newcommand{\bQ}{\mathbb{Q}}
\newcommand{\Q}{\mathbb{Q}}
\newcommand{\C}{\mathbb{C}}
\newcommand{\rH}{\mathrm{H}}

\newcommand{\fm}{\mathfrak{m}}
\DeclareMathOperator{\Frob}{Frob}

\DeclareMathOperator{\Hom}{Hom}

\DeclareMathOperator{\loc}{loc}
\DeclareMathOperator{\ord}{ord}

\DeclareMathOperator{\tr}{tr}

\DeclareMathOperator{\GL}{GL}

\DeclareMathOperator{\Gal}{Gal}
\DeclareMathOperator{\Aut}{Aut}
\DeclareMathOperator{\End}{End}

\newcommand{\CF}{\mathcal{F}}
\newcommand{\CL}{\mathscr{L}}
\newcommand{\CN}{\mathscr{N}}
\newcommand{\kap}{\kappa}
\newcommand{\ind}{\mathrm{ind}}
\newcommand{\rank}{\mathrm{rank}}


\makeatletter
\@addtoreset{theorem}{subsection}
\makeatother

\begin{document}

\title[On the anticyclotomic
Iwasawa theory of rational elliptic curves at Eisenstein primes]{On the anticyclotomic Iwasawa theory of rational elliptic curves at Eisenstein primes}

\author{Francesc Castella}
\address[F.~Castella]{University of California Santa Barbara, South Hall, Santa Barbara, CA 93106, USA}
\email{castella@ucsb.edu}

\author{Giada Grossi}
\address[G.~Grossi]{Institut Galilée, Université Sorbonne Paris Nord, 93430 Villetaneuse, FRANCE}
\email{grossi@math.univ-paris13.fr}
 
\author{Jaehoon Lee}  
\address[J.~Lee]{KAIST, 291 Daehak-ro, Yuseong-gu, Daejeon 34141, Republic of Korea}
\email{jaehoon.lee900907@gmail.com}
 
\author{Christopher Skinner} 
\address[C.~Skinner]{Princeton University, Fine Hall, Washington Road, Princeton, NJ 08544-1000, USA}
\email{cmcls@princeton.edu}

\date{\today}

\begin{abstract}
Let $E/\mathbb{Q}$ be an elliptic curve and $p$ an odd prime where $E$ has good reduction, and assume that $E$ admits a rational $p$-isogeny. In this paper we study the anticyclotomic Iwasawa theory of $E$ over an imaginary quadratic field in which $p$ splits, which we relate to the anticyclotomic Iwasawa theory of characters 
by a variation of the method of Greenberg--Vatsal. As a result of our study we obtain proofs (under relatively mild hypotheses) of Perrin-Riou's Heegner point main conjecture, a $p$-converse to the theorem of Gross--Zagier and Kolyvagin, and the $p$-part of the Birch--Swinnerton-Dyer formula in analytic rank $1$, for Eisenstein primes $p$. 
\end{abstract}
\maketitle

\tableofcontents

\section*{Introduction}
\addtocontents{toc}{\protect\setcounter{tocdepth}{0}}
\addtocontents{lof}{\protect\setcounter{tocdepth}{0}}
\renewcommand{\thetheorem}{\Alph{theorem}}

\subsection{Statement of the main results}

Let $E/\Q$ be an elliptic curve, and let $p$ be an odd prime of good reduction for $E$. We say that $p$ is an \emph{Eisenstein prime} (for $E$) if $E[p]$ is reducible as a $G_\Q$-module, where $G_\Q={\rm Gal}(\overline{\Q}/\Q)$ is the absolute Galois group of $\Q$ and $E[p]$ denotes the $p$-torsion of $E$. Equivalently, $p$ is an Eisenstein prime if $E$ admits a rational $p$-isogeny. By a result of Fontaine (see \cite{edixhoven-weight} for an account), Eisenstein primes are primes of \emph{ordinary} reduction for $E$, and by Mazur's results \cite{mazur} in fact $p\in\{3, 5, 7, 13, 37\}$. 


Let $p>2$ be an Eisenstein prime for $E$, and let $K$ be an imaginary quadratic field such that 
\begin{equation}\label{eq:intro-spl}
\textrm{$p=v\bar{v}$ splits in $K$,}\tag{spl} 
\end{equation} 
where $v$ denotes the prime of $K$ above $p$ induced by a fixed embedding $\overline{\bQ}\hookrightarrow\overline{\bQ}_p$. 
Denoting by $N$ the conductor of $E$, assume also that $K$ satisfies the following \emph{Heegner hypothesis}:
\begin{equation}\label{eq:intro-Heeg}
\textrm{every prime $\ell\vert N$ splits in $K$.}\tag{Heeg}
\end{equation}

Under these hypotheses, the anticyclotomic Iwasawa main conjecture for $E$ {considered in this paper}
can be formulated in two different guises. We begin by recalling these, since both formulations will play an important role in the proof of our main results. (Note that for the formulation $p$ can be any odd prime of good ordinary reduction for $E$.) Let $\Gamma={\rm Gal}(K_\infty/K)$ be the Galois group of the anticyclotomic $\Z_p$-extension of $K$, and for each $n$ denote by $K_n$ the subfield of $K_\infty$ with $[K_n:K]=p^n$. Set
\[
\Lambda:=\Z_p\llbracket\Gamma\rrbracket,\quad\Lambda_{\rm ac}:=\Lambda\otimes_{\Z_p}\Q_p,\quad\Lambda^{\rm ur}:=\Lambda\hat\otimes_{\Z_p}\Z_p^{\rm ur},
\]
where $\Z_p^{\rm ur}$ is the completion of the ring of integers of the maximal unramified extension of $\Q_p$. 
Following the work of Bertolini--Darmon--Prasanna \cite{BDP}, there is a $p$-adic $L$-function $\mathcal{L}_E\in\Lambda^{\rm ur}$ interpolating the central critical values of {the $L$-function of}
 $f/K$, where $f\in S_2(\Gamma_0(N))$ is the newform associated with $E$, twisted by certain characters of $\Gamma$
of infinite order. For any subfield $L\subset\overline{\Q}$, let ${\rm Sel}_{p^m}(E/L)$ be the Selmer group fitting into the descent exact sequence
\[
0\rightarrow E(L)\otimes_{\Z}\Z/p^m\Z\rightarrow{\rm Sel}_{p^m}(E/L)\rightarrow\sha(E/L)[p^m]\rightarrow 0.
\]
We put ${\rm Sel}_{p^\infty}(E/K_\infty)=\varinjlim_m{\rm Sel}_{p^m}(E/K_\infty)$, 
and let 
\[
\mathfrak{X}_E={\rm Hom}_{\Z_p}(\mathfrak{S}_E,\Q_p/\bZ_p)
\] 
be the Pontryagin dual of {modified} Selmer group $\mathfrak{S}_E$ obtained from ${\rm Sel}_{p^\infty}(E/K_\infty)$ 
by relaxing (resp. imposing triviality) at the places above $v$ (resp. $\bar{v}$).

The following formulation of the anticyclotomic Iwasawa main conjectures for $E$ can be seen as a special case of Greenberg's main conjectures \cite{greenberg-motives}.

\begin{conj}\label{conj:BDP}
Let $E/\mathbb{Q}$ be an elliptic curve and $p>2$ a prime of good ordinary reduction for $E$, and let $K$ be an imaginary quadratic field satisfying {\rm (\ref{eq:intro-Heeg})} and  {\rm (\ref{eq:intro-spl})}. 
Then $\mathfrak{X}_E$ is $\Lambda$-torsion, and
\[
{\rm char}_\Lambda\bigl(\mathfrak{X}_E\bigr)\Lambda^{\rm ur}=\bigl(\Lcal_E\bigr)
\]
as ideals in $\Lambda^{\rm ur}$. 
\end{conj}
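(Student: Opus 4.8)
The plan is to follow the strategy of Greenberg--Vatsal, transferring the anticyclotomic main conjecture from the two characters that cut out $E[p]$ to $E$ itself. Since $p$ is an Eisenstein prime, there is a short exact sequence of $G_\Q$-modules
\[
0\longrightarrow\mathbb{F}_p(\phi)\longrightarrow E[p]\longrightarrow\mathbb{F}_p(\psi)\longrightarrow 0,
\]
with $\phi\psi=\omega$ the mod $p$ cyclotomic character; by Fontaine's theorem $E$ has good ordinary reduction at $p$, so that (after possibly interchanging $\phi$ and $\psi$) the restriction of $\psi$ to a decomposition group at $p$ is unramified while that of $\phi$ is not. Restricting to $G_K$ and viewing $\phi,\psi$ as Teichm\"uller lifts of finite-order characters of $G_K$, the objects on the character side are the corresponding Katz-type anticyclotomic $p$-adic $L$-functions $\mathcal{L}_\phi,\mathcal{L}_\psi\in\Lambda^{\rm ur}$ and the Greenberg Selmer modules $\mathfrak{X}_\phi,\mathfrak{X}_\psi$ attached to $\phi,\psi$ with the local conditions at $v$ and $\bar v$ induced by those defining $\mathfrak{X}_E$.

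First I would record the main conjecture on the character side. Because $p$ splits in $K$, Rubin's proof of the Iwasawa main conjecture for imaginary quadratic fields (via the Euler system of elliptic units) gives ${\rm char}_\Lambda(\mathfrak{X}_\phi)\Lambda^{\rm ur}=(\mathcal{L}_\phi)$ and ${\rm char}_\Lambda(\mathfrak{X}_\psi)\Lambda^{\rm ur}=(\mathcal{L}_\psi)$; moreover the $\mu$-invariants of Katz $p$-adic $L$-functions vanish by the results of Gillard and Hida, so $\mu(\mathcal{L}_\phi)=\mu(\mathcal{L}_\psi)=0$, and hence also $\mu(\mathfrak{X}_\phi)=\mu(\mathfrak{X}_\psi)=0$; in particular $\mathfrak{X}_\phi$ and $\mathfrak{X}_\psi$ are $\Lambda$-torsion.

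The core of the proof is then a pair of comparisons. On the analytic side, the reducibility of $E[p]$ forces a congruence $f\equiv E_{\phi,\psi}\pmod p$ with an Eisenstein series, and since $f$ and $E_{\phi,\psi}$ are both $p$-ordinary this congruence propagates through the relevant Hida families; because the Rankin--Selberg $L$-values attached to $E_{\phi,\psi}/K$ factor as products of Hecke $L$-values of $K$, the Bertolini--Darmon--Prasanna $p$-adic $L$-function specializes at $E_{\phi,\psi}$ to $\mathcal{L}_\phi\mathcal{L}_\psi$ up to a unit, whence $\mathcal{L}_E\equiv u\cdot\mathcal{L}_\phi\mathcal{L}_\psi$ modulo the maximal ideal of $\Lambda^{\rm ur}$ for some unit $u$. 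In particular $\mathcal{L}_E\neq 0$, $\mu(\mathcal{L}_E)=0$, and $\lambda(\mathcal{L}_E)=\lambda(\mathcal{L}_\phi)+\lambda(\mathcal{L}_\psi)$. On the algebraic side, applying Galois cohomology over $K_\infty$ to the displayed exact sequence and matching the local conditions defining $\mathfrak{X}_E$ (relaxed at $v$, strict at $\bar v$) with the Greenberg conditions for $\phi$ and $\psi$, a diagram chase --- in which the error terms (${\rm H}^0$, ${\rm H}^2$, and connecting maps) are controlled using the distinctness of $\phi$ and $\psi$, the ordinarity at $p$, and the vanishing of $\mu$ on the character side to pass from mod-$\mathfrak{m}$ information to genuine control --- shows that $\mathfrak{X}_E$ is $\Lambda$-torsion with $\mu(\mathfrak{X}_E)=0$ and $\lambda(\mathfrak{X}_E)=\lambda(\mathfrak{X}_\phi)+\lambda(\mathfrak{X}_\psi)$.

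Finally I would conclude. Combining the two comparisons with the character main conjecture, $\mathfrak{X}_E$ is $\Lambda$-torsion, $\mathcal{L}_E\neq 0$, and ${\rm char}_\Lambda(\mathfrak{X}_E)$ and $(\mathcal{L}_E)$ have equal $\mu$- and $\lambda$-invariants. One divisibility, ${\rm char}_\Lambda(\mathfrak{X}_E)\Lambda^{\rm ur}\supseteq(\mathcal{L}_E)$, is obtained from the anticyclotomic Euler system of Heegner points --- using that $\kappa_1^{\rm Hg}$ is not $\Lambda$-torsion by Cornut--Vatsal --- together with the Bertolini--Darmon--Prasanna explicit reciprocity law relating $\kappa_1^{\rm Hg}$ to $\mathcal{L}_E$; establishing this divisibility in the residually reducible setting is itself a nontrivial point. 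Since $\Lambda^{\rm ur}\cong\Z_p^{\rm ur}\llbracket T\rrbracket$ is a unique factorization domain, a divisibility between two nonzero elements with equal $\mu$- and $\lambda$-invariants forces the generated ideals to coincide, giving the assertion of Conjecture~\ref{conj:BDP}. The main obstacle is the algebraic comparison in the third step: turning the reducible exact sequence into an honest statement about $\mathfrak{X}_E$ requires careful bookkeeping of the local conditions at $v$, $\bar v$ and at the primes dividing $N$, and above all a genuine proof that $\mu(\mathfrak{X}_E)=0$, the property that makes the residual comparison faithfully detect the characteristic ideal; the degenerate cases in which $\phi$ or $\psi$ is trivial or equal to $\omega$, where the error terms no longer vanish, must be handled separately.
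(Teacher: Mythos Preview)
Your overall strategy is the paper's: Greenberg--Vatsal transfer of anticyclotomic invariants from the characters $\phi,\psi$ to $E$, Rubin's main conjecture and Hida's $\mu=0$ on the character side, and a Heegner-point Kolyvagin divisibility to finish. Two substantive corrections are needed, however.

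First, the analytic congruence is not $\mathcal{L}_E\equiv u\,\mathcal{L}_\phi\mathcal{L}_\psi$. What Kriz's computation actually gives is
\[
\mathcal{L}_E\equiv(\mathcal{E}_{\phi,\psi}^{\iota})^{2}\cdot(\mathcal{L}_\phi)^{2}\pmod{p\Lambda^{\rm ur}},
\]
with $\mathcal{E}_{\phi,\psi}$ a product of local Euler factors at the bad primes; the square comes from the very definition of $\mathcal{L}_E$ as a square, and $\mathcal{L}_\psi$ only enters after invoking the Katz functional equation $\lambda(\mathcal{L}_\psi)=\lambda(\mathcal{L}_\phi)$. Correspondingly, on both the analytic and algebraic sides the $\lambda$-invariant identities are not $\lambda(\cdot_E)=\lambda(\cdot_\phi)+\lambda(\cdot_\psi)$ but carry the same correction $\sum_{w\in S}\{\lambda(\mathcal{P}_w(\phi))+\lambda(\mathcal{P}_w(\psi))-\lambda(\mathcal{P}_w(E))\}$; the comparison must be run through the \emph{imprimitive} Selmer groups $\mathfrak{X}_E^S,\mathfrak{X}_\phi^S,\mathfrak{X}_\psi^S$, where the exact sequence argument actually works, and only afterwards do the Euler factors cancel against each other.

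Second, and more seriously, your Euler-system step is incomplete. In the residually reducible case Howard's Kolyvagin system argument acquires an error term that blows up as the anticyclotomic character approaches the trivial one (roughly $v_p(\alpha(\gamma)-1)$), so the divisibility one proves is only in $\Lambda[1/p,1/(\gamma-1)]$. The paper closes this gap by imposing the extra hypothesis that ${\rm corank}_{\Z_p}{\rm Sel}_{p^\infty}(E/K)=1$, which forces ${\rm ord}_{(\gamma-1)}{\rm char}_\Lambda(X_{\rm tors})=0$ and hence upgrades the divisibility to $\Lambda[1/p]$. Without such a hypothesis (or an alternative argument at $\mathfrak{P}_0$) your final step does not go through. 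Relatedly, the paper does not ``handle separately'' the cases $\phi|_{G_p}\in\{\mathds{1},\omega\}$; it excludes them, since they break the local computations (freeness of $\rH^1(K_{\bar v},-)$, vanishing of $\rH^0$ and $\rH^2$) that make the imprimitive comparison work.
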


A second formulation, originally due to Perrin-Riou \cite{perrinriou}, is in terms of Heegner points. Although a more general formulation is possible (\emph{cf.} \cite[\S{2.3}]{CMpconverse}), here as in \cite{perrinriou} we assume that
\begin{equation}\label{eq:intro-disc}
\textrm{the discriminant $D_K$ of $K$ is odd and $D_K\neq -3$,}\tag{disc}
\end{equation}
and do not require hypothesis (\ref{eq:intro-spl}). Fix a modular parametrization
\[
\pi:X_0(N)\rightarrow E,
\]
and for any subfield $L\subset\overline{\Q}$ let ${\rm Sel}_{p^m}(E/L)$ be the Selmer group fitting into the descent exact sequence
\[
0\rightarrow E(L)\otimes_{\Z}\Z/p^m\Z\rightarrow{\rm Sel}_{p^m}(E/L)\rightarrow\sha(E/L)[p^m]\rightarrow 0.
\]
Via $\pi$, the Kummer images of Heegner points on $X_0(N)$ over ring class fields of $K$ of $p$-power conductor give rise to a class $\kappa_1^{\rm Hg}\in\mathcal{S}$, where
\[
\mathcal{S}=\biggl(\varprojlim_n\varprojlim_m{\rm Sel}_{p^m}(E/K_n)\biggr)\otimes_{}\Q_p.
\] 
{The group $\mathcal{S}$ is naturally a $\Lambda_{\rm ac}$-module, and} the class $\kappa_1^{\rm Hg}$ is known to be non-$\Lambda_{\rm ac}$-torsion by results of Cornut and Vatsal \cite{cornut}, \cite{vatsal}. Denote by $\mathcal{H}\subset\mathcal{S}$ the $\Lambda_{\rm ac}$-submodule generated by $\kappa_1^{\rm Hg}$, and put 
\[
\mathcal{X}={\rm Hom}_{\Z_p}({\rm Sel}_{p^\infty}(E/K_\infty),\Q_p/\Z_p)\otimes\Q_p.
\]


\begin{conj}\label{conj:PR}
Let $E/\Q$ be an elliptic curve and $p>2$ a prime of good ordinary reduction for $E$, and let $K$ be an imaginary quadratic field satisfying {\rm(\ref{eq:intro-Heeg})} and {\rm(\ref{eq:intro-disc})}. Then $\mathcal{S}$ and $\mathcal{X}$ both
have $\Lambda_{\rm ac}$-rank one, and
\[
{\rm char}_{\Lambda_{\rm ac}}(\mathcal{X}_{\rm tors})={\rm char}_{\Lambda_{\rm ac}}\bigl(\mathcal{S}/\mathcal{H}\bigr)^2,
\] 
where $\mathcal{X}_{\rm tors}$ denotes the $\Lambda_{\rm ac}$-torsion submodule of $X$. 
\end{conj}

\begin{rem}
One can naturally formulate an integral version of Conjecture~\ref{conj:PR}, but the results of \cite{perrinriou} and \cite{howard-GZ} show that the terms appearing in the corresponding equality of $\Lambda$-module characteristic ideals are in general \emph{not} invariant under isogenies. (With $p$ inverted, i.e., as ideals in $\Lambda_{\rm ac}$, the terms are invariant under isogenies.) On the other hand, it is clear that the principal ideals in $\Lambda^{\rm ur}$ appearing in the equality of Conjecture~\ref{conj:BDP} depend only on the isogeny class of $E$.
\end{rem}

When $p$ is non-Eisenstein for $E$, Conjectures~\ref{conj:BDP} and \ref{conj:PR} have been studied by several authors \cite{bertolini,howard,howard-gl2-type,wan-heegner,castellaheights,BCK}, but the residually reducible case remained largely unexplored; in particular, unless $E$ has CM by $K$ (a case that is excluded by our hypothesis (\ref{eq:intro-Heeg}), but see \cite{CMpconverse} for this case), there seems to be no previous results towards these conjectures when $p$ is an Eisenstein prime for $E$. 

To state our main results on 
the anticyclotomic Iwasawa theory of $E$ at Eisenstein primes $p$, write
\[
E[p]^{ss}=\mathbb{F}_p(\phi)\oplus\mathbb{F}_p(\psi),
\]
where $\phi,\psi:G_\Q\rightarrow\mathbb{F}_p^\times$ are characters. Note that it follows from the Weil pairing that $\psi=\omega\phi^{-1}$, where $\omega$ is the Teichm\"uller character. Let $G_p\subset G_{\mathbb{Q}}$ be a decomposition group at $p$. 

Our most complete results towards Conjectures \ref{conj:BDP} and \ref{conj:PR} are proved under the additional hypothesis
that
\begin{equation}\label{eq:intro-sel}
\text{the $\Z_p$-corank of $\mathrm{Sel}_{p^\infty}(E/K)$ is $1$.}\tag{Sel}
\end{equation}

\begin{theorem}\label{thm:C}
Let $E/\mathbb{Q}$ be an elliptic curve, $p>2$ an Eisenstein prime for $E$, and $K$ an imaginary quadratic field satisfying {\rm (\ref{eq:intro-Heeg})}, {\rm (\ref{eq:intro-spl})}, {\rm (\ref{eq:intro-disc})}, and
{\rm (\ref{eq:intro-sel})}. 
Assume also that
$\phi\vert_{G_p}\neq\mathds{1},\omega$. Then Conjecture~\ref{conj:BDP} holds.
\end{theorem}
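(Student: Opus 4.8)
The plan is to prove Conjecture~\ref{conj:BDP} by the method of \emph{Greenberg--Vatsal}: to reduce the anticyclotomic main conjecture for $E$ at the Eisenstein prime $p$ to the corresponding main conjectures for the two characters $\phi$ and $\psi=\omega\phi^{-1}$ occurring in $E[p]^{ss}=\mathbb{F}_p(\phi)\oplus\mathbb{F}_p(\psi)$, which are known. Since $p$ is a prime of ordinary reduction, $E[p^\infty]$ carries a $G_p$-stable filtration with unramified quotient, and after relabeling I would arrange that $\mathbb{F}_p(\phi)$ is the sub of $E[p]$ compatible with the Selmer conditions that are relaxed at $v$ and strict at $\bar v$. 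The supplementary hypothesis $\phi\vert_{G_p}\neq\mathds{1},\omega$ is symmetric in $\phi$ and $\psi$ (it is equivalent to $\psi\vert_{G_p}\neq\mathds{1},\omega$) and plays the role of a ``$p$-regularity'' condition: it ensures that the local conditions at $v,\bar v$ defining the character Selmer groups are finite of controlled corank, and that the Euler-type factors relating the associated $p$-adic $L$-functions are units --- exactly the inputs that make the residual comparisons below free of spurious error terms at $p$.

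On the analytic side, the Eisenstein congruence $a_\ell(f)\equiv\phi(\ell)+\psi(\ell)\pmod{\mathfrak{m}}$ relating the newform $f$ of $E$ to the weight-two Eisenstein series attached to $(\phi,\psi)$, together with the interpolation formulae for $\mathcal{L}_E$ of Bertolini--Darmon--Prasanna and of Katz, gives a congruence
\[
\mathcal{L}_E\equiv(\text{unit})\cdot\mathcal{L}_\phi\cdot\mathcal{L}_\psi\pmod{\mathfrak{m}\Lambda^{\mathrm{ur}}},
\]
where $\mathcal{L}_\phi,\mathcal{L}_\psi\in\Lambda^{\mathrm{ur}}$ are the anticyclotomic projections of the Katz $p$-adic $L$-functions of $K$ attached to $\phi$ and $\psi$. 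By Gillard's theorem that the Katz $p$-adic $L$-function of $K$ has vanishing $\mu$-invariant when $p$ splits in $K$, this already forces $\mu(\mathcal{L}_E)=0$ and $\lambda(\mathcal{L}_E)=\lambda(\mathcal{L}_\phi)+\lambda(\mathcal{L}_\psi)$.

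On the algebraic side, I would run the parallel comparison for Selmer groups: tensoring $0\to\mathbb{F}_p(\phi)\to E[p]\to\mathbb{F}_p(\psi)\to 0$ into continuous cohomology over $K_\infty$ and tracking the local conditions (relaxed at $v$, strict at $\bar v$, Greenberg-type elsewhere) relates the residual Selmer group $\mathrm{Sel}(E[p]/K_\infty)$ --- which governs the $\Lambda$-corank and the $\mu$-invariant of $\mathfrak{X}_E$ --- to the analogous residual Selmer groups of $\phi$ and $\psi$ with matching local conditions. Hypotheses {\rm (\ref{eq:intro-Heeg})}, {\rm (\ref{eq:intro-spl})}, {\rm (\ref{eq:intro-disc})} and $p$-regularity render the local error terms (at $p$ and at the ramified primes) finite and controlled, so that, invoking the equality $\mathrm{char}_\Lambda(\mathfrak{X}_\phi)\Lambda^{\mathrm{ur}}=(\mathcal{L}_\phi)$ (and likewise for $\psi$) from Rubin's proof of the Iwasawa main conjecture for imaginary quadratic fields in which $p$ splits, together again with Gillard, one obtains that $\mathfrak{X}_E$ is $\Lambda$-torsion with $\mu(\mathfrak{X}_E)=0$ and
\[
\lambda(\mathfrak{X}_E)=\lambda(\mathcal{L}_\phi)+\lambda(\mathcal{L}_\psi)=\lambda(\mathcal{L}_E).
\]

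It then remains to upgrade this equality of $\mu$- and $\lambda$-invariants to the equality of ideals in Conjecture~\ref{conj:BDP}, for which it is enough to produce one divisibility $\mathrm{char}_\Lambda(\mathfrak{X}_E)\Lambda^{\mathrm{ur}}\supseteq(\mathcal{L}_E)$: writing $\mathcal{L}_E=g\cdot h$ with $g$ a generator of $\mathrm{char}_\Lambda(\mathfrak{X}_E)\Lambda^{\mathrm{ur}}$, additivity of the $\mu$- and $\lambda$-invariants forces $\mu(h)=\lambda(h)=0$, whence $h\in(\Lambda^{\mathrm{ur}})^\times$ and equality holds. This last divisibility is where hypothesis {\rm (\ref{eq:intro-sel})} enters: the $\Z_p$-corank of $\mathrm{Sel}_{p^\infty}(E/K)$ being $1$ forces $\kappa_1^{\mathrm{Hg}}$ to be non-torsion, and feeding this into the Heegner point Euler system --- adapted to the residually reducible setting --- together with the explicit reciprocity law relating $\loc_v(\kappa_1^{\mathrm{Hg}})$ to $\mathcal{L}_E$ yields the required bound on $\mathfrak{X}_E$. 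The step I expect to be the main obstacle is the algebraic comparison: since $E[p]$ need not split as a $G_p$-module, the local conditions for $E[p]$ do not restrict to those for $\phi$ and $\psi$ on the nose, so one must prove that all the resulting local error terms are finite over the whole of $\Lambda$; and --- crucially, in order to conclude $\mu(\mathfrak{X}_E)=0$ rather than merely an inequality of $\mu$-invariants --- it is essential that the $\mu=0$ input for the characters (Gillard) and the equality in their main conjecture (Rubin) both hold unconditionally here.
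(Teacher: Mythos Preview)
Your overall strategy matches the paper's: compare $\mu$- and $\lambda$-invariants on both sides via the Greenberg--Vatsal method (using Rubin's main conjecture for the characters and the vanishing of $\mu$ for Katz $p$-adic $L$-functions), then supply one divisibility via a Heegner point Kolyvagin system. Two points, however, deserve correction.

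First, the analytic congruence is not $\mathcal{L}_E\equiv(\text{unit})\cdot\mathcal{L}_\phi\cdot\mathcal{L}_\psi$. What actually comes out of the Eisenstein congruence (via Kriz) is
\[
\mathcal{L}_E\equiv(\mathcal{E}_{\phi,\psi}^\iota)^2\cdot(\mathcal{L}_\phi)^2\pmod{p\Lambda^{\rm ur}},
\]
where $\mathcal{E}_{\phi,\psi}$ is a product of local Euler factors $\mathcal{P}_w$ at primes $w\mid N$ and is \emph{not} a unit in general. The symmetric form $\lambda(\mathcal{L}_E)=\lambda(\mathcal{L}_\phi)+\lambda(\mathcal{L}_\psi)+\sum_w\{\cdots\}$ only emerges after invoking the functional equation $\lambda(\mathcal{L}_\psi)=\lambda(\mathcal{L}_\phi)$. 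On the algebraic side, the same Euler factor corrections $\mathcal{P}_w(\phi),\mathcal{P}_w(\psi),\mathcal{P}_w(E)$ appear and must be matched; the paper does this by passing to $S$-imprimitive Selmer groups (relaxing the local conditions at all $w\mid N$), which is the device that makes the residual comparison clean. Your ``local error terms\dots finite and controlled'' hides this.

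Second, and more seriously, you misidentify the role of hypothesis (\ref{eq:intro-sel}). The non-triviality of $\kappa_1^{\rm Hg}$ is unconditional here: it follows from Cornut--Vatsal. What (\ref{eq:intro-sel}) actually buys is a way around a genuine obstruction in the residually reducible Kolyvagin system argument. The paper's extension of Howard's method (Theorem~\ref{thm:howard}) bounds Selmer groups of twists $T_\alpha$ with an error term $E_\alpha$ depending on $C_\alpha=v_p(\alpha(\gamma)-1)$, which \emph{blows up} as $\alpha$ approaches the trivial character. Consequently the raw output is only a divisibility in $\Lambda[1/p,1/(\gamma-1)]$, and equality of $\lambda$-invariants alone cannot close the gap at $\mathfrak{P}_0=(\gamma-1)$. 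Hypothesis (\ref{eq:intro-sel}) is used (Corollary~\ref{cor:howard}) to conclude directly that $\mathrm{ord}_{\mathfrak{P}_0}\bigl(\mathrm{char}_\Lambda(X_{\rm tors})\bigr)=0$, upgrading the divisibility to $\Lambda[1/p]$ and allowing the $\mu=0$, $\lambda$-equality argument to finish.
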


As we explain in more detail in the next section, a key step towards Theorem~\ref{thm:C} is the proof of a divisibility in  Conjecture~\ref{conj:PR} that we establish without the need to assume (\ref{eq:intro-spl}) (see Theorem~\ref{thm:howard-HP}). On the other hand, as first observed in \cite{castella-PhD} and \cite{wan-heegner}, when $p$ splits in $K$, Conjectures~\ref{conj:BDP} and \ref{conj:PR} are essentially equivalent (see Proposition \ref{prop:equiv-imc}). Thus from Theorem~\ref{thm:howard-HP} we deduce one of the divisibilities in Conjecture~\ref{conj:BDP}, which by the analysis of Iwasawa invariants carried out in $\S\S$\ref{sec:algebraic}-\ref{IMCII} then yields the equality of ideals in $\Lambda^{\rm ur}$ predicted by Conjecture~\ref{conj:BDP}. As a result, our analysis together with the aforementioned equivalence also yields the following.

\begin{cor}\label{cor:D}
Let $E/\mathbb{Q}$ be an elliptic curve, $p>2$ an Eisenstein prime for $E$, and $K$ an imaginary quadratic field satisfying {\rm (\ref{eq:intro-Heeg})}, {\rm (\ref{eq:intro-disc})}, and
{\rm (\ref{eq:intro-sel})}. If $E(K)[p]=0$, then $\mathcal{S}$ and $\mathcal{X}$ both
have $\Lambda_{\rm ac}$-rank one, and
\[
{\rm char}_{\Lambda_{\rm ac}}(\mathcal{X}_{\rm tors})\supset{\rm char}_{\Lambda_{\rm ac}}\bigl(\mathcal{S}/\mathcal{H}\bigr)^2.
\] 
Moreover, if in addition $K$ satisfies {\rm (\ref{eq:intro-spl})} and $\phi\vert_{G_p}\neq\mathds{1},\omega$, then
\[
{\rm char}_{\Lambda_{\rm ac}}(\mathcal{X}_{\rm tors})={\rm char}_{\Lambda_{\rm ac}}\bigl(\mathcal{S}/\mathcal{H}\bigr)^2,
\] 
and hence Conjecture~\ref{conj:PR} holds.
\end{cor}


Note that in both Theorem~\ref{thm:C} and Corollary~\ref{cor:D}, the elliptic curve $E$ is allowed to have complex multiplication  (necessarily by an imaginary quadratic field different from $K$). 

With a judicious choice of $K$, Theorem~\ref{thm:C} also has applications to the arithmetic over $\Q$ of rational elliptic curves. Specifically, for Eisenstein prime $p$, we obtain a $p$-converse to the celebrated theorem 
\begin{equation}\label{eq:GZK}
\ord_{s=1}L(E,s)=r \in \{0,1\}\;\Longrightarrow\;
{\rm rank}_\Z E(\Q)=r\;\textrm{and}\;
\#\sha(E/\bQ)<\infty,
\end{equation}
of Gross--Zagier and Kolyvagin. (The case of Eisenstein primes eluded the methods of 
\cite{pCONVskinner} and \cite{wei-zhang}, which require $E[p]$ to be absolutely irreducible as a $G_\bQ$-module.)

\begin{theorem}\label{thm:E}
Let $E/\mathbb{Q}$ be an elliptic curve and $p>2$ an Eisenstein prime for $E$, so that $E[p]^{ss}=\mathbb{F}_p(\phi)\oplus\mathbb{F}_p(\psi)$ as $G_\Q$-modules, and assume that $\phi\vert_{G_p}\neq\mathds{1},\omega$. Let $r\in\{0,1\}$. Then the following implication holds:
\[
{\rm corank}_{\Z_p}{\rm Sel}_{p^\infty}(E/\Q)=r
\;\Longrightarrow\;
{\rm ord}_{s=1}L(E,s)=r,
\]
and so ${\rm rank}_\Z E(\Q)=r$ and $\#\sha(E/\Q)<\infty$.
\end{theorem}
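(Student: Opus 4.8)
The plan is to deduce Theorem~\ref{thm:E} from Theorem~\ref{thm:C}, or rather from its consequence Corollary~\ref{cor:D}, by the standard $p$-converse strategy: pick a well-adapted auxiliary imaginary quadratic field, descend the Heegner point main conjecture to the non-triviality of a single Heegner point, and apply Gross--Zagier. So suppose ${\rm corank}_{\Z_p}{\rm Sel}_{p^\infty}(E/\Q)=1$. First I would invoke the $p$-parity theorem for elliptic curves over $\Q$ (known for all primes $p$) to deduce that the global root number satisfies $w(E/\Q)=-1$. Then, using the analytic non-vanishing theorems for quadratic twists of $L(E,s)$ in the refined form allowing prescribed splitting and congruence conditions --- and noting that $w(E/\Q)=-1$ forces $w(E^K/\Q)=+1$ for every $K$ satisfying (\ref{eq:intro-Heeg}), since (\ref{eq:intro-Heeg}) gives $w(E/K)=-1$ and $w(E/K)=w(E/\Q)\,w(E^K/\Q)$ --- I would select an imaginary quadratic field $K$ satisfying (\ref{eq:intro-Heeg}), (\ref{eq:intro-spl}) and (\ref{eq:intro-disc}) with $L(E^K/\Q,1)\neq 0$, where $E^K$ denotes the quadratic twist of $E$ by $K$.

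For such $K$, since $p$ splits in $K$ the representation $E^K[p]$ is reducible with $\phi_{E^K}\vert_{G_p}=\phi\vert_{G_p}\neq\mathds{1},\omega$, and $E^K$ has good ordinary reduction at $p$; hence the cyclotomic Iwasawa theory of $E^K$ at the Eisenstein prime $p$ in the style of Greenberg--Vatsal yields that $L(E^K/\Q,1)\neq 0$ implies ${\rm Sel}_{p^\infty}(E^K/\Q)$ is finite. As $p$ is odd, the ${\rm Gal}(K/\Q)$-eigenspace decomposition of ${\rm Sel}_{p^\infty}(E/K)$ gives
\[
{\rm corank}_{\Z_p}{\rm Sel}_{p^\infty}(E/K)={\rm corank}_{\Z_p}{\rm Sel}_{p^\infty}(E/\Q)+{\rm corank}_{\Z_p}{\rm Sel}_{p^\infty}(E^K/\Q)=1+0=1,
\]
so (\ref{eq:intro-sel}) holds for $E/K$, and $(E,p,K)$ satisfies all the hypotheses of Theorem~\ref{thm:C}.

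Next I would apply Corollary~\ref{cor:D}: $\mathcal{S}$ and $X$ have $\Lambda$-rank one and ${\rm char}_\Lambda(X_{\rm tors})=\tfrac{1}{c_E^2u_K^2}\,{\rm char}_\Lambda(\mathcal{S}/\Lambda\kappa_1^{\rm Hg})^2$, with $u_K=1$ under (\ref{eq:intro-disc}). From (\ref{eq:intro-sel}), a control theorem for $X$ --- whose local and global error terms are finite precisely because $\phi\vert_{G_p}\neq\mathds{1},\omega$ --- shows that ${\rm char}_\Lambda(X_{\rm tors})$ is prime to the augmentation ideal $I_\Gamma\subset\Lambda$; hence so is ${\rm char}_\Lambda(\mathcal{S}/\Lambda\kappa_1^{\rm Hg})$, and therefore $\mathcal{S}/\Lambda\kappa_1^{\rm Hg}$ has finite $\Gamma$-coinvariants. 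Passing to $\Gamma$-coinvariants and using the control theorem for $\mathcal{S}$, one identifies the image of $\kappa_1^{\rm Hg}$, up to a nonzero scalar and finite error, with the Kummer image of a point $P_K\in E(K)$ arising from Heegner points, and the finiteness just obtained forces $P_K$ to have infinite order. (One could alternatively argue directly with $\Lcal_E$: Theorem~\ref{thm:C} together with (\ref{eq:intro-sel}) forces $\Lcal_E$ to have nonzero image in $\Lambda^{\rm ur}/I_\Gamma\Lambda^{\rm ur}$, and the Bertolini--Darmon--Prasanna formula relating this image to $\log_{\omega_E}(P_K)^2$ --- the $p$-adic interpolation factor being nonzero by good ordinary reduction --- again gives $P_K$ of infinite order.)

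Finally, since $P_K$ has infinite order, the Gross--Zagier formula gives $L'(E/K,1)\neq 0$, so ${\rm ord}_{s=1}L(E/K,s)=1$; together with the factorization $L(E/K,s)=L(E/\Q,s)\,L(E^K/\Q,s)$ and $L(E^K/\Q,1)\neq 0$ this forces ${\rm ord}_{s=1}L(E,s)=1$, whence ${\rm rank}_\Z E(\Q)=1$ and $\#\sha(E/\Q)<\infty$ by the theorem (\ref{eq:GZK}) of Gross--Zagier and Kolyvagin. I expect the main difficulty to be the descent step that converts the identity of Corollary~\ref{cor:D} into the non-triviality of the single point $P_K$: one must check that the local and global error terms in the relevant control theorems vanish or stay finite at the Eisenstein prime $p$ --- which is exactly the role of $\phi\vert_{G_p}\neq\mathds{1},\omega$ --- and, as a secondary matter, that the congruence conditions (\ref{eq:intro-Heeg}), (\ref{eq:intro-spl}), (\ref{eq:intro-disc}) can be imposed simultaneously with the analytic non-vanishing of $L(E^K/\Q,1)$.
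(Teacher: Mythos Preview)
Your argument follows essentially the same route as the paper's proof: deduce $w(E/\Q)=-1$ from the corank-one hypothesis (the paper cites Monsky), choose $K$ via non-vanishing results so that $L(E^K,1)\neq 0$, verify \eqref{eq:intro-sel}, apply Corollary~\ref{cor:D}, and use the vanishing of ${\rm ord}_{\mathfrak{P}_0}({\rm char}_\Lambda(X_{\rm tors}))$ to force the Heegner point $P_K$ to be non-torsion, then finish with Gross--Zagier and the factorization of $L(E/K,s)$. One small simplification: to get ${\rm Sel}_{p^\infty}(E^K/\Q)$ finite from $L(E^K,1)\neq 0$ the paper just invokes Kolyvagin (or Kato), which needs no hypothesis on $E[p]$; your detour through Greenberg--Vatsal is unnecessary here and, taken literally, would impose the extra parity/ramification condition on $\phi$ that appears only in Theorem~\ref{thm:F}, not in Theorem~\ref{thm:E}.
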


Note that if ${\rm rank}_{\Z}E(\Q)=r$ and $\#\sha(E/\Q)[p^\infty]<\infty$ then ${\rm corank}_{\Z_p}{\rm Sel}_{p^\infty}(E/\Q)=r$,
whence the $p$-converse to \eqref{eq:GZK}. We also note that for $p=3$, Theorem~\ref{thm:E} together with the recent work of Bhargava--Klagsbrun--Lemke Oliver--Shnidman \cite{BKLS} on the average $3$-Selmer rank for abelian varieties in quadratic twist families, 
provides additional evidence
towards Goldfeld's conjecture \cite{goldfeld} for elliptic curves $E/\bQ$ admitting a rational $3$-isogeny (see Corollary~\ref{cor:goldfeld} and Remark~\ref{rem:BKLS}, and see also \cite{kriz-li} for earlier results along these lines).

Another application of Theorem~\ref{thm:C} is the following.
\begin{theorem}\label{thm:F}
Under the hypotheses of Theorem~\ref{thm:E}, assume in addition that $\phi$ is either ramified at $p$ and odd, or unramified at $p$ and even. If ${\rm ord}_{s=1}L(E,s)=1$, then 
\[
\ord_{p}\biggl(\frac{L'(E, 1)}{\operatorname{Reg}(E / \Q) \cdot \Omega_{E}}\biggr)=\ord_{p}\biggl(\# \sha(E / \Q) \prod_{\ell \nmid \infty} c_{\ell}(E /\Q)\biggr),
\]
where 
\begin{itemize}
\item ${\rm Reg}(E/\Q)$ is the regulator of $E(\Q)$, 
\item $\Omega_E=\int_{E(\mathbb{R})}\vert\omega_E\vert$ is the N\'{e}ron  period associated to the N\'{e}ron differential $\omega_E$, and
\item $c_\ell(E/\Q)$ is the Tamagawa number of $E$ at the prime $\ell$.
\end{itemize} 
In other words, the $p$-part of the Birch--Swinnerton-Dyler formula for $E$ holds.
\end{theorem}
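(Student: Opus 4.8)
The plan is to derive the $p$-part of the Birch--Swinnerton-Dyer formula in analytic rank $1$ from Theorem~\ref{thm:C} (equivalently, Corollary~\ref{cor:D}) by running the standard Heegner point argument, keeping careful track of $p$-adic valuations at the Eisenstein prime $p$. First I would choose the auxiliary imaginary quadratic field $K$: by the hypotheses of Theorem~\ref{thm:E} we already have a $K$ satisfying (\ref{eq:intro-Heeg}), (\ref{eq:intro-spl}), (\ref{eq:intro-disc}); we may further arrange, as in \cite{BKLS}-type or Waldspurger-type nonvanishing arguments, that $L(E^K,1)\neq 0$ so that $\mathrm{rank}_\Z E^K(\Q)=0$ and $\sha(E^K/\Q)$ is finite, and that the relevant Heegner point $y_K\in E(K)$ is of infinite order. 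Under (\ref{eq:intro-Heeg}) and $\mathrm{ord}_{s=1}L(E,s)=1$, the Gross--Zagier formula gives $\widehat{h}_K(y_K)\doteq L'(E/K,1)/\Omega$ up to the explicit fudge factors $c_E^2 u_K^2$ and the period ratio, and the factorization $L(E/K,s)=L(E,s)L(E^K,s)$ lets us pass between the regulator over $K$ and the regulator over $\Q$.

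Next, the key input is Corollary~\ref{cor:D}: $\mathcal{S}$ and $X$ have $\Lambda$-rank one, and $\mathrm{char}_\Lambda(X_{\mathrm{tors}})=\tfrac{1}{c_E^2u_K^2}\mathrm{char}_\Lambda(\mathcal{S}/\Lambda\kappa_1^{\mathrm{Hg}})^2$. Evaluating this identity of characteristic ideals at the trivial character of $\Gamma$ — i.e. applying the control theorem for the Heegner module and for the dual Selmer group to descend from $K_\infty$ to $K$ — converts the divisibility into a numerical identity: $\mathrm{ord}_p\#\big(\sha(E/K)[p^\infty]\big)$ plus local Tamagawa-type contributions equals $2\,\mathrm{ord}_p\,[\,E(K)\otimes\Z_p : \Z_p\cdot(\text{image of }y_K)\,] - 2\,\mathrm{ord}_p(c_E u_K)$, up to the standard error terms coming from the (finite) kernels and cokernels in the control sequences. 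Here I would invoke the hypothesis $\phi|_{G_p}\neq\mathds{1},\omega$, together with the extra parity condition on $\phi$ in Theorem~\ref{thm:F} ("ramified at $p$ and odd, or unramified at $p$ and even"), precisely to ensure that the local conditions at $v,\bar v$ are tame, that $E(K_\infty)[p^\infty]$ and the analogous modules are finite so the control theorems have controlled defect, and that no spurious $p$ appears in the comparison of the BDP/Heegner normalizations with the imprimitive Selmer groups.

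Then I would assemble the pieces: combine (i) the Gross--Zagier formula relating $\widehat h_K(y_K)$ to $L'(E/K,1)/(\Omega_E\Omega_{E^K})$ with explicit constants, (ii) the rank-one BSD formula for $E^K$ in analytic rank $0$ at $p$ — which is known in this generality (e.g.\ via Kato or via the anticyclotomic methods already established for the rank-$0$ twist, using that $L(E^K,1)\neq0$) — to handle the $E^K$-factor, (iii) the Iwasawa-theoretic identity from Corollary~\ref{cor:D} descended to $K$, and (iv) the comparison of $\mathrm{Sel}_{p^\infty}(E/K)$ with $\mathrm{Sel}_{p^\infty}(E/\Q)\oplus\mathrm{Sel}_{p^\infty}(E^K/\Q)$ together with the product formula for Tamagawa numbers $\prod_\ell c_\ell(E/K)$ versus $\prod_\ell c_\ell(E/\Q)\cdot\prod_\ell c_\ell(E^K/\Q)$. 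Reconciling the periods and Manin constants — tracking $c_E$, $u_K$, $\Omega_E/\Omega_{E^K}$, and the ratio of the Néron period over $K$ to the product of periods over $\Q$ — to see that all powers of $p$ cancel except those in the asserted formula, is the bookkeeping heart of the argument, and it is essentially the same reconciliation carried out in \cite{jsw} in the irreducible case, now done for $E[p]$ reducible.

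The main obstacle I expect is the control/descent step at the prime $p$ in the Eisenstein situation: because $E[p]$ is reducible, $\mathrm{Sel}_{p^\infty}(E/K_\infty)$ and the Heegner module $\mathcal{S}$ can have extra $\Lambda$-torsion or nontrivial finite submodules coming from the subquotient characters $\phi,\psi$, so the kernels and cokernels in the control sequences need not vanish and must be computed exactly. This is exactly what the hypothesis $\phi|_{G_p}\neq\mathds{1},\omega$ and the parity condition on $\phi$ are there to control: they force the local error terms at $v$ and $\bar v$ to be trivial (or to match precisely the Tamagawa factors at $p$), so that the evaluation of characteristic ideals at the trivial character is clean. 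Once that is in hand, the remaining steps are the by-now-standard combination of Gross--Zagier, the anticyclotomic main conjecture, and local Tamagawa bookkeeping, carried out exactly as in the absolutely irreducible case.
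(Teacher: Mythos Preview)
Your overall architecture is correct and matches the paper's: choose $K$ with $L(E^K,1)\neq 0$, descend the anticyclotomic main conjecture to $K$, combine with Gross--Zagier, split everything as $E/\Q$ times $E^K/\Q$, and cancel the $E^K$-contribution using the $p$-part of BSD in rank~$0$ for $E^K$. The paper does this via Theorem~\ref{thm:C} (the BDP form) together with the $p$-adic Gross--Zagier formula of \cite{BDP} and the anticyclotomic control theorem of \cite{jsw}, rather than via Corollary~\ref{cor:D} as you suggest; since the two main conjectures are equivalent this is a cosmetic difference, though the BDP route makes the bookkeeping of the $\log_{\omega_E}(P_K)$ term cleaner.

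There is, however, a genuine gap in your proposal: you have misidentified the role of the parity hypothesis on $\phi$. The condition $\phi|_{G_p}\neq\mathds{1},\omega$ by itself already forces $E(K_v)[p]=0$ and handles all the local control at $v,\bar v$; the descent from $K_\infty$ to $K$ is clean under that hypothesis alone. The additional condition ``$\phi$ ramified at $p$ and odd, or unramified at $p$ and even'' has nothing to do with control at $p$. Its sole purpose is to guarantee the $p$-part of BSD in rank~$0$ for the twist $E^K$, which is \emph{not} ``known in this generality'' as you assert: at an Eisenstein prime one needs the cyclotomic main conjecture for $E^K$, which comes from Kato's divisibility plus the Greenberg--Vatsal divisibility, and the latter requires that the character describing the kernel of the $p$-isogeny on $E^K$ be either ramified at $p$ and even, or unramified at $p$ and odd. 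Twisting by the (odd, unramified-at-$p$) quadratic character of $K$ swaps the parity and preserves ramification at $p$, so the stated hypothesis on $\phi$ for $E$ translates into exactly the Greenberg--Vatsal hypothesis for $E^K$. Without this observation your step~(ii) is unjustified and the argument does not close.
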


\subsection{Method of proof and outline of the paper}

Let us explain some of the ideas that go into the  proof of our main results, beginning with the proof of Theorem~\ref{thm:C}. As in \cite{greenvats}, our starting point is Greenberg's old observation \cite{greenberg-nagoya} that a ``main conjecture'' should be equivalent to an imprimitive one. More precisely, in the context of Theorem~\ref{thm:C}, for $\Sigma$ any finite set of non-archimedean primes of $K$ not containing any of the primes above $p$, this translates into the expectation that the $\Sigma$-imprimitive Selmer group $\mathfrak{X}_E^\Sigma$, obtained by relaxing the local condition defining (the Pontryagin dual of) $\mathfrak{X}_E$ at the primes $w\in\Sigma$, is $\Lambda$-torsion with
\begin{equation}\label{eq:BDP-imp}
{\rm char}_\Lambda\bigl(\mathfrak{X}_E^\Sigma\bigr)\Lambda^{\rm ur}\overset{?}=\bigl(\mathcal{L}_E^\Sigma\bigr)
\end{equation}
as ideals in $\Lambda^{\rm ur}$, where $\mathcal{L}_E^\Sigma:=\mathcal{L}_E\cdot\prod_{w\in\Sigma}\mathcal{P}_w(E)$ for certain elements in $\mathcal{P}_w(E)\in\Lambda$ interpolating, for varying characters $\chi$ of $\Gamma$, the $w$-local Euler factor of $L(E/K,\chi,s)$ evaluated $s=1$. 

A key advantage of the imprimitive main conjecture (\ref{eq:BDP-imp}) is that (unlike the original conjecture), for suitable choices of $\Sigma$, its associated Iwasawa invariants are well-behaved with respect to congruences mod $p$. Identifying $\Lambda$ with the power series ring $\Z_p\llbracket T\rrbracket$ by setting $T=\gamma-1$ for a fixed topological generator $\gamma\in\Gamma$, recall that by the Weierstrass preparation theorem, 
every nonzero $g\in\Lambda$ can be uniquely written in the form
\[
g=u\cdot p^\mu\cdot Q(T),
\]
with $u\in\Lambda^\times$, $\mu=\mu(g)\in\Z_{\geqslant 0}$, and $Q(T)\in\Z_p[T]$ a distinguished polynomial of degree $\lambda(g)$. The constants $\lambda$ and $\mu$ are the so-called \emph{Iwasawa invariants} of $g$. For a torsion $\Lambda$-module $\mathfrak{X}$ we let $\lambda(\mathfrak{X})$ and $\mu(\mathfrak{X})$ be the Iwasawa invariants of a characteristic power series for $\mathfrak{X}$, and for a nonzero $\mathcal{L}\in\Lambda^{\rm ur}$ we let $\lambda(\Lcal)$ and $\mu(\Lcal)$ be the Iwasawa invariants of any element of $\Lambda$ generating the same $\Lambda^{\rm ur}$-ideal as $\mathcal{L}$. 

As a first step towards Theorem~\ref{thm:C}, we deduce from the $G_\Q$-module isomorphism $E[p]^{ss}=\mathbb{F}_p(\phi)\oplus\mathbb{F}_p(\psi)$ that, taking $\Sigma$ to consist of primes that are split in $K$ and containing all the primes of bad reduction for $E$, the module $\mathfrak{X}_E^\Sigma$ is $\Lambda$-torsion with
\begin{equation}\label{eq:alg-inv}
\mu\bigl(\mathfrak{X}_E^\Sigma\bigr)=0\quad\textrm{and}\quad\lambda\bigl(\mathfrak{X}_E^\Sigma\bigr)=\lambda\bigl(\mathfrak{X}_\phi^\Sigma\bigr)+\lambda\bigl(\mathfrak{X}_\psi^\Sigma\bigr),
\end{equation}
where $\mathfrak{X}_\phi^\Sigma$ and $\mathfrak{X}_\psi^\Sigma$ are anticyclotomic Selmer groups (closely related to the Pontryagin dual of certain class groups) for the Teichm\"uller lifts of $\phi$ and $\psi$, respectively. The proof of (\ref{eq:alg-inv}), which is taken up in $\S\ref{sec:algebraic}$, uses Rubin's work \cite{rubinmainconj} on the Iwasawa main conjecture for imaginary quadratic fields and Hida's work \cite{hidamu=0} on the vanishing of the $\mu$-invariant of $p$-adic Hecke $L$-functions. 

On the other hand, in $\S\ref{IMCII}$ we deduce from the main result of \cite{kriz} that for such $\Sigma$ one also has
\begin{equation}\label{eq:an-inv}
\mu\bigl(\Lcal_E^\Sigma\bigr)=0\quad\textrm{and}\quad\lambda\bigl(\Lcal_E^\Sigma\bigr)=\lambda\bigl(\Lcal_\phi^\Sigma\bigr)+\lambda\bigl(\Lcal_\psi^\Sigma\bigr),
\end{equation}
where $\mathcal{L}_\phi^\Sigma$ and $\Lcal_\psi^\Sigma$ are $\Sigma$-imprimitive anticyclotomic Katz $p$-adic $L$-functions attached to $\phi$ and $\psi$, respectively. 

With equalities (\ref{eq:alg-inv}) and (\ref{eq:an-inv}) in hand, 
it follows easily that to prove  the equality of characteristic ideals in Conjecture~\ref{conj:BDP} it suffices to prove one of the predicted divisibilities in $\Lambda^{\rm ur}[\frac{1}{p}]$. In $\S\ref{IMCIII}$,
{by combining Howard's approach to proving Iwasawa-theoretic divisibilities \cite{howard} with a Kolyvagin system argument along the lines of 
Nekov\'a\v{r}'s \cite{nekovar} (but adapted for twists by infinite order characters and for obtaining a bound on the length of Tate--Shafarevich group and not just an annihilator),}
we prove the main result towards one of the divisibilities in Conjecture~\ref{conj:PR}: 
{${\rm char}_{\Lambda_{\rm ac}}(\mathcal{X}_{\rm tors})$ divides ${\rm char}_{\Lambda_{\rm ac}}\bigl(\mathcal{S}/\mathcal{H}\bigr)^2$ in 
$\Lambda_{\rm ac}[\frac{1}{T}]$, and even in 
$\Lambda_{\rm ac}$ assuming \eqref{eq:intro-sel}.} As already noted, hypotheses (\ref{eq:intro-spl}) and $\phi\vert_{G_p}\neq\mathds{1},\omega$ are not needed at this point. 
This yields a corresponding divisibility in (\ref{eq:BDP-imp}), from which the proof of Theorem~\ref{thm:C} follows easily. The details of the final argument, and the deduction of Corollary~\ref{cor:D}, are given in $\S\ref{sec:CD}$.
The additional hypothesis \eqref{eq:intro-sel} is required to circumvent the growth of the
`error term' in our 
Kolyvagin system arguments in the cases of twists by anticyclotomic characters 
$p$-adically close to the trivial character. {The arguments in $\S\ref{IMCIII}$ apply equally well to both the residually reducible and residually irreducible cases.}

Finally, the proofs of Theorems~\ref{thm:E} and \ref{thm:F} are given in $\S\ref{sec:EF}$, 
and they are both obtained as an application of Theorem~\ref{thm:C} for a suitably chosen $K$. In particular, the proof of Theorem~\ref{thm:F} requires knowing the $p$-part of the Birch--Swinnerton-Dyler formula in analytic rank $0$
for the quadratic twist $E^K$; this is deduced in Theorem~\ref{thmGV} from the results of Greenberg--Vatsal \cite{greenvats}, and this is responsible for the additional hypotheses on $\phi$ placed in Theorem~\ref{thm:F}. 

\subsection{Examples}

To illustrate Theorem~\ref{thm:F}, take $p=5$ and consider the elliptic curve
\begin{displaymath}
J: y^2+y=x^3+x^2-10x+10.
\end{displaymath} 
The curve $J$ has conductor $123$ and analytic rank $1$, and satisfies $J[5]^{ss}=\Z/5\Z\oplus\mu_p$ as $G_\Q$-modules ($J$ has a rational $5$-torsion point). If $\psi$ is an even quadratic character such that $\psi(5)=-1$, corresponding to a real quadratic field $\Q(\sqrt{c})$ in which $5$ is inert, then the twist $E=J_c$ of $J$ by $\psi$ satisfies the hypotheses of Theorem~\ref{thm:E} with $p=5$. Since the root number of $J$ is $-1$ (being of analytic rank one), by \cite[Thm.~B.2]{twist} we can find infinitely many $\psi$ as above for which the associated twist $E=J_c$ also has analytic rank one, and therefore for which Theorem~\ref{thm:F} applies. 

One can proceed similarly for $p=3$ (resp. $p=7$), taking real quadratic twists of, for example, the elliptic curve $y^2+y=x^3+x^2-7x+5$ of conductor 91 (resp. $y^2+xy+y=x^{3}-x^{2}-19353x+958713$ of conductor 574). For $p=13$ (resp. $p=37$), one can do the same, possibly choosing the quadratic character to be odd and/or imposing conditions at some bad primes depending on the character describing the kernel of the isogeny (which is not trivial in these cases) in order to apply \cite[Thm.~B.2]{twist}. One could consider, for example, twists of the elliptic curve $y^2+y=x^3-21x+40$ of conductor 441 (resp. $y^2+xy+y=x^3+x^2-8x+6$ of conductor 1225). 

We also note that, for each of the four primes primes above, $p=3,5,7,13$, there are infinitely many distinct $j$-invariants to which Theorem~\ref{thm:F} applies, as $X_0(p)$ has genus $0$ in these cases.

\subsection{Relation to previous works}

Results in the same vein as (\ref{eq:alg-inv}) and (\ref{eq:an-inv}) were first obtained by Greenberg--Vatsal \cite{greenvats} in the cyclotomic setting; combined with Kato's Euler system divisibility \cite{kato-euler-systems}, these results led to their proof of the cyclotomic Iwasawa main conjecture for rational elliptic curves at Eisenstein primes $p$ (under some hypotheses on the kernel of the associated rational $p$-isogeny). This paper might be seen as an extension of the Greenberg--Vatsal method for Eisenstein primes to the anticyclotomic setting.  
{However, for the anticyclotomic Selmer groups and $L$-functions considered in this paper we are able to avoid the possible variation within an isogeny class of elliptic curves of the $\mu$-invariants and periods, which must be dealt with in \cite{greenvats}. In large part this is because the periods
in the corresponding $p$-adic families are the CM periods of Hecke characters and not the periods of the elliptic curve.  Consequently, the methods are slightly more robust and the resulting applications somewhat more general. 
The $\mu$-invariants of anticyclotomic Selmer groups and $p$-adic $L$-functions were also studied in \cite{pollack2011anticyclotomic}, but for different Selmer conditions and hypotheses on $K$ (in fact, under the hypothesis \eqref{eq:intro-Heeg} the $p$-adic $L$-function in \cite{pollack2011anticyclotomic} vanishes identically and the Selmer group is not $\Lambda$-cotorsion).}

The ensuing applications {of Theorems \ref{thm:C} and Corollary \ref{cor:D}} to the $p$-converse of the Gross--Zagier--Kolyvagin theorem (Theorem~\ref{thm:E}) and the $p$-part of the Birch--Swinnerton-Dyer formula in analytic rank $1$ (Theorem~\ref{thm:F}) covers primes $p$ that were either left untouched by the recent works in these directions \cite{pCONVskinner,pCONVvenerucci,pBSDbert,wei-zhang,skinner-zhang,jsw,castella,pBSDbps} (where $p$ is assumed to be non-Eisenstein), or extending previous works \cite{tian,CLTZ,CCL} ($p=2$),  \cite{kriz-li} ($p=3$), \cite{CMpconverse} (CM cases). {Many of these results (especially \cite{pCONVskinner} and \cite{jsw}) also rely on progress toward 
 Conjecture \ref{conj:BDP} in the residually irreducible case. Such progress has generally come via Eisenstein congruences on higher rank unitary groups and has explicitly excluded the Eisenstein cases considered in this paper.}

\subsection{Weight two newforms} The methods and results of this paper should easily extend to cuspidal newforms of weight two and trivial character that are congruent to Eisenstein series at a prime above $p$. We have focused on the case of elliptic curves in the interest of not obscuring the main features of our argument with cumbersome notation. The general case will be addressed in later work that will also consider higher weight forms as well as Hilbert modular forms.

\subsection{Acknowledgements}
This paper has its origins in one of the projects proposed by F.C. and C.S. at the 2018 Arizona Winter School on Iwasawa theory, and we would like to thank the organizers for making possible a uniquely stimulating week during which G.G. and J.L. made very substantial progress on this project. 
{We also thank Ashay Burungale and the anonymous referees for many helpful comments and suggestions on an earlier draft of this paper.}
G.G. is grateful to Princeton University for the hospitality during a visit to F.C. and C.S. in February-March 2019. During the preparation of this paper, F.C. was partially supported by the NSF grant DMS-1946136 and DMS-2101458; G.G. was supported by the Engineering and Physical Sciences Research Council [EP/L015234/1], the EPSRC Centre for Doctoral Training in Geometry and Number Theory (The London School of Geometry and Number Theory), University College London;
C.S. was partially supported by the Simons Investigator Grant \#376203 from the Simons Foundation and by the NSF grant DMS-1901985.

\addtocontents{toc}{\protect\setcounter{tocdepth}{2}}
\addtocontents{lof}{\protect\setcounter{tocdepth}{2}}

\renewcommand{\thetheorem}{\arabic{section}.\arabic{subsection}.\arabic{theorem}}

\section{Algebraic side}\label{sec:algebraic}

In this section we prove Theorem~\ref{MAINalgside} below, relating the anticyclotomic Iwasawa invariants of an elliptic curve $E/\Q$ at a prime $p$ with $E[p]^{ss}=\mathbb{F}_p(\phi)\oplus\mathbb{F}(\psi)$ to the anticyclotomic Iwasawa invariants of the characters $\phi$ and $\psi$. 

Throughout, we fix a prime $p>2$ and an embedding $\iota_p:\overline{\Q}\hookrightarrow\overline{\Q}_p$, and let $K\subset\overline{\Q}$ be an imaginary quadratic field in which $p=v\bar{v}$ splits, with $v$ the prime of $K$ above $p$ induced by $\iota_p$.  We also fix an embedding $\iota_\infty:\overline{\Q}\hookrightarrow \C$.

Let $G_K = {\rm Gal}(\overline{\Q}/K)\subset G_\Q = {\rm Gal}(\overline{\Q}/\Q)$, and for each place $w$ of $K$ let $I_w\subset G_w\subset G_K$ be
corresponding inertia and decomposition groups. Let $\Frob_w\in G_w/I_w$ be the arithmetic Frobenius. For the prime $v\mid p$ we assume 
$G_v$ is chosen so that it is identified with ${\rm Gal}(\overline{\Q}_p/\Q_p)$ via $\iota_p$.

Let $\Gamma={\rm Gal}(K_\infty/K)$ be the Galois group of the anticyclotomic $\Z_p$-extension $K_\infty$ of $K$, and 
let $\Lambda=\Z_p\llbracket\Gamma\rrbracket$ be the anticyclotomic Iwasawa algebra. We shall often identify $\Lambda$ with the power series ring $\bZ_p\llbracket T\rrbracket$ by setting $T=\gamma-1$ for a fixed topological generator $\gamma\in\Gamma$.

\subsection{Local cohomology groups of characters}\label{localchar}

Let  $\theta:G_K\rightarrow\mathbb{F}_p^\times$  
be a character with conductor divisible only by primes that are split in $K$. Via the Teichm\"uller lift $\mathbb{F}_p^\times\hookrightarrow\Z_p^\times$, we shall also view $\theta$ as taking values in $\Z_p^\times$. Set
\[
M_\theta=\Z_p(\theta)\otimes_{\Z_p}\Lambda^\vee,
\]
where $(-)^\vee = {\rm Hom}_{\rm cts}(-,\bQ_p/\bZ_p)$ for topological $\Z_p$-modules. The module $M_\theta$ is equipped with a $G_K$-action via $\theta\otimes\Psi^{-1}$, where $\Psi:G_K\rightarrow\Lambda^\times$ is the character arising from the projection $G_K\twoheadrightarrow\Gamma$.

In this section, we study the local cohomology of $M_\theta$ at various primes $w$ of $K$. 

\subsubsection{$w\nmid p$ split in $K$}

Let $w$ be a prime of $K$ lying over a prime $\ell\neq p$ split in $K$, and let $\Gamma_w\subset \Gamma$ be the corresponding decomposition group. Let $\gamma_w\in\Gamma_w$ be 
the image of $\Frob_w$, and set
\begin{equation}\label{eq:Euler}
\mathcal{P}_w(\theta)=P_w(\ell^{-1}\gamma_w)
\in\Lambda,
\end{equation}
where $P_w={\rm det}(1-{\rm Frob}_wX\vert\bQ_p(\theta)_{I_w})$ is the Euler factor at $w$ of the $L$-function of $\theta$.
 
\begin{lemma}\label{lemmawneqp} 
The module $\rH^1(K_w,M_\theta)^\vee$ is $\Lambda$-torsion with
\[
{\rm char}_\Lambda(\rH^1(K_w,M_\theta)^\vee)=(\mathcal{P}_w(\theta)).
\]
In particular, $\rH^1(K_w,M_\theta)^\vee$ has $\mu$-invariant zero.
\end{lemma}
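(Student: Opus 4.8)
The plan is to compute $\rH^1(K_w, M_\theta)$ directly via local Euler characteristic and local duality considerations, exploiting that $w \nmid p$. First I would observe that since $w$ lies over $\ell \neq p$, the local Euler characteristic formula gives $\#\rH^0(K_w, M)\cdot\#\rH^2(K_w,M) = \#\rH^1(K_w,M)$ (up to the trivial Euler factor at $\ell$) for any finite $G_w$-module $M$; passing to the $\Lambda$-adic coefficients $M_\theta = \Z_p(\theta)\otimes_{\Z_p}\Lambda^\vee$ and dualizing, this translates into the statement that $\rH^1(K_w,M_\theta)^\vee$, $\rH^0(K_w,M_\theta)^\vee$, and $\rH^2(K_w,M_\theta)^\vee$ all have the same characteristic ideal behavior, and in particular $\rH^1$ is $\Lambda$-torsion once I check the $\rH^0$ and $\rH^2$ are. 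Concretely, I would show $\rH^2(K_w, M_\theta) = 0$: by local Tate duality $\rH^2(K_w, M_\theta)$ is dual to $\rH^0(K_w, \mathrm{Hom}(M_\theta, \mu_{p^\infty}))$, and since $\Lambda^\vee$ is a cofree $\Lambda$-module while $\Psi$ is ramified along $\Gamma_w$ (the anticyclotomic $\Z_p$-extension is ramified — actually for $w\nmid p$ it is unramified, so I must be careful here and instead argue that $\Gamma_w$ is infinite, hence $\Psi$ restricted to $G_w$ has infinite order, which forces the relevant invariants to vanish after a limiting argument).

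The cleanest route, which I would actually take, is: identify $\rH^1(K_w, M_\theta)$ using the inflation–restriction sequence along $I_w$. Since $\theta$ is unramified at $w$ (its conductor involves only split primes over rationals $\neq \ell$, or if $\ell \mid \mathrm{cond}(\theta)$ one handles the ramified case separately) and the cyclotomic character is unramified at $w\nmid p$, the module $M_\theta$ is unramified at $w$ except that $\Psi$ may be ramified — but $K_\infty/K$ is unramified at the prime $w\nmid p$ unless $w$ divides... actually $K_\infty/K$ is ramified only at primes above $p$, so $\Psi$ is unramified at $w$ too. Hence $I_w$ acts on $M_\theta$ through a finite quotient, and in fact $\rH^1(K_w, M_\theta) = \rH^1(G_w/I_w, M_\theta^{I_w}) \oplus \rH^1(I_w, M_\theta)^{G_w/I_w}$-type decomposition. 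Since $G_w/I_w \cong \widehat{\Z}$ has cohomological dimension $1$ with $\rH^1(\widehat{\Z}, A) = A_{\Frob_w}$ (coinvariants), I get $\rH^1(K_w, M_\theta)$ is (up to the tamely ramified part, which vanishes as $M_\theta$ is a pro-$p$ module and the wild inertia is pro-prime-to-$p$ while tame inertia contributes $\mu_{p}$-twists that cancel) identified with $(M_\theta^{I_w})_{\Frob_w} = (M_\theta)_{\Frob_w}$. Dualizing, $\rH^1(K_w, M_\theta)^\vee \cong (M_\theta^\vee)^{\Frob_w = 1} = \bigl(\Z_p(\theta^{-1}\omega) \otimes \Lambda\bigr)^{\Frob_w=1}$, where $\Frob_w$ acts by $\theta(\Frob_w)^{-1}\omega(\Frob_w)\cdot\gamma_w^{-1}$... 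I would then compute this kernel as $\Lambda / (1 - \theta(\Frob_w)\ell^{-1}\gamma_w)\Lambda$, wait — I need to match signs and the Euler factor $P_w(X) = \det(1 - \Frob_w X \mid \Q_p(\theta)_{I_w}) = 1 - \theta(\Frob_w)X$, so $\mathcal{P}_w(\theta) = 1 - \theta(\Frob_w)\ell^{-1}\gamma_w$, and the characteristic ideal of $\Lambda/\mathcal{P}_w(\theta)\Lambda$ is exactly $(\mathcal{P}_w(\theta))$. This gives the claimed formula, and since $\mathcal{P}_w(\theta)$ has a unit as its $p$-adic content (its constant-term reduction mod $\fm_\Lambda$ being $1 - \overline{\theta(\Frob_w)}\,\overline{\ell}^{-1} \in \mathbb{F}_p$, and even when this vanishes the coefficient of $\gamma_w$ is a unit), the $\mu$-invariant is zero.

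The main obstacle will be pinning down the cohomology of inertia correctly — specifically verifying that the contribution of $\rH^1(I_w, M_\theta)$ (tame ramification) vanishes or is absorbed, and handling the edge case where $\ell \mid \mathrm{cond}(\theta)$ so that $\theta$ itself is ramified at $w$ (then $\Q_p(\theta)_{I_w} = 0$, $P_w = 1$, $\mathcal{P}_w(\theta) = 1$, and one must show $\rH^1(K_w, M_\theta)^\vee$ is genuinely torsion with trivial characteristic ideal, i.e. finite — this follows because $M_\theta^{I_w}$ is then a proper $\Lambda$-submodule killed by... actually it requires showing $\rH^1(K_w,M_\theta)$ is finite, via the Euler characteristic being trivial and $\rH^0, \rH^2$ both finite). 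I expect the bulk of the argument to be a careful bookkeeping of these local terms; the structural input — local Tate duality, the local Euler characteristic formula, and the fact that $K_\infty/K$ is unramified outside $p$ — is entirely standard, and the $\mu = 0$ statement is an immediate consequence of $\mathcal{P}_w(\theta)$ not being divisible by $p$ in $\Lambda$.
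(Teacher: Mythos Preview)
Your route — unwind $\rH^1(K_w,M_\theta)$ via inflation--restriction along $I_w$ and read off the Euler factor — is exactly the computation underlying the paper's proof; the paper simply notes one class-field-theoretic fact and then cites \cite[Prop.~2.4]{greenvats}. But your plan has two concrete problems.

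First, you assert that $\Gamma_w$ is infinite without saying why. This is precisely the point where the hypothesis ``$\ell$ splits in $K$'' enters, and it is the one thing the paper makes explicit: by class field theory for the anticyclotomic $\Z_p$-extension, primes of $K$ lying over a split rational prime are finitely decomposed in $K_\infty/K$ (whereas inert primes split completely). Without this, $\gamma_w=1$, the element $\mathcal{P}_w(\theta)$ degenerates to a constant in $\Z_p$, and both the torsion claim and the $\mu=0$ claim can fail.

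Second, your identification $\rH^1(K_w,M_\theta)\simeq (M_\theta)_{\Frob_w}$ with the tame contribution ``cancelling'' is inverted. The unramified piece $\rH^1(G_w/I_w,M_\theta^{I_w})=M_\theta/(\Frob_w-1)M_\theta$ actually \emph{vanishes}: $\Frob_w-1$ acts on $M_\theta\cong\Lambda^\vee$ as multiplication by the nonzero element $\theta(\Frob_w)\gamma_w^{-1}-1\in\Lambda$, and multiplication by any nonzero $\lambda\in\Lambda$ is surjective on $\Lambda^\vee$ (its Pontryagin dual, multiplication by $\lambda$ on $\Lambda$, is injective). Hence the entire $\rH^1(K_w,M_\theta)$ comes from the tame-inertia term $\rH^1(I_w,M_\theta)^{\Frob_w}$, which you dismissed. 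It is the Frobenius-invariants on $M_\theta(-1)$ that, after dualizing, produce the cyclic $\Lambda$-module with characteristic ideal $(\mathcal{P}_w(\theta))$. Once you swap the roles of the two pieces (and keep track of the Tate twist from the tame character), the argument is the standard one and goes through; your endpoint and the $\mu=0$ deduction are correct, but the mechanism delivering $\mathcal{P}_w(\theta)$ is the piece you threw away.
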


\begin{proof} 
Since $\ell$ splits in $K$, it follows from class field theory that the index $[\Gamma:\Gamma_w]$ is finite (i.e., $w$ is finitely decomposed in $K_\infty/K$). Thus the argument proving \cite[Prop.~2.4]{greenvats} can be immediately adapted to yield this result.
\end{proof}

\subsubsection{$w\mid p$} Recall that we assume that $p=v\bar{v}$ splits in $K$. We begin by recording the following commutative algebra lemma, which shall also be used later in the paper.

\begin{lemma}\label{lem:commalg}
Let $X$ be a finitely generated $\Lambda$-module satisfying the following two properties: 
\begin{itemize}
\item $X[T]=0$, 
\item $X/TX$ is a free $\Z_p$-module of rank $r$. 
\end{itemize}
Then $X$ is a free $\Lambda$-module of rank $r$.
\end{lemma}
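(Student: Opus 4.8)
The plan is to prove that $X$ is free over $\Lambda = \Z_p\llbracket T\rrbracket$ by choosing a minimal set of generators and showing there can be no relations, using the two hypotheses to control both the rank and the torsion. First I would lift a $\Z_p$-basis $\bar{x}_1,\dots,\bar{x}_r$ of $X/TX$ to elements $x_1,\dots,x_r\in X$. By the topological Nakayama lemma (applicable since $X$ is finitely generated over the complete local ring $\Lambda$ with maximal ideal $\mathfrak{m}=(p,T)$, and since $X/\mathfrak{m}X$ is a quotient of $X/TX$), these $x_i$ generate $X$ as a $\Lambda$-module. This gives a surjection $\varphi:\Lambda^r\twoheadrightarrow X$; write $R=\ker\varphi$, so $0\to R\to\Lambda^r\to X\to 0$ is exact, and it remains to show $R=0$.

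\medskip

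The key step is to exploit the hypothesis $X[T]=0$. Applying the snake lemma to multiplication by $T$ on the short exact sequence, and using that $\Lambda^r$ is $T$-torsion-free while $X[T]=0$, I get an exact sequence
\[
0\longrightarrow R[T]\longrightarrow \Lambda^r[T]=0,
\]
so $R[T]=0$, i.e.\ $R$ is also $T$-torsion-free; and the cokernel part of the snake sequence reads
\[
0\longrightarrow R/TR\longrightarrow (\Lambda/T\Lambda)^r=\Z_p^r\longrightarrow X/TX\longrightarrow 0.
\]
But $X/TX$ is free of rank $r$ over $\Z_p$ by hypothesis, so the surjection $\Z_p^r\twoheadrightarrow X/TX$ of finitely generated $\Z_p$-modules of the same rank $r$ must be an isomorphism (its kernel is a finitely generated $\Z_p$-module of rank $0$ that injects into a torsion-free module, hence is $0$). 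Therefore $R/TR=0$.

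\medskip

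Now I finish by a second application of topological Nakayama: $R$ is a finitely generated $\Lambda$-module (being a submodule of $\Lambda^r$ over the Noetherian ring $\Lambda$) with $R/TR=0$, hence $R/\mathfrak{m}R=0$, hence $R=0$. Thus $\varphi$ is an isomorphism and $X\cong\Lambda^r$ is free of rank $r$, as claimed. The main obstacle — really the only place where care is needed — is justifying that the $x_i$ actually generate $X$ (not merely $X/TX$) and that $R/TR=0$ forces $R=0$: both are instances of Nakayama's lemma in the topological/complete-local setting, which applies because $\Lambda$ is Noetherian local with maximal ideal $(p,T)$ and all modules in sight are finitely generated; one must also verify that finite generation of $X$ over $\Z_p\llbracket T\rrbracket$ together with $X/TX$ finite rank over $\Z_p$ is consistent, which it is since the snake-lemma argument above never used more than finite generation of $X$.
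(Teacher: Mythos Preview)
Your proof is correct and follows essentially the same approach as the paper's: lift generators via Nakayama to get a surjection $\Lambda^r\twoheadrightarrow X$, use the snake lemma for multiplication by $T$ (together with $X[T]=0$) to conclude the kernel has trivial reduction mod $T$, and apply Nakayama again to kill the kernel. Your write-up is somewhat more detailed than the paper's (you spell out why the induced map $\Z_p^r\to X/TX$ is an isomorphism and why $R[T]=0$), but the argument is the same.
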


\begin{proof}
From Nakayama's lemma we obtain a surjection $\pi:\Lambda^r\twoheadrightarrow X$ which becomes an isomorphism $\bar{\pi}$ after reduction modulo $T$. Letting $K={\rm ker}(\pi)$, from the snake lemma we deduce the exact sequence
\[
0\rightarrow K/TK\rightarrow(\Lambda/T\Lambda)^r\xrightarrow{\bar{\pi}}X/TX\rightarrow 0.
\]
Thus $K/TK=0$, and so $K=0$ by another application of Nakayama's lemma. 
\end{proof}

Let $\omega:G_\Q\rightarrow \mathbb{F}_p^\times$ be the mod $p$ cyclotomic character.  
Let $w$ be a prime of $K$ above $p$.

\begin{prop}\label{propw=v}
Assume that $\theta\vert_{G_w}\neq\mathds{1},\omega$. Then:
\begin{enumerate}
\item[(i)] The restriction map
\[
r_w:\rH^1(K_w,M_\theta)\rightarrow\rH^1(I_w,M_\theta)^{G_w/I_w}
\]
is an isomorphism. 
\item[(ii)] $\rH^1(K_w,M_{\theta})$ is $\Lambda$-cofree of rank $1$.
\end{enumerate}
\end{prop}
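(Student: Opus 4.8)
The plan is to analyze the local Galois cohomology of $M_\theta = \Z_p(\theta) \otimes_{\Z_p} \Lambda^\vee$ at the prime $w \mid p$ using the hypothesis $\theta|_{G_w} \neq \mathds{1},\omega$, which forces the relevant invariants/coinvariants to vanish. First I would establish part (i). The inflation-restriction exact sequence gives
\[
0 \to \rH^1(G_w/I_w, M_\theta^{I_w}) \to \rH^1(K_w, M_\theta) \xrightarrow{r_w} \rH^1(I_w, M_\theta)^{G_w/I_w} \to \rH^2(G_w/I_w, M_\theta^{I_w}),
\]
so it suffices to show $\rH^i(G_w/I_w, M_\theta^{I_w}) = 0$ for $i = 1,2$. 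Here I would observe that $\theta$ has conductor divisible only by split primes, hence is unramified at $w \mid p$, so $I_w$ acts on $\Z_p(\theta)$ trivially and acts on $\Lambda^\vee$ through the projection $G_K \twoheadrightarrow \Gamma$; since $K_\infty/K$ is the anticyclotomic $\Z_p$-extension in which $p$ splits, $w$ is in fact totally ramified in $K_\infty/K$, so $I_w$ surjects onto $\Gamma$ and $M_\theta^{I_w} = \Z_p(\theta) \otimes (\Lambda^\vee)^\Gamma = \Z_p(\theta)$ (with trivial $\Gamma$-action remaining), on which $G_w/I_w \cong \widehat{\Z}$ acts via $\theta|_{G_w} = \theta(\Frob_w)$, a nontrivial element of $\Z_p^\times$ (nontrivial because $\theta|_{G_w}\neq\mathds 1$). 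For a procyclic group acting on a finite module with no fixed points, all cohomology vanishes, and here $\rH^1(\widehat\Z, \Z_p(\theta))$ and $\rH^2$ both vanish since $(\theta(\Frob_w)-1)$ is a unit — giving the isomorphism $r_w$.

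Next, for part (ii), I would apply Lemma~\ref{lem:commalg} to $X := \rH^1(K_w, M_\theta)^\vee$. Two things must be checked: that $X[T] = 0$ and that $X/TX$ is free of rank $1$ over $\Z_p$. Dualizing, $X[T] = 0$ is equivalent to $\rH^1(K_w, M_\theta)$ being $T$-divisible, i.e. $\rH^1(K_w, M_\theta)/T = 0$; and $X/TX = (\rH^1(K_w, M_\theta)[T])^\vee$. Using the exact sequence $0 \to \Lambda^\vee[T] \to \Lambda^\vee \xrightarrow{T} \Lambda^\vee \to 0$ tensored with $\Z_p(\theta)$ — noting $\Lambda^\vee[T] \cong \Q_p/\Z_p$ with trivial $\Gamma$-action, so $M_\theta[T] \cong \Z_p(\theta) \otimes \Q_p/\Z_p = (\Q_p/\Z_p)(\theta)$ and $M_\theta/T \cong$ the same — the long exact cohomology sequence reads
\[
\rH^0(K_w, M_\theta) \xrightarrow{T} \rH^0(K_w, M_\theta) \to \rH^1(K_w, (\Q_p/\Z_p)(\theta)) \to \rH^1(K_w, M_\theta) \xrightarrow{T} \rH^1(K_w, M_\theta) \to \rH^2(K_w, (\Q_p/\Z_p)(\theta)).
\]
Since $\theta|_{G_w} \neq \mathds{1}$, one has $\rH^0(K_w, (\Q_p/\Z_p)(\theta)) = 0$, and since $\theta|_{G_w} \neq \omega$, local duality gives $\rH^2(K_w, (\Q_p/\Z_p)(\theta)) \cong \rH^0(K_w, \Z_p(\omega\theta^{-1}))^\vee = 0$ as $\omega\theta^{-1}|_{G_w} \neq \mathds 1$. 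Hence $T$ is surjective on $\rH^1(K_w, M_\theta)$, giving $X[T] = 0$, and the kernel $\rH^1(K_w, M_\theta)[T] \cong \rH^1(K_w, (\Q_p/\Z_p)(\theta))$. It remains to identify the latter: the local Euler characteristic formula gives $\#\rH^1 = \#\rH^0 \cdot \#\rH^2 \cdot \#\rH^0(K_w,(\Q_p/\Z_p)(\theta))[p^n]^{[K_w:\Q_p]} = \cdots$; more cleanly, over $\Z_p$-coefficients $\rH^1(K_w, \Z_p(\theta))$ has $\Z_p$-rank $[K_w:\Q_p] = 1$ (local Euler characteristic, the $\rH^0$ and $\rH^2$ being finite), and is in fact free of rank $1$ since $\rH^0(K_w,\Z_p(\theta)) = 0$ kills torsion; passing to the colimit, $\rH^1(K_w, (\Q_p/\Z_p)(\theta)) \cong (\Q_p/\Z_p)$, so $X/TX \cong \Z_p$ is free of rank $1$. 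Lemma~\ref{lem:commalg} then yields that $X$ is free over $\Lambda$ of rank $1$, i.e. $\rH^1(K_w, M_\theta)$ is $\Lambda$-cofree of rank $1$, which is (ii).

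The main obstacle I anticipate is not any single deep input but rather keeping the module-theoretic bookkeeping correct: in particular, verifying that $w$ is totally ramified (not merely finitely decomposed) in the anticyclotomic $\Z_p$-extension when $p$ splits — this is what makes $M_\theta^{I_w}$ collapse to $\Z_p(\theta)$ and is essential for part (i) — and carefully tracking the interplay between the $T$-action (coming from $\Psi^{-1}$ on the $\Lambda^\vee$ factor) and the $G_w$-action when taking $[T]$- and $/T$-parts. One should also double-check that the two excluded cases $\theta|_{G_w} = \mathds 1$ and $\theta|_{G_w} = \omega$ are exactly the ones needed: $\mathds 1$ is excluded to kill $\rH^0$ (and to get $r_w$ injective and $X$ torsion-free), and $\omega$ is excluded to kill $\rH^2$ via local duality (ensuring $T$ acts surjectively, hence $X[T]=0$). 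Everything else — inflation-restriction, the long exact sequence for $0 \to M_\theta[T] \to M_\theta \to M_\theta/T \to 0$, the local Euler characteristic formula, and Lemma~\ref{lem:commalg} — is routine once these points are pinned down.
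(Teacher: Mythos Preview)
Your argument for part~(ii) matches the paper's: both apply Lemma~\ref{lem:commalg} to $X = \rH^1(K_w,M_\theta)^\vee$, use the sequence $0\to(\Q_p/\Z_p)(\theta)\to M_\theta\xrightarrow{T}M_\theta\to 0$, and invoke $\theta|_{G_w}\neq\mathds{1}$ and $\theta|_{G_w}\neq\omega$ to kill $\rH^0$ and $\rH^2$ respectively.

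For part~(i) your route differs from the paper's, and the concern you flag is a genuine gap. The paper does not compute $M_\theta^{I_w}$ at all. Instead: surjectivity of $r_w$ is immediate since $G_w/I_w\simeq\widehat{\Z}$ has $p$-cohomological dimension~$1$; for injectivity one has $\ker(r_w)\simeq M_\theta^{I_w}/(\Frob_w-1)M_\theta^{I_w}$, and the hypothesis $\theta|_{G_w}\neq\mathds{1}$ forces $M_\theta^{G_w}=0$, so $\Frob_w-1$ is injective on $M_\theta^{I_w}$. Dualizing, $\Frob_w-1$ becomes a \emph{surjective} endomorphism of $(M_\theta^\vee)_{I_w}$, a finitely generated $\Lambda$-module (it is a quotient of $M_\theta^\vee\simeq\Lambda$). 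By the Noetherian property a surjective endomorphism of such a module is an isomorphism, whence $\ker(r_w)=0$.

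Your explicit identification of $M_\theta^{I_w}$ rests on $I_w\twoheadrightarrow\Gamma$, and this can fail: when $p\mid h_K$ the anticyclotomic tower may have unramified initial layers (complex conjugation acts as $-1$ on $\mathrm{Cl}(K)$, so portions of the $p$-Hilbert class field can sit inside $K_\infty$), in which case the image of $I_w$ in $\Gamma$ is a proper subgroup $p^b\Gamma$ and $M_\theta^{I_w}$ has $\Z_p$-corank $p^b>1$. The paper is written to allow $p\mid h_K$ (cf.\ the proof of Theorem~\ref{thm:howard-HPKS}), so this is not a harmless simplification. You also implicitly use that $\theta$ is unramified at $w$, which holds in the intended application but is not among the stated hypotheses. (A minor slip: even in the totally ramified case $(\Lambda^\vee)^\Gamma\simeq\Q_p/\Z_p$, so $M_\theta^{I_w}\simeq(\Q_p/\Z_p)(\theta)$ rather than $\Z_p(\theta)$.) The paper's Noetherian trick cleanly sidesteps all of this.
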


\begin{proof} 
The map $r_w$ is clearly surjective, so it suffices to show injectivity. Since $G_w/I_w$ is pro-cyclic,
\[
{\rm ker}(r_w)\simeq M_\theta^{I_w}/({\rm Frob}_w-1)M_\theta^{I_w},
\]
where ${\rm Frob}_w$ is a Frobenius element at $w$. Taking Pontryagin duals to the exact sequence
\[
0\rightarrow M_\theta^{G_w}\rightarrow M_\theta^{I_w}\xrightarrow{{\rm Frob}_w-1}M_\theta^{I_w}\rightarrow M_\theta^{I_w}/({\rm Frob}_w-1)M_\theta^{I_w}\rightarrow 0
\]
and using the vanishing of $M_\theta^{G_w}$ (which follows from $\theta\vert_{G_w}\neq\mathds{1}$) we deduce a $\Lambda$-module surjection 
\begin{equation}\label{eq:surj}
(M_\theta^\vee)_{I_w}\twoheadrightarrow(M_\theta^\vee)_{I_w},
\end{equation}
hence an isomorphism (by the Noetherian property of $\Lambda$).
Since the kernel of (\ref{eq:surj}) is isomorphic to ${\rm ker}(r_w)^\vee$, (i)  follows. For (ii), in light of Lemma~\ref{lem:commalg}, letting 
\[
X:=\rH^1(K_w,M_\theta)^\vee,
\] 
it suffices to show that $X[T]=0$ and the quotient $X/TX$ is $\bZ_p$-free of rank $1$. Taking cohomology for the exact sequence $0\rightarrow\bQ_p/\bZ_p(\theta)\rightarrow M_{\theta}\xrightarrow{\times T}M_{\theta}\rightarrow 0$ we obtain 
\begin{equation}\label{eq:coh-char}
\frac{\rH^1(K_w,M_{\theta})}{T\rH^1(K_w,M_{\theta})}=0,\quad
\quad\rH^1(K_w,\Q_p/\Z_p(\theta))\simeq\rH^1(K_w,M_{\theta})[T],
\end{equation}
using that $\rH^2(K_w,M_\theta)=0$ (which follows from $\theta|_{G_w} \neq \omega$)
for the first isomorphism and 
$\rH^0(K_w,M_{\theta})=0$ for the second. The first isomorphism shows that $X[T]=0$. On the other hand, taking cohomology for the exact sequence $0\rightarrow\mathbb{F}_p(\theta)\rightarrow\bQ_p/\bZ_p(\theta)\xrightarrow{p}\bQ_p/\bZ_p(\theta)\rightarrow 0$ and using that $\theta\vert_{G_w}\neq\omega$ we obtain
\[
\frac{\rH^1(K_w,\bQ_p/\bZ_p(\theta))}{p\rH^1(K_w,\bQ_p/\bZ_p(\theta))}\simeq\rH^2(K_w,\mathbb{F}_p(\theta))=0,
\]
which together with the second isomorphism in (\ref{eq:coh-char}) shows that $X/TX\simeq\rH^1(K_w,\bQ_p/\bZ_p(\theta))^\vee$ is $\bZ_p$-free of rank $1$ (the value of the rank following from the local Euler characteristic formula), concluding the proof.
\end{proof}

\subsection{Selmer groups of characters}
\label{subsec:Sel-char}

As in the preceding section, let $\theta:G_K\rightarrow\mathbb{F}_p^\times$ be a character whose conductor is divisible only by primes split in $K$ (that is, which are unramified over $\Q$ and have degree one).

Let $\Sigma$ be a finite set of places of $K$ containing $\infty$ and the primes dividing $p$ or the conductor of $\theta$
and such that every finite place in $\Sigma$ is split in $K$, and denote by $K^\Sigma$ the maximal extension of $K$ unramified outside $\Sigma$. 

\begin{definition} 
The \emph{Selmer group} of $\theta$ is
\[
\rH^1_{\Fcal_{\rm Gr}}(K,M_\theta):=\ker\biggr\{\rH^1(K^\Sigma/K,M_\theta)\rightarrow\prod_{w \in \Sigma, w \nmid p}\rH^1(K_w,M_\theta)\times\rH^1(K_{\bar{v}},M_\theta)\biggr\},
\]
and letting $S=\Sigma\setminus\{v,\bar{v},\infty\}$, we define the \emph{$S$-imprimitive Selmer group} of $\theta$ by 
\[
\rH^1_{\Fcal_{\rm Gr}^S}(K,M_\theta):=\ker\biggr\{\rH^1(K^\Sigma/K,M_\theta)\rightarrow\rH^1(K_{\bar{v}},M_\theta)\biggr\}.
\]
Replacing $M_\theta$ by $M_\theta[p]$ in the above definitions, we obtain the \emph{residual Selmer group} $\rH^1_{\Fcal_{\rm Gr}}(K,M_\theta[p])$ and its $S$-imprimitive variant $\rH^1_{\Fcal_{\rm Gr}^S}(K,M_\theta[p])$.
\end{definition}

It is well-known that the above  groups are cofinitely generated over the corresponding Iwasawa algebra ($\Lambda$ and $\Lambda/p$), and that the Selmer group and residual Selmer groups 
are independent of the choice of the set $\Sigma$ as above. 


The following result, combining work of Rubin  
and Hida, will play a key role in our proofs.

\begin{theorem}[Rubin, Hida]\label{thmmu=0} 
Assume that $\theta\vert_{G_{\bar{v}}}\neq\mathds{1},\omega$. Then $\rH^1_{\Fcal_{\rm Gr}}(K,M_\theta)^\vee$ is a torsion $\Lambda$-module with $\mu$-invariant zero.
\end{theorem}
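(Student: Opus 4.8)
The plan is to deduce the theorem from two deep inputs: the Iwasawa Main Conjecture for the imaginary quadratic field $K$ (Rubin, \cite{rubinmainconj}), which identifies the characteristic ideal of the relevant Iwasawa module with an ideal generated by a Katz $p$-adic $L$-function, and Hida's theorem \cite{hidamu=0} on the vanishing of the $\mu$-invariant of (anticyclotomic) Katz $p$-adic $L$-functions; this is the anticyclotomic counterpart of the Greenberg--Vatsal argument, with Rubin and Hida replacing Kato and Ferrero--Washington. First I would reduce to the $S$-imprimitive Selmer group: by Poitou--Tate duality, $\rH^1_{\Fcal_{\rm Gr}}(K,M_\theta)^\vee$ and $\rH^1_{\Fcal_{\rm Gr}^S}(K,M_\theta)^\vee$ differ only by subquotients of the finitely many local cohomology groups $\rH^1(K_w,M_\theta)^\vee$ with $w\in S$, each of which is $\Lambda$-torsion with vanishing $\mu$-invariant by Lemma~\ref{lemmawneqp} (applicable because the conductor hypothesis on $\theta$ forces each such $w$ to be split in $K$), so it suffices to prove the statement for either group.

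The key step is to identify $\rH^1_{\Fcal_{\rm Gr}}(K,M_\theta)^\vee$, up to a pseudo-null discrepancy, with the $\theta$-isotypic anticyclotomic specialization of the classical Iwasawa module of abelian pro-$p$ extensions unramified outside $p$ that enters Rubin's Main Conjecture. After inducing along the finite abelian extension of $K$ cut out by $\theta$, the module $M_\theta=\Z_p(\theta)\otimes_{\Z_p}\Lambda^\vee$ becomes a twist of $\Lambda^\vee$, and the imposed local conditions --- relaxed at $v$, strict at $\bar v$, strict at the (split) bad primes --- match the one-sided $p$-ramification condition defining that module. The hypothesis $\theta\vert_{G_{\bar v}}\neq\mathds 1,\omega$ is precisely what forces the vanishing of the relevant local $\rH^0$ and $\rH^2$ groups (via Proposition~\ref{propw=v} and the $\rH^2$-vanishing used in its proof, now applied at $\bar v$), so that this comparison is exact enough not to affect torsionness or the $\mu$-invariant, and so that the global Euler characteristic computation gives the Selmer group its expected size.

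With this bridge in place, Rubin's theorem \cite{rubinmainconj} shows that $\mathrm{char}_\Lambda\bigl(\rH^1_{\Fcal_{\rm Gr}}(K,M_\theta)^\vee\bigr)$ is generated by the anticyclotomic Katz $p$-adic $L$-function $\mathcal L_\theta\in\Lambda^{\rm ur}$ attached to $\theta$ (a branch of the two-variable Katz $L$-function restricted to the anticyclotomic line). Since $\mathcal L_\theta\neq 0$ --- it $p$-adically interpolates nonzero classical Hecke $L$-values in Katz's range --- the module $\rH^1_{\Fcal_{\rm Gr}}(K,M_\theta)^\vee$ is $\Lambda$-torsion; and since $\mu(\mathcal L_\theta)=0$ by Hida's theorem \cite{hidamu=0} (the conductor condition on $\theta$ places us in its range of applicability), we conclude $\mu\bigl(\rH^1_{\Fcal_{\rm Gr}}(K,M_\theta)^\vee\bigr)=\mu(\mathcal L_\theta)=0$. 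I expect the main obstacle to be the identification in the second paragraph: matching the Greenberg Selmer group with the precise Iwasawa module occurring in Rubin's formulation, correctly tracking the twists, the induction, and the local conditions at $v$ and $\bar v$, and verifying --- using $\theta\vert_{G_{\bar v}}\neq\mathds 1,\omega$ and the local computations of \S\ref{localchar} --- that no discrepancy arises that could spoil either torsionness or the vanishing of $\mu$.
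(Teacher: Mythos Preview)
Your proposal is correct and follows essentially the same route as the paper: identify $\rH^1_{\Fcal_{\rm Gr}}(K,M_\theta)$ with the Pontryagin dual of the $\theta$-isotypic piece of the classical Iwasawa module $\mathcal{X}_\infty={\rm Gal}(\mathcal{M}_\infty/K_\infty K_\theta)$ (the maximal abelian pro-$p$ extension unramified outside $v$), then invoke Rubin's main conjecture and Hida's $\mu=0$ theorem. Two minor remarks: the reduction to the $S$-imprimitive group in your first paragraph is unnecessary (the paper works directly with the primitive Selmer group), and the identification you anticipate in your second paragraph is in fact \emph{exact}, not merely up to pseudo-null --- the paper uses Proposition~\ref{propw=v}(i) at $\bar v$ and \cite[Rem.~3.2]{pollack2011anticyclotomic} at the split primes $w\nmid p$ to show that the strict local conditions defining $\rH^1_{\Fcal_{\rm Gr}}$ coincide with the unramified ones, which is precisely the ``obstacle'' you flagged.
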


\begin{proof}
Let $K_\theta\subset\overline{\bQ}$ be the fixed field of ${\rm ker}(\theta)$, and set $\Delta_\theta={\rm Gal}(K_\theta/K)$. The restriction map
\[
\rH^1(K^\Sigma/K,M_\theta)\rightarrow\rH^1(K^\Sigma/K_\theta,M_\theta)^{\Delta_\theta}
\] 
is an isomorphism (since $p\nmid\vert\Delta_\theta\vert$), which combined with Shapiro's lemma gives rise to an identification
\begin{equation}\label{eq:CFT}
\rH^1(K^\Sigma/K_\theta,M_\theta)
\simeq{\rm Hom}_{\rm cts}((\mathcal{X}_\infty^\Sigma)^\theta,\bQ_p/\bZ_p),
\end{equation}
where $\mathcal{X}_\infty^\Sigma={\rm Gal}(\mathcal{M}_\infty^\Sigma/K_\infty K_\theta)$ is the Galois group of the maximal abelian pro-$p$ extension of $K_\infty K_\theta$ unramified outside $\Sigma$, and $(\mathcal{X}_\infty^\Sigma)^\theta$ is the $\theta$-isotypic component of $\mathcal{X}_\infty^\Sigma$ for the action of $\Delta_\theta$, identified as a subgroup of ${\rm Gal}(K_\infty K_\theta/K)$ via the decomposition ${\rm Gal}(K_\infty K_\theta/K)\simeq\Gamma\times\Delta_\theta$. 

Now, by \cite[Rem.~3.2]{pollack2011anticyclotomic} (since the primes $w\nmid p$ in $\Sigma$ are finitely decomposed in $K_\infty/K$) and Proposition~\ref{propw=v}(i), the Selmer group $\rH^1_{\Fcal_{\rm Gr}}(K,M_\theta)$ is the same as the one defined by the unramified local conditions, i.e., as
\[
\ker\biggr\{\rH^1(K^\Sigma/K,M_\theta)\rightarrow\prod_{w \in \Sigma, w \nmid p}\rH^1(I_w,M_\theta)^{G_v/I_v}\times\rH^1(I_{\bar{v}},M_\theta)^{G_{\bar{v}}/I_{\bar{v}}}\biggr\},
\]
and so under the identification $(\ref{eq:CFT})$ we obtain
\[
\rH^1_{\Fcal_{\rm Gr}}(K,M_\theta)\simeq{\rm Hom}_{\rm cts}(\mathcal{X}_\infty^\theta,\bQ_p/\bZ_p)
\] 
where $\mathcal{X}_\infty={\rm Gal}(\mathcal{M}_\infty/K_\infty K_\theta)$ is the Galois group of the maximal abelian pro-$p$ extension of $K_\infty K_\theta$ unramified outside $v$. Thus from the works of  Rubin \cite{rubinmainconj},
which identifies $\mathrm{char}_\Lambda(\rH^1_{\Fcal_{\rm Gr}}(K,M_\theta)^\vee)$ with the ideal generated by an anticyclotomic projection of a Katz $p$-adic $L$-function,  
and Hida \cite{hidamu=0}, proving the vanishing of the $\mu$-invariant of such anticyclotomic $p$-adic $L$-functions, we obtain the theorem.
\end{proof}

\begin{remark}
Following the notations introduced in the proof of Theorem~\ref{thmmu=0}, and letting $\mathcal{X}_\infty^{sp}={\rm Gal}(\mathcal{M}_\infty^{sp}/K_\infty K_\theta)$ be the Galois group of the maximal abelian pro-$p$ extension of $K_\infty K_\theta$ unramified outside $v$ and in which the primes above $\bar{v}$ split completely, Proposition~\ref{propw=v}(i) shows $(\mathcal{X}_\infty^\Sigma)^\theta=(\mathcal{X}_\infty^{sp})^\theta$. 
\end{remark}

The next two results will allow us to determine $\lambda(\mathfrak{X}_\theta^S)$ in terms of the residual Selmer group $\rH^1_{\Fcal_{\rm Gr}^S}(K,M_\theta[p])$. In brief, the fact that $\mathfrak{X}_\theta^S$ has no nonzero pseudo-null $\Lambda$-submodules (shown in Proposition~\ref{propchar} below) yields the equality $\lambda(\mathfrak{X}_\theta^S)=\mathrm{dim}_{\mathbb{F}_p}\bigl(\rH^{1}_{\Fcal_{\rm Gr}^S}(K,M_\theta)[p]\bigr)$, which combined with the next lemma yields the desired result.

\begin{lemma}\label{rmkchar}
Assume that $\theta\vert_{G_{\bar{v}}}\neq\mathds{1}$. Then
\[
\rH^1_{\Fcal_{\rm Gr}^S}(K,M_\theta[p])\simeq
\rH^1_{\Fcal_{\rm Gr}^S}(K,M_\theta)[p].
\] 
\end{lemma}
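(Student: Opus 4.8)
The plan is to compare the two Selmer groups term by term using the tautological exact sequence $0 \to M_\theta[p] \to M_\theta \xrightarrow{\times p} M_\theta \to 0$ of $G_K$-modules, which here is just $0 \to \mathbb{F}_p(\theta) \to \Q_p/\Z_p(\theta)\otimes\Lambda^\vee \to \Q_p/\Z_p(\theta)\otimes\Lambda^\vee \to 0$. Taking $G_K$-cohomology over $K^\Sigma/K$ and using that $\rH^0(K^\Sigma/K, M_\theta) = M_\theta^{G_K} = 0$ (which follows from $\theta \neq \mathds{1}$; more precisely one only needs $\theta|_{G_{\bar v}} \neq \mathds{1}$, but the global vanishing is automatic since $\theta$ is nontrivial as $\theta|_{G_{\bar v}}\neq\mathds 1$), one gets a short exact sequence
\[
0 \to \rH^1(K^\Sigma/K, M_\theta[p]) \to \rH^1(K^\Sigma/K, M_\theta) \xrightarrow{\times p} \rH^1(K^\Sigma/K, M_\theta),
\]
identifying $\rH^1(K^\Sigma/K, M_\theta[p])$ with $\rH^1(K^\Sigma/K, M_\theta)[p]$.

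Next I would run the same argument at the local place $\bar v$ cutting out the $S$-imprimitive condition. One has the analogous sequence in $\rH^1(K_{\bar v}, -)$, and the relevant point is that the natural map $\rH^1(K_{\bar v}, M_\theta[p]) \to \rH^1(K_{\bar v}, M_\theta)[p]$ is injective — equivalently $\rH^0(K_{\bar v}, M_\theta)$ is $p$-divisible, which again follows from $M_\theta^{G_{\bar v}} = 0$, i.e. from the hypothesis $\theta|_{G_{\bar v}} \neq \mathds{1}$. This is exactly the place where the hypothesis of the lemma is used.

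Finally I would assemble the comparison of kernels. Consider the commutative square whose rows are the restriction maps $\rH^1(K^\Sigma/K, -) \to \rH^1(K_{\bar v}, -)$ for $M_\theta[p]$ and for $M_\theta$, and whose left vertical is the identification $\rH^1(K^\Sigma/K,M_\theta[p]) \simeq \rH^1(K^\Sigma/K,M_\theta)[p]$ from the first paragraph and whose right vertical is the injection $\rH^1(K_{\bar v}, M_\theta[p]) \hookrightarrow \rH^1(K_{\bar v}, M_\theta)[p] \hookrightarrow \rH^1(K_{\bar v}, M_\theta)$ from the second. A class $x \in \rH^1(K^\Sigma/K, M_\theta)[p]$ lies in $\rH^1_{\Fcal_{\rm Gr}^S}(K, M_\theta)[p]$ iff its restriction to $\rH^1(K_{\bar v}, M_\theta)$ vanishes; by injectivity of the right vertical this holds iff the restriction of the corresponding class in $\rH^1(K^\Sigma/K, M_\theta[p])$ to $\rH^1(K_{\bar v}, M_\theta[p])$ vanishes, i.e. iff it lies in $\rH^1_{\Fcal_{\rm Gr}^S}(K, M_\theta[p])$. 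This gives the asserted isomorphism.

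The argument is essentially a diagram chase, so there is no serious obstacle; the only point requiring care is confirming the two vanishing statements $M_\theta^{G_K} = 0$ and $M_\theta^{G_{\bar v}} = 0$ that make the snake-lemma sequences behave as claimed, both of which are immediate from $\theta|_{G_{\bar v}} \neq \mathds{1}$ (note $M_\theta = \Z_p(\theta)\otimes_{\Z_p}\Lambda^\vee$ has the same $G_{\bar v}$-fixed points as $\Q_p/\Z_p(\theta)\otimes\Lambda^\vee$ since $\theta$ acts through a finite quotient of prime-to-$p$ order). One should also double-check that the definition of $\rH^1_{\Fcal_{\rm Gr}^S}$ for $M_\theta[p]$ uses the literal restriction map to $\rH^1(K_{\bar v}, M_\theta[p])$ and not to some modified local condition, so that the square above really commutes on the nose.
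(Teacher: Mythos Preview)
Your proposal is correct and follows essentially the same route as the paper: use the vanishing of $\rH^0(K,M_\theta)$ (from $\theta\neq\mathds{1}$) to identify $\rH^1(K^\Sigma/K,M_\theta[p])$ with $\rH^1(K^\Sigma/K,M_\theta)[p]$, then use the vanishing of $\rH^0(K_{\bar v},M_\theta)$ (from $\theta|_{G_{\bar v}}\neq\mathds{1}$) to see that $r_{\bar v}:\rH^1(K_{\bar v},M_\theta[p])\to\rH^1(K_{\bar v},M_\theta)[p]$ is injective (indeed an isomorphism), and conclude by the obvious diagram chase. The paper's proof is just a terser version of yours.
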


\begin{proof}
The hypothesis on $\theta$ implies in particular that $\rH^0(K,\mathbb{F}_p(\theta))=0$, and so $\rH^0(K,M_\theta)=0$. Thus the natural map
\[
\rH^1(K^\Sigma/K,M_\theta[p])\rightarrow\rH^1(K^\Sigma/K,M_\theta)[p]
\]
induced by multiplication by $p$ on $M_\theta$ is an isomorphism. To conclude it suffices to check that the natural map $r_{\bar{v}}:\rH^1(K_{\bar{v}},M_\theta[p])\rightarrow\rH^1(K_{\bar{v}},M_\theta)[p]$ is an injection, but since $\rH^0(K_{\bar{v}},\mathbb{F}_p(\theta))=0$ by the hypothesis, the same argument as above shows that $r_{\bar{v}}$ is an isomorphism.
\end{proof}

Let
\[
\mathfrak{X}_\theta^S:=\rH^1_{\Fcal_{\rm Gr}^S}(K,M_\theta)^\vee \ \ \text{and} \ \ 
\mathfrak{X}_\theta:=\rH^1_{\Fcal_{\rm Gr}}(K,M_\theta)^\vee,
\]
and recall the element $\mathcal{P}_w(\theta)\in\Lambda$ introduced in $(\ref{eq:Euler})$.

\begin{prop}\label{propchar} 
Assume that $\theta\vert_{G_{\bar{v}}}\neq\mathds{1},\omega$. Then
$\mathfrak{X}_{\theta}^{S}$ is a torsion $\Lambda$-module with $\mu$-invariant zero and 
its $\lambda$-invariant satisfies
\begin{displaymath}
\lambda\bigl(\mathfrak{X}_{\theta}^{S}\bigr)=\lambda\bigl(\mathfrak{X}_{\theta}\bigr)+\sum_{w\in\Sigma, w\nmid p}\lambda\bigl(\mathcal{P}_w(\theta)\bigr).
\end{displaymath} 
Moreover, $\rH^{1}_{\Fcal^{S}_{\rm Gr}}(K,M_\theta[p])$ is finite and 
\[
\mathrm{dim}_{\mathbb{F}_p}\bigl(\rH^{1}_{\Fcal_{\rm Gr}^S}(K,M_\theta[p])\bigr)=\lambda\bigl(\mathfrak{X}_\theta^S\bigr).
\]
\end{prop}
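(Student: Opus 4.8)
The proof combines Theorem~\ref{thmmu=0}, which gives that $\mathfrak{X}_\theta$ is torsion with $\mu=0$, the local computation in Lemma~\ref{lemmawneqp}, and the control theorem Lemma~\ref{rmkchar}. The first task is to relate $\mathfrak{X}_\theta^S$ to $\mathfrak{X}_\theta$ via the Euler factors at the bad primes. The defining sequence of Selmer groups gives an exact sequence
\[
0\rightarrow\rH^1_{\Fcal_{\rm Gr}}(K,M_\theta)\rightarrow\rH^1_{\Fcal_{\rm Gr}^S}(K,M_\theta)\rightarrow\prod_{w\in\Sigma,\,w\nmid p}\rH^1(K_w,M_\theta),
\]
and the plan is to show the last map is surjective. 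Surjectivity of the local-to-global map of this type is a Poitou--Tate / global Euler characteristic computation: the obstruction lies in a piece of $\rH^2(K^\Sigma/K,M_\theta)$ (equivalently, dualizes to a strict Selmer group for $M_\theta[p]^\ast$), and the hypothesis $\theta|_{G_{\bar v}}\neq\mathds{1},\omega$ (forcing vanishing of the relevant $\rH^0$'s and $\rH^2$'s locally at $\bar v$, cf.\ the proof of Proposition~\ref{propw=v}) together with the fact that $\mathfrak{X}_\theta$ is $\Lambda$-torsion of rank $0$ should kill it. Concretely, one can argue that $\rH^1_{\Fcal_{\rm Gr}}(K,M_\theta)^\vee$ being $\Lambda$-torsion forces the dual Selmer group to be $0$ (the Selmer group of the mirror is generically trivial), so the local map is onto. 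Granting this, taking Pontryagin duals and using Lemma~\ref{lemmawneqp} (each $\rH^1(K_w,M_\theta)^\vee$ is $\Lambda$-torsion with $\mu=0$ and $\lambda=\lambda(\mathcal{P}_w(\theta))$) yields that $\mathfrak{X}_\theta^S$ is $\Lambda$-torsion with $\mu(\mathfrak{X}_\theta^S)=\mu(\mathfrak{X}_\theta)+\sum_{w}\mu(\mathcal{P}_w(\theta))=0$ and $\lambda(\mathfrak{X}_\theta^S)=\lambda(\mathfrak{X}_\theta)+\sum_{w\nmid p}\lambda(\mathcal{P}_w(\theta))$, which is the displayed formula.

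For the second assertion, the point is that $\mu(\mathfrak{X}_\theta^S)=0$ together with $\Lambda$-torsion makes the control machinery work cleanly. Since $\mathfrak{X}_\theta^S$ is a torsion $\Lambda=\Z_p\llbracket T\rrbracket$-module with $\mu=0$, its characteristic ideal is generated by a distinguished polynomial of degree $\lambda:=\lambda(\mathfrak{X}_\theta^S)$, and such a module has finite $\mathfrak{X}_\theta^S/T\mathfrak{X}_\theta^S$ and finite $\mathfrak{X}_\theta^S[T]$ with
\[
\mathrm{length}_{\Z_p}\bigl(\mathfrak{X}_\theta^S/T\mathfrak{X}_\theta^S\bigr)-\mathrm{length}_{\Z_p}\bigl(\mathfrak{X}_\theta^S[T]\bigr)=\lambda;
\]
more relevantly, $\mathfrak{X}_\theta^S/(p,T)$ has $\mathbb{F}_p$-dimension exactly $\lambda$ provided $p$ is not a ``repeated factor'' obstruction — but with $\mu=0$ this is automatic after we identify $\mathfrak{X}_\theta^S/(p,T)$ with the residual object. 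The dual of Lemma~\ref{rmkchar} gives $\mathfrak{X}_\theta^S/p\,\mathfrak{X}_\theta^S\simeq\rH^1_{\Fcal_{\rm Gr}^S}(K,M_\theta[p])^\vee$, so $\rH^1_{\Fcal_{\rm Gr}^S}(K,M_\theta[p])$ is finite and
\[
\dim_{\mathbb{F}_p}\rH^1_{\Fcal_{\rm Gr}^S}(K,M_\theta[p])=\dim_{\mathbb{F}_p}\bigl(\mathfrak{X}_\theta^S/p\,\mathfrak{X}_\theta^S\bigr).
\]
It remains to see the right-hand side equals $\lambda(\mathfrak{X}_\theta^S)$. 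Write $\mathfrak{X}_\theta^S\sim\bigoplus_i\Lambda/(f_i)$ up to pseudo-isomorphism with $\prod_i f_i$ the characteristic polynomial, $\mu=0$ so each $f_i$ is a distinguished polynomial; one checks $\dim_{\mathbb{F}_p}(\Lambda/(p,f_i))=\deg f_i$ and that the pseudo-null kernel and cokernel of the pseudo-isomorphism, being finite $\Lambda$-modules, are annihilated by a power of $p$ and by a power of $T$ — so the issue is a genuine $p$-divisibility subtlety, and here one invokes that $\mathfrak{X}_\theta^S[p]$ must also be finite (again from Lemma~\ref{rmkchar} applied the other way, or from $\mathfrak{X}_\theta^S$ torsion with $\mu=0$) to conclude $\mathfrak{X}_\theta^S$ has no nonzero finite $\Lambda$-submodule in a way that forces $\dim_{\mathbb{F}_p}(\mathfrak{X}_\theta^S/p)=\lambda$.

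\textbf{Main obstacle.} The genuinely delicate point is the surjectivity of the global-to-local map defining $\mathfrak{X}_\theta^S$ from $\mathfrak{X}_\theta$, i.e.\ that relaxing at the bad split primes $w\nmid p$ really enlarges the Selmer group by exactly the full local cohomology, with no loss coming from a nontrivial dual Selmer group. This is where one must use that $\mathfrak{X}_\theta$ is $\Lambda$-\emph{torsion} (Theorem~\ref{thmmu=0}) to kill the Poitou--Tate obstruction term, and it is the step where the hypothesis $\theta|_{G_{\bar v}}\neq\mathds{1},\omega$ is again essential to control the local terms at $v,\bar v$. The secondary subtlety is the passage from $\dim_{\mathbb{F}_p}(\mathfrak{X}_\theta^S/p)$ to $\lambda(\mathfrak{X}_\theta^S)$: in general these differ by $2\,\dim_{\mathbb{F}_p}(\mathfrak{X}_\theta^S[p]/\text{stuff})$, so one must carefully use $\mu=0$ and the finiteness supplied by Lemma~\ref{rmkchar} to see there is no discrepancy. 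Everything else — the snake-lemma exact sequence, the bookkeeping of $\lambda$-invariants over the $\mathcal{P}_w(\theta)$, and the identification of $\mathfrak{X}_\theta^S/p$ with the residual dual — is routine.
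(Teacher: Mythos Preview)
Your first part is essentially right and matches the paper: the surjectivity of
\[
\rH^1_{\Fcal_{\rm Gr}^S}(K,M_\theta)\longrightarrow\prod_{w\in S}\rH^1(K_w,M_\theta)
\]
is exactly the Poitou--Tate input you describe, and the paper obtains it by citing \cite[Prop.~A.2]{pollack2011anticyclotomic} (using that $\mathfrak{X}_\theta$ is $\Lambda$-torsion and the Cartier dual has no $G_{K_\infty}$-invariants). The $\lambda$- and $\mu$-formulae then follow from Lemma~\ref{lemmawneqp} and Theorem~\ref{thmmu=0} as you say.

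There is, however, a genuine gap in your second part. You correctly identify that the crux is passing from $\dim_{\mathbb{F}_p}(\mathfrak{X}_\theta^S/p\mathfrak{X}_\theta^S)$ to $\lambda(\mathfrak{X}_\theta^S)$, and that this amounts to showing $\mathfrak{X}_\theta^S$ has no nonzero finite $\Lambda$-submodule (equivalently, once $\mu=0$, that $\mathfrak{X}_\theta^S$ is $\Z_p$-free). But your proposed justification --- that $\mathfrak{X}_\theta^S[p]$ is finite --- does not suffice: finiteness of $\mathfrak{X}_\theta^S[p]$ is automatic from $\mu=0$ (since $\mathfrak{X}_\theta^S$ is then finitely generated over $\Z_p$), and it says nothing about whether a finite $\Lambda$-submodule exists. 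For instance, $\Lambda/(T)\oplus\Lambda/(p,T)$ has $\mu=0$, $\lambda=1$, finite $p$-torsion, and yet a nonzero finite submodule, giving $\dim_{\mathbb{F}_p}(X/pX)=2\neq 1$. Neither Lemma~\ref{rmkchar} ``applied the other way'' nor the finiteness you mention closes this.

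The paper's argument here is substantive and different from what you sketch. It first shows $\rH^2(K^\Sigma/K,M_\theta)=0$ (via the Euler characteristic formula and the fact that this $\rH^2$ is $\Lambda$-cofree, since ${\rm Gal}(K^\Sigma/K)$ has cohomological dimension $2$). From the long exact sequence for $0\to\bQ_p/\bZ_p(\theta)\to M_\theta\xrightarrow{T}M_\theta\to 0$ it then follows that $\rH^1(K^\Sigma/K,M_\theta)^\vee$ has no nonzero pseudo-null $\Lambda$-submodule. The surjectivity already established gives
\[
\mathfrak{X}_\theta^S\;\simeq\;\rH^1(K^\Sigma/K,M_\theta)^\vee\big/\rH^1(K_{\bar v},M_\theta)^\vee,
\]
and since $\rH^1(K_{\bar v},M_\theta)^\vee$ is $\Lambda$-\emph{free} by Proposition~\ref{propw=v}(ii), one concludes by \cite[Lem.~2.6]{greenvats} that $\mathfrak{X}_\theta^S$ itself has no nonzero pseudo-null submodule. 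Combined with $\mu=0$, this forces $\mathfrak{X}_\theta^S$ to be $\Z_p$-free of rank $\lambda(\mathfrak{X}_\theta^S)$, and hence $\rH^1_{\Fcal_{\rm Gr}^S}(K,M_\theta)\simeq(\bQ_p/\bZ_p)^{\lambda(\mathfrak{X}_\theta^S)}$, which together with Lemma~\ref{rmkchar} gives the dimension formula. This cofreeness-at-$\bar v$ step (and the cohomological-dimension argument feeding into it) is the missing ingredient in your outline.
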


\begin{proof}
Since $\mathfrak{X}_{\theta}$ is $\Lambda$-torsion by Theorem~\ref{thmmu=0} and the Cartier dual ${\rm Hom}(\bQ_p/\bZ_p(\theta),\mu_{p^\infty})$ has no non-trivial $G_{K_\infty}$-invariants, from \cite[Prop.~A.2]{pollack2011anticyclotomic} we obtain that the restriction map in the definition of $\rH^{1}_{\Fcal_{\rm Gr}}(K,M_\theta)$ is surjective, and so the sequence
\begin{equation}\label{eq:sur1}
 0 \rightarrow \rH^{1}_{\Fcal_{\rm Gr}}(K,M_\theta) \rightarrow\rH^{1}(K^{\Sigma}/K, M_\theta) \rightarrow \prod_{w\in\Sigma, w\nmid p}\rH^1(K_w,M_\theta) \times \rH^{1}(K_{\bar{v}}, M_\theta)\rightarrow 0
\end{equation}
is exact. From the definitions, this readily yields the exact sequence 
\begin{equation}\label{eq:sur2}
0 \rightarrow\rH^{1}_{\Fcal_{\rm Gr}}(K,M_\theta) \rightarrow\rH^{1}_{\Fcal_{\rm Gr}^S}(K,M_\theta) \rightarrow \prod_{w\in S}\rH^1(K_w,M_\theta) \rightarrow 0,
\end{equation}
which combined with Theorem~\ref{thmmu=0} and Lemma~\ref{lemmawneqp}  gives the first part of the proposition.

For the second part, note that $\rH^{2}(K^{\Sigma}/K,M_\theta)=0$. (Indeed, by the Euler characteristic formula, the $\Lambda$-cotorsionness of $\rH^{1}_{\Fcal_{\rm Gr}}(K,M_\theta)$ implies that $\rH^{2}(K^{\Sigma}/K,M_\theta)$ is $\Lambda$-cotorsion; being $\Lambda$-cofree, as follows immediately from the fact that ${\rm Gal}(K^\Sigma/K)$ has cohomological dimension $2$, it must vanish.) Thus from the long exact sequence in cohomology induced by 
$0\rightarrow \bQ_p/\bZ_p(\theta)\rightarrow M_\theta\xrightarrow{\times T}M_\theta\rightarrow 0$ 
we obtain the isomorphism
\[
\frac{\rH^{1}(K^{\Sigma}/K,M_\theta)}{T\rH^{1}(K^{\Sigma}/K,M_\theta)} \simeq\rH^{2}(K^{\Sigma}/K,\bQ_p/\bZ_p(\theta)).
\]

Since $\rH^{2}(K^{\Sigma}/K,\bQ_p/\bZ_p(\theta))$ is $\bZ_p$-cofree (because ${\rm Gal}(K^\Sigma/K)$ has cohomological dimension $2$), it follows that $\rH^{1}(K^{\Sigma}/K,M_\theta)^\vee$ has no nonzero pseudo-null $\Lambda$-submodules (\emph{cf.} \cite[Prop.~5]{greenberg-iwasawa}), and since $(\ref{eq:sur1})$ and $(\ref{eq:sur2})$ readily imply that
\[
\mathfrak{X}_\theta^{S}\simeq\frac{\rH^1(K^\Sigma/K,M_\theta)^\vee}{\rH^1(K_{\bar{v}},M_\theta)^\vee}
\]
as $\Lambda$-modules, by Proposition~\ref{propw=v}(iii) and \cite[Lem.~2.6]{greenvats} we conclude that also $\mathfrak{X}_\theta^S$ has no nonzero pseudo-null $\Lambda$-submodules. Finally, since $\mathfrak{X}_\theta^S$ is $\Lambda$-torsion with $\mu$-invariant zero by Theorem~\ref{thmmu=0}, the finiteness of $\rH^{1}_{\Fcal^{S}_{\rm Gr}}(K,M_\theta)[p]$ (and therefore of $\rH^{1}_{\Fcal^{S}_{\rm Gr}}(K,M_\theta[p])$ by Lemma~\ref{rmkchar}) follows from the structure theorem. It also follows that 
$\rH^1_{\mathcal{F}_{\rm Gr}^S}(K,M_\theta)$ is divisible. In particular,
\[
\rH^1_{\mathcal{F}_{\rm Gr}^S}(K,M_\theta)\simeq(\bQ_p/\bZ_p)^{\lambda},
\]
where $\lambda=\lambda(\mathfrak{X}_\theta^S)$, which together with Lemma~\ref{rmkchar} gives the final formula for the $\lambda$-invariant.
\end{proof}

The following corollary will be used crucially in the next section.

\begin{cor}\label{corcharacters} 
Assume that $\theta\vert_{G_{\bar{v}}}\neq\mathds{1},\omega$. Then 
$\rH^{2}(K^{\Sigma}/K,M_\theta[p])=0$ and the sequence
\[
0\rightarrow\rH^1_{\Fcal_{\rm Gr}^S}(K,M_\theta[p])\rightarrow\rH^1(K^\Sigma/K,M_\theta[p])\rightarrow\rH^1(K_{\bar{v}},M_\theta[p])\rightarrow 0
\]
is exact.
\end{cor}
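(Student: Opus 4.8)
The plan is to deduce the corollary from the corresponding facts for $M_\theta$ itself by a snake-lemma argument. First I would record the exact sequence
\[
0 \longrightarrow \rH^1_{\Fcal_{\rm Gr}^S}(K,M_\theta) \longrightarrow \rH^1(K^\Sigma/K,M_\theta) \xrightarrow{\;\mathrm{res}_{\bar v}\;} \rH^1(K_{\bar v},M_\theta) \longrightarrow 0,
\]
which follows from $(\ref{eq:sur1})$ — valid under our hypothesis, as in the proof of Proposition~\ref{propchar} — by composing the surjection there with the projection onto the $\rH^1(K_{\bar v},M_\theta)$-factor: the composite is the restriction map $\mathrm{res}_{\bar v}$, it is surjective, and its kernel is $\rH^1_{\Fcal_{\rm Gr}^S}(K,M_\theta)$ directly from the definition of the $S$-imprimitive Selmer group.

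Next I would apply the snake lemma to multiplication by $p$ on this short exact sequence. Two inputs make most of the resulting six-term sequence collapse: by (the proof of) Proposition~\ref{propchar} the group $\rH^1_{\Fcal_{\rm Gr}^S}(K,M_\theta)\simeq(\bQ_p/\bZ_p)^{\lambda}$ is divisible, and by Proposition~\ref{propw=v}(ii) applied at $\bar v$ — legitimate since $\theta\vert_{G_{\bar v}}\neq\mathds{1},\omega$ — the group $\rH^1(K_{\bar v},M_\theta)$ is $\Lambda$-cofree, hence $p$-divisible. Thus the mod-$p$ quotients of the two outer terms vanish; this forces $\rH^1(K^\Sigma/K,M_\theta)/p=0$ as well (so $\rH^1(K^\Sigma/K,M_\theta)$ is $p$-divisible) and leaves the short exact sequence
\[
0 \longrightarrow \rH^1_{\Fcal_{\rm Gr}^S}(K,M_\theta)[p] \longrightarrow \rH^1(K^\Sigma/K,M_\theta)[p] \xrightarrow{\;\mathrm{res}_{\bar v}\;} \rH^1(K_{\bar v},M_\theta)[p] \longrightarrow 0.
\]

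To finish I would convert this into a statement about $M_\theta[p]$-cohomology using the long exact sequence for $0\to M_\theta[p]\to M_\theta\xrightarrow{\times p}M_\theta\to 0$. Since $\rH^0(K^\Sigma/K,M_\theta)=\rH^0(K_{\bar v},M_\theta)=0$ by the hypothesis on $\theta$ (as used in Lemma~\ref{rmkchar}), the natural maps $\rH^1(\,\cdot\,,M_\theta[p])\to\rH^1(\,\cdot\,,M_\theta)[p]$ are isomorphisms for both $K^\Sigma/K$ and $K_{\bar v}$; being functorial in the base, they fit into a commutative square with the two restriction maps and hence identify the displayed sequence of $p$-torsion subgroups with
\[
0 \longrightarrow \rH^1_{\Fcal_{\rm Gr}^S}(K,M_\theta[p]) \longrightarrow \rH^1(K^\Sigma/K,M_\theta[p]) \xrightarrow{\;\mathrm{res}_{\bar v}\;} \rH^1(K_{\bar v},M_\theta[p]) \longrightarrow 0,
\]
the left-hand identification being Lemma~\ref{rmkchar} and, equivalently, a consequence of the middle map being the restriction map (so its kernel is $\rH^1_{\Fcal_{\rm Gr}^S}(K,M_\theta[p])$ by definition). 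This is the claimed exact sequence. Finally, the same long exact sequence together with the vanishing of $\rH^2(K^\Sigma/K,M_\theta)$ (from the proof of Proposition~\ref{propchar}) and the $p$-divisibility of $\rH^1(K^\Sigma/K,M_\theta)$ just established gives $\rH^2(K^\Sigma/K,M_\theta[p])\simeq\rH^1(K^\Sigma/K,M_\theta)/p=0$.

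The real content of the argument is already contained in its inputs — the $\mu=0$ statement of Theorem~\ref{thmmu=0}, the divisibility and the vanishing of $\rH^2(K^\Sigma/K,M_\theta)$ from Proposition~\ref{propchar}, and the $\Lambda$-cofreeness of $\rH^1(K_{\bar v},M_\theta)$ from Proposition~\ref{propw=v}(ii) — and it is these that make the snake-lemma terms disappear; given them, no step is hard. The only points requiring genuine care, and hence the most likely place to slip, are (i) checking that $\mathrm{res}_{\bar v}$ on $\rH^1(K^\Sigma/K,M_\theta)$ is surjective with kernel exactly $\rH^1_{\Fcal_{\rm Gr}^S}(K,M_\theta)$, and (ii) the functoriality that turns the sequence of $p$-torsion subgroups into a sequence of $M_\theta[p]$-cohomology carrying the restriction maps, so that the final object is literally the Selmer group $\rH^1_{\Fcal_{\rm Gr}^S}(K,M_\theta[p])$ rather than something merely isomorphic to it.
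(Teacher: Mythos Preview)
Your proof is correct and follows essentially the same approach as the paper: deduce the short exact sequence for $M_\theta$ from $(\ref{eq:sur1})$, apply the snake lemma to multiplication by $p$ using the divisibility of $\rH^1_{\Fcal_{\rm Gr}^S}(K,M_\theta)$ and the $\Lambda$-cofreeness of $\rH^1(K_{\bar v},M_\theta)$, and then identify the resulting $p$-torsion sequence with the $M_\theta[p]$-sequence via Lemma~\ref{rmkchar}. The only difference is cosmetic ordering---the paper establishes $\rH^2(K^\Sigma/K,M_\theta[p])=0$ first and the exact sequence second, whereas you do the reverse---but the ingredients and the logic are identical.
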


\begin{proof} 
In the course of the proof of Proposition~\ref{propchar} we showed that $\rH^2(K^\Sigma/K,M_\theta)=0$, and so the cohomology long exact sequence induced by multiplication by $p$ on $M_\theta$ yields an isomorphism
\begin{equation}\label{eq:iso}
\frac{\rH^1(K^\Sigma/K,M_\theta)}{p\rH^1(K^\Sigma/K,M_\theta)}\simeq\rH^2(K^\Sigma/K,M_\theta[p]).
\end{equation}
On the other hand, from the exactness of $(\ref{eq:sur1})$  we deduce the exact sequence
\begin{equation}\label{eq:sur3}
0\rightarrow\rH^1_{\Fcal_{\rm Gr}^S}(K,M_\theta)\rightarrow\rH^1(K^\Sigma/K,M_\theta)\rightarrow\rH^1(K_{\bar{v}},M_\theta)\rightarrow 0.
\end{equation}
Since we also showed in that proof that $\rH^1_{\Fcal_{\rm Gr}^S}(K,M_\theta)$ is divisible, and $\rH^1(K_{\bar{v}},M_\theta)$ is $\Lambda$-cofree by Proposition~\ref{propw=v}(ii), it follows from $(\ref{eq:sur3})$ that $\rH^1(K^\Sigma/K,M_\theta)^\vee$ has no $p$-torsion, and so 
\[
\rH^2(K^\Sigma/K,M_\theta[p])=0
\] 
by $(\ref{eq:iso})$, giving the first claim in the statement.

For the second claim, consider the commutative diagram
\begin{center}
\begin{tikzcd}[column sep=2em, row sep=2em]
0 \arrow[r] & \rH^1_{\Fcal_{\rm Gr}^S}(K,M_\theta) \arrow[d, "p"] \arrow[r]  \arrow[r] & \rH^{1}(K^{\Sigma}/K, M_\theta) \arrow[r] \arrow[d, "p"]  \arrow[r] & \rH^{1}(K_{\bar{v}},M_\theta) \arrow[d, "p"] \arrow[r]  & 0 \\
0 \arrow[r] & \rH^1_{\Fcal_{\rm Gr}^S}(K,M_\theta) \arrow[r] \arrow[r] & \rH^{1}(K^\Sigma/K,M_\theta) \arrow[r]  & \rH^{1}(K_{\bar{v}},M_\theta) \arrow[r] & 0,
\end{tikzcd}
\end{center}
where the vertical maps are the natural ones induced by multiplication by $p$ on $M_\theta$. Since $\rH^1_{\Fcal_{\rm Gr}^S}(K,M_\theta)$ is divisible, the snake lemma applied to this diagram yields the exact sequence
\[
0\rightarrow\rH^1_{\Fcal_{\rm Gr}^S}(K,M_\theta)[p]\rightarrow\rH^1(K^\Sigma/K,M_\theta)[p]\rightarrow\rH^1(K_{\bar{v}},M_\theta)[p]\rightarrow 0,
\]
which by Lemma~\ref{rmkchar} (and the natural isomorphisms shown in its proof) is identified with the exact sequence in the statement.
\end{proof}

\subsection{Local cohomology groups of \texorpdfstring{$E$}{E}}
\label{subsec:local-E}

Now we let $E/\mathbb{Q}$ be an elliptic curve of conductor $N$ with good reduction at $p$ and admitting a rational $p$-isogeny. 
The $G_\bQ$-module $E[p]$ is therefore reducible, fitting into an exact sequence 
\begin{equation}\label{eq:es-mod}
0\rightarrow\mathbb{F}_p(\phi)\rightarrow E[p]\rightarrow\mathbb{F}_p(\psi)\rightarrow 0,
\end{equation}
where $\phi,\psi:G_\bQ\rightarrow\mathbb{F}_p^\times$ are characters such that $\phi\psi=\omega$ by the Weil pairing. We assume that every prime $\ell\vert N$ splits in $K$ and continue to assume that $p=v\bar{v}$ splits in $K$, so the results of the preceding sections can be applied to the restrictions of $\phi$ and $\psi$ to $G_K$.

Let $T=T_pE$ be the $p$-adic Tate module of $E$, and denote by $M_E$ the $G_K$-module
\[
M_E:=T\otimes_{\bZ_p}\Lambda^\vee,
\]
where the tensor product is endowed with the diagonal $G_K$-action (and the action on $\Lambda^\vee$ is via $\Psi^{-1}$, as before).

\begin{lemma}\label{lemma411}
Let $w$ be a prime of $K$ above $p$, and assume that $E(K_w)[p]=0$. Then $\rH^1(K_w,M_E)$ is $\Lambda$-cofree of rank $2$.
\end{lemma}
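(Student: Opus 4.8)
The plan is to deduce this exactly as in Proposition~\ref{propw=v}(ii), replacing the rank-$1$ module $M_\theta$ by the rank-$2$ module $M_E$ and using the hypothesis $E(K_w)[p]=0$ in place of $\theta|_{G_w}\neq\mathds{1},\omega$. Set $X:=\rH^1(K_w,M_E)^\vee$. By Lemma~\ref{lem:commalg} it suffices to show that $X[T]=0$ and that $X/TX$ is $\bZ_p$-free of rank $2$. The mechanism is the same: feed the short exact sequences
$0\rightarrow E[p^\infty](\theta\text{-free version})\rightarrow M_E\xrightarrow{\times T}M_E\rightarrow 0$,
i.e.\ $0\rightarrow T\otimes(\bQ_p/\bZ_p)\rightarrow M_E\xrightarrow{\times T}M_E\rightarrow 0$, into the long exact cohomology sequence over $K_w$, and likewise the multiplication-by-$p$ sequence $0\rightarrow E[p]\rightarrow T\otimes\bQ_p/\bZ_p\xrightarrow{p}T\otimes\bQ_p/\bZ_p\rightarrow 0$.

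First I would record the input vanishing statements. From $E(K_w)[p]=0$ we get $\rH^0(K_w,E[p])=0$, hence $\rH^0(K_w,T\otimes\bQ_p/\bZ_p)=0$ and $\rH^0(K_w,M_E)=0$. Dually, the Cartier dual of $E[p]$ is $E[p]$ twisted by the cyclotomic character (self-dual up to twist via the Weil pairing), and one checks $\rH^0(K_w,E[p]^{\vee}(1))=\rH^2(K_w,E[p])^{\vee}=0$ as well — here I would use that $E[p]^{\vee}(1)\cong E[p]$ as $G_{\bQ}$-modules (Weil pairing) so its $K_w$-invariants again inject into $E(K_w)[p]=0$; this gives $\rH^2(K_w,E[p])=0$, and then $\rH^2(K_w,T\otimes\bQ_p/\bZ_p)=0$ and $\rH^2(K_w,M_E)=0$ by passing to the limit/divisible quotient. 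With $\rH^2(K_w,M_E)=0$, the $\times T$ sequence gives $\rH^1(K_w,M_E)/T\rH^1(K_w,M_E)=0$, i.e.\ $X[T]=0$, and the connecting map identifies $\rH^1(K_w,T\otimes\bQ_p/\bZ_p)\xrightarrow{\sim}\rH^1(K_w,M_E)[T]$ using $\rH^0(K_w,M_E)=0$.

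Next, the mod-$p$ sequence together with $\rH^2(K_w,E[p])=0$ gives
$\rH^1(K_w,T\otimes\bQ_p/\bZ_p)/p\cdot\rH^1(K_w,T\otimes\bQ_p/\bZ_p)\simeq\rH^2(K_w,E[p])=0$,
so $\rH^1(K_w,T\otimes\bQ_p/\bZ_p)$ is $p$-divisible; being also cofinitely generated it is $\bZ_p$-cofree, and combined with the identification $X/TX\simeq\rH^1(K_w,T\otimes\bQ_p/\bZ_p)^{\vee}$ this shows $X/TX$ is $\bZ_p$-free. Its rank is computed by the local Euler characteristic formula: for the finite module $E[p]$ over the local field $K_w$ with $w\mid p$ one has $\#\rH^1/(\#\rH^0\cdot\#\rH^2)=\#(E[p])^{[K_w:\Q_p]}=p^{2[K_w:\Q_p]}$, and since $w$ is split (so $[K_w:\Q_p]=1$) and $\rH^0=\rH^2=0$ we get $\dim_{\mathbb{F}_p}\rH^1(K_w,E[p])=2$, hence $\bZ_p$-corank $2$ for $\rH^1(K_w,T\otimes\bQ_p/\bZ_p)$, hence $\bZ_p$-rank $2$ for $X/TX$. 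Lemma~\ref{lem:commalg} then yields that $X$ is $\Lambda$-free of rank $2$, i.e.\ $\rH^1(K_w,M_E)$ is $\Lambda$-cofree of rank $2$.

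The only genuine subtlety — the step I would be most careful about — is the vanishing $\rH^2(K_w,M_E)=0$, equivalently $\rH^0(K_w,E[p]^{\vee}(1))=0$, since for generic $E[p]$ this requires the Weil-pairing self-duality and the hypothesis $E(K_w)[p]=0$ (after twisting back), rather than a two-character condition as in Proposition~\ref{propw=v}. Everything else is a direct transcription of the rank-$1$ argument, with all Iwasawa-theoretic bookkeeping (cofreeness, control via $T$, reduction mod $p$) going through verbatim once that vanishing is in place.
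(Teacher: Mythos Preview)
Your proposal is correct and follows essentially the same approach as the paper's proof: reduce to Lemma~\ref{lem:commalg} by showing $X[T]=0$ and $X/TX$ is $\bZ_p$-free of rank $2$, using the $\times T$ and $\times p$ exact sequences together with the vanishing of $\rH^0$ and $\rH^2$ forced by $E(K_w)[p]=0$ and local duality (via the Weil pairing). The paper is slightly terser---it invokes local duality directly for $\rH^2(K_w,E[p^\infty])=0$ rather than first treating $E[p]$---but the argument is the same.
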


\begin{proof} 
The proof is virtually the same as the proof of Proposition~\ref{propw=v}(ii). Letting $X:=\rH^1(K_w,M_E)^\vee$, by Lemma~\ref{lem:commalg} it suffices to show that $X[T]=0$ and $X/TX$ is $\bZ_p$-free of rank $2$. The hypotheses imply that $E(K_w)[p^\infty]=0$, and so $\rH^2(K_w,E[p^\infty])=0$ by local duality. Taking cohomology for the exact sequence 
\[
0\rightarrow E[p^\infty]\rightarrow M_E\xrightarrow{\times T}M_E\rightarrow 0
\] 
it follows that
\begin{equation}\label{eq:coh}
\frac{\rH^1(K_w,M_E)}{T\rH^1(K_w,M_E)}\simeq\rH^2(K_w,E[p^\infty])=0,\quad
\quad\rH^1(K_w,E[p^\infty])\simeq\rH^1(K_w,M_E)[T].
\end{equation}

The first isomorphism shows that $X[T]=0$. On the other hand, taking cohomology for the exact sequence $0\rightarrow E[p]\rightarrow E[p^\infty]\xrightarrow{p}E[p^\infty]\rightarrow 0$ we obtain
\[
\frac{\rH^1(K_w,E[p^\infty])}{p\rH^1(K_w,E[p^\infty])}\simeq\rH^2(K_w,E[p])=0,
\]
which together with the second isomorphism in (\ref{eq:coh}) shows that $X/TX\simeq\rH^1(K_w,E[p^\infty])^\vee$ is $\bZ_p$-free. That its rank is $2$ follows from the local Euler characteristic formula.
\end{proof}

\subsection{Selmer groups of \texorpdfstring{$E$}{E}}
\label{subsec:Sel-E}

Fix a finite set $\Sigma$ of places of $K$ containing $\infty$ and the primes above $Np$, and such that the finite places in $\Sigma$ are all split in $K$. 

Similarly as in $\S\ref{subsec:Sel-char}$, we define a Selmer group for $E$ by  
\[
\rH^1_{\Fcal_{\rm Gr}}(K,M_E):=\ker\biggr\{\rH^1(K^\Sigma/K,M_E)\rightarrow\prod_{w \in \Sigma, w \nmid p}\rH^1(K_w,M_E)\times\rH^1(K_{\bar{v}},M_E)\biggr\},
\]
and an 
{$S$-imprimitive Selmer group}, where $S=\Sigma\setminus\{v,\bar{v},\infty\}$, by 
\[
\rH^1_{\Fcal_{\rm Gr}^S}(K,M_E):=\ker\biggr\{\rH^1(K^\Sigma/K,M_E)\rightarrow\rH^1(K_{\bar{v}},M_E)\biggr\}.
\]
The residual Selmer groups $\rH^1_{\Fcal_{\rm Gr}}(K,M_E[p])$ and $\rH^1_{\Fcal_{\rm Gr}^S}(K,M_E[p])$ are defined in the same manner.

Viewing the characters $\phi$ and $\psi$ appearing in the exact sequence $(\ref{eq:es-mod})$ as taking values in $\Z_p^\times$ via the Teichm\"uller lift, we obtain an exact sequence 
\begin{equation}\label{eq:esIw}
0 \rightarrow M_\phi[p] \rightarrow M_E[p] \rightarrow M_\psi[p] \rightarrow 0
\end{equation}
of $\mathrm{Gal}(K^{\Sigma}/K)$-modules. Let $G_p\subset G_\Q$ be a decomposition group at $p$.

\begin{prop}\label{cormodp} 
Assume that $\phi\vert_{G_{p}}\neq\mathds{1},\omega$. Then 
$(\ref{eq:esIw})$ induces a natural exact sequence 
\[
0 \rightarrow \rH^1_{\Fcal_{\rm Gr}^S}(K,M_\phi[p]) \rightarrow \rH^1_{\Fcal_{\rm Gr}^S}(K,M_E[p]) \rightarrow 
\rH^1_{\Fcal_{\rm Gr}^S}(K,M_\psi[p]) \rightarrow 0.
\]
In particular, $\rH^1_{\Fcal_{\rm Gr}^S}(K,M_E[p])$ is finite, and 
\[
\mathrm{dim}_{\mathbb{F}_p}\bigl(\rH^1_{\Fcal_{\rm Gr}^S}(K,M_E[p])\bigr)=\lambda(\mathfrak{X}_\phi^S)+\lambda(\mathfrak{X}_{\psi}^S).
\]
\end{prop}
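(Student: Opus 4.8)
The plan is to deduce everything from the short exact sequence $(\ref{eq:esIw})$ of $\mathrm{Gal}(K^\Sigma/K)$-modules together with Corollary~\ref{corcharacters} applied to each of $\theta=\phi$ and $\theta=\psi$ (both of which have $\theta|_{G_{\bar v}}\neq\mathds 1,\omega$, since $\psi=\omega\phi^{-1}$ and $\phi|_{G_p}\neq\mathds 1,\omega$ forces $\psi|_{G_p}\neq\mathds 1,\omega$ as well, noting $G_{\bar v}$ is identified with a decomposition group at $p$). First I would take Galois cohomology of $(\ref{eq:esIw})$ over $K^\Sigma/K$. Using that $\rH^0(K,M_\psi[p])=0$ (from $\psi|_{G_K}\neq\mathds 1$) the connecting map kills $\rH^0$, and using $\rH^2(K^\Sigma/K,M_\phi[p])=0$ from Corollary~\ref{corcharacters}, I get a short exact sequence
\[
0\rightarrow \rH^1(K^\Sigma/K,M_\phi[p])\rightarrow \rH^1(K^\Sigma/K,M_E[p])\rightarrow \rH^1(K^\Sigma/K,M_\psi[p])\rightarrow 0,
\]
and moreover the vanishing $\rH^2(K^\Sigma/K,M_E[p])=0$ from the same long exact sequence (squeezed between the two vanishing $\rH^2$'s of the characters).

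Next I would do the analogous thing at the local place $\bar v$: take cohomology of $(\ref{eq:esIw})$ restricted to $G_{\bar v}$. Again $\rH^0(K_{\bar v},M_\psi[p])=0$, so I obtain an exact sequence
\[
0\rightarrow \rH^1(K_{\bar v},M_\phi[p])\rightarrow \rH^1(K_{\bar v},M_E[p])\rightarrow \rH^1(K_{\bar v},M_\psi[p])\rightarrow \rH^2(K_{\bar v},M_\phi[p]).
\]
Here the point is that the last term vanishes: $\rH^2(K_{\bar v},M_\phi[p])$ is dual to $\rH^0(K_{\bar v},\mathrm{Hom}(M_\phi[p],\mu_p))=\rH^0(K_{\bar v},(\Lambda/p)(\omega\phi^{-1}))$ by local Tate duality, and this vanishes because $\phi|_{G_{\bar v}}\neq\omega$. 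So I get a short exact sequence of the local $\rH^1(K_{\bar v},-)$'s compatible (via the restriction maps) with the global one.

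Then I assemble these into a commutative diagram with exact rows — global $\rH^1$ on top, local $\rH^1(K_{\bar v},-)$ on the bottom, and the three vertical restriction maps defining the $S$-imprimitive Selmer groups — and apply the snake lemma. The key input making this work cleanly is Corollary~\ref{corcharacters}: for each of $\theta=\phi,\psi$ the restriction map $\rH^1(K^\Sigma/K,M_\theta[p])\to\rH^1(K_{\bar v},M_\theta[p])$ is \emph{surjective} with kernel exactly $\rH^1_{\Fcal_{\rm Gr}^S}(K,M_\theta[p])$. So the snake lemma gives
\[
0\rightarrow \rH^1_{\Fcal_{\rm Gr}^S}(K,M_\phi[p])\rightarrow \rH^1_{\Fcal_{\rm Gr}^S}(K,M_E[p])\rightarrow \rH^1_{\Fcal_{\rm Gr}^S}(K,M_\psi[p])\rightarrow 0,
\]
the cokernel term vanishing because both outer restriction maps are onto (equivalently, $\rH^2(K^\Sigma/K,M_E[p])=0$ already shows the middle restriction is onto, but surjectivity of the outer two is what's needed for exactness on the right of the snake sequence). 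The finiteness of $\rH^1_{\Fcal_{\rm Gr}^S}(K,M_E[p])$ is then immediate from finiteness of the two outer terms (Proposition~\ref{propchar}), and the dimension count follows by additivity of $\dim_{\mathbb F_p}$ in short exact sequences together with the equalities $\dim_{\mathbb F_p}\rH^1_{\Fcal_{\rm Gr}^S}(K,M_\theta[p])=\lambda(\mathfrak X_\theta^S)$ from Proposition~\ref{propchar}.

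The main obstacle — really the only subtle point — is verifying that the maps in $(\ref{eq:esIw})$ genuinely induce maps on the \emph{Selmer} subgroups, i.e. that the restriction-at-$\bar v$ local conditions are compatible across the sequence; this is exactly the commutativity of the diagram to which I apply the snake lemma, and it is built from naturality of restriction in Galois cohomology, so it is formal once the ambient short exact sequences of global and local $\rH^1$'s are in place. A secondary thing to be careful about is the claim $\rH^2(K^\Sigma/K,M_E[p])=0$: one could derive it directly as I indicated by sandwiching in the long exact sequence of $(\ref{eq:esIw})$, but it is worth double-checking that the relevant $\rH^2$'s of $M_\phi[p]$ and $M_\psi[p]$ both vanish, which is precisely the content of the first assertion of Corollary~\ref{corcharacters} applied twice. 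Everything else is bookkeeping with exact sequences.
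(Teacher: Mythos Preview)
Your proposal is correct and follows essentially the same approach as the paper: build a commutative diagram with the global $\rH^1(K^\Sigma/K,-)$ sequence on top and the local $\rH^1(K_{\bar v},-)$ sequence on the bottom, then apply the snake lemma using the surjectivity of the left vertical map from Corollary~\ref{corcharacters}. You are somewhat more explicit than the paper about why the rows are short exact (in particular, your local Tate duality argument for $\rH^2(K_{\bar v},M_\phi[p])=0$ spells out what the paper leaves implicit in ``the hypothesis on $\phi$''), but the structure of the argument is the same.
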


\begin{proof} 
Taking cohomology for the exact sequence $(\ref{eq:esIw})$ we obtain the commutative diagram 
\begin{center}
\begin{tikzcd}[column sep=2em, row sep=2em]
0 \arrow[r] & \rH^{1}(K^{\Sigma}/K,M_\phi[p]) \arrow[d] \arrow[r] & \rH^{1}(K^{\Sigma}/K, M_E[p]) \arrow[r] \arrow[d]  & \rH^{1}(K^{\Sigma}/K,M_\psi[p]) \arrow[d]  \arrow[r] & 0 \\
0 \arrow[r] & \rH^{1}(K_{\bar{v}},M_\phi[p]) \arrow[r] & \rH^{1}(K_{\bar{v}},M_E[p]) \arrow[r]  & \rH^{1}(K_{\bar{v}},M_\psi[p])
\arrow[r] & 0,
\end{tikzcd}
\end{center}
where the exactness of the rows follows immediately from Corollary~\ref{corcharacters} and the hypothesis on $\phi$
(which implies that $\psi|_{G_p}\neq\mathds{1},\omega$ as well), 
and the vertical maps are given by restriction. Since the left vertical arrow is surjective by Corollary~\ref{corcharacters}, the snake lemma applied to this diagram yields the exact sequence in the statement. The last claim now follows from the last claim of Proposition~\ref{propchar}.
\end{proof}

Now we can relate the imprimitive residual and $p^\infty$-Selmer groups. Set
\[ 
\mathfrak{X}_{E}^S:=\rH^1_{\Fcal_{\rm Gr}^S}(K,M_E)^{\vee}, \quad \mathfrak{X}_{E}:=\rH^1_{\Fcal_{\rm Gr}}(K,M_E)^{\vee}.
\]

\begin{prop}\label{propmodp}
Assume that $\phi\vert_{G_{p}}\neq\mathds{1},\omega$. Then 
\[
\rH^1_{\Fcal_{\rm Gr}^S}(K,M_E[p]) \simeq \rH^1_{\Fcal_{\rm Gr}^S}(K,M_E)[p]. 
\]
Moreover, the modules $\mathfrak{X}_E^S$ and $\mathfrak{X}_E$ are both $\Lambda$-torsion with $\mu=0$.
\end{prop}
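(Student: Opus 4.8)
The plan is to deduce the proposition from the character-level results of $\S\ref{subsec:Sel-char}$, using the short exact sequence $(\ref{eq:esIw})$ together with its $p$-adic counterpart, and the commutative-algebra Lemma~\ref{lem:commalg}. First I would establish the isomorphism $\rH^1_{\Fcal_{\rm Gr}^S}(K,M_E[p])\simeq\rH^1_{\Fcal_{\rm Gr}^S}(K,M_E)[p]$ exactly as in the proof of Lemma~\ref{rmkchar}: the hypothesis $\phi\vert_{G_p}\neq\mathds{1}$ (hence also $\psi\vert_{G_p}=\omega\phi^{-1}\vert_{G_p}\neq\mathds{1}$ since $\phi\vert_{G_p}\neq\omega$) forces $\rH^0(K,E[p])=0$ by the exact sequence $(\ref{eq:esIw})$ and the vanishing of $\rH^0$ of both characters, so that multiplication by $p$ on $M_E$ induces an isomorphism $\rH^1(K^\Sigma/K,M_E[p])\xrightarrow{\sim}\rH^1(K^\Sigma/K,M_E)[p]$; likewise $\rH^0(K_{\bar v},E[p])=0$ gives $\rH^1(K_{\bar v},M_E[p])\xrightarrow{\sim}\rH^1(K_{\bar v},M_E)[p]$, and intersecting with the defining kernels yields the claimed identification of $S$-imprimitive Selmer groups.

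Next I would prove $\Lambda$-torsionness with $\mu=0$ for $\mathfrak{X}_E^S$. By Proposition~\ref{cormodp}, $\rH^1_{\Fcal_{\rm Gr}^S}(K,M_E[p])$ is \emph{finite} of $\mathbb{F}_p$-dimension $\lambda(\mathfrak{X}_\phi^S)+\lambda(\mathfrak{X}_\psi^S)$; by the isomorphism just established this says $\mathfrak{X}_E^S/p\mathfrak{X}_E^S$, equivalently $(\mathfrak{X}_E^S)[p]^\vee$ — more precisely $\bigl(\rH^1_{\Fcal_{\rm Gr}^S}(K,M_E)[p]\bigr)^\vee = \mathfrak{X}_E^S/p$ — is finite. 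A finitely generated $\Lambda$-module $\mathfrak{X}$ with $\mathfrak{X}/p\mathfrak{X}$ finite is $\Lambda$-torsion with $\mu(\mathfrak{X})=0$: indeed $\mathfrak{X}/p\mathfrak{X}$ is a finite $\Lambda/p\Lambda=\mathbb{F}_p\llbracket T\rrbracket$-module, so $\mathfrak{X}$ requires no free summand (a free summand would contribute a copy of $\mathbb{F}_p\llbracket T\rrbracket$ to $\mathfrak{X}/p$), hence $\mathfrak{X}$ is $\Lambda$-torsion, and since $\mathfrak{X}/p$ is finite no elementary summand of the form $\Lambda/p^k$ can occur, forcing $\mu=0$. (One must know $\mathfrak{X}_E^S$ is finitely generated over $\Lambda$, which is standard for these $\Sigma$-imprimitive Selmer groups, as noted before Theorem~\ref{thmmu=0}.) Applying this with $\mathfrak{X}=\mathfrak{X}_E^S$ gives the assertion for the imprimitive module.

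Finally, for the primitive module $\mathfrak{X}_E$: the defining local conditions differ from the $S$-imprimitive ones only at the finitely many primes $w\in S$, all lying over rational primes $\ell\neq p$ split in $K$, so there is an exact sequence
\[
0\rightarrow\rH^1_{\Fcal_{\rm Gr}}(K,M_E)\rightarrow\rH^1_{\Fcal_{\rm Gr}^S}(K,M_E)\rightarrow\prod_{w\in S}\rH^1(K_w,M_E),
\]
dualizing to a surjection $\mathfrak{X}_E^S\twoheadrightarrow\mathfrak{X}_E$ up to the contribution of the local terms; each $\rH^1(K_w,M_E)^\vee$ is $\Lambda$-torsion with $\mu=0$ (this is the $E$-analogue of Lemma~\ref{lemmawneqp}, proved by the same adaptation of \cite[Prop.~2.4]{greenvats} since $w$ is finitely decomposed in $K_\infty/K$). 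Since $\Lambda$-torsionness and vanishing of $\mu$ are preserved under subquotients and extensions, $\mathfrak{X}_E$ inherits both properties from $\mathfrak{X}_E^S$ and the local terms. I expect the only delicate point to be the clean bookkeeping in this last step — one wants an honest short exact sequence relating $\mathfrak{X}_E$, $\mathfrak{X}_E^S$ and $\bigoplus_{w\in S}\rH^1(K_w,M_E)^\vee$, which requires the surjectivity of the global-to-local map, i.e. the vanishing of $\rH^2(K^\Sigma/K,M_E)$; this follows from the $\Lambda$-cotorsionness of $\rH^1_{\Fcal_{\rm Gr}}(K,M_E)$ (already in hand) via the global Euler characteristic formula and $\mathrm{cd}\,\Gal(K^\Sigma/K)=2$, exactly as in the proof of Proposition~\ref{propchar}. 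With that in place the extension argument is immediate.
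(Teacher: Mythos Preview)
Your argument is essentially correct and follows the paper's line closely for the first two parts. For the isomorphism and for the torsionness/$\mu=0$ of $\mathfrak{X}_E^S$, your proof and the paper's coincide: the paper also reduces to the argument of Lemma~\ref{rmkchar} after noting $E(K_{\bar v})[p]=0$, and then invokes the finiteness of $\rH^1_{\Fcal_{\rm Gr}^S}(K,M_E)[p]$ (via the isomorphism just established and Proposition~\ref{cormodp}) to conclude.

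Where you diverge is in the last step, and there you overcomplicate. The paper dispatches $\mathfrak{X}_E$ in one line: since $\rH^1_{\Fcal_{\rm Gr}}(K,M_E)\hookrightarrow\rH^1_{\Fcal_{\rm Gr}^S}(K,M_E)$, dualizing gives a surjection $\mathfrak{X}_E^S\twoheadrightarrow\mathfrak{X}_E$, and a quotient of a torsion module with $\mu=0$ is again torsion with $\mu=0$. Your exact sequence already yields this surjection (dualizing an injection gives a surjection on the nose; no ``up to the contribution of the local terms'' qualifier is needed), so the proposition is proved at your step~4 without any appeal to the local terms at $w\in S$. Your final paragraph about wanting an ``honest short exact sequence'' and needing $\rH^2(K^\Sigma/K,M_E)=0$ is superfluous for the present statement and, as written, flirts with circularity: you justify $\rH^2=0$ via the $\Lambda$-cotorsionness of $\rH^1_{\Fcal_{\rm Gr}}(K,M_E)$ ``already in hand'', but that is precisely what you are in the middle of proving. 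If ``already in hand'' was meant to refer to the consequence of the surjection you established two sentences earlier, the logic closes, but the exposition obscures this. The paper wisely defers the full short exact sequence and the $\rH^2$-vanishing to Corollary~\ref{prop426}, where they are actually needed and where torsionness can be cleanly cited from the present proposition.
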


\begin{proof} 
Since $\psi=\omega\phi^{-1}$, our assumption on $\phi$ implies that $E(K_{\bar{v}})[p]=0$, and therefore $\rH^0(K_{\bar{v}},M_E)=0$. Thus the same argument as in the proof of Lemma~\ref{rmkchar} yields the  isomorphism in the statement. It follows from Proposition~\ref{cormodp} that $\rH^1_{\Fcal_{\rm Gr}^S}(K,M_E)[p]$ is finite, and so $\mathfrak{X}_E^S$ is $\Lambda$-cotorsion with $\mu=0$. Since $\mathfrak{X}_E$ is a quotient of $\mathfrak{X}_E^S$, this completes the proof.
\end{proof}

Now we can deduce the following analogue of Proposition~\ref{propchar} for $M_E$.

\begin{cor}\label{prop426} 
Assume that $\phi\vert_{G_{p}}\neq\mathds{1},\omega$. Then $\mathfrak{X}_E^S$ has no non-trivial finite $\Lambda$-submodules, and 
\[
\lambda\bigl(\mathfrak{X}_E^S\bigr)=\mathrm{dim}_{\mathbb{F}_p}\bigl(\rH^1_{\Fcal_{\rm Gr}^S}(K,M_E[p])\bigr).
\] 
\end{cor}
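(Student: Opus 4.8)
The plan is to combine two facts already assembled in this section: the isomorphism $\rH^1_{\Fcal_{\rm Gr}^S}(K,M_E[p])\simeq\mathfrak{X}_E^S[p]$ (via the Pontryagin dual of the isomorphism in Proposition~\ref{propmodp}), and the structural information on $\mathfrak{X}_E^S$ coming from Proposition~\ref{propmodp} together with the exact sequences available from the definition of the $S$-imprimitive Selmer group. First I would establish that $\mathfrak{X}_E^S$ has no non-trivial finite $\Lambda$-submodules by mimicking the argument in the proof of Proposition~\ref{propchar}: namely, show $\rH^2(K^\Sigma/K,M_E)=0$ (using that $\mathfrak{X}_E$ is $\Lambda$-torsion by Proposition~\ref{propmodp}, the Euler characteristic formula, $\Lambda$-cofreeness of $\rH^2$, and $\mathrm{cd}(\Gal(K^\Sigma/K))=2$), deduce via the multiplication-by-$T$ sequence $0\to E[p^\infty]\to M_E\xrightarrow{\times T}M_E\to 0$ that $\rH^1(K^\Sigma/K,M_E)/T\simeq\rH^2(K^\Sigma/K,E[p^\infty])$, and conclude that $\rH^2(K^\Sigma/K,E[p^\infty])$ is $\bZ_p$-cofree so that $\rH^1(K^\Sigma/K,M_E)^\vee$ has no non-trivial pseudo-null submodules. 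Then, writing (as in Proposition~\ref{propchar}) $\mathfrak{X}_E^S\simeq\rH^1(K^\Sigma/K,M_E)^\vee/\rH^1(K_{\bar v},M_E)^\vee$ — which requires the surjectivity of the global-to-local restriction map, available from \cite[Prop.~A.2]{pollack2011anticyclotomic} since $\mathfrak{X}_E$ is $\Lambda$-torsion and the Cartier dual of $E[p^\infty]$ has no $G_{K_\infty}$-invariants (again using $\phi|_{G_p}\neq\mathds{1},\omega$, hence $E(K_\infty)[p]=0$) — and invoking the $\Lambda$-cofreeness of $\rH^1(K_{\bar v},M_E)$ from Lemma~\ref{lemma411}, the quotient inherits the no-pseudo-null-submodules property; by \cite[Lem.~2.6]{greenvats} or a direct argument this passes to $\mathfrak{X}_E^S$.

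Next I would combine this with Proposition~\ref{propmodp}, which gives that $\mathfrak{X}_E^S$ is $\Lambda$-torsion with $\mu=0$ and that $\rH^1_{\Fcal_{\rm Gr}^S}(K,M_E[p])$ is finite. By the structure theorem, a torsion $\Lambda$-module with $\mu=0$ and no non-trivial finite $\Lambda$-submodules is pseudo-isomorphic to $\bigoplus_i\Lambda/(f_i)$ with each $f_i$ a distinguished polynomial, and the absence of finite submodules forces $\mathfrak{X}_E^S$ itself to be a finitely generated free $\bZ_p$-module of rank $\lambda(\mathfrak{X}_E^S)$; dually, $\rH^1_{\Fcal_{\rm Gr}^S}(K,M_E)$ is $p$-divisible, isomorphic to $(\bQ_p/\bZ_p)^{\lambda(\mathfrak{X}_E^S)}$. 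Taking $p$-torsion then gives $\dim_{\mathbb{F}_p}\rH^1_{\Fcal_{\rm Gr}^S}(K,M_E)[p]=\lambda(\mathfrak{X}_E^S)$, and by the isomorphism from Proposition~\ref{propmodp} the left-hand side equals $\dim_{\mathbb{F}_p}\rH^1_{\Fcal_{\rm Gr}^S}(K,M_E[p])$, which is the desired formula.

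The main obstacle is the first paragraph — verifying that $\mathfrak{X}_E^S$ has no non-trivial finite $\Lambda$-submodule. All the necessary ingredients (the vanishing of $\rH^2(K^\Sigma/K,M_E)$, the surjectivity of the Poitou–Tate restriction map, the cofreeness of the local term at $\bar v$) are present, but one must be careful that the no-pseudo-null-submodules property indeed descends from $\rH^1(K^\Sigma/K,M_E)^\vee$ through the quotient by the cofree module $\rH^1(K_{\bar v},M_E)^\vee$ to $\mathfrak{X}_E^S$; this is exactly the role played by \cite[Lem.~2.6]{greenvats} in the character case, and the same reasoning applies verbatim here with $M_\theta$ replaced by $M_E$, using Lemma~\ref{lemma411} in place of Proposition~\ref{propw=v}(ii). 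Once that is in place, the remaining steps are purely formal applications of the $\Lambda$-module structure theorem and Pontryagin duality.
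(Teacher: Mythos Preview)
Your proposal is correct and follows essentially the same route as the paper: establish surjectivity of the global-to-local map via \cite[Prop.~A.2]{pollack2011anticyclotomic}, deduce $\rH^2(K^\Sigma/K,M_E)=0$ and absence of pseudo-null submodules in $\rH^1(K^\Sigma/K,M_E)^\vee$ as in Proposition~\ref{propchar}, express $\mathfrak{X}_E^S$ as the quotient by the $\Lambda$-cofree local term at $\bar v$ (Lemma~\ref{lemma411}) and apply \cite[Lem.~2.6]{greenvats}, then combine with Proposition~\ref{propmodp} and the structure theorem. One small slip: in your opening line you write $\rH^1_{\Fcal_{\rm Gr}^S}(K,M_E[p])\simeq\mathfrak{X}_E^S[p]$, but the Pontryagin dual of $\rH^1_{\Fcal_{\rm Gr}^S}(K,M_E)[p]$ is $\mathfrak{X}_E^S/p\mathfrak{X}_E^S$, not $\mathfrak{X}_E^S[p]$; your second paragraph handles this correctly, so the slip is only in the summary and does not affect the argument.
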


\begin{proof} 
Since $M_E^*={\rm Hom}(M_E,\mu_{p^\infty})$ has no non-trivial $G_K$-invariants and $\mathfrak{X}_E^S$ is $\Lambda$-torsion by Proposition~\ref{propmodp}, from \cite[Prop.~A.2]{pollack2011anticyclotomic} we deduce that the sequence
\begin{equation}\label{eq:1}
0 \rightarrow \rH^1_{\Fcal_{\rm Gr}}(K,M_E) \rightarrow \rH^{1}(K^{\Sigma}/K, M_E) \rightarrow \prod_{w\in\Sigma, w\nmid p}\rH^1(K_w,M_E)\times\rH^{1}(K_{\bar{v}}, M_E) \rightarrow 0
\end{equation}
is exact. Proceeding as in the proof of Proposition~\ref{propchar}, we see that the $\Lambda$-torsionness of $\mathfrak{X}_E$ implies that $\rH^{2}(K^{\Sigma}/K, M_E)=0$ and that $\rH^{1}(K^{\Sigma}/K, M_E)^{\vee}$ has no nonzero pseudo-null $\Lambda$-submodules. The exactness of $(\ref{eq:1})$ readily implies a $\Lambda$-module isomorphism 
\[
\mathfrak{X}_E^S\simeq\frac{\rH^1(K^\Sigma/K,M_E)^\vee}{\rH^{1}(K_{\bar{v}}, M_E)^\vee}.
\]
 
Since $\rH^1(K_{\bar{v}},M_E)$ is $\Lambda$-cofree by Lemma~\ref{lemma411}, we thus conclude from \cite[Lem.~2.6]{greenvats} that $\mathfrak{X}_E^S$ has no nonzero finite $\Lambda$-submodules. Together with the isomorphism  $\rH^1_{\Fcal_{\rm Gr}^S}(K,M_E[p]) \simeq \rH^1_{\Fcal_{\rm Gr}^S}(K,M_E)[p]$ of Proposition~\ref{propmodp}, the last claim in the statement
of the corollary follows from this.
\end{proof}

Finally, we note that as in Lemma~\ref{lemmawneqp}, one can show that for primes $w\nmid p$ split in $K$, the module $\rH^1(K_w,M_E)^\vee$ is $\Lambda$-torsion with characteristic ideal generated by the element
\[
\mathcal{P}_w(E)=P_w(\ell^{-1}\gamma_w)
\in\Lambda,
\]
where $P_w={\rm det}(1-{\rm Frob}_wX\vert V_{I_w})$, for $V=T\otimes\bQ_p$, is the Euler factor at $w$ of the $L$-function of $E$.

\subsection{Comparison I: Algebraic Iwasawa invariants}

We now arrive at the main result of this section. Recall that every prime $w\in\Sigma\setminus\{\infty\}$ is split in $K$, and we set $S=\Sigma\setminus\{v,\bar{v},\infty\}$. 

\begin{theorem}\label{MAINalgside}
Assume that $\phi\vert_{G_p}\neq\mathds{1},\omega$. Then the module $\mathfrak{X}_{E}$ is $\Lambda$-torsion with $\mu(\mathfrak{X}_{E})=0$ and 
\begin{displaymath}
\lambda\bigl(\mathfrak{X}_E\bigr)=\lambda\bigl(\mathfrak{X}_\phi\bigr)+\lambda\bigl(\mathfrak{X}_\psi\bigr)
+\sum_{w\in S} \lbrace \lambda\bigl(\mathcal{P}_w(\phi)\bigr)+\lambda\bigl(\mathcal{P}_w(\psi)\bigr)-\lambda\bigl(\mathcal{P}_w(E)\bigr) \rbrace.
\end{displaymath}
\end{theorem}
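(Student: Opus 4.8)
The plan is to deduce the $\lambda$-formula by combining the $S$-imprimitive and mod-$p$ comparisons already established in this section, passing between primitive and $S$-imprimitive invariants on both the elliptic-curve side and the character side. First, Proposition~\ref{propmodp} already gives that $\mathfrak{X}_E$ is $\Lambda$-torsion with $\mu(\mathfrak{X}_E)=0$, so the only remaining task is the computation of $\lambda(\mathfrak{X}_E)$. Note that the hypothesis $\phi\vert_{G_p}\neq\mathds{1},\omega$ together with $\psi=\omega\phi^{-1}$ forces $\psi\vert_{G_p}\neq\mathds{1},\omega$ as well, and since $p=v\bar v$ splits in $K$ the restrictions of $\phi$ and $\psi$ to $G_{\bar v}$ are conjugate (via complex conjugation) to their restrictions to $G_p$; hence Proposition~\ref{propchar}, Proposition~\ref{cormodp} and Corollary~\ref{prop426} all apply to both $\phi$ and $\psi$.

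The core of the argument is the chain of identities
\[
\lambda\bigl(\mathfrak{X}_E^S\bigr)=\dim_{\mathbb{F}_p}\rH^1_{\Fcal_{\rm Gr}^S}(K,M_E[p])=\lambda\bigl(\mathfrak{X}_\phi^S\bigr)+\lambda\bigl(\mathfrak{X}_\psi^S\bigr),
\]
where the first equality is Corollary~\ref{prop426} and the second is the last assertion of Proposition~\ref{cormodp}. It then remains to convert each $S$-imprimitive $\lambda$-invariant into the corresponding primitive one. On the character side this is exactly Proposition~\ref{propchar}: $\lambda(\mathfrak{X}_\theta^S)=\lambda(\mathfrak{X}_\theta)+\sum_{w\in S}\lambda(\mathcal{P}_w(\theta))$ for $\theta\in\{\phi,\psi\}$. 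On the elliptic-curve side, the exact sequence~\eqref{eq:1} produced in the proof of Corollary~\ref{prop426} restricts — upon imposing triviality at $\bar v$ only in the target — to the short exact sequence
\[
0\rightarrow\rH^1_{\Fcal_{\rm Gr}}(K,M_E)\rightarrow\rH^1_{\Fcal_{\rm Gr}^S}(K,M_E)\rightarrow\prod_{w\in S}\rH^1(K_w,M_E)\rightarrow 0,
\]
exactly as~\eqref{eq:sur2} does in the proof of Proposition~\ref{propchar}. Dualizing, and using that each $\rH^1(K_w,M_E)^\vee$ for $w\in S$ is $\Lambda$-torsion with characteristic ideal $(\mathcal{P}_w(E))$ (the last display of \S\ref{subsec:Sel-E}), the multiplicativity of characteristic ideals in short exact sequences of torsion $\Lambda$-modules yields $\lambda(\mathfrak{X}_E^S)=\lambda(\mathfrak{X}_E)+\sum_{w\in S}\lambda(\mathcal{P}_w(E))$. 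Substituting these three relations into the chain above and solving for $\lambda(\mathfrak{X}_E)$ gives the claimed formula once the common $S$-sums cancel.

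Since all the hard analytic and algebraic input (Rubin's and Hida's theorems, the structure of the local cohomology groups, and the mod-$p$ comparison of Proposition~\ref{cormodp}) is packaged in the earlier results, the only genuinely new step is the derivation of the $E$-side short exact sequence above and the bookkeeping of $\lambda$-invariants. The main point to be careful about is therefore the $\Lambda$-torsionness of all terms appearing — in particular the local factors $\rH^1(K_w,M_E)^\vee$ and $\rH^1(K_w,M_\theta)^\vee$ for $w\in S$ — so that characteristic ideals are defined and multiply in short exact sequences; this is supplied by Lemma~\ref{lemmawneqp} and its $M_E$-analogue. No new estimate is required: the proof is a clean assembly of the comparison results of this section.
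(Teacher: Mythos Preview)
Your proof is correct and follows essentially the same route as the paper: use Proposition~\ref{propmodp} for torsionness and $\mu=0$, combine Corollary~\ref{prop426} with Proposition~\ref{cormodp} to get $\lambda(\mathfrak{X}_E^S)=\lambda(\mathfrak{X}_\phi^S)+\lambda(\mathfrak{X}_\psi^S)$, then pass from $S$-imprimitive to primitive invariants via Proposition~\ref{propchar} on the character side and the short exact sequence derived from~\eqref{eq:1} on the $E$-side. The only extra content you add is the explicit remark on why $\psi\vert_{G_{\bar v}}\neq\mathds{1},\omega$, which the paper handles implicitly.
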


\begin{proof}
That $\mathfrak{X}_E$ is $\Lambda$-torsion with $\mu$-invariant zero is part of Proposition~\ref{propmodp}. For the $\lambda$-invariant, combining Corollary~\ref{prop426} and the last claim of Proposition~\ref{cormodp} we obtain
\begin{equation}\label{eq:lambda-imp}
\lambda\bigl(\mathfrak{X}_E^S\bigr)
=\lambda\bigl(\mathfrak{X}_\phi^S\bigr)+\lambda\bigl(\mathfrak{X}_{\psi}^S\bigr).
\end{equation}
On the other hand, from (\ref{eq:1}) we deduce the exact sequence
\[
0 \rightarrow \rH^1_{\Fcal_{\rm Gr}}(K,M_E) \rightarrow \rH^1_{\Fcal_{\rm Gr}^S}(K,M_E) \rightarrow \prod_{w\in S}\rH^1(K_w,M_E) \rightarrow 0, 
\]
and therefore the relation $\lambda\bigl(\mathfrak{X}_E^S\bigr)=\lambda\bigl(\mathfrak{X}_E\bigr)+\sum_{w\in S}\lambda\bigl(\mathcal{P}_w(E)\bigr)$. This, combined with the second part of Proposition~\ref{propchar} shows that $(\ref{eq:lambda-imp})$ reduces to the equality of $\lambda$-invariants in the statement
of the theorem.
\end{proof}

\section{Analytic side}\label{IMCII}

Let $E/\Q$ be an elliptic curve of conductor $N$, $p\nmid 2N$ a prime of good reduction for $E$, and $K$ an imaginary quadratic field satisfying hypotheses (\ref{eq:intro-Heeg}), (\ref{eq:intro-spl}), and (\ref{eq:intro-disc}) from the introduction; in particular, $p=v\bar{v}$ splits in $K$.

In this section, assuming $E[p]^{ss}=\mathbb{F}_p(\phi)\oplus\mathbb{F}_p(\psi)$ as $G_\Q$-modules, we prove an analogue of  Theorem~\ref{MAINalgside} on the analytic side, relating the Iwasawa invariants of an anticyclotomic $p$-adic $L$-function of $E$ to the Iwasawa invariants of anticyclotomic Katz $p$-adic $L$-functions attached to $\phi$ and $\psi$.

\subsection{\texorpdfstring{$p$}{p}-adic \texorpdfstring{$L$}{L}-functions}\label{subsec:Lp}

Recall that $\Lambda=\Z_p\llbracket\Gamma\rrbracket$ denotes the anticyclotomic Iwasawa algebra, and set $\Lambda^{\rm ur}=\Lambda\hat{\otimes}_{\Z_p}\Z_p^{\rm ur}$,  
for $\Z_p^{\rm ur}$ the completion of the ring of integers of the maximal unramified extension of $\Q_p$. 

We shall say that an algebraic Hecke character $\psi:K^\times\backslash\mathbb{A}_K^\times\rightarrow\mathbb{C}^\times$ has infinity type $(m,n)$ if 
the component $\psi_\infty$ of $\psi$ at $\infty$ satisfies
$\psi_\infty(z)=z^{m}\bar{z}^{n}$ for all $z\in (K\otimes\mathbb{R})^\times\simeq\mathbb{C}^\times$, 
where the last identification is made via $\iota_\infty$.

\subsubsection{The Bertolini--Darmon--Prasanna \texorpdfstring{$p$}{p}-adic \texorpdfstring{$L$}{l}-functions} 

Fix an integral ideal $\mathfrak{N}\subset\mathcal{O}_K$ with 
\begin{equation}\label{eq:N}
\mathcal{O}_K/\mathfrak{N}\simeq\Z/N\Z.
\end{equation}
Let $f\in S_2(\Gamma_0(N))$ be the newform associated with $E$. Following \cite{BDP}, one has the following result. 

\begin{theorem}\label{thm:BDP}
There exists an element $\Lcal_E\in\Lambda^{\rm ur}$ characterized by the following interpolation property: For every character $\xi$ of $\Gamma$ crystalline at both $v$ and $\bar{v}$ and corresponding to a Hecke character of $K$ of infinity type $(n,-n)$ with $n\in\Z_{>0}$ and $n\equiv 0\pmod{p-1}$, we have
\[
\mathcal{L}_E(\xi)=\frac{\Omega_p^{4n}}{\Omega_\infty^{4n}}\cdot\frac{\Gamma(n)\Gamma(n+1)\xi(\mathfrak{N}^{-1})}{4(2\pi)^{2n+1}\sqrt{D_K}^{2n-1}}\cdot\bigl(1-a_p\xi(\bar{v})p^{-1}+\xi(\bar{v})^2p^{-1}\bigr)^2\cdot L(f/K,\xi,1),
\]  
where $\Omega_p$ and $\Omega_\infty$ are CM periods attached to $K$ as in \cite[\S{2.5}]{cas-hsieh1}.
\end{theorem}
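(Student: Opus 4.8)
The plan is to invoke the construction of Bertolini--Darmon--Prasanna directly. Their work produces, for the newform $f\in S_2(\Gamma_0(N))$ attached to $E$ together with the auxiliary data of the imaginary quadratic field $K$ (satisfying the Heegner hypothesis via the choice of $\mathfrak{N}$ with $\mathcal{O}_K/\mathfrak{N}\simeq\Z/N\Z$), the fixed embeddings $\iota_p$ and $\iota_\infty$, and the CM periods $\Omega_p,\Omega_\infty$, a $p$-adic $L$-function interpolating the central critical values $L(f/K,\xi,1)$ as $\xi$ ranges over the relevant infinity types. Concretely, one takes the $p$-adic Rankin--Selberg $L$-function of \cite{BDP} associated to $f$ and to the family of CM points on the modular (or Shimura) curve of level $\Gamma_0(N)\cap\Gamma_1(\mathfrak{N})$, restricted to the anticyclotomic line. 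I would first recall that since $p$ splits in $K$ and $(\mathrm{disc})$ holds, the anticyclotomic characters of $\Gamma$ of infinity type $(n,-n)$ with $n>0$ lie in the range of interpolation of the BDP construction; these characters, viewed as Hecke characters of $K$, have the required $p$-adic and archimedean behavior (crystalline at $v$ and $\bar v$, and self-dual up to the norm), and are Zariski-dense in $\mathrm{Spec}\,\Lambda^{\mathrm{ur}}$.

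Second, I would assemble the interpolation formula. The BDP formula expresses $\mathcal{L}_E(\xi)$ as a product of: the ratio of $p$-adic to complex CM periods raised to the appropriate power ($\Omega_p^{4n}/\Omega_\infty^{4n}$, the exponent $4n$ reflecting the weight-$2$ normalization and the square coming from the Rankin--Selberg/diagonal restriction setup); an explicit archimedean factor involving $\Gamma$-values, powers of $2\pi$, the discriminant $\sqrt{D_K}$, and the value $\xi(\mathfrak{N}^{-1})$ coming from the choice of CM point and the ideal $\mathfrak{N}$; the square of the $p$-adic multiplier $1-a_p\xi(\bar v)p^{-1}+\xi(\bar v)^2 p^{-1}$, which is the Euler factor at $p$ of $f/K$ evaluated against $\xi$ at $\bar v$ (squared because $p$ splits and the interpolation is anticyclotomic, so the factors at $v$ and $\bar v$ coincide); and finally the central value $L(f/K,\xi,1)$ itself. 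I would cite the precise statement in \cite{BDP} (or its recension in \cite{cas-hsieh1}, whose normalization of the CM periods $\Omega_p,\Omega_\infty$ we follow) and check that our normalizations of $\xi$, of the periods, and of the measure all match, so that the stated formula is literally the specialization of theirs.

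Third, I would address uniqueness and membership in $\Lambda^{\mathrm{ur}}$: the element $\mathcal{L}_E$ is unique because the interpolation points are dense, and it lies in $\Lambda^{\mathrm{ur}}=\Lambda\hat\otimes_{\Z_p}\Z_p^{\mathrm{ur}}$ rather than merely in its fraction field because the BDP construction is via a $\Z_p^{\mathrm{ur}}$-valued measure on $\Gamma$ (equivalently, a bounded $p$-adic modular-forms-valued distribution evaluated at a CM point), so no denominators appear. The only genuine obstacle here is bookkeeping: reconciling the several normalizations in the literature — the choice of complex and $p$-adic periods, the precise shape of the Euler factor at $p$ (which depends on whether one uses $a_p$ or the unit root, and on the convention $\phi\psi=\omega$), the sign conventions in the infinity type $(n,-n)$ versus $(-n,n)$, and the factor $\xi(\mathfrak{N}^{-1})$ versus $\xi(\mathfrak{N})$ — so that the displayed formula is exactly correct. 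Since this is a recollection of an established construction, there is no new mathematical difficulty; the proof is essentially a citation of \cite{BDP} and \cite{cas-hsieh1} together with a verification that the hypotheses $(\mathrm{Heeg})$, $(\mathrm{spl})$, $(\mathrm{disc})$ place us squarely in their setting.
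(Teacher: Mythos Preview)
Your proposal is correct and takes essentially the same approach as the paper: both treat this as a citation of \cite{BDP} together with the measure-theoretic refinement in \cite{cas-hsieh1}, checking that the hypotheses (\ref{eq:intro-Heeg}), (\ref{eq:intro-spl}), (\ref{eq:intro-disc}) place us in their setting and that the normalizations agree. The one difference worth noting is that the paper's proof does not stop at a high-level citation but deliberately recalls the explicit construction from \cite{cas-hsieh1}---the Igusa scheme, the CM points $x_{\mathfrak{a}}$, the Serre--Tate expansion $f(t_{\mathfrak{a}})$, the $\Z_p^{\rm ur}$-valued measures ${\rm d}\mu_{f^\flat,\mathfrak{a}}$, and the definition $\mathcal{L}_E(\xi)=\mathscr{L}_{v,\eta}(\eta^{-1}\xi)^2$---because these details are essential input for the congruence argument later in the section (Theorem~\ref{thm:kriz}), where one compares $\mathcal{L}_E$ with the analogous measure built from an Eisenstein series $G$ via the mod $p$ congruence $f\equiv G$. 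Your proof suffices for the theorem as stated, but if you are writing this up in context you should retain the explicit measure description rather than black-boxing it.
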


\begin{proof}
This was originally constructed in \cite{BDP} as a continuous function of $\xi$, and later explicitly constructed as a measure in \cite{cas-hsieh1} (following the approach in \cite{brakocevic}). Since this refined construction will be important for our purposes in this section, we recall some of the details. 

Let ${\rm Ig}(N)$ be the Igusa scheme over $\Z_{(p)}$ parametrizing elliptic curves with $\Gamma_1(Np^\infty)$-level structure as in \cite[\S{2.1}]{cas-hsieh1};  its complex points admit a uniformization 
\begin{equation}\label{eq:unif}
[\;,\;]:\mathfrak{H}\times{\rm GL}_2(\hat{\Q})\rightarrow{\rm Ig}(N)(\mathbb{C}).
\end{equation} 
Let $c$ be a positive integer prime to $Np$. Then $\vartheta:=(D_K+\sqrt{-D_K})/2$ and the element $\xi_c:=\varsigma^{(\infty)}\gamma_c\in\mathrm{GL}_2(\hat{\mathbb{Q}}^{})$ constructed in \cite[p.~577]{cas-hsieh1} define a point 
\[
x_c:=[(\vartheta,\xi_c)]\in\mathrm{Ig}(N)(\mathbb{C})
\]
rational over $K[c](v^\infty)$, the compositum of the ring class field $K$ of conductor $c$ and the ray class field of $K$ of conductor $v^\infty$. For every $\mathcal{O}_c$-ideal $\mathfrak{a}$ prime to $\mathfrak{N}v$, let $a\in\hat{K}^{(cp),\times}$ be such that $\mathfrak{a}=a\hat{\mathcal{O}}_c\cap K$ and set
\[
\sigma_\mathfrak{a}:=\mathrm{rec}_K(a^{-1})\vert_{K[c](v^\infty)}\in\mathrm{Gal}(K[c](v^\infty)/K),
\]
where ${\rm rec}_K:K^\times\backslash\hat{K}^\times\rightarrow G_K^{\rm ab}$ is the reciprocity map (geometrically normalized). Then by Shimura's reciprocity law, the point $x_\mathfrak{a}:=x_c^{\sigma_\mathfrak{a}}$ is defined by the pair $(\vartheta,\overline{a}^{-1}\xi_c)$ under $(\ref{eq:unif})$.  

Let $V_p(N;R)$ be the space of $p$-adic modular forms of tame level $N$ defined over a $p$-adic ring $R$ (as recalled in \cite[\S{2.2}]{cas-hsieh1}), and let $S_{\mathfrak{a}}\hookrightarrow{\rm Ig}(N)_{/\Z_p^{\rm ur}}$ be the local deformation space of $x_\mathfrak{a}\otimes\overline{\mathbb{F}}_p\in{\rm Ig}(N)(\overline{\mathbb{F}}_p)$, so we have $\mathcal{O}_{S_{\mathfrak{a}}}\simeq\Z_p^{\rm ur}\llbracket t_\mathfrak{a}-1\rrbracket$ by Serre--Tate theory. Viewing $f$ as a $p$-adic modular form, the Serre--Tate expansion 
\[
f(t_{\mathfrak{a}}):=f\vert_{S_\mathfrak{a}}\in\bZ_p^{\rm ur}\llbracket t_{\mathfrak{a}}-1\rrbracket 
\]
defines a $\Z_p^{\rm ur}$-valued measure ${\rm d}\mu_{f,\mathfrak{a}}$ on $\Z_p$ characterized (by Mahler's theorem, see e.g. \cite[\S{3.3}, Thm.~1]{hida-blue}) by
\begin{equation}\label{eq:def-meas}
\int_{\Z_p}\binom{x}{m}{\rm d}\mu_{f,{\mathfrak{a}}}=\binom{\theta}{m}f(x_{\mathfrak{a}})
\end{equation}
for all $m\geqslant 0$, 
where $\theta:V_p(N;\Z_p)\rightarrow V_p(N;\Z_p)$ is the Atkin--Serre operator, acting as $qd/dq$ on $q$-expansions. Similarly, the $p$-depletion 
\[
f^\flat=\sum_{p\nmid n}a_nq^n
\]
defines a $\Z_p^{\rm ur}$-valued measure ${\rm d}\mu_{f^\flat,\mathfrak{a}}$  on $\Z_p$ (supported on $\Z_p^\times$) with $p$-adic Mellin transform $f^\flat(t_\mathfrak{a})$, and we let ${\rm d}\mu_{f^\flat_{\mathfrak{a}}}$ be the measure on $\Z_p^\times$ corresponding to $f(t_{\mathfrak{a}}^{{\rm N}(\mathfrak{a})^{-1}\sqrt{-D_K}^{-1}})$ (see \cite[Prop.~3.3]{cas-hsieh1}).

Letting $\eta$ be an auxiliary anticyclotomic Hecke character of $K$ of infinity type $(1,-1)$ and conductor $c$, define $\mathscr{L}_{v,\eta}\in\Lambda^{\mathrm{ur}}$ by
\begin{equation}\label{eq:cusp-measure}
\mathscr{L}_{v,\eta}(\phi)=
\sum_{[\mathfrak{a}]\in\mathrm{Pic}(\mathcal{O}_c)}
\eta(\mathfrak{a})\mathbf{N}(\mathfrak{a})^{-1}
\int_{\Z_p^\times}\eta_v(\phi\vert[\mathfrak{a}])\;\mathrm{d}\mu_{f^\flat_\mathfrak{a}}
\end{equation}
where $\eta_v$ denotes the $v$-component of $\eta$, and $\phi\vert[\mathfrak{a}]:\Z_p^\times\rightarrow\mathcal{O}_{\mathbb{C}_p}^\times$ is defined by $(\phi\vert[\mathfrak{a}])(x)=\phi({\rm rec}_v(x)\sigma_{\mathfrak{a}}^{-1})$ for the local reciprocity map ${\rm rec}_v:K_v^\times\rightarrow G_K^{\rm ab}\twoheadrightarrow\Gamma$. Then by \cite[Prop.~3.8]{cas-hsieh1} the element $\mathcal{L}_E\in\Lambda^{\rm ur}$
defined by
\[
\mathcal{L}_E(\xi):=\mathscr{L}_{v,\eta}(\eta^{-1}\xi)^2
\]
has the stated interpolation property. 
\end{proof}

\subsubsection{Katz $p$-adic L-functions} 

Let $\theta:G_\Q\rightarrow\mathbb{Z}_p^\times$ be a Dirichlet character of conductor $C$. As it will suffice for our purposes, we assume that $C\vert N$ (so $p\nmid C$), and let $\mathfrak{C}\vert\mathfrak{N}$ be such that $\mathcal{O}_K/\mathfrak{C}=\Z/C\Z$. 

The next result follows from the work of Katz \cite{katz}, as extended by Hida--Tilouine \cite{hidatilouineI}.

\begin{theorem}\label{thm:Katz}
There exists an element $\mathcal{L}_{\theta}\in\Lambda^{\rm ur}$ characterized by the following interpolation property: For every character $\xi$ of $\Gamma$ crystalline at both $v$ and $\bar{v}$ and corresponding to a Hecke character of $K$ of infinity type $(n,-n)$ with $n\in\Z_{>0}$ and $n\equiv 0\pmod{p-1}$, we have
\begin{align*}
\Lcal_{\theta}(\xi)=\frac{\Omega_p^{2n}}{\Omega_\infty^{2n}}\cdot 4\Gamma(n+1)\cdot\frac{(2\pi i)^{n-1}}{\sqrt{D_K}^{n-1}}&\cdot\bigl(1-\theta^{-1}(p)\xi^{-1}(v)\bigr)\cdot\bigl(1-\theta(p)\xi(\bar{v})p^{-1})\bigr)\\
&\times\prod_{\ell\vert C}(1-\theta(\ell)\xi(w)\ell^{-1})\cdot L(\theta_{K}\xi\mathbf{N}_K,0),
\end{align*}
where $\Omega_p$ and $\Omega_\infty$ are as in Theorem~\ref{thm:BDP}, and for each $\ell\vert C$ we take the prime $w\vert\ell$ with $w\vert\mathfrak{C}$.
\end{theorem}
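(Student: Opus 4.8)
The plan is to obtain $\Lcal_\theta$ as the anticyclotomic restriction, suitably twisted, of the classical two-variable $p$-adic $L$-function attached to $K$. Concretely, I would first invoke the results of Katz~\cite{katz}, as extended by Hida--Tilouine~\cite{hidatilouineI}: attached to $K$, the split prime $p=v\bar v$, and the tame conductor $C\mathcal{O}_K$ there is an element $\Lcal_{K,C}$ of (a suitable quotient of) $\Z_p^{\rm ur}\llbracket\Gal(K(Cp^\infty)/K)\rrbracket$ interpolating the Hecke $L$-values $L(\chi,0)$ for $\chi$ an algebraic Hecke character of $K$ of infinity type $(k,j)$ in the Katz range ($k\geq 1\geq j$), of conductor dividing $Cp^\infty$, and crystalline at $v$ and $\bar v$; the interpolation formula carries the period ratio $(\Omega_p/\Omega_\infty)^{k-j}$, explicit $\Gamma$- and $\pi$-factors, one $p$-stabilization factor at each of $v$ and $\bar v$, and the Euler factors at the primes dividing $C$, with $\Omega_p,\Omega_\infty$ as in Theorem~\ref{thm:BDP}.

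Since the anticyclotomic $\Z_p$-extension $K_\infty$ is unramified outside $p$, the group $\Gamma=\Gal(K_\infty/K)$ is a quotient of $\Gal(K(Cp^\infty)/K)$; write $g\mapsto\bar g$ for this projection. I would then define $\Lcal_\theta\in\Lambda^{\rm ur}$ to be the image of $\Lcal_{K,C}$ under the continuous $\Z_p^{\rm ur}$-algebra homomorphism
\[
\Z_p^{\rm ur}\llbracket\Gal(K(Cp^\infty)/K)\rrbracket\longrightarrow\Lambda^{\rm ur}=\Z_p^{\rm ur}\llbracket\Gamma\rrbracket,\qquad [g]\longmapsto\theta_K\mathbf{N}_K(g)\cdot[\bar g],
\]
where $\theta_K\mathbf{N}_K$ is regarded $p$-adically via $\iota_p$; equivalently, $\Lcal_\theta$ is the anticyclotomic restriction of the twist of $\Lcal_{K,C}$ by $\theta_K\mathbf{N}_K$. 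One checks (routinely) that this map is well defined and continuous, so that $\Lcal_\theta$ indeed lies in $\Lambda^{\rm ur}$.

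It remains to match the interpolation formula. Given $\xi$ as in the statement, the Hecke character $\chi:=\theta_K\xi\mathbf{N}_K$ is algebraic, crystalline at $v$ and $\bar v$ (as $\theta_K$ is unramified at $p$ and $\mathbf{N}_K$ is crystalline there), of conductor dividing $Cp^\infty$, and of infinity type $(k,j)$ with $k-j=2n$, hence in the Katz range for $n>0$. By construction $\Lcal_\theta(\xi)=\Lcal_{K,C}(\chi)$, so feeding $\chi$ into the formula of the previous step: the period ratio becomes $\Omega_p^{2n}/\Omega_\infty^{2n}$; the archimedean factors collapse to $4\,\Gamma(n+1)\cdot(2\pi i)^{n-1}/\sqrt{D_K}^{\,n-1}$; writing out $\chi(\mathfrak p)$ and $\chi(\bar{\mathfrak p})$ in terms of $\theta$ and $\xi$, the two $p$-stabilization factors become $(1-\theta^{-1}(p)\xi^{-1}(v))$ and $(1-\theta(p)\xi(\bar v)p^{-1})$; and the Euler factors at the primes above $C$ become $\prod_{\ell\mid C}(1-\theta(\ell)\xi(w)\ell^{-1})$, with $w\mid\ell$ chosen so that $w\mid\mathfrak C$. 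This is exactly the asserted formula, and uniqueness of $\Lcal_\theta$ follows at once, since the characters $\xi$ in the statement accumulate at the trivial character along $\Gamma$ and are therefore Zariski-dense in $\mathrm{Spec}\,\Lambda^{\rm ur}$, forcing any two elements with the stated interpolation to agree.

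The conceptual content is modest; the real work — and the only genuine obstacle — is the normalization bookkeeping in the last step. One must reconcile the conventions of Katz and Hida--Tilouine with those fixed around Theorem~\ref{thm:BDP}: the choice of adelic norm character $\mathbf{N}_K$ (and hence the exact infinity type of $\chi$ and the exponent of the period ratio), the geometric-versus-arithmetic normalization of the reciprocity map (which governs whether $\xi(v)$ or $\xi^{-1}(v)$ appears, and where the factor $p^{-1}$ is placed), and the precise shape of the interpolation factors at $v$ and $\bar v$. Once the dictionary between the two normalizations is pinned down, the displayed formula — including the divisibility condition $n\equiv 0\pmod{p-1}$, which is inherited from the component structure of $\Lcal_{K,C}$ — follows on the nose.
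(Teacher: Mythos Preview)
Your proposal is correct and follows essentially the same approach as the paper: take the Katz $p$-adic $L$-function over the ray class tower (via Katz and Hida--Tilouine), push it down to $\Lambda^{\rm ur}$ along the $\theta$-twisted projection, and read off the interpolation formula. The paper's proof is terser---it simply cites the form of the Katz $p$-adic $L$-function given in \cite[Thm.~27]{kriz} and defines the projection $\pi_\theta:g\mapsto\theta(g)[g]$ from $\Z_p^{\rm ur}\llbracket\Gal(K(\mathfrak{C}p^\infty)/K)\rrbracket$ to $\Lambda^{\rm ur}$ (twisting by $\theta$ alone rather than $\theta_K\mathbf{N}_K$, the $\mathbf{N}_K$ being absorbed into the normalization of the Katz measure in that reference)---but the underlying construction is the same, and the normalization bookkeeping you flag is exactly what the paper offloads to \cite{kriz}.
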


\begin{proof}
The character $\theta$  (viewed as a character of $K$) defines a projection 
\[
\pi_\theta:\bZ_p^{\rm ur}\llbracket{\rm Gal}(K(\mathfrak{C}p^\infty)/K)\rrbracket\rightarrow\Lambda^{\rm ur},
\]
where $K(\mathfrak{C}p^\infty)$ is the ray class field of $K$ of conductor $\mathfrak{C}p^\infty$
(this projection is just $g\mapsto \theta(g)[g]$ for $g\in {\rm Gal}(K(\mathfrak{C}p^\infty)/K$ 
and $[g]$ the image of $g$ in $\Gamma$).
The element $\Lcal_{\theta}$ is then obtained by applying $\pi_\theta$ to the Katz $p$-adic $L$-function described in \cite[Thm.~27]{kriz}, setting $\chi^{-1}=\theta_K\xi\mathbf{N}_K$. 
\end{proof}

\subsection{Comparison II: Analytic Iwasawa invariants}

The following theorem follows from the main result of \cite{kriz}. Following the notations in \emph{op.\,cit},
we let $N_0$ be the square-full part of $N$ (so the quotient $N/N_0$ is square-free), and fix an integral ideal $\mathfrak{N}\subset\mathcal{O}_K$ as in (\ref{eq:N}).

Let also $f=\sum_{n=1}^\infty a_nq^n\in S_2(\Gamma_0(N))$ be the newform associated with $E$, and denote by $\lambda^\iota$ the image of $\lambda\in\Lambda$ under the involution of $\Lambda$ given by $\gamma\mapsto\gamma^{-1}$ for $\gamma\in\Gamma$.

\begin{theorem}\label{thm:kriz}
Assume that $E[p]^{ss}\simeq\mathbb{F}_p(\phi)\oplus\mathbb{F}_p(\psi)$ as $G_\bQ$-modules,
with the characters $\phi, \psi$ labeled so that $p\nmid\mathrm{cond}(\phi)$, and suppose $\phi\neq\mathds{1}$.
Then there is a factorization $N/N_0=N_+N_-$ with
\[
\begin{cases}
a_\ell\equiv\phi(\ell)\pmod{p}&\textrm{if $\ell\vert N_+$,}\\
a_\ell\equiv\psi(\ell)\pmod{p}&\textrm{if $\ell\vert N_-$,}\\
a_\ell\equiv 0\pmod{p}&\textrm{if $\ell\vert N_0,$}
\end{cases}
\]
such that the following congruence holds
\[
\mathcal{L}_E\equiv(\mathcal{E}_{\phi,\psi}^\iota)^2\cdot(\mathcal{L}_\phi)^2\pmod{p\Lambda^{\rm ur}},
\]
where 
\[
\mathcal{E}_{\phi,\psi}=\prod_{\ell\vert N_0N_-}\mathcal{P}_w(\phi)\cdot\prod_{\ell\vert N_0N_+}\mathcal{P}_w(\psi),
\] 
and for each $\ell\vert N$ we take the prime $w\vert\ell$ with $w\vert\mathfrak{N}$.
\end{theorem}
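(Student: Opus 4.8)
The plan is to derive the congruence from Kriz's $p$-adic Waldspurger formula \cite{kriz}, exploiting that residual reducibility of $E[p]$ forces the $p$-adic modular form attached to $f$ to be congruent mod $p$ to a $p$-adic Eisenstein series. First I would record, from $E[p]^{ss}=\mathbb{F}_p(\phi)\oplus\mathbb{F}_p(\psi)$ together with the Eichler--Shimura relation, the congruences $a_\ell\equiv\phi(\ell)+\psi(\ell)\pmod p$ for $\ell\nmid Np$. Analysing the restriction of $E[p]$ to a decomposition group at $\ell\mid N$ then pins down the bad Hecke eigenvalues mod $p$: additive reduction at $\ell$ (i.e.\ $\ell\mid N_0$, the square-full part) forces $a_\ell=0$, while for $\ell\parallel N$ the curve has multiplicative reduction, $a_\ell=\pm1$, and $a_\ell\bmod p$ equals one of $\phi(\ell),\psi(\ell)$ (both $\phi,\psi$ being unramified at such $\ell$, as $E[p]\vert_{I_\ell}$ is unipotent); declaring $\ell\mid N_+$ or $\ell\mid N_-$ accordingly produces the asserted factorization $N/N_0=N_+N_-$ with the stated congruences. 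Consequently, as a $p$-adic modular form of tame level $N$, the $p$-depletion $f^\flat$ is congruent modulo $p$ to the $p$-depletion of a $p$-adic Eisenstein series attached to $\phi$ and $\psi$, where passing from the minimal-level (``new'') Eisenstein series up to tame level $N$ introduces at each $\ell\mid N$ exactly the Euler factors recorded in $\mathcal{E}_{\phi,\psi}$ --- the \emph{complementary} factor $\mathcal{P}_w(\psi)$ (resp.\ $\mathcal{P}_w(\phi)$) for $\ell\mid N_+$ (resp.\ $\ell\mid N_-$), and both factors for $\ell\mid N_0$ (where $a_\ell=0$).

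Next I would feed this congruence into the construction of $\mathcal{L}_E$ recalled in the proof of Theorem~\ref{thm:BDP}. There $\mathcal{L}_E(\xi)=\mathscr{L}_{v,\eta}(\eta^{-1}\xi)^2$, and $\mathscr{L}_{v,\eta}$ is assembled from the $p$-depleted Serre--Tate measures $\mathrm{d}\mu_{f^\flat_{\mathfrak a}}$ in a way that is $\Z_p$-linear in $f$ and continuous for the $p$-adic topology on (depleted) Serre--Tate expansions; hence the element $\xi\mapsto\mathscr{L}_{v,\eta}(\eta^{-1}\xi)$ of $\Lambda^{\rm ur}$ is congruent mod $p$ to the same construction applied to the depleted Eisenstein series from Step~1. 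This Eisenstein-valued quantity is exactly what Kriz's main theorem evaluates: the Rankin--Selberg convolution of an Eisenstein series against a CM theta series being a product of Hecke $L$-values whose relevant ``square root'' is a single Hecke $L$-value, it identifies $\mathscr{L}_{v,\eta}(\eta^{-1}\xi)\bmod p$ with $\pm\,\mathcal{E}_{\phi,\psi}^{\iota}\cdot\mathcal{L}_\phi(\xi)$, the involution $\iota$ arising from comparing the normalization of the Hecke character in the interpolation formulas of Theorems~\ref{thm:BDP} and~\ref{thm:Katz} (namely $\xi$ against $\xi^{-1}\mathbf{N}_K$). The hypothesis $p\nmid\mathrm{cond}(\phi)$ is precisely the condition under which $\phi$, rather than $\psi$, is the character carried by Kriz's Katz $p$-adic $L$-function, and one checks that the resulting $\mathcal{L}_\phi$ --- obtained, as in the proof of Theorem~\ref{thm:Katz}, by applying $\pi_\phi$ to \cite[Thm.~27]{kriz} --- agrees with the element fixed there. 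Squaring the displayed congruence and using $\mathcal{L}_E=\mathscr{L}_{v,\eta}(\eta^{-1}\xi)^2$ then gives $\mathcal{L}_E\equiv(\mathcal{E}_{\phi,\psi}^{\iota})^2\cdot(\mathcal{L}_\phi)^2\pmod{p\Lambda^{\rm ur}}$.

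Since the analytic content is Kriz's theorem, no new estimates are needed, and I expect the real difficulty to be bookkeeping. The delicate points are: matching the CM periods $\Omega_p,\Omega_\infty$ and the archimedean/$p$-adic normalizations across \cite{BDP,cas-hsieh1,katz,hidatilouineI,kriz}; tracking the local modifications at the primes $\ell\mid N$ finely enough to obtain the exact shape of $\mathcal{E}_{\phi,\psi}$, including the correct assignment $N_+\leftrightarrow\psi$, $N_-\leftrightarrow\phi$ and the doubling at $N_0$; and correctly identifying the involution $\iota$ (equivalently, the choice for each $\ell\mid N$ of the prime $w\mid\ell$ with $w\mid\mathfrak{N}$), so that it is compatible with the one appearing on the algebraic side in Theorem~\ref{MAINalgside}.
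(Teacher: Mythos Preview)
Your approach is essentially the same as the paper's. The paper also (i) invokes \cite[Thm.~31]{kriz} to obtain the congruence $f\equiv G\pmod{p}$ with $G$ the level-$N$ Eisenstein series $E_2^{\phi,\phi^{-1},(N)}$ (which encodes the $N_+N_-$ factorization you describe), (ii) transfers this congruence through the Serre--Tate measure construction of Theorem~\ref{thm:BDP} to get $\mathcal{L}_E\equiv(\mathcal{L}_G)^2\pmod{p\Lambda^{\rm ur}}$, and (iii) evaluates $\mathcal{L}_G$ explicitly via \cite[Prop.~37]{kriz} and compares with the interpolation property of $\mathcal{L}_\phi$ to identify $\mathcal{L}_G\equiv\mathcal{E}_{\phi,\psi}^\iota\cdot\mathcal{L}_\phi\pmod{p\Lambda^{\rm ur}}$. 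The only refinement is that the paper carries the explicit constants (Gauss sums, etc.) through step~(iii) and checks they are $p$-adic units, rather than absorbing them into a $\pm$; your assessment that the substance is bookkeeping is accurate.
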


\begin{proof} 
By Theorem~34 and Remark~32 in \cite{kriz}, our hypothesis on $E[p]$ implies that there is a congruence
\begin{equation}\label{eq:cong-mf}
f\equiv G^{}\pmod{p},
\end{equation}
where $G^{}$ is a certain weight two Eisenstein series (denoted $E_{2}^{\phi,\phi^{-1},(N)}$ in \emph{loc.\,cit.}). 
Viewed as a $p$-adic modular form, $G$ defines $\Z_p^{\rm ur}$-valued measures $\mu_{G,\mathfrak{a}}$
on $\Z_p$ by the rule (\ref{eq:def-meas}). With the notations introduced in the proof of Theorem~\ref{thm:BDP}, set
\begin{equation}\label{eq:Eis-measure}
\mathscr{L}_{v,\eta}(G,\phi)=
\sum_{[\mathfrak{a}]\in\mathrm{Pic}(\mathcal{O}_c)}
\eta(\mathfrak{a})\mathbf{N}(\mathfrak{a})^{-1}
\int_{\Z_p^\times}\eta_v(\phi\vert[\mathfrak{a}])\;\mathrm{d}\mu_{G^\flat,{\mathfrak{a}}}
\end{equation}
where the cusp form $f$ in $(\ref{eq:def-meas})$ has been replaced by $G$, and let $\mathcal{L}_G^{}\in\Lambda^{\rm ur}$ be the element defined by 
\[
\Lcal_G(\xi):=\mathscr{L}_{v,\eta}(\eta^{-1}\xi).
\]
Then for $\xi$ an arbitrary character of $\Gamma$ crystalline at both $v$ and $\bar{v}$ and corresponding to a Hecke character of $K$ of infinity type $(n,-n)$ for some $n\in\Z_{>0}$ with $n\equiv 0\pmod{p-1}$, the calculation in \cite[Prop.~37]{kriz} (taking $\chi^{-1}=\xi\mathbf{N}_K$, $\psi_1=\phi$, and $\psi_2=\phi^{-1}=\psi\omega^{-1}$ in the notations of \emph{loc.cit.}, so in particular $j=n-1$) shows that
\begin{equation}\label{eq:kriz-prop37}
\Lcal_G(\xi)=\frac{\Omega_p^{2n}}{\Omega_\infty^{2n}}\cdot\frac{\Gamma(n+1)\phi^{-1}(-\sqrt{D_K})\xi(\bar{\mathfrak{t}})t}{\mathfrak{g}(\phi)}\cdot\frac{(2\pi i)^{n-1}}{\sqrt{D_K}^{n-1}}\cdot
\Xi_{\xi^{-1}\mathbf{N}_K^{-1}}(\phi,\psi\omega^{-1},N_+,N_-,N_0)\cdot L(\phi_K\xi\mathbf{N}_K,0),
\end{equation}
where $\phi_K$ denotes the base change of $\phi$ to $K$, and
\begin{align*}
\Xi_{\xi^{-1}\mathbf{N}_K^{-1}}(\phi,\psi\omega^{-1},N_+,N_-,N_0)=\prod_{\ell\vert N_+}&(1-\phi^{-1}\xi(\bar{w}))\cdot\prod_{\ell\vert N_-}(1-\phi\xi(\bar{w})\ell^{-1})\\
&\times\prod_{\ell\vert N_0}(1-\phi^{-1}\xi(\bar{w}))(1-\phi\xi(\bar{w})\ell^{-1}).
\end{align*}
Comparing with the interpolation property of $\mathcal{L}_\phi$ in Theorem~\ref{thm:kriz}, and noting that
\[
\mathcal{E}_{\phi,\psi}(\xi^{-1})=\Xi_{\xi^{-1}\mathbf{N}_K^{-1}}(\phi,\psi\omega^{-1},N_+,N_-,N_0)
\]
for all $\xi$ as above, the equality $(\ref{eq:kriz-prop37})$ implies that
\begin{equation}\label{eq:cor-37}
\Lcal_G = \mathcal{E}_{\phi,\psi}^\iota\cdot\Lcal_\phi.
\end{equation}

On the other hand, the congruence $(\ref{eq:cong-mf})$ implies the congruences
\[
\binom{\theta}{m}f(x_{\mathfrak{a}})\equiv
\binom{\theta}{m}G(x_{\mathfrak{a}})\pmod{p\Z_p^{\rm ur}}
\]
for all $m\geqslant 0$, which in turn yield the congruence
\begin{equation}\label{eq:cor-meas}
\Lcal_E\equiv(\Lcal_G)^2\pmod{p\Lambda^{\rm ur}}.
\end{equation}
The combination of (\ref{eq:cor-37}) and (\ref{eq:cor-meas}) now yields the theorem.
\end{proof}

\begin{theorem}\label{cor:Kriz}
Assume that $E[p]^{ss}=\mathbb{F}_p(\phi)\oplus\mathbb{F}_p(\psi)$ as $G_\bQ$-modules, with the characters $\phi, \psi$ labeled so that $p\nmid\mathrm{cond}(\phi)$, and suppose $\phi\neq\mathds{1}$. Then $\mu(\Lcal_E)=0$ and 
\[
\lambda(\Lcal_E)=\lambda(\Lcal_\phi)+\lambda(\Lcal_\psi)+\sum_{w\in S}\bigl\{\lambda(\mathcal{P}_w(\phi))+\lambda(\mathcal{P}_w(\psi))-\lambda(\mathcal{P}_w(E))\bigr\}.
\]
\end{theorem}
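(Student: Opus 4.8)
The plan is to deduce the statement from the congruence of Theorem~\ref{thm:kriz}, the vanishing of the $\mu$-invariants of the relevant Katz $p$-adic $L$-functions and Euler factors, and the functional equation relating $\Lcal_\phi$ and $\Lcal_\psi$. First I would reduce to the case $p\nmid\mathrm{cond}(\phi)$: since $p$ is a prime of good ordinary reduction and $\phi\psi=\omega$, restricting $E[p]$ to a decomposition group at $p$ and using its ordinary structure shows that $\{\phi\vert_{G_p},\psi\vert_{G_p}\}=\{\omega\lambda,\lambda^{-1}\}$ for some unramified character $\lambda$, which by the hypothesis $\phi\vert_{G_p}\neq\mathds{1},\omega$ satisfies $\lambda\neq\mathds{1}$; in particular exactly one of $\phi,\psi$ is unramified at $p$, and $\psi\vert_{G_p}=\omega\cdot(\phi\vert_{G_p})^{-1}$ is likewise $\neq\mathds{1},\omega$. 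As both the hypothesis and the asserted formula are symmetric under $\phi\leftrightarrow\psi$, we may relabel so that $p\nmid\mathrm{cond}(\phi)$, and then Theorem~\ref{thm:kriz} applies and gives
\[
\Lcal_E\equiv(\mathcal{E}_{\phi,\psi}^\iota)^2\cdot(\Lcal_\phi)^2\pmod{p\Lambda^{\rm ur}},\qquad
\mathcal{E}_{\phi,\psi}=\prod_{\ell\mid N_0N_-}\mathcal{P}_{w}(\phi)\cdot\prod_{\ell\mid N_0N_+}\mathcal{P}_{w}(\psi).
\]

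Next I would record that the right-hand side has $\mu$-invariant $0$: each $\mathcal{P}_w(\phi),\mathcal{P}_w(\psi)$ has $\mu=0$ by Lemma~\ref{lemmawneqp}, so $\mu(\mathcal{E}_{\phi,\psi})=0$, while $\mu(\Lcal_\phi)=0$ by Hida's theorem on the vanishing of the $\mu$-invariant of anticyclotomic Hecke $p$-adic $L$-functions (as recalled in the proof of Theorem~\ref{thmmu=0}), $\Lcal_\phi$ being such an $L$-function. Since the involution $\iota$ and multiplication by units preserve $\mu$ and $\lambda$, and these are additive on products, reducing the displayed congruence modulo $p$ yields $\mu(\Lcal_E)=0$ and
\[
\lambda(\Lcal_E)=2\lambda(\Lcal_\phi)+2\lambda(\mathcal{E}_{\phi,\psi}).
\]

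It then remains to rewrite the right-hand side in the claimed form, for which two ingredients are needed. The first is that $\lambda(\Lcal_\psi)=\lambda(\Lcal_\phi)$ and $\mu(\Lcal_\psi)=0$; the latter holds for the same reason as $\mu(\Lcal_\phi)=0$, and for the former one uses that $\phi$ and $\psi=\omega\phi^{-1}$ have the same prime-to-$p$ conductor, so that the functional equation of the anticyclotomic Katz $p$-adic $L$-function relates $\Lcal_\psi$ to $\Lcal_\phi^\iota$ up to a unit of $\Lambda^{\rm ur}$ (the finite Euler-factor discrepancies being trivial, since $\phi$ and $\psi$ vanish on Frobenius at every prime dividing their common conductor), giving $\lambda(\Lcal_\psi)=\lambda(\Lcal_\phi^\iota)=\lambda(\Lcal_\phi)$. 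The second is an elementary comparison of the local Euler factors at the primes $w\in S$: the factors at $w$ and at the conjugate prime $\bar w$ differ by the involution $\iota$ (for $E$ as well as for $\phi$ and $\psi$), so $w$ and $\bar w$ contribute equal $\lambda$'s; for $w$ of good reduction $\mathcal{P}_w(E)\equiv\mathcal{P}_w(\phi)\mathcal{P}_w(\psi)\pmod{p\Lambda}$ by matching characteristic polynomials of Frobenius on $E[p]^{ss}$, so good primes contribute $0$ (in particular the sum over $S$ is independent of $\Sigma$); for $\ell\mid N_0$ the curve $E$ has additive reduction, so $\mathcal{P}_w(E)=1$; and for $\ell\mid N_+$ (resp. $\ell\mid N_-$) it has multiplicative reduction with $a_\ell\equiv\phi(\ell)$ (resp. $\psi(\ell)$) — which, as $a_\ell=\pm1\ne0$ in $\mathbb{F}_p$, forces $\phi,\psi$ unramified at $\ell$ and gives $\mathcal{P}_w(E)\equiv\mathcal{P}_w(\phi)\pmod{p\Lambda}$ (resp. $\equiv\mathcal{P}_w(\psi)$). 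Substituting these into $\sum_{w\in S}\{\lambda(\mathcal{P}_w(\phi))+\lambda(\mathcal{P}_w(\psi))-\lambda(\mathcal{P}_w(E))\}$ and comparing termwise over $\ell\mid N_0$, $\ell\mid N_+$, $\ell\mid N_-$ with $2\lambda(\mathcal{E}_{\phi,\psi})$ shows the two are equal; combined with $\lambda(\Lcal_\psi)=\lambda(\Lcal_\phi)$, this turns $\lambda(\Lcal_E)=2\lambda(\Lcal_\phi)+2\lambda(\mathcal{E}_{\phi,\psi})$ into the asserted identity.

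The step I expect to be the main obstacle is the equality $\lambda(\Lcal_\phi)=\lambda(\Lcal_\psi)$ (together with $\mu(\Lcal_\psi)=0$): making the functional equation of the anticyclotomic Katz $p$-adic $L$-function precise in the normalization of Theorem~\ref{thm:Katz}, and checking that the archimedean and $p$-adic local constants entering it are units, so that they leave the Iwasawa invariants unchanged. The remaining steps are supplied by Theorem~\ref{thm:kriz}, Lemma~\ref{lemmawneqp} and Hida's theorem, or amount to the routine prime-by-prime bookkeeping of Euler factors sketched above.
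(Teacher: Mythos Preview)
Your argument is correct and follows essentially the same route as the paper: the congruence of Theorem~\ref{thm:kriz} combined with Hida's $\mu=0$ result gives $\mu(\Lcal_E)=0$ and $\lambda(\Lcal_E)=2\lambda(\Lcal_\phi)+2\lambda(\mathcal{E}_{\phi,\psi})$, the functional equation of the Katz $p$-adic $L$-function yields $\lambda(\Lcal_\phi)=\lambda(\Lcal_\psi)$, and the prime-by-prime Euler factor comparison turns $2\lambda(\mathcal{E}_{\phi,\psi})$ into the sum over $S$. Your explicit reduction to the labeling $p\nmid\mathrm{cond}(\phi)$ via the ordinary filtration at $p$ and the symmetry of the statement under $\phi\leftrightarrow\psi$ is a detail the paper leaves implicit (it is built into the hypothesis of Theorem~\ref{thm:kriz}), and your case analysis of the Euler factors at good, additive, and multiplicative primes spells out what the paper compresses into equation~\eqref{eq:Euler-comp}, but the substance is the same.
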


\begin{proof}
Since $K$ satisfies (\ref{eq:intro-Heeg}) and (\ref{eq:intro-spl}), the conductors of both $\phi$ and $\psi$ are only divisible by primes split in $K$, and hence the vanishing of $\mu(\mathcal{L}_E)$ follows immediately from the congruence of Theorem~\ref{thm:kriz} and Hida's result \cite{hidamu=0} (note that the factors $\mathcal{P}_w(\phi)$ and $\mathcal{P}_w(\psi)$ also have vanishing $\mu$-invariant, since again the primes $w$ are split in $K$). 

As for the equality between $\lambda$-invariants, note that the involution of $\Lambda$ given by $\gamma\mapsto\gamma^{-1}$ for $\gamma\in\Gamma$
preserves $\lambda$-invariants, and so
\[
\lambda(\mathcal{P}_w(\theta)^2)=\lambda(\mathcal{P}_w(\theta))+\lambda(\mathcal{P}_{\bar{w}}(\theta)),
\]
using that complex conjugation acts as inversion on $\Gamma$. For the term $\mathcal{E}_{\phi,\psi}$ in Theorem~\ref{thm:kriz} we thus have
\[
\lambda((\mathcal{E}_{\phi,\psi}^\iota)^2)=
\lambda((\mathcal{E}_{\phi,\psi})^2)=\sum_{w\vert N_0N_-}\lambda(\mathcal{P}_w(\phi))+\sum_{w\vert N_0N_+}\lambda(\mathcal{P}_w(\psi)),
\]
where $w$ runs over all divisors, not just the ones dividing $\mathfrak{N}$. Using the congruence relations in Theorem~\ref{thm:kriz} (in particular, that $a_\ell\equiv 0\pmod{p}$ for $\ell\vert N_0$) this can be rewritten as
\begin{equation}\label{eq:Euler-comp}
\lambda((\mathcal{E}_{\phi,\psi}^\iota)^2)=\sum_{w\in S}\bigl\{\lambda(\mathcal{P}_w(\phi))+\lambda(\mathcal{P}_w(\psi))-\lambda(\mathcal{P}_w(E))\bigr\}.
\end{equation}
On the other hand, since $\psi=\phi^{-1}\omega$, the functional equation for the Katz $p$-adic $L$-function (see e.g. \cite[Thm.~27]{kriz}) yields
\begin{equation}\label{eq:functional}
\lambda(\mathcal{L}_\psi)=\lambda(\mathcal{L}_\phi).
\end{equation}
The result now follows from Theorem~\ref{thm:kriz} combined with $(\ref{eq:Euler-comp})$ and $(\ref{eq:functional})$.
\end{proof}

Together with the main result of $\S\ref{sec:algebraic}$, we arrive at the following.

\begin{theorem}\label{mulambda} 
Assume that $E[p]^{ss}=\mathbb{F}_p(\phi)\oplus\mathbb{F}_p(\psi)$ with $\phi\vert_{G_p}\neq\mathds{1},\omega$. Then $\mu(\mathcal{L}_E)=\mu(\mathfrak{X}_E)=0$ and 
\[
\lambda(\Lcal_E)=\lambda(\mathfrak{X}_E).
\]
\end{theorem}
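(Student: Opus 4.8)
The plan is to combine the two ``comparison'' results that have just been established, namely Theorem~\ref{MAINalgside} on the algebraic side and Theorem~\ref{cor:Kriz} on the analytic side, together with the analogous facts for characters.  First I would record that both results require exactly the hypothesis $\phi\vert_{G_p}\neq\mathds{1},\omega$, and that (as noted after \eqref{eq:es-mod}) this hypothesis forces $\psi\vert_{G_p}=\omega\phi^{-1}\vert_{G_p}\neq\mathds{1},\omega$ as well; moreover, since $K$ satisfies \eqref{eq:intro-Heeg} and \eqref{eq:intro-spl}, the conductors of $\phi$ and $\psi$ are divisible only by primes split in $K$, so that all of the results of $\S\ref{sec:algebraic}$ and $\S\ref{IMCII}$ apply to $\phi\vert_{G_K}$ and $\psi\vert_{G_K}$.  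The vanishing $\mu(\mathcal{L}_E)=0$ is part of Theorem~\ref{cor:Kriz}, and $\mu(\mathfrak{X}_E)=0$ is part of Theorem~\ref{MAINalgside} (equivalently Proposition~\ref{propmodp}), so the only real content is the equality of $\lambda$-invariants.

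The key step is the following chain of equalities.  By Theorem~\ref{MAINalgside},
\[
\lambda(\mathfrak{X}_E)=\lambda(\mathfrak{X}_\phi)+\lambda(\mathfrak{X}_\psi)+\sum_{w\in S}\bigl\{\lambda(\mathcal{P}_w(\phi))+\lambda(\mathcal{P}_w(\psi))-\lambda(\mathcal{P}_w(E))\bigr\},
\]
and by Theorem~\ref{cor:Kriz},
\[
\lambda(\mathcal{L}_E)=\lambda(\mathcal{L}_\phi)+\lambda(\mathcal{L}_\psi)+\sum_{w\in S}\bigl\{\lambda(\mathcal{P}_w(\phi))+\lambda(\mathcal{P}_w(\psi))-\lambda(\mathcal{P}_w(E))\bigr\}.
\]
Since the ``Euler factor'' correction terms $\sum_{w\in S}\{\cdots\}$ are literally identical in the two formulas, subtracting reduces the claim to
\[
\lambda(\mathfrak{X}_\phi)+\lambda(\mathfrak{X}_\psi)=\lambda(\mathcal{L}_\phi)+\lambda(\mathcal{L}_\psi).
\]
To prove this I would invoke the two-variable (or anticyclotomic) main conjecture for characters, which is a theorem of Rubin \cite{rubinmainconj}: the characteristic ideal $\mathrm{char}_\Lambda(\mathfrak{X}_\theta)=\mathrm{char}_\Lambda(\rH^1_{\Fcal_{\rm Gr}}(K,M_\theta)^\vee)$ is generated by an anticyclotomic $p$-adic Hecke $L$-function of $\theta$, which (after matching normalizations) generates the same $\Lambda^{\rm ur}$-ideal as the Katz $p$-adic $L$-function $\mathcal{L}_\theta$ of Theorem~\ref{thm:Katz}; in particular $\lambda(\mathfrak{X}_\theta)=\lambda(\mathcal{L}_\theta)$.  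Indeed this identification is precisely what was used in the proof of Theorem~\ref{thmmu=0}.  Applying it with $\theta=\phi\vert_{G_K}$ and $\theta=\psi\vert_{G_K}$ gives $\lambda(\mathfrak{X}_\phi)=\lambda(\mathcal{L}_\phi)$ and $\lambda(\mathfrak{X}_\psi)=\lambda(\mathcal{L}_\psi)$, and adding these two identities yields the desired equality and hence the theorem.

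The main obstacle, such as it is, is bookkeeping: one must make sure that the $\mathcal{L}_\phi$, $\mathcal{L}_\psi$ appearing in Theorem~\ref{cor:Kriz} (defined via $\pi_\theta$ applied to Kriz's Katz $p$-adic $L$-function in the proof of Theorem~\ref{thm:Katz}) are the \emph{same} objects, up to a unit in $\Lambda^{\rm ur}$, as the Hecke $L$-functions whose ideals Rubin's main conjecture identifies with $\mathrm{char}_\Lambda(\mathfrak{X}_\phi)$, $\mathrm{char}_\Lambda(\mathfrak{X}_\psi)$ — this is a comparison of CM periods and interpolation formulas that has already been implicitly carried out in Theorem~\ref{thmmu=0}, so it suffices to cite it.  One should also note that the restriction of $\mathfrak{X}_\theta$ as defined in $\S\ref{subsec:Sel-char}$ matches the Selmer/class-group module appearing on Rubin's side, which is exactly the content of the identification \eqref{eq:CFT} and the remark following Theorem~\ref{thmmu=0}.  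With those identifications in place, the proof is a two-line subtraction.

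\begin{proof}
As observed after \eqref{eq:es-mod}, the hypothesis $\phi\vert_{G_p}\neq\mathds{1},\omega$ implies $\psi\vert_{G_p}=\omega\phi^{-1}\vert_{G_p}\neq\mathds{1},\omega$, and since $K$ satisfies \eqref{eq:intro-Heeg} and \eqref{eq:intro-spl} the conductors of $\phi$ and $\psi$ are divisible only by primes split in $K$, so the results of $\S\ref{sec:algebraic}$ and $\S\ref{IMCII}$ apply to $\phi\vert_{G_K}$ and $\psi\vert_{G_K}$. The vanishing $\mu(\mathcal{L}_E)=0$ is part of Theorem~\ref{cor:Kriz} and $\mu(\mathfrak{X}_E)=0$ is part of Theorem~\ref{MAINalgside}, so it remains to prove the equality of $\lambda$-invariants.

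Subtracting the formula for $\lambda(\mathcal{L}_E)$ in Theorem~\ref{cor:Kriz} from the formula for $\lambda(\mathfrak{X}_E)$ in Theorem~\ref{MAINalgside}, the terms $\sum_{w\in S}\{\lambda(\mathcal{P}_w(\phi))+\lambda(\mathcal{P}_w(\psi))-\lambda(\mathcal{P}_w(E))\}$ cancel, so the claim is equivalent to
\[
\lambda(\mathfrak{X}_\phi)+\lambda(\mathfrak{X}_\psi)=\lambda(\mathcal{L}_\phi)+\lambda(\mathcal{L}_\psi).
\]
By Rubin's proof of the main conjecture \cite{rubinmainconj}, as recalled in the proof of Theorem~\ref{thmmu=0} (via the identification \eqref{eq:CFT} and the remark following it), for $\theta\in\{\phi\vert_{G_K},\psi\vert_{G_K}\}$ the characteristic ideal $\mathrm{char}_\Lambda(\mathfrak{X}_\theta)$ is generated by an anticyclotomic $p$-adic Hecke $L$-function of $\theta$ which generates the same $\Lambda^{\rm ur}$-ideal as the Katz $p$-adic $L$-function $\mathcal{L}_\theta$ of Theorem~\ref{thm:Katz}; in particular $\lambda(\mathfrak{X}_\theta)=\lambda(\mathcal{L}_\theta)$. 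Applying this with $\theta=\phi\vert_{G_K}$ and $\theta=\psi\vert_{G_K}$ and adding the two resulting identities gives the displayed equality, and hence $\lambda(\mathcal{L}_E)=\lambda(\mathfrak{X}_E)$.
\end{proof}
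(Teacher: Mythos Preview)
Your proof is correct and follows essentially the same approach as the paper: cite the vanishing of the $\mu$-invariants from Proposition~\ref{propmodp} and Theorem~\ref{cor:Kriz}, then subtract the $\lambda$-invariant formulas of Theorem~\ref{MAINalgside} and Theorem~\ref{cor:Kriz} to reduce to $\lambda(\mathfrak{X}_\theta)=\lambda(\mathcal{L}_\theta)$ for $\theta\in\{\phi,\psi\}$, which is Rubin's main conjecture. The paper's proof is the same but more terse, simply citing Rubin for the two equalities $\lambda(\mathcal{L}_\phi)=\lambda(\mathfrak{X}_\phi)$ and $\lambda(\mathcal{L}_\psi)=\lambda(\mathfrak{X}_\psi)$ without the bookkeeping discussion you include.
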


\begin{proof}
The vanishing of $\mu(\mathfrak{X}_E)$ (resp. $\mu(\mathcal{L}_E)$) has been shown in Proposition~\ref{propmodp} (resp. Theorem~\ref{cor:Kriz}). On the other hand, 
Iwasawa's main conjecture for $K$ (a theorem of Rubin \cite{rubinmainconj}) yields in particular the equalities
$\lambda(\Lcal_\phi)=\lambda(\mathfrak{X}_\phi)$ and  
$\lambda(\Lcal_\psi)=\lambda(\mathfrak{X}_\psi)$. The combination of Theorem~\ref{MAINalgside} and Theorem~\ref{cor:Kriz} therefore yields the result.
\end{proof}

\section{A Kolyvagin system argument}\label{IMCIII}

The goal of this section is to prove Theorem~\ref{thm:howard} below, extending \cite[Thm.~2.2.10]{howard} to the residually reducible setting. This result, which assumes the existence of a non-trivial Kolyvagin system, will be applied in $\S\ref{sec:CD}$ to a Kolyvagin system derived from Heegner points to prove one of the divisibilities towards Conjecture~\ref{conj:PR}.

\subsection{Selmer structures and Kolyvagin systems}

Let $K$ be an imaginary quadratic field, let $(R,\mathfrak{m})$ be a complete Noetherian local ring with finite residue field of characteristic $p$, and let 
$M$ be a topological $R[G_K]$-module such that the $G_K$-action is unramified outside a finite set of primes. We define a \emph{Selmer structure} $\mathcal{F}$ on $M$ to be a finite set $\Sigma=\Sigma(\mathcal{F})$ of places of $K$ containing $\infty$, the primes above $p$, and the primes where $M$ is ramified, together with a choice of $R$-submodules (called local conditions) 
$\rH^1_{\Fcal}(K_w,M)\subset\rH^1(K_w,M)$ for every $w\in\Sigma$. The associated \emph{Selmer group} is then defined by
\[
\rH^1_{\Fcal}(K,M):={\rm ker}\biggl\{\rH^1(K^\Sigma/K,M)\rightarrow\prod_{w\in\Sigma}\frac{\rH^1(K_w,M)}{\rH^1_{\Fcal}(K_w,M)}\biggr\},
\]
where $K^\Sigma$ is the maximal extension of $K$ unramified outside $\Sigma$. 

Below we shall use the following local conditions. First, the \emph{unramified} local condition is
\[
\rH^1_{\rm ur}(K_w,M):={\rm ker}\bigl\{\rH^1(K_w,M)\rightarrow\rH^1(K_w^{\rm ur},M)\bigr\}.
\]
If $w\mid p$ is a finite prime where $M$ is unramified, we set $\rH^1_{\rm f}(K_w,M):=\rH^1_{\rm ur}(K_w,M)$, which is sometimes called the \emph{finite} local condition. 
The \emph{singular quotient} $\rH^1_{\rm s}(K,M)$ is defined by the exactness of the sequence
\[
0\rightarrow\rH^1_{\rm f}(K_w,M)\rightarrow\rH^1(K_w,M)\rightarrow\rH^1_{\rm s}(K_w,M)\rightarrow 0.
\]

Denote by $\cL_0=\cL_0(M)$ the set of rational primes $\ell\neq p$ such that
\begin{itemize}
\item $\ell$ is inert in $K$,
\item $M$ is unramified at $\ell$.
\end{itemize}
Letting $K[\ell]$ be the ring class field of $K$ of conductor $\ell$, define the \emph{transverse} local condition at $\lambda\vert\ell\in\cL_0$ by
\[
\rH^1_{\rm tr}(K_\lambda,T):={\rm ker}\bigl\{\rH^1(K_\lambda,T)\rightarrow\rH^1(K[\ell]_{\lambda'},T)\bigr\},
\]
where $K[\ell]_{\lambda'}$ is the completion of $K[\ell]$ at any prime $\lambda'$ above $\lambda$.

As in \cite{howard}, we call a \emph{Selmer triple} $(M,\mathcal{F},\cL)$ the data of a Selmer structure $\mathcal{F}$ on $M$ and a subset $\cL\subset\cL_0$ with $\cL\cap\Sigma(\mathcal{F})=\emptyset$. Given a Selmer triple $(M,\Fcal,\cL)$ and pairwise
coprime integers $a,b,c$ divisible only by primes in $\cL_0$, the modified Selmer group $\rH^1_{\Fcal^a_b(c)}(K,M)$ is the one defined by 
$\Sigma(\Fcal^a_b(c))=\Sigma(\Fcal)\cup\{w\vert abc\}$
and the local conditions 
\[
\rH^1_{\Fcal^a_b(c)}(K_\lambda,T)
=\begin{cases}
\rH^1(K_\lambda,T) &\text{if }\lambda\vert a,\\
0 &\text{if }\lambda\vert b,\\
\rH^1_{\tr}(K_\lambda,T) &\text{if }\lambda\vert c, \\
\rH^1_{\Fcal}(K_w,T) & \text{if }\lambda\nmid abc.
\end{cases}
\]

Let $T$ be a compact $R$-module equipped with a continuous linear $G_K$-action 
that is unramified outside a finitely set of primes.
For each $\lambda\vert\ell\in\cL_0 = \cL_0(T)$, let $I_\ell$ be the smallest ideal containing $\ell+1$ for which the Frobenius element ${\rm Frob}_\lambda\in G_{K_\lambda}$ acts trivially on $T/I_\ell T$. By class field theory, the prime $\lambda$ splits completely in the Hilbert class field of $K$, and the $p$-Sylow subgroups of $G_\ell:={\rm Gal}(K[\ell]/K[1])$ and $k_\lambda^\times/\mathbb{F}_\ell^\times$ are identified via the Artin symbol, where $k_\lambda$ is the residue field of $\lambda$. Hence by \cite[Lem.~1.2.1]{mazrub} there is a \emph{finite-singular comparison isomorphism}
\begin{equation}\label{eq:f-s}
\phi_\lambda^{\rm fs}:\rH^1_{\rm f}(K_\lambda,T/I_\ell T)\simeq T/I_\ell T\simeq\rH^1_{\rm s}(K_\lambda,T/I_\ell T)\otimes G_\ell.
\end{equation}

Given a subset $\cL\subset\cL_0$, we let $\cN=\cN(\cL)$ be the set of square-free products of primes $\ell\in\cL$, and for each $n\in\cN$ define
\[
I_n=\sum_{\ell\vert n}I_n\subset R,\quad G_n=\bigotimes_{\ell\vert n}G_\ell,
\]
with the convention that $1\in\cN$, $I_1=0$, and $G_1=\Z$.

\begin{definition}
A \emph{Kolyvagin system} for a Selmer triple $(T,\Fcal,\cL)$ is a collection of classes
\[
\kappa=\{\kappa_n\in\rH^1_{\Fcal(n)}(K,T/I_nT)\otimes G_n\}_{n\in\cN}
\]
such that 
$(\phi_\lambda^{\rm fs}\otimes 1)({\rm loc}_\lambda(\kappa_n))={\rm loc}_\lambda(\kappa_{n\ell})$ for all $n\ell\in\cN$.
\end{definition}

We denote by $\mathbf{KS}(T,\Fcal,\cL)$ the $R$-module of Kolyvagin systems for $(T,\Fcal,\cL)$.

\subsection{Bounding Selmer groups}\label{sec:Zp-twisted} 

Here we state our main result on bounding Selmer groups of anticyclotomic twists of Tate modules of elliptic curves, whose proof is given in the next section. The reader mostly interested in the Iwasawa-theoretic consequences of this result might wish to proceed to $\S\ref{subsec:Iw-proof}$ after reading the statement of Theorem~\ref{thm:Zp-twisted}.

Let $E/\Q$ be an elliptic curve of conductor $N$, let $p\nmid 2N$ be a prime of good ordinary reduction for $E$, and let $K$ be an imaginary quadratic field of discriminant $D_K$  prime to $Np$. We assume 
\begin{equation}\label{eq:h1}
E(K)[p]=0.\tag{h1}
\end{equation}

As  before, let $\Gamma={\rm Gal}(K_\infty/K)$ be the Galois group of the anticyclotomic $\Z_p$-extension of $K$.
Let $\alpha:\Gamma\rightarrow R^\times$ be a character with values in the ring of integers $R$ of a finite extension $\Phi/\Q_p$. Let 
\[
r=\rank_{\Z_p}R.
\] 
Let $\rho_E:G_{\bQ}\rightarrow{\rm Aut}_{\bZ_p}(T_pE)$ give the action of $G_\bQ$ on the $p$-adic Tate module of $E$ and consider the $G_K$-modules
\begin{equation}
T_\alpha:=T_pE\otimes_{\bZ_p}R(\alpha),\quad
V_\alpha:=T_\alpha\otimes_R\Phi,\quad A_\alpha:=T_\alpha\otimes_R\Phi/R\simeq V_\alpha/T_\alpha,\nonumber
\end{equation}
where $R(\alpha)$ is the free $R$-module of rank one on which $G_K$ acts the projection $G_K\twoheadrightarrow\Gamma$ composed with $\alpha$, and the $G_K$-action on $T_\alpha$ is via $\rho_\alpha = \rho_E\otimes\alpha$. 

Let $\fm\subset R$ be the maximal ideal, 
with uniformizer $\pi\in\fm$, 
and let $\bar{T}:=T_\alpha\otimes_{} R/\fm$ be the residual representation associated to $T_\alpha$. Note that
\begin{equation}\label{eq:modp}
\bar{T}\simeq E[p]\otimes_{}R/\fm
\end{equation}
as $G_K$-modules, since $\alpha\equiv 1\pmod{\fm}$. In particular, (\ref{eq:h1}) implies that $\bar{T}^{G_K}=0$. 

For $w\vert p$ a prime of $K$ above $p$,  set 
\[
{\rm Fil}_w^+(T_pE):={\rm ker}\bigl\{T_pE\rightarrow T_p\tilde{E}\bigr\},
\]
where $\tilde{E}$ is the reduction of $E$ at $w$, and put
\[
{\rm Fil}_w^+(T_\alpha):={\rm Fil}_w^+(T_pE)\otimes_{\bZ_p}R(\alpha),\quad{\rm Fil}_w^+(V_\alpha):={\rm Fil}_w^+(T_\alpha)\otimes_{R}\Phi.
\]
Following \cite{coates-greenberg},  define the \emph{ordinary} Selmer structure $\Fcal_{\rm ord}$ on $V_\alpha$ by
taking $\Sigma(\Fcal_{\rm \ord}) = \{w\mid pN\}$ and  
\[
\rH^1_{\Fcal_{\rm ord}}(K_w,V_\alpha):=
\begin{cases}
{\rm im}\bigl\{\rH^1(K_w,{\rm Fil}_w^+(V_\alpha))\rightarrow\rH^1(K_w,V_\alpha)\bigr\} & \textrm{if $w\vert p$,}\\
\rH^1_{\rm ur}(K_w,V_\alpha)& \textrm{else.}
\end{cases}
\]
Let $\Fcal_{\rm ord}$ also denote the Selmer structure on $T_\alpha$ and $A_\alpha$ obtained by propagating $\rH^1_{\Fcal_{\rm ord}}(K_w,V_\alpha)$ under the maps induced by the exact sequence 
$0\rightarrow T_\alpha\rightarrow V_\alpha\rightarrow A_\alpha\rightarrow 0$.

Let $\gamma\in\Gamma$ be a topological generator, and let 
\begin{equation}\label{eq:erralpha}
C_\alpha:=
\begin{cases}
v_p(\alpha(\gamma)-\alpha^{-1}(\gamma)) & \alpha\neq \alpha^{-1}, \\
0 & \alpha = \alpha^{-1},
\end{cases}
\end{equation}
where $v_p$ is the $p$-adic valuation normalized so that $v_p(p)=1$. Finally, let 
\[
\cL_E:=\{\ell\in\cL_0(T_pE)\;:\;a_\ell\equiv\ell+1\equiv 0\;({\rm mod}\;p)\},
\]
where $a_\ell=\ell+1-\vert\tilde{E}(\mathbb{F}_\ell)\vert$, and $\cN=\cN(\cL_E)$. 

\begin{theorem}\label{thm:Zp-twisted}
Suppose $\alpha\neq 1$ and there is a Kolyvagin system $\kappa_\alpha=\{\kappa_{\alpha,n}\}_{n\in\cN}\in\mathbf{KS}(T_\alpha,\Fcal_{\rm ord},\cL_E)$ with $\kappa_{\alpha,1}\neq 0$. Then $\rH^1_{\Fcal_{\rm ord}}(K,T_\alpha)$ has rank one, and there is a finite $R$-module $M_\alpha$ such that
\[
{\rm H}^1_{\Fcal_{\rm ord}}(K,A_\alpha)\simeq(\Phi/R)\oplus M_\alpha\oplus M_\alpha
\]
with 
\[
{\rm length}_R(M_\alpha)\leqslant {\rm length}_R\bigl({\rm H}^1_{\Fcal_{\rm ord}}(K,T_\alpha)/R\cdot\kappa_{\alpha,1}\bigr)+E_\alpha
\]
for some constant $E_\alpha\in\Z_{\geqslant 0}$ depending only on  $C_{\alpha}$, $T_pE$, and ${\rm rank}_{\bZ_p}(R)$.
\end{theorem}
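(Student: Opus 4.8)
# Proof Proposal for Theorem~\ref{thm:Zp-twisted}

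The plan is to follow the strategy of Howard's original Kolyvagin system argument \cite[Thm.~2.2.10]{howard}, but carefully tracking where the residual reducibility of $\bar{T} \simeq E[p] \otimes R/\fm$ forces us to allow for an error term. Recall that Howard's argument proceeds by studying the ``core Selmer rank'' of the Selmer structure $\Fcal_{\rm ord}$ on $\bar{T}$ and uses a Kolyvagin system to produce a sequence of primes $\ell_1, \dots, \ell_k \in \cL_E$ such that the Selmer group becomes progressively more constrained; the key input is that localizing at well-chosen Kolyvagin primes kills the singular parts and bounds the length of the dual Selmer group. In the residually irreducible case, one has the hypothesis that $\bar{T}$ generates a ``big'' image, which guarantees (via Chebotarev) that the requisite Kolyvagin primes exist and that certain error terms vanish. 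Here $\bar{T}$ is reducible, so we must locate a substitute for the irreducibility hypothesis.

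First, I would establish the basic structural facts: that $\Fcal_{\rm ord}$ is a self-dual Selmer structure (up to twist by the cyclotomic character, using $\psi = \omega\phi^{-1}$ and local Tate duality, together with the fact that $\mathrm{Fil}_w^+$ is its own orthogonal complement under the Weil pairing), and that the core rank of $(\bar{T}, \Fcal_{\rm ord})$ is $1$. The core rank computation uses the local Euler characteristic formula together with the ordinary local conditions at $w \mid p$ and the hypothesis (\ref{eq:h1}) that $\bar{T}^{G_K} = 0$; since $\bar{T}$ is a twist of $E[p] \otimes R/\fm$ and the global Euler characteristic is governed by the archimedean places of the imaginary quadratic field $K$, one obtains core rank exactly $1$, just as in Howard's setting. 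This step should go through essentially verbatim because it only sees Euler characteristics and the vanishing of $\bar{T}^{G_K}$, not the internal structure of $\bar{T}$ as a $G_K$-module.

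Second — and this is where I expect the main obstacle — I would run the Kolyvagin descent. Starting from $\kappa_{\alpha,1} \neq 0$, one chooses primes $\ell \in \cL_E$ (using Chebotarev applied to the extension cut out by $T_\alpha/\fm^s T_\alpha$ for suitable $s$) whose Frobenius hits prescribed elements, and one bounds $\mathrm{length}_R(\rH^1_{\Fcal_{\rm ord}^*}(K, A_\alpha)^\vee)$ in terms of $\mathrm{length}_R(\rH^1_{\Fcal_{\rm ord}}(K,T_\alpha)/R\kappa_{\alpha,1})$. In the residually irreducible case the error is zero; in our setting the obstruction is that the Chebotarev argument cannot simultaneously control the $\phi$-part and the $\psi$-part of $\bar{T}$, because $\mathrm{Gal}(K(\bar{T})/K)$ need not act irreducibly and the relevant cohomology classes may have restrictions to $\bar{T}$ that are not ``generic.'' The fix is to measure the failure: the error term $E_\alpha$ should be bounded by (a constant multiple of) the length of $\rH^1(K(\bar{T})/K, \bar{T})$ together with the $R$-length of the cokernel of the map comparing the actual image of Galois with the image one would have in the irreducible case. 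Crucially, one shows this quantity is bounded purely in terms of $T_pE$ (equivalently, the fixed finite group $\mathrm{Gal}(\Q(E[p^\infty])/\Q)$ and the congruence module of the isogeny) and $\mathrm{rank}_{\Z_p}(R)$, and does \emph{not} grow with $\alpha$ — except through $C_\alpha = v_p(\alpha(\gamma)-1)$, which enters because propagating local conditions between $T_\alpha$ and $A_\alpha$ and comparing $\bar{T}$ for different $\alpha$ introduces a discrepancy measured by how $p$-adically close $\alpha$ is to the trivial character. This is precisely the role of the $\alpha \neq 1$ hypothesis and the reason $E_\alpha$ is permitted to depend on $C_\alpha$.

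Finally, I would assemble the conclusion. The self-duality of $\Fcal_{\rm ord}$ plus the rank-one core rank forces $\rH^1_{\Fcal_{\rm ord}}(K,T_\alpha)$ to have $R$-rank exactly $1$ and $\rH^1_{\Fcal_{\rm ord}}(K,A_\alpha)$ to have corank exactly $1$; writing $\rH^1_{\Fcal_{\rm ord}}(K,A_\alpha) \simeq (\Phi/R) \oplus (\text{finite})$, Cassels--Tate-style Poitou--Tate global duality together with the alternating pairing on the finite part forces the finite part to be of the form $M_\alpha \oplus M_\alpha$ for some finite $R$-module $M_\alpha$ (this is the standard ``square'' phenomenon for self-dual Selmer groups, and uses only that $R$ is a PID-like discrete valuation ring, via the structure theorem for finitely generated modules). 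The length bound on $M_\alpha$ then follows from the Kolyvagin descent of the previous step, with the error constant $E_\alpha$ absorbing the non-optimality coming from residual reducibility. The deduction that $E_\alpha$ depends only on $C_\alpha$, $T_pE$, and $\mathrm{rank}_{\Z_p}(R)$ is the heart of the matter and deserves a careful, self-contained lemma isolating the Chebotarev/cohomological estimates; everything else is a faithful adaptation of \cite{howard} and \cite{mazrub}.
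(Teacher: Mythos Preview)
Your broad strategy is correct: follow Howard's descent and absorb the residual reducibility into bounded error terms arising in the \v{C}ebotarev step. However, there is a genuine gap in your identification of \emph{where} the errors come from and \emph{how} to control them, and this gap is exactly the technical heart of the argument.

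You suggest the obstruction is that ``the \v{C}ebotarev argument cannot simultaneously control the $\phi$-part and the $\psi$-part of $\bar T$,'' and you propose bounding $E_\alpha$ by the length of $\rH^1(K(\bar T)/K,\bar T)$ plus a cokernel term. The paper does not split into $\phi$/$\psi$ components at all. Instead it isolates three explicit constants: $C_1=\min\{v_p(u-1):u\in\Z_p^\times\cap\mathrm{im}(\rho_E)\}$ (controlling the kernel of restriction via Sah's lemma), $C_2$ such that $p^{C_2}\End_{\Z_p}(T_pE)\subset\rho_E(\Z_p[G_\Q])$ (measuring the failure of the Galois image to span all endomorphisms), and $C_\alpha$ (used to separate the $\alpha$- and $\alpha^{-1}$-eigenspaces under complex conjugation, which is where $\alpha\neq 1$ enters). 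Your proposal gestures toward the first of these but not the second or third, and in particular the role of $C_\alpha$ is not what you describe --- it does not arise from propagating local conditions between $T_\alpha$ and $A_\alpha$, but from the fact that complex conjugation exchanges $\alpha$ and $\alpha^{-1}$ and one must work in one eigenspace.

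The second and more serious omission is the actual \v{C}ebotarev result. In Howard's argument one chooses a prime $\ell$ where three given cohomology classes localize with full order; in the reducible setting one cannot do this, and the paper's substitute (Proposition~\ref{prop:prime2}) is not a standard cohomological estimate but a genuinely new argument borrowed from Nekov\'a\v{r} \cite[\S6]{nekovar}: one evaluates the classes on $\mathrm{Gal}(M/L)$, forms a quadratic form $q$ on a certain module $X^+$ via $2\times 2$ determinants, and uses \cite[Lem.~(6.6.1)]{nekovar} to find an element $z$ on which $q$ takes a value of controlled valuation. This is what allows one to find infinitely many primes $\ell$ such that $R\loc_\ell(c_1)+R\loc_\ell(c_2)$ contains a submodule of shape $\fm^{d_1+d_2+2e}(R^{(k)}\oplus R^{(k)})$ with $e=r(C_1+C_2+C_\alpha)$. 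Your proposal does not indicate any mechanism for producing such primes, and without it the descent cannot proceed.

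Finally, the iterative descent itself requires some careful linear algebra (the paper's Lemma~\ref{lem:key} and Proposition~\ref{prop:key}) to control how the invariants $d_i(n_j)$ of $M^{(k)}(n_j)$ change as one passes from $n_j$ to $n_{j+1}=n_j\ell$, together with a specific list of inductive properties (a)--(e) that the sequence must satisfy. Your outline (``Selmer group becomes progressively more constrained'') is correct in spirit but does not engage with these difficulties.
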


When $\rho_E\vert_{G_K}:G_K\rightarrow{\rm End}_{\Z_p}(T_pE)$ is surjective, Theorem~\ref{thm:Zp-twisted} (with $E_\alpha=0$) can be deduced from \cite[Thm.~1.6.1]{howard}, but the proof of Theorem~\ref{thm:Zp-twisted} assuming only (\ref{eq:h1}) requires new ideas, some of which were inspired by Nekov\'{a}\v{r}'s work \cite{nekovar}.

\subsection{Proof of Theorem~\ref{thm:Zp-twisted}}

To ease notation, let $(T,\mathcal{F},\cL)$ denote the Selmer triple $(T_\alpha,\Fcal_{\rm ord},\cL_E)$, and let $\rho=\rho_\alpha$. For any $k\geqslant 0$, let 
\[
R^{(k)}=R/\fm^kR,\quad T^{(k)}=T/\fm^kT,\quad\cL^{(k)}=\{\ell\in\cL\colon I_\ell\subset p^k\Z_p\},
\]
and let $\mathscr{N}^{(k)}$ be the set of square-free products of primes $\ell\in\cL^{(k)}$. 

We begin by recalling two preliminary results from \cite{mazrub} and \cite{howard}.

\begin{lemma}\label{lemmamod} 
For every $n\in\cN^{(k)}$ and $0\leqslant i\leqslant k$ there are natural isomorphisms
\begin{equation}\label{eq2}
\rH^1_{\Fcal(n)}(K,T^{(k)}/\fm^iT^{(k)})\xrightarrow{\sim}\rH^1_{\Fcal(n)}(K,T^{(k)}[\fm^i])\xrightarrow{\sim}{\rm H}_{\Fcal(n)}^1(K,T^{(k)})[\fm^i]\nonumber
\end{equation}
induced by the maps $T^{(k)}/\fm^iT^{(k)}\xrightarrow{\pi^{k-i}}T^{(k)}[\fm^i]\rightarrow T^{(k)}$.
\end{lemma}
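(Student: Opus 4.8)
The plan is to prove \Cref{lemmamod} by a straightforward diagram chase, following the argument of \cite[Lem.~3.5.3]{howard} (or \cite{mazrub}), the only point to check being that everything is compatible with the local conditions defining the modified Selmer structure $\Fcal(n)$.

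First I would establish the two isomorphisms at the level of global cohomology, \emph{before} imposing any Selmer conditions. For the left-hand map: since $\pi^{k-i}\colon T^{(k)}/\fm^iT^{(k)}\to T^{(k)}[\fm^i]$ is an isomorphism of $G_K$-modules (both are abstractly $T\otimes_R R/\fm^i$, as $\pi^{k-i}$ carries the one onto the other bijectively), it induces an isomorphism on $\rH^1(K^\Sigma/K,-)$. For the right-hand map: the short exact sequence $0\to T^{(k)}[\fm^i]\to T^{(k)}\xrightarrow{\pi^i}\fm^{k-i}T^{(k)}\to 0$ together with $0\to \fm^{k-i}T^{(k)}\to T^{(k)}\to T^{(k)}/\fm^{k-i}T^{(k)}\to 0$ gives, after taking cohomology and using $\bar T^{G_K}=0$ (hence $(T^{(k)})^{G_K}$ and related invariants vanish appropriately, so $\rH^0$-terms do not obstruct), the identification $\rH^1(K^\Sigma/K,T^{(k)}[\fm^i])\xrightarrow{\sim}\rH^1(K^\Sigma/K,T^{(k)})[\fm^i]$; the hypothesis $E(K)[p]=0$ via \eqref{eq:modp} is exactly what makes the connecting maps behave. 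This part is essentially verbatim from \cite{mazrub,howard}.

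Next I would check compatibility with the local conditions at each place $w\in\Sigma(\Fcal(n))$. For $w\nmid pn$ this is the unramified condition, which is visibly preserved by maps induced by morphisms of $G_{K_w}$-modules that are isomorphisms on inertia-invariants; for $w\mid p$ one uses that $\mathrm{Fil}^+_w$ is functorial and the filtration behaves well under the quotients $T^{(k)}/\fm^i T^{(k)}$ and $T^{(k)}[\fm^i]$ (again because $\pi^{k-i}$ is a $G_{K_w}$-isomorphism between them); at primes $\ell\mid n$ one has the relaxed, strict, or transverse condition, and in each case the condition is defined by (co)restriction to an explicit extension, so it is preserved by functoriality. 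Hence the three global isomorphisms restrict to isomorphisms of the Selmer subgroups $\rH^1_{\Fcal(n)}(K,-)$, which is the assertion.

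The main (minor) obstacle is simply bookkeeping: making sure the maps $T^{(k)}/\fm^iT^{(k)}\xrightarrow{\pi^{k-i}}T^{(k)}[\fm^i]\to T^{(k)}$ are the ones inducing the stated isomorphisms in the stated direction, and that no $\rH^0$ or $\rH^2$ term interferes — both controlled by $\bar T^{G_K}=0$. Since \Cref{lemmamod} is stated as a recollection from \cite{mazrub} and \cite{howard}, I would keep the proof to a sentence or two, citing \cite[Lem.~3.5.3]{howard} for the argument in the residually irreducible case and noting that it goes through unchanged under hypothesis \eqref{eq:h1}.
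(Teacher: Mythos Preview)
Your proposal is correct and follows essentially the same approach as the paper: the paper's proof is the single sentence ``The proof of \cite[Lem.~3.5.4]{mazrub} carries over, since it only requires the vanishing of $\bar{T}^{G_K}$,'' which is exactly the point you identify (you cite \cite[Lem.~3.5.3]{howard} instead, but the argument is the same). Your expanded sketch of the diagram chase and the check of local conditions is fine, just more verbose than what the paper records.
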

\begin{proof}
The proof of \cite[Lem.~3.5.4]{mazrub} carries over, since it only requires the vanishing of $\bar{T}^{G_K}$.
\end{proof}

\begin{prop}\label{propstructure}
For every $n\in\cN^{(k)}$ there is an $R^{(k)}$-module $M^{(k)}(n)$  and an integer $\epsilon$ such that
\begin{equation}\label{eq1}
\rH^1_{\CF(n)}(K,T^{(k)}) \simeq (R^{(k)})^\epsilon\oplus M^{(k)}(n)\oplus M^{(k)}(n).\nonumber
\end{equation}
Moreover, $\epsilon$ can be taken to be $\epsilon\in\{0,1\}$ and is independent of $k$ and $n$.
\end{prop}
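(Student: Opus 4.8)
The plan is to follow the strategy of Howard's proof of the structural result \cite[Prop.~1.6.1 / \S2.2]{howard} (and its antecedents in \cite{mazrub}), adapting it to the residually reducible setting where the only input is the vanishing of $\bar T^{G_K}$ rather than surjectivity of $\rho$. The starting observation is that by Lemma~\ref{lemmamod} the module $\rH^1_{\CF(n)}(K,T^{(k)})$ is completely determined by its submodules of $\fm^i$-torsion, so it suffices to produce, for each $i\le k$, an identity relating $\dim_{R/\fm}\rH^1_{\CF(n)}(K,T^{(k)}[\fm])$-type quantities that forces the ``almost self-dual'' shape $(R^{(k)})^\epsilon\oplus M\oplus M$. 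Concretely, the self-duality comes from global Poitou--Tate duality: the Selmer structure $\CF(n)$ and its dual structure $\CF(n)^*$ on the Cartier dual are related, and by the choice of transverse conditions at primes dividing $n$ the local conditions are everywhere exactly orthogonal complements under the local Tate pairing. This produces a perfect pairing (up to the ambiguity measured by the global units / the free part) between $\rH^1_{\CF(n)}(K,T^{(k)})$ and $\rH^1_{\CF(n)^*}(K,T^{(k)})$, and since $T^{(k)}$ is (after twisting) essentially self-dual up to the cyclotomic character — and over $R^{(k)}$ the twist is invisible — one gets that $\rH^1_{\CF(n)}(K,T^{(k)})$ is ``almost'' self-dual as an $R^{(k)}$-module.

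The key steps, in order, would be: (1) Record that $\bar T^{G_K}=0$ (hence $\bar T_{G_K}=0$ by self-duality of $\bar T\simeq E[p]\otimes R/\fm$ up to the mod-$p$ cyclotomic character, using $p>2$), so that $\rH^0$ and $\rH^2$ local/global terms behave as in the irreducible case and Lemma~\ref{lemmamod} applies uniformly. (2) Set up global duality: for the Selmer triple $(T^{(k)},\CF(n),\cdot)$ with $T^{(k)}$ a finite $R^{(k)}$-module, the Greenberg--Wiles / Poitou--Tate formula gives
\[
\frac{\#\rH^1_{\CF(n)}(K,T^{(k)})}{\#\rH^1_{\CF(n)^*}(K,T^{(k)})^{*}}=\prod_{w\in\Sigma(\CF(n))}\frac{\#\rH^1_{\CF(n)}(K_w,T^{(k)})}{\#\rH^0(K_w,T^{(k)})},
\]
and one checks the right-hand side equals $(\#R^{(k)})^{\epsilon}$ with $\epsilon\in\{0,1\}$ independent of $k,n$ — here the local terms at $p$ contribute via the ordinary condition (using good ordinary reduction and the Euler characteristic formula, the sum of local corank contributions is the $\bZ_p$-corank of the global Selmer group, which is $1$ for $A_\alpha$), while the transverse conditions at $\ell\mid n$ contribute nothing new because $\#\rH^1_{\rm tr}(K_\lambda,T^{(k)})=\#\rH^1_{\rm f}(K_\lambda,T^{(k)})=\#\rH^0(K_\lambda,T^{(k)})$. (3) Identify $\CF(n)^*$ with $\CF(n)$: the ordinary local condition at $p$ is its own orthogonal complement (this is the standard self-duality of the Greenberg local condition for an ordinary self-dual representation, again invisible to the $\alpha$-twist over $R^{(k)}$), and the transverse condition is self-dual by \cite[Lem.~1.2.1]{mazrub}; hence $\rH^1_{\CF(n)^*}(K,T^{(k)})\simeq\Hom(\rH^1_{\CF(n)}(K,T^{(k)}),R^{(k){}^\vee})$ as $R^{(k)}$-modules. (4) Conclude by a purely module-theoretic lemma over the finite Gorenstein local ring $R^{(k)}$ (the case $R=\bZ_p/p^k$ is \cite[Lem.~3.5.6 or similar]{mazrub}, and the general Gorenstein case is identical): a finite $R^{(k)}$-module $H$ with $\#H=(\#R^{(k)})^{\epsilon}\cdot\#\Hom(H,R^{(k){}^\vee})$ — i.e.\ with $\#H/\#H^\vee$ a controlled power — and which is ``reflexive enough'' must decompose as $(R^{(k)})^\epsilon\oplus M\oplus M$; the point is that the nontrivial invariant factors pair up. That $\epsilon$ is the same $0$ or $1$ for all $k$ and $n$ follows because it is computed by the local Euler factors in step (2), which do not see $n$ (transverse $=$ finite in cardinality) nor $k$.

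The main obstacle is step (3)–(4): verifying that the ordinary local condition at $v,\bar v$ is \emph{exactly} self-orthogonal under the local Tate pairing for the twisted module $T_\alpha^{(k)}$, not merely up to a bounded error. This is where the residually reducible hypothesis could in principle cause trouble, but it is handled by working over $R^{(k)}=R/\fm^k$ where $\alpha\equiv 1$, so $T^{(k)}$ is genuinely isomorphic (as $G_{K_v}$-module with its filtration) to $E[p^?]\otimes R^{(k)}$ up to the cyclotomic twist that makes ${\rm Fil}^+$ and ${\rm Fil}^+{}$-perp coincide; the relevant self-duality is then exactly Flach's / Greenberg's computation and needs only $E$ to have good ordinary reduction and $E(K_w)[p]=0$ (which is (h1) together with $\phi|_{G_p}\ne\mathds 1,\omega$ translated through the hypotheses, giving $E(K_w)[p]=0$). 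Once that exact orthogonality is in place, the Greenberg--Wiles formula and the Gorenstein module lemma give the statement mechanically, with $\epsilon$ read off from the (rank-one) ordinary Selmer corank of $A_\alpha$.
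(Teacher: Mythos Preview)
Your outline follows the right general strategy (self-duality of the Selmer structure plus a module-theoretic argument), which is also what Howard's proof of \cite[Prop.~1.5.5]{howard} does. But your steps (3)--(4) contain a genuine gap.

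The problem is that the Greenberg--Wiles formula and abstract Pontryagin duality alone cannot give the decomposition $(R^{(k)})^\epsilon\oplus M\oplus M$. Over the Artinian Gorenstein ring $R^{(k)}$, \emph{every} finite module $H$ satisfies $\#H=\#H^\vee$ and $H\simeq H^\vee$ as $R^{(k)}$-modules; so your identity $\#H=(\#R^{(k)})^\epsilon\cdot\#H^\vee$ forces $\epsilon=0$ and is otherwise vacuous, and the isomorphism in your step~(3) carries no structural information. A counterexample to the ``module lemma'' as you state it: $H=R/\fm\oplus R/\fm\oplus R/\fm^2$ over $R^{(k)}=R/\fm^2$ is self-dual and has an odd number of cyclic summands not of length $k$, so it is not of the form $(R^{(k)})^\epsilon\oplus M\oplus M$.

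What is actually required---and what Howard's hypotheses (H.3), (H.4) encode---is an \emph{explicit alternating} $R^{(k)}$-bilinear pairing on $\rH^1_{\CF(n)}(K,T^{(k)})$ (or on a suitable quotient), induced from the alternating Weil pairing $T\times T\to R(1)$ together with the self-orthogonality of the local conditions. It is the structure theorem for alternating forms over the truncated DVR $R^{(k)}$ that forces the invariant factors outside the radical to pair up, giving $M\oplus M$; the parameter $\epsilon$ is the rank of the radical. The paper simply records that Howard's proof goes through under (\ref{eq:h1}) and the verifications of (H.3), (H.4) via \cite[Lem.~3.7.1, Lem.~2.2.1]{mazrub}. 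For the independence of $\epsilon$ from $k$ and $n$, the paper does not use local Euler-characteristic bookkeeping as you propose, but rather the parity lemma \cite[Lem.~1.5.3]{howard}: by Lemma~\ref{lemmamod} one has $\epsilon\equiv\dim_{R/\fm}\rH^1_{\CF(n)}(K,\bar T)\pmod 2$, and the parity of this residual Selmer rank is constant in $n$ (and visibly independent of $k$).
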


\begin{proof}
This is shown in \cite[Prop.~1.5.5]{howard}, whose proof makes use of hypothesis (\ref{eq:h1}) and hypotheses (H.3) and (H.4) in \emph{op.\,cit.}, the latter two being satisfied in our setting by \cite[Lem.~3.7.1]{mazrub} and \cite[Lem.~2.2.1]{mazrub}, respectively. We note that the independence of $\epsilon$ follows from the fact that, by Lemma~\ref{lemmamod}, we have
\[
\epsilon\equiv{\rm dim}_{R/\mathfrak{m}}\rH^1_{\CF(n)}(K,\bar{T})\pmod{2},
\]
and the right dimension is independent of $k$ and $n$ by the ``parity lemma'' of \cite[Lem.~1.5.3]{howard}, whose proof is also given under just the aforementioned hypotheses.
\end{proof}

\subsubsection{The \v{C}ebotarev argument}
For any finitely-generated torsion $R$-module $M$ and $x\in M$, write 
\[
\ord(x):=\min\{m\geqslant 0: \pi^m\cdot x =0\}.
\]

When $\rho_E$ has large image, a standard application of the \v{C}ebotarev density theorem can be used to show that, given $R$-linearly independent classes $c_1,\dots,c_s\in\rH^1(K,T^{(k)})$, there exist infinitely many primes $\ell\in\cL$ such 
that ${\rm ord}({\rm loc}_\ell(c_i))={\rm ord}(c_i)$, $i=1,\dots,s$ (see \cite[Cor.~3.2]{mccallum}). 
Assuming only hypothesis (\ref{eq:h1}), one can obtain a similar result with ``error terms''.  Our version of this is 
Proposition~\ref{prop:prime2} below, which provides the key technical input for our proof of Theorem~\ref{thm:Zp-twisted}. 
Before proving this proposition we define the error terms that appear in its statement.

For any field $F\subset\overline{\Q}$ let $F(E[p^\infty])$ be the fixed field of the kernel of $\rho_E|_{G_F}$. 
Since $(D_K,Np)=1$, and therefore $E$ does not have CM by $K$, and $p$ is odd by hypothesis, $\Q(E[p^\infty])\cap K_\infty = \Q$, 
as any subfield of $K_\infty$ that is Galois over $\Q$ is either $\Q$ or contains $K$.
Hence the natural projection 
$\Gal(K_\infty(E[p^\infty])/K_\infty)\rightarrow \Gal(\Q(E[p^\infty])/\Q)$ is an isomorphism and 
so $\rho_E(G_{K_\infty}) = \rho_E(G_\Q)$. 

The first error term comes from the following.

\begin{lemma}\label{lem:image1} 
The intersection $U = \Z_p^\times\cap \mathrm{im}(\rho_E|_{G_{K_\infty}})$ is an open subgroup of $\Z_p^\times$
such that $U\subset \mathrm{Im}(\rho)\subset \Aut_R(T)$ for all characters $\alpha$. 
\end{lemma}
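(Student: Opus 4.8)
The plan is to analyze the image $H := \mathrm{im}(\rho_E|_{G_{K_\infty}}) \subset \Aut_{\Z_p}(T_pE) \cong \GL_2(\Z_p)$, which by the discussion immediately preceding the lemma equals $\rho_E(G_\Q)$. First I would recall that $E$ admits a rational $p$-isogeny, so $\rho_E$ is reducible: with respect to a suitable $\Z_p$-basis adapted to the filtration on $T_pE$ lifting \eqref{eq:es-mod}, every element of $H$ is upper-triangular, say $\begin{pmatrix} a & * \\ 0 & d\end{pmatrix}$ with $a,d \in \Z_p^\times$. The determinant is the $p$-adic cyclotomic character, which is surjective onto $\Z_p^\times$ (since $K_\infty/K$ is anticyclotomic, the cyclotomic character is still surjective on $G_{K_\infty}$; more simply, $\Q(\mu_{p^\infty}) \cap K_\infty = \Q$ as $\Q(\mu_{p^\infty})$ is abelian over $\Q$ and ramified at $p$, while $K_\infty$ is unramified at $p$ outside the anticyclotomic direction — this is the same argument used above for $\Q(E[p^\infty]) \cap K_\infty = \Q$). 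So the two characters $a = \phi_{\mathrm{cyc}}$ (the diagonal entry giving the submodule) and $d = \psi_{\mathrm{cyc}} = \chi_{\mathrm{cyc}} \phi_{\mathrm{cyc}}^{-1}$ on $H$ are each valued in $\Z_p^\times$, and at least one of them has open image — indeed $\phi_{\mathrm{cyc}}$ reduces mod $p$ to $\phi|_{G_K}$ and $d$ to $\psi|_{G_K}$, and since $\phi\psi = \omega$ generates $\mathbb{F}_p^\times$, the product $ad = \chi_{\mathrm{cyc}}$ has image all of $\Z_p^\times$, so $\{a(h) : h \in H\}$ and $\{d(h): h \in H\}$ cannot both be contained in a proper closed subgroup.

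Next I would identify $U = \Z_p^\times \cap H$ explicitly. An element $h \in H$ lies in $\Z_p^\times$ (i.e. is a scalar matrix) precisely when $a(h) = d(h)$ and the off-diagonal entry vanishes; the set of such scalars forms a closed subgroup of $\Z_p^\times$. To see it is \emph{open}, it suffices to produce scalars arbitrarily close to $1$. Here I would use that $H$ contains an open subgroup of the upper-triangular Borel: since $T_pE$ is not a CM representation in the relevant range (or more robustly, since the image of an elliptic curve's mod-$p^\infty$ representation restricted to an index-finite subgroup still contains a congruence subgroup of the Borel by Serre's open-image-type results in the reducible case — cf. the structure of $\GL_2(\Z_p)$-subgroups), the homothety subgroup of $H$ is open in $\Z_p^\times$. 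Concretely, $\det(h) = a(h)d(h)$ surjects onto $\Z_p^\times$, and the commutator subgroup of $H$ is contained in the unipotent radical, so passing to $H \cap SL_2$ and using that $[H:H\cap SL_2]$-index considerations force homotheties; the cleanest route is: the scalar $h^2$ for $h$ a homothety, combined with surjectivity of $\det$ on $H$, shows $(\Z_p^\times)^2 \subset$ (norms of) the scalar part after adjustment — I will make this precise by noting that if $\lambda \in \Z_p^\times$ then some lift $\tilde\lambda \in H$ has $\det = \lambda^2$, and $\tilde\lambda \cdot (\text{scalar correction})$...

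The main obstacle is exactly this last point: showing the homothety subgroup of $\rho_E(G_\Q)$ is \emph{open} in $\Z_p^\times$, not just nontrivial. For a non-CM elliptic curve this is classical (the image contains a nontrivial homothety with infinite order, hence by Sen's theorem or direct $\GL_2(\Z_p)$-subgroup analysis an open subgroup of homotheties); for the reducible case one invokes that $E[p]^{ss} = \mathbb{F}_p(\phi) \oplus \mathbb{F}_p(\psi)$ with $\phi \ne \psi$ (which holds since $\phi\psi = \omega$ and $p > 2$, ruling out $\phi = \psi$ unless $\phi^2 = \omega$ — one must check this degenerate case separately, but even there the $p$-adic representation is not split, and Ribet-style arguments or the non-vanishing of the extension class give the needed homotheties). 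Once $U$ is shown open in $\Z_p^\times$, the inclusion $U \subset \mathrm{Im}(\rho) = \mathrm{Im}(\rho_\alpha) \subset \Aut_R(T)$ for every $\alpha$ is immediate: $\rho_\alpha = \rho_E \otimes \alpha$ and scalars $u \in U \subset \Z_p^\times$ are sent to $u \cdot \alpha(\cdot)$, but restricted to $G_{K_\infty}$ the character $\alpha$ is trivial (it factors through $\Gamma = \Gal(K_\infty/K)$), so a homothety $u$ coming from $G_{K_\infty}$ still acts as the scalar $u \in R^\times$ on $T = T_\alpha$. I would close by remarking that this last observation is the whole point of restricting to $G_{K_\infty}$: it makes the homothety subgroup $U$ independent of the twisting character $\alpha$, which is what subsequent \v{C}ebotarev arguments in the proof of Theorem~\ref{thm:Zp-twisted} need.
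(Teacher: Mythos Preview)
Your approach has a fundamental error at the outset. You assert that because $E$ admits a rational $p$-isogeny, the $p$-adic representation $\rho_E$ on $T_pE$ is upper-triangular with respect to a basis ``adapted to the filtration on $T_pE$ lifting \eqref{eq:es-mod}.'' But there is no such filtration: the exact sequence \eqref{eq:es-mod} lives on $E[p]$, not on $T_pE$. A rational $p$-isogeny yields a $G_\Q$-stable line in $E[p]$, not a $G_\Q$-stable $\Z_p$-line in $T_pE$; for a non-CM curve the image of $\rho_E$ is \emph{open} in $\GL_2(\Z_p)$ by Serre, so it is never contained in a Borel---the reducibility mod $p$ only forces the image into an Iwahori-type congruence subgroup. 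All of your subsequent computations with diagonal characters $a,d$ therefore rest on a false premise, which is why the argument dissolves into the tangle you describe. Note also that the Eisenstein hypothesis is not even part of the setup of \S\ref{sec:Zp-twisted}; the lemma is stated under only (\ref{eq:h1}).

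The paper's proof is essentially one line: Serre's open image theorem gives that $\rho_E(G_\Q)$ is open in $\GL_2(\Z_p)$, hence contains $1+p^nM_2(\Z_p)$ for some $n$, and in particular contains the open subgroup $1+p^n\Z_p$ of scalars. Since $\rho_E(G_{K_\infty})=\rho_E(G_\Q)$ (as observed just before the lemma), this gives openness of $U$ immediately. Your final paragraph---that $\alpha$ is trivial on $G_{K_\infty}$, so homotheties coming from $G_{K_\infty}$ act by the same scalar on every twist $T_\alpha$---is correct and is exactly how the paper concludes.
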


\begin{proof}  
By 
\cite[Prop.~(6.1.1)(i)]{nekovar}, $U=\Z_p^\times\cap \mathrm{im}(\rho_E)\subset \Aut_{\Z_p}(T_pE)\simeq \GL_2(\Z_p)$ 
is an open subgroup of $\Z_p^\times$. Since 
$\mathrm{im}(\rho_E|_{G_{K_\infty}}) =  \mathrm{im}(\rho_E)$, 
$U = \Z_p^\times\cap \mathrm{im}(\rho_E|_{G_{K_\infty}})$. 
As $\alpha$ is trivial on $G_{K_\infty}$ the claim for all characters $\alpha$ follows.
\end{proof}

For $U = \Z_p^\times\cap \mathrm{im}(\rho_E)$ as in Lemma \ref{lem:image1}, let
\[
C_1 := \min\{v_p(u-1)\colon u\in U\}.
\]
Since $U$ is an open subgroup of $\Z_p^\times$, $0\leqslant C_1 < \infty$.

To define the second error term, note that $\End_{\Z_p}(T_pE)/\rho_E(\Z_p[G_{\Q}])$ is a torsion $\Z_p$-module, as $\rho_E$ is irreducible. Hence there exists $m\in\Z_{\geq 0}$ such that $p^m (\End_{\Z_p}(T_pE)/\rho_E(\Z_p[G_{\Q}]))=0$. Then
\[
C_2:=\min\bigl\{ m\geqslant 0 \colon p^m\End_{\Z_p}(T_pE)\subset\rho_E(\Z_p[G_{\Q}])\bigr\}
\]
is such that $0\leqslant C_2 <\infty$. 

\begin{lemma}\label{lem:image2} For any $\alpha$,  $p^{C_2}$ annihilates
$\End_R(T)/\rho(R[G_{K_\infty}])$. 
\end{lemma}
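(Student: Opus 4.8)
The plan is to show that $\rho(R[G_{K_\infty}])$ contains $p^{C_2}\End_R(T)$, where $T = T_pE\otimes_{\Z_p}R(\alpha)$ and $\End_R(T)\simeq \End_{\Z_p}(T_pE)\otimes_{\Z_p}R$ since $R(\alpha)$ is free of rank one over $R$. The key observation, already recorded in the excerpt, is that $\rho_E(G_{K_\infty}) = \rho_E(G_\Q)$ because $\Q(E[p^\infty])\cap K_\infty = \Q$; moreover $\alpha$ is trivial on $G_{K_\infty}$, so $\rho|_{G_{K_\infty}} = \rho_\alpha|_{G_{K_\infty}}$ literally agrees with $\rho_E|_{G_{K_\infty}}$ as a map to $\Aut$ of the underlying $\Z_p$-module, just regarded now as $R$-linear. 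Hence $\rho(R[G_{K_\infty}])$ is the $R$-span inside $\End_R(T)$ of $\rho_E(G_\Q)$.

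First I would unwind the identification $\End_R(T) = \End_{\Z_p}(T_pE)\otimes_{\Z_p}R$ and note that $\rho(R[G_{K_\infty}]) = \rho_E(\Z_p[G_\Q])\otimes_{\Z_p}R$ as $R$-submodules, using that $\rho(G_{K_\infty}) = \rho_E(G_\Q)$ and that $R[G_{K_\infty}]$ maps onto $R\cdot\rho_E(\Z_p[G_\Q])$. Then, by the definition of $C_2$, we have $p^{C_2}\End_{\Z_p}(T_pE)\subset\rho_E(\Z_p[G_\Q])$, and tensoring this inclusion of $\Z_p$-modules with the (flat, as it is the ring of integers of a finite extension) $\Z_p$-algebra $R$ gives $p^{C_2}\End_R(T)\subset \rho(R[G_{K_\infty}])$. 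Therefore $p^{C_2}$ annihilates the quotient $\End_R(T)/\rho(R[G_{K_\infty}])$, which is exactly the claim.

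The only mild subtlety — and the step I would be most careful about — is the identification of $\rho(R[G_{K_\infty}])$ with $\rho_E(\Z_p[G_\Q])\otimes_{\Z_p}R$ rather than merely containing it: one must check that extending scalars along $\Z_p\hookrightarrow R$ does not shrink or distort the image, which follows because $R$ is free (hence faithfully flat) over $\Z_p$, so $\rho_E(\Z_p[G_\Q])\otimes_{\Z_p}R\hookrightarrow\End_{\Z_p}(T_pE)\otimes_{\Z_p}R = \End_R(T)$ is injective and its image is precisely the $R$-submodule generated by $\rho_E(G_\Q) = \rho(G_{K_\infty})$. Everything else is formal, so I do not expect a genuine obstacle here; the lemma is really just transporting the definition of $C_2$ across the base change $\Z_p\rightsquigarrow R$ and along the equality of images $\rho_E(G_{K_\infty}) = \rho_E(G_\Q)$.
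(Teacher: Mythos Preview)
Your proof is correct and follows essentially the same approach as the paper: both use that $\rho(G_{K_\infty}) = \rho_E(G_\Q)$ (since $\alpha$ is trivial on $G_{K_\infty}$ and $\rho_E(G_{K_\infty}) = \rho_E(G_\Q)$), then identify $\End_R(T)/\rho(R[G_{K_\infty}]) \simeq (\End_{\Z_p}(T_pE)/\rho_E(\Z_p[G_\Q]))\otimes_{\Z_p}R$ via flatness of $R$ over $\Z_p$, and conclude by the definition of $C_2$. Your version spells out the flatness step more carefully, but the argument is the same.
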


\begin{proof}  
Since $\rho(G_{K_\infty}) = \rho_E(G_{K_\infty}) = \rho_E(G_\Q)$ (using that $E$ does not have CM by $K$), it follows that
$$
\End_R(T)/\rho(R[G_{K_\infty}]) = \End_R(T)/\rho_E(R[G_\Q]) = (\End_{\Z_p}(T_pE)/\rho_E(\Z_p[G_\Q]))\otimes_{\Z_p}R
$$
is annihilated by $p^{C_2}$.
\end{proof}

\begin{remark}\label{rmkirred} 
If $\rho_E$ is surjective, then clearly $C_1 = 0$. Similarly, if $E[p]$ is irreducible, then $C_2=0$. In particular, if $\rho_E$ is surjective, then $C_1 = 0 = C_2$.
\end{remark}

The third error term is given by the quantity $C_\alpha$ defined before.

\begin{prop}\label{prop:prime2} Suppose $\alpha\neq 1$.
Let $c_1, c_2, c_3 \in \rH^1(K,T^{(k)})$. Suppose $Rc_1 + Rc_2$ contains a submodule isomorphic to 
$\fm^{d_1}R^{(k)}\oplus \fm^{d_2}R^{(k)}$ for some $d_1,d_2\geqslant 0$.
Then there exist infinitely many primes $\ell\in \CL^{(k)}$ such that 
$$
\ord(\loc_\ell(c_3)) \geqslant \ord(c_3) - r(C_1+C_2+C_\alpha),  
$$
and
$R\loc_\ell(c_1) + R \loc_\ell(c_2) \subset \rH^1(K_\ell,T^{(k)})$ contains a
submodule isomorphic to 
$$
\fm^{d_1+d_2+2r(C_1+C_2+C_\alpha)}(R^{(k)}\oplus R^{(k)}).
$$
\end{prop}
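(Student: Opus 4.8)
The plan is to run the standard \v{C}ebotarev argument but with the large-image hypothesis replaced by the quantitative "error term" control provided by Lemmas~\ref{lem:image1} and~\ref{lem:image2}. First I would reduce to the situation where the classes $c_i$ have been replaced by a convenient generating set: using that $Rc_1+Rc_2$ contains $\fm^{d_1}R^{(k)}\oplus\fm^{d_2}R^{(k)}$, pick elements $c_1', c_2'$ inside $Rc_1+Rc_2$ generating this submodule (so $\ord(c_i')=k-d_i$), and note that it suffices to find $\ell$ with $\ord(\loc_\ell(c_3))\geqslant\ord(c_3)-r(C_1+C_2+C_\alpha)$ and with $\loc_\ell(c_i')$ of order at least $k-d_i-r(C_1+C_2+C_\alpha)$ for $i=1,2$; the displayed submodule claim for $R\loc_\ell(c_1)+R\loc_\ell(c_2)$ then follows from elementary $R$-module bookkeeping (the sum of two cyclic submodules of the given orders in $\rH^1(K_\ell,T^{(k)})\simeq T^{(k)}\oplus T^{(k)}$, using the unramified/transverse splitting and $\bar T^{G_K}=0$, contains $\fm^{d_1+d_2+2r(C_1+C_2+C_\alpha)}(R^{(k)}\oplus R^{(k)})$).

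Next I would set up the Galois-theoretic arena. Let $L=K_\infty(E[p^\infty], \mu_{p^\infty})$, or more precisely the fixed field of the common kernel of $\rho$ and of all the classes $c_i, c_1', c_2'$ (which are cocycles on $\Gal(K^\Sigma/K)$ with values in the finite module $T^{(k)}$); then the $c$'s restrict to $G_L$-homomorphisms $\overline{c}\colon G_L\to T^{(k)}$. The key point, exactly as in \cite[\S1.6]{howard} and \cite{mccallum}, is to find $\tau\in G_K$ such that (a) $\tau$ acts on $T^{(k)}$ through an element forcing $(T^{(k)})^{\tau=1}$ to be small and cyclic over $R$ — here is where $C_1$ enters: by Lemma~\ref{lem:image1} we may choose $\tau|_{K(E[p^\infty])}$ to lie in $U\subset\Z_p^\times\subset\Aut_R(T)$, and then $(T^{(k)})/(\tau-1)$ and $(T^{(k)})[\tau-1]$ are both killed by $\fm^{rC_1}$ more than in the surjective case — and (b) the values $\overline{c_3}(\sigma\tau)$, $\overline{c_i'}(\sigma\tau)$ can be made large as $\sigma$ ranges over $\Gal(L/K(E[p^\infty]))$. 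For (b) one uses that $\rho(R[G_{K_\infty}])$ has finite index in $\End_R(T)$ with exponent dividing $p^{C_2}$ (Lemma~\ref{lem:image2}), so twisting $\tau$ by elements $\sigma\in\Gal(L/\cdot)$ moves $\overline{c}(\tau)$ around a $G_{K_\infty}$-submodule of $T^{(k)}$ containing $\fm^{rC_2}T^{(k)}$; hence one can arrange $\ord(\overline{c_3}(\sigma\tau))\geqslant\ord(c_3)-r(C_1+C_2)$ and similarly for $c_1',c_2'$. The extra $rC_\alpha$ accounts for the discrepancy between the $G_K$-action and the $G_{K_\infty}$-action: since $\alpha\neq 1$, a topological generator $\gamma$ of $\Gamma$ acts on $T$ by $\rho_E(\tilde\gamma)\alpha(\gamma)$, and $v_p(\alpha(\gamma)-1)=C_\alpha$ measures how far from $G_{K_\infty}$-equivariant our chosen $\tau$-conjugation is; absorbing this costs at most $rC_\alpha$ in each order. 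Then \v{C}ebotarev applied to the conjugacy class of $\sigma\tau$ in $\Gal(L/K)$ produces infinitely many inert primes $\ell$ (lying in $\CL^{(k)}$ because the condition $I_\ell\subset p^k\Z_p$ is exactly that $\Fr_\ell$ acts trivially on $T^{(k)}$, which is guaranteed by the choice of $\tau$ acting through $U$ with $\tau\equiv 1$ suitably) with $\loc_\ell(c)$ computed in terms of $\overline{c}(\Fr_\ell)=\overline{c}(\sigma\tau)$, giving the stated lower bounds on the orders.

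The main obstacle I anticipate is step (b) above: controlling the orders of the values of \emph{two} cocycles $c_1', c_2'$ simultaneously (plus $c_3$) while only having $G_{K_\infty}$-linear — not $G_K$-linear — wiggle room, and while the target module $T^{(k)}\oplus T^{(k)}$ is not free over $R$ but only has a prescribed submodule. The surjective-image case handles this by a clean application of the fact that $H^1(\Gal(L/K(E[p^\infty])), T^{(k)})^{\vee}$-type obstructions vanish; here one must instead argue that the image of $(\overline{c_1'},\overline{c_2'},\overline{c_3})\colon \Gal(L/K(E[p^\infty]))\to T^{(k)}\oplus T^{(k)}\oplus T^{(k)}$ is large enough after accounting for the $p^{C_2}$-torsion cokernel, which requires knowing the three cocycles remain "independent modulo the error" — this is where I would need to invoke the linear-independence hypothesis on $Rc_1+Rc_2$ together with an argument (along the lines of \cite[Lem.~1.6.3]{howard}, adapted) that the relevant Galois cohomology group $H^1(\Gal(L/K), \bar T)$ and its analogues inject suitably, using only $\bar T^{G_K}=0$ and the explicit bounds $C_1, C_2$. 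Everything else — the module bookkeeping, the translation between $\ord(\loc_\ell(c))$ and cocycle values via the finite-singular machinery, and the verification that the primes produced lie in $\CL^{(k)}$ — is routine once this independence-with-error statement is in place.
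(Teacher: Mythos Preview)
Your outline has the right spirit but contains two genuine gaps that the paper's argument handles by different means.

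\textbf{The reduction step is not valid.} Knowing $\ord(\loc_\ell(c_i'))\geqslant k-d_i-r(C_1+C_2+C_\alpha)$ for $i=1,2$ individually does \emph{not} imply that $R\loc_\ell(c_1')+R\loc_\ell(c_2')$ contains $\fm^{d_1+d_2+2r(C_1+C_2+C_\alpha)}(R^{(k)}\oplus R^{(k)})$. Both $c_1',c_2'$ are unramified at $\ell$, so both localizations land in $\rH^1_{\rm ur}(K_\ell,T^{(k)})\simeq T^{(k)}\simeq (R^{(k)})^2$; the unramified/transverse decomposition does not separate them. Nothing prevents $\loc_\ell(c_1')$ and $\loc_\ell(c_2')$ from being nearly proportional, giving an essentially cyclic span. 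What must be controlled is the \emph{determinant} of the pair $(\loc_\ell(c_1),\loc_\ell(c_2))$ in $(R^{(k)})^2$. The paper does this by constructing a quadratic form $q$ (the determinant of the $\pm$-components of the restricted cocycles) on an auxiliary module $X^+$ and then invoking Nekov\'{a}\v{r}'s results \cite[Cor.~(6.3.4), Lem.~(6.6.1)]{nekovar} to find a single Galois element $z$ at which $q(z)$ avoids $\fm^{d_1+d_2+2r(C_1+C_2+C_\alpha)+1}$ \emph{and} the $c_3$-condition holds simultaneously. You correctly flag ``controlling two cocycles simultaneously'' as the main obstacle, but your proposed route around it (individual order bounds plus bookkeeping) does not work, and the adaptation of \cite[Lem.~1.6.3]{howard} you gesture at is precisely what requires the Nekov\'{a}\v{r} input in the reducible case.

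\textbf{The \v{C}ebotarev setup misses complex conjugation.} You apply \v{C}ebotarev in $\Gal(L/K)$ to a class $\sigma\tau$ with $\tau$ acting as a scalar in $U\subset\Z_p^\times$. But membership in $\cL^{(k)}$ requires $\ell$ \emph{inert} in $K$ with $a_\ell\equiv\ell+1\equiv 0\pmod{p^k}$; a Frobenius acting as a scalar near $1$ gives $a_\ell\equiv 2$ and gives no control on inertness. The paper instead applies \v{C}ebotarev over $\Q$ to $g=\tau z\in\Gal(M/\Q)$ with $\tau$ \emph{complex conjugation} and $z\in Z^+\subset G_L/H$: then $\Frob_\ell=g$ forces $\ell$ inert (since $\tau|_K\neq 1$), $a_\ell\equiv\tr(\tau)=0$, $\ell\equiv\det(\tau)=-1$. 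Complex conjugation is also the actual source of $C_\alpha$: $\tau$ carries $\Hom(G_L,T_E^{(k)})^{(\alpha)}$ to $\Hom(G_L,T_E^{(k)})^{(\alpha^{-1})}$, and since $\alpha\neq\alpha^{-1}$ their intersection is annihilated by $\pi^{rC_\alpha}$, which is why the passage $f_i\mapsto f_i^\pm=(1\pm\tau)f_i$ costs $rC_\alpha$. The constant $C_1$ enters not through fixed points of $\tau$ but through Sah's lemma, bounding the kernel of restriction to $G_L$.
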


\begin{proof} 
Let $m_i = \max\{0,\ord(c_i) - r(C_1+C_2+C_{\alpha})\}$.
Note that since $R c_1 + R c_2$ contains a submodule isomorphic to $\fm^{d_1}R^{(k)}\oplus \fm^{d_2}R^{(k)}$,
it must be that $\max\{\ord(c_1),\ord(c_2)\}\geqslant k-d_1,k-d_2$ and hence 
if $m_1=m_2=m_3=0$,  
then the lemma is trivially true. So we suppose $\max\{m_1,m_2,m_3\}>0$.

Let $K_{\alpha}\subset K_{\infty}$ be such that $\alpha|_{G_{K_\alpha}}\equiv 1 \mod \fm^k$.
Let $L=K_{\alpha}(E[p^k])$ be the fixed field of the kernel of the action of $G_{K_\alpha}$ on 
$E[p^k]$ (so in particular, $G_L$ acts trivially on $T^{(k)}$). 
Then $\rho$ induces an injection
\[
\rho: \Gal(L/K)\hookrightarrow \Aut(T^{(k)}).
\] 
Let $u\in \Z_p^\times\cap\mathrm{im}(\rho_E|_{G_{K_\infty}})$ such that $\ord_p(u-1) = C_1$. 
Then $u = \rho(g)$ for some $g\in \Gal(L/K)$. Let $T_{E}^{(k)} = T_pE\otimes_{\Z_p} R/\fm^k$. 
It follows from Sah's lemma that $g-1$ annihilates $\rH^1(\Gal(L/K), T^{(k)})$, and therefore the kernel of the restriction map
$$
\rH^1(K,T^{(k)})\rightarrow \rH^1(L,T^{(k)})= \rH^1(L,T_{E}^{(k)})^{(\alpha)}=\Hom(G_L,T_{E}^{(k)})^{(\alpha)}
$$
is annihilated by $p^{C_1}$ and hence by $\pi^{rC_1}$ (cf.\cite[Prop.~(6.1.2)]{nekovar}). 
Here and in the following we denote by $(-)^{(\alpha)}$ the submodule on which $\Gal(L/K)$ acts via the character $\alpha$.
The restriction of the $c_i$ to $G_L$ therefore yields homomorphisms $f_i \in \Hom(G_L,T_E^{(k)})^{(\alpha)}$ such that 
$$
\ord(f_i) \geqslant \ord(c_i) - rC_1, i=1,2,3,
$$
and $Rf_1+Rf_2\subset \Hom(G_L,T_E^{(k)})^{(\alpha)}$ contains a submodule isomorphic to 
$\fm^{d_1+rC_1}R^{(k)}\oplus \fm^{d_2+rC_1}R^{(k)}$.

Note that the complex conjugation $\tau$ acts naturally on $\Hom(G_L,T_E^{(k)})$, 
and that this action maps an element $f\in\Hom(G_L,T_E^{(k)})^{(\alpha)}$ to an element $\tau\cdot f\in \Hom(G_L,T_E^{(k)})^{(\alpha^{-1})}$. 
The intersection $\Hom(G_L,T_E^{(k)})^{(\alpha)}\cap \Hom(G_L,T_E^{(k)})^{(\alpha^{-1})}$
is annihilated by $\gamma-\alpha^{\pm 1}(\gamma)$ and so by $\alpha(\gamma)-\alpha^{-1}(\gamma)$, for all $\gamma\in\Gal(L/K)$.
Since 
\[
\{\alpha(\gamma)-\alpha^{-1}(\gamma)\;{\rm mod}\;\fm^k \ : \ \gamma\in\Gal(L/K)\} = \{\alpha(\gamma)-\alpha^{-1}(\gamma)\;{\rm mod}\;\fm^k \ : \ \gamma\in\Gamma\}
\] 
and since
$\alpha\neq \alpha^{-1}$ (as $\alpha\neq 1$ and $p$ is odd),
it follows from the definition of $C_\alpha$ that  $\Hom(G_L,T_E^{(k)})^{(\alpha)}\cap \Hom(G_L,T_E^{(k)})^{(\alpha^{-1})}$ is annihilated by $\pi^{rC_\alpha}$. This implies that
$f_i^\pm = (1\pm \tau)\cdot f_i$ satisfies
$$
\ord(f_i^\pm) \geqslant \ord(f_i) - rC_\alpha\geqslant{\ord(c_i)-r(C_1+C_\alpha)}, \ \ i=1,2,3,
$$
and that $Rf_1^\pm +R f_2^\pm = (1\pm\tau)\cdot(Rf_1+Rf_2)$ contains a submodule isomorphic to 
$\fm^{d_1+r(C_1+C_\alpha)}R^{(k)}\oplus \fm^{d_2+r(C_1+C_\alpha)}R^{(k)}$.
Note that since $\max\{m_1,m_2,m_3\}>0$, it follows that for some $j$ both $f_j^+$ and $f_j^-$ are non-zero.

The $R$-module spanned by the image of $f_i^\pm$ contains $R[G_{K_\infty}]\cdot f_i^\pm(G_L)$. 
By Lemma \ref{lem:image2}, the latter contains $p^{C_2}(\End_{\Z_p}(T_pE)\otimes_{\Z_p}R)\cdot f_i^\pm(G_L) \subset T_E^{(k)}$. 
Since $f_i^\pm$ has order at least $\ord(f_i) - rC_\alpha$, $f_i^\pm(G_L)$ contains an element of order at least 
$\ord(f_i) - rC_\alpha$ and hence $\pi^{k-\ord(f_i) + r(C_2+C_\alpha)} T_E^{(k)}\subset 
p^{C_2}(\End_{\Z_p}(T_pE)\otimes_{\Z_p}R)\cdot f_i^\pm(G_L)$.
In particular, the $R$-module spanned by the image of $f_i^\pm$ contains 
$\fm^{k-m_i} T_E^{(k)}$.

Let $H \subset G_L$ be the intersection of the kernels of the $f_i^\pm$. Since some $f_j^\pm$ is non-zero, $H\neq G_L$
and $Z = G_L/H$ is a non-zero torsion $\Z_p$-module. The subgroup $H$ is stable under the action of complex conjugation and hence this action descends to $Z$, which then decomposes as $Z = Z^+ \oplus Z^-$ with respect to this action.
Each $f_i^\pm$ can be viewed as an element of $\Hom(Z,T_E^{(k)})$.
Let $g_i^\pm$ be the composition of $f_i^\pm$ with the projection of $T_E^{(k)}$ to $(T_E^{(k)})^\pm$.
Fix an $R^{(k)}$-basis $u_\pm$ of $(T_E^{(k)})^\pm$. Since the $R$-span of the image of $f_i^\pm$ contains
$\fm^{k-m_i} T_E^{(k)}$, the $R$-span of the image of $g_i^\pm$ contains $\fm^{k-m_i}R^{(k)} u_\pm$.
Moreover, since $f_i^\pm \in \Hom(Z,T_E^{(k)})^\pm$, $g^\pm_i(Z^-)=0$ and so $g_i^\pm(Z) = g_i^\pm(Z^+)$.
Since $\max\{m_1,m_2,m_3\}>0$, it follows that $Z^+$ is nontrivial.

Let $W^\pm = \sum_{i=1}^3 Rf_i^\pm \subset \Hom(G_L,T_E^{(k)})^\pm$ and let
$W = W^+\oplus W^- \subset \Hom(G_L,T_E^{(k)})$. 
Each $f\in W$ can be viewed as a homomorphism from 
$Z$ to $T_E^{(k)}$, and evaluation at $z\in Z$ yields an injection
$$
Z\hookrightarrow \Hom_R(W,T_E^{(k)}).
$$
Furthermore, this injection is equivariant with respect to the action of complex conjugation, so 
the restriction to $Z^+$ is an injection
$$
Z^+ \hookrightarrow \Hom_R(W,T_E^{(k)})^+ = \Hom_R(W^+,(T_E^{(k)})^+)\oplus \Hom_R(W^-,(T_E^{(k)})^-).
$$
Let $X^+\subset \Hom_R(W,T_E^{(k)})^+$ be the $R$-span of the image of $Z^+$. It follows
from \cite[Cor.~(6.3.4)]{nekovar} and Lemma \ref{lem:image2} that 
\begin{equation}\label{eq:spanimage}
p^{C_2} \Hom_R(W,T_E^{(k)})^+\subset X^+.
\end{equation}

Given $(\phi^+,\phi^-) \in \Hom_R(W^+,(T_E^{(k)})^+)\oplus \Hom_R(W^-,(T_E^{(k)})^-)$, define
$$
q(\phi^+,\phi^-) = \det\left(\begin{matrix}
\beta(\phi^+(f_1^+)) & \beta(\phi^-(f_1^-)) \\
\beta(\phi^+(f_2^+)) & \beta(\phi^-(f_2^-))
\end{matrix}\right), \ \ 
\phi^\pm(-) = \beta(\phi^\pm(-))u_{\pm}\in R^{(k)}u_\pm = (T_E^{(k)})^\pm.
$$
The restriction of $q$ to $X^+$ defines an $R^{(k)}$-valued quadratic form 
on $X^+$ that we denote by $q(x)$. 
Since $W^+$ contains $Rf_1^+ +R f_2^+$, which in turn contains
a submodule isomorphic to 
$\fm^{d_1+r(C_1+C_\alpha)}R^{(k)}\oplus \fm^{d_2+r(C_1+C_\alpha)}R^{(k)}$, there exists
$\psi^+\in \Hom_R(W^+,(T_E^{(k)})^+)$ and $j\in \{1,2\}$ such that 
$\beta(\psi^+(f_j^+)) \in \pi^{\max{d_1,d_2}+r(C_1+C_\alpha)}(R^{(k)})^\times$.
Similarly, there exists $\psi^-\in \Hom_R(W^-,(T_E^{(k)})^-)$
such that $\beta(\psi^-(f_{3-j}^-)) \in \pi^{\min{d_1,d_2}+r(C_1+C_\alpha})(R^{(k)})^\times$
and $\beta(\psi^-(f_j^-)) = 0$.
For such a pair $(\psi^+,\psi^-)$, 
$$
q(\psi^+,\psi^-) \in \pi^{d_1+d_2+2r(C_1+C_\alpha)}(R^{(k)})^\times.
$$
From \eqref{eq:spanimage} it follows that $p^{C_2}(\psi^+,\psi^-) = x_\psi$ for some $x_\psi\in X^+$, and
$$
q(x_\psi) = p^{2C_2}q(\psi^+,\psi^-) 
\in\pi^{d_1+d_2+2r(C_1+C_2+C_\alpha)}(R^{(k)})^\times.
$$
It then follows from \cite[Lem.~(6.6.1)(ii)]{nekovar} that 
\begin{equation}\label{eq:qimage}
q(Z^+) \not\subset \fm^{d_1+d_2+2r(C_1+C_2+C_\alpha)+1}R^{(k)}.
\end{equation}

If $m_3>0$, let $Z_3\subset Z^+$ be the submodule such that 
$g_3^+(Z_3) = \fm^{k-{m_3}+1}R^{(k)} u_+$. Otherwise, let $Z_3 =0$. Then $Z_3$ is a proper $\Z_p$-submodule of $Z^+$.
It then follows from \cite[Lem.~(6.6.1)(iii)]{nekovar} and \eqref{eq:qimage} that
\begin{equation}\label{eq:z}
\text{there exists $z\in Z^+$ such that $z\not\in Z_3$ and $q(z) \not\in \fm^{d_1+d_2+2r(C_1+C_2+C_\alpha)+1}R^{(k)}$.}
\end{equation}

Let $M$ be the fixed field of the subgroup $H\subset G_L$, so $\Gal(M/L) = Z$. 
Let $g = \tau z \in \Gal(M/\Q)$, and let 
$\ell\nmid pND_K$ be any prime such that each $c_i$ is unramified at $\ell$ and $\Frob_\ell = g$ in $\Gal(M/\Q)$ 
(there are infinitely many such $\ell$: this is the application of the \v{C}ebotarev Density Theorem). 
Since $G_L$ fixes $E[p^k]$ and $K$, 
$\Frob_\ell$ acts as $\tau$ on $K$ and $E[p^k]$.
This means that $\ell$ is inert in $K$ and that $a_\ell(E) \equiv \ell +1 \equiv 0 \mod p^k$. 
That is, $\ell\in \CL^{(k)}$.

Since $\ell$ is inert in $K$, the Frobenius element for $K_\ell$ is $\Frob_\ell^2$. Consider the restriction of
$c_i$ to $K_\ell$. Since $c_i$ is unramified at $\ell$, $\loc_\ell(c_i)\in \rH^1_{\rm ur}(K_\ell,T^{(k)}$.
Evaluation at $\Frob_\ell^2$ is an isomorphism
$$
\rH^1_{\rm ur}(K_\ell, T^{(k)}) \xrightarrow\sim T^{(k)}/(\Frob_\ell^2-1)T^{(k)} = T^{(k)} = T_E^{(k)},
$$
where the last equality is because $\Frob_\ell^2$ acts as $\tau^2 = 1$ on $T^{(k)}$ by the choice of $\ell$.
This means that $\loc_\ell(c_i)$ is completely determined by $c_i(\Frob_\ell^2)$.
Furthermore, since $\Frob_\ell^2 = g^2  = z^2 \in \Gal(M/L)$, $c_i(\Frob_\ell^2) = f_i(z^2)$. Hence 
\begin{equation}\label{eq:ciz}
c_i(\Frob_\ell^2) = f_i(z^2) = 2f_i(z) = f_i^+(z) + f_i^-(z) = (g_i^+(z),g_i^-(z)) \in T_E^{(k)} = (T_E^{(k)})^+\oplus (T_E^{(k)})^-,
\end{equation}
since the projection of $f_i^\pm$ to $(T_E^{(k)})^\mp$ vanishes on $Z^+$.  

From \eqref{eq:ciz} we see that $\ord(\loc_\ell(c_3)) = \ord(c_3(\Frob_\ell^2) = \ord(f_3(z^2)) \geqslant \ord(g_3^+(z))$.
Since $z\not\in Z_3$ by \eqref{eq:z},
$$
\ord(\loc_\ell(c_3)) \geqslant m_3,
$$
which shows that $\ell$ satisfies the first condition of the theorem.

From \eqref{eq:ciz} we also see that
$$
R\loc_\ell(c_1) + R\loc_\ell(c_2) \xrightarrow\sim R (g_1^+(z),g_1^-(z)) + R(g_2^+(z),g_2^-(z)) \subset T_E^{(k)} = (T_E^{(k)})^+\oplus (T_E^{(k)})^-.
$$
Write $g_i^\pm(z) = \beta_i^\pm(z)u_\pm$. Then 
$$
q(z) = \det\left(\begin{matrix}
\beta_1^+(z) & \beta_1^-(z) \\
\beta_2^+(z) & \beta_2^-(z) 
\end{matrix}\right).
$$
Since $q(z) \not\in \fm^{d_1+d_2+2r(C_1+C_2+C_\alpha)+1}R^{(k)}$ by \eqref{eq:z}, it follows from the above 
expression for $q(z)$ that the module
$R (g_1^+(z),g_1^-(z)) + R(g_2^+(z),g_2^-(z))$ contains a submodule isomorphic to 
$\fm^{d_1+d_2+2r(C_1+C_2+C_\alpha)}(R^{(k)}\oplus R^{(k)})$, which shows that $\ell$ also satisfies the
second condition of the theorem.
\end{proof}

\begin{cor}\label{cor:prime2} Suppose $\alpha\neq 1$.
Let $c_1, c_2 \in \rH^1(K,T^{(k)})$. Suppose $Rc_1 + Rc_2$ contains a submodule isomorphic to 
$\fm^{d_1} R^{(k)}\oplus \fm^{d_2} R^{(k)}$ for some $d_1,d_2 \geqslant 0$.
Then there exist infinitely many primes $\ell\in \CL^{(k)}$ such that 
$\ord(\loc_\ell(c_1)) \geqslant \ord(c_1) - r(C_1+C_2+C_\alpha)$ and 
$R\loc_\ell(c_1) + R \loc_\ell(c_2) \subset \rH^1(K_\ell,T^{(k)})$ contains a
submodule isomorphic to 
$$
\fm^{k-\ord(c_1)+r(C_1+C_2+C_\alpha)}R^{(k)} \oplus \fm^{d_1+d_2+2r(C_1+C_2+C_\alpha)}R^{(k)}.
$$
\end{cor}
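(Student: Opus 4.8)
The plan is to deduce the corollary from Proposition~\ref{prop:prime2} by applying that proposition with the third class taken to be $c_1$ itself. Set $e := r(C_1+C_2+C_\alpha)$ and $a := d_1+d_2+2e$. Applying Proposition~\ref{prop:prime2} to $(c_1, c_2, c_3) := (c_1, c_2, c_1)$ — whose hypothesis is exactly the hypothesis of the corollary — produces infinitely many primes $\ell\in\CL^{(k)}$ for which, on the one hand, $\ord(\loc_\ell(c_1)) \geqslant \ord(c_1) - e$, which is the first assertion of the corollary verbatim, and on the other hand, writing $N := R\loc_\ell(c_1) + R\loc_\ell(c_2)$, the module $N$ contains a submodule isomorphic to $\fm^{a}(R^{(k)}\oplus R^{(k)}) \cong R/\fm^{k-a}\oplus R/\fm^{k-a}$.

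The remaining step is to combine these two conclusions to get the (finer, unbalanced) second assertion of the corollary. As in the proof of Proposition~\ref{prop:prime2}, the classes $\loc_\ell(c_i)$ lie in $\rH^1_{\rm ur}(K_\ell, T^{(k)})$, which is free of rank $2$ over $R^{(k)} = R/\fm^k$; since $R^{(k)}$ is a quotient of a discrete valuation ring and $N$ is generated by two elements, $N \cong R/\fm^{n_1}\oplus R/\fm^{n_2}$ for some $n_1 \geqslant n_2 \geqslant 0$. From the submodule produced by Proposition~\ref{prop:prime2} we read off $n_2 \geqslant k-a$, and from $\ord(\loc_\ell(c_1))\geqslant m_1 := \max\{0,\ord(c_1)-e\}$ we get $n_1\geqslant m_1$. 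By the classification of submodules of a direct sum of two cyclic $R^{(k)}$-modules (elementary divisor theory over $R^{(k)}$) it follows that $N$ contains a submodule isomorphic to $R/\fm^{m_1}\oplus R/\fm^{k-a} \cong \fm^{k-m_1}R^{(k)}\oplus\fm^{a}R^{(k)}$, and since $m_1 \geqslant \ord(c_1)-e$ this module in turn contains one isomorphic to $\fm^{k-\ord(c_1)+e}R^{(k)}\oplus\fm^{a}R^{(k)}$. That is precisely the second assertion.

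I do not expect a genuine obstacle here: all the substance is already in Proposition~\ref{prop:prime2} (the \v{C}ebotarev argument with error terms), and what remains is the elementary-divisor bookkeeping above. The only points that require a moment's attention are the degenerate ranges $\ord(c_1)\leqslant e$ (so $m_1=0$ and both $\fm^{k-m_1}R^{(k)}$ and $\fm^{k-\ord(c_1)+e}R^{(k)}$ are the zero module) and $a\geqslant k$ (so $\fm^{a}R^{(k)}=0$), in which cases the claimed inclusion is immediate from what Proposition~\ref{prop:prime2} already gives.
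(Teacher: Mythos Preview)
Your proposal is correct and follows the same approach as the paper: apply Proposition~\ref{prop:prime2} with $c_3=c_1$, then read off both conclusions. The paper's proof is a two-line ``whence the conclusion of the corollary,'' and your elementary-divisor bookkeeping (together with the degenerate-case remarks) simply makes explicit the step the paper leaves to the reader.
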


\begin{proof} We apply Proposition \ref{prop:prime2}
with $c_3 = c_1$. Then $R\loc_\ell(c_1) = R\loc_\ell(c_3)$ contains a submodule isomophic 
to $\fm^{k-\ord(c_3)+r(C_1+C_2+C_\alpha)}R^{(k)} = \fm^{k-\ord(c_1)+r(C_1+C_2+C_\alpha)}R^{(k)}$, and
$R\loc_\ell(c_1) + R\loc_\ell(c_2)$ contains a submodule isomorphic to 
$\fm^{d_1+d_2+2r(C_1+C_2+C_\alpha)}(R^{(k)}\oplus R^{(k)})$,
whence the conclusion of the corollary.
\end{proof}

With Proposition \ref{prop:prime2} -- and especially Corollary \ref{cor:prime2} -- in hand, we next prove the following theorem, which implies the first statement of Theorem~\ref{thm:Zp-twisted} and will be used in the next section to prove the bound on
the length of $M_\alpha$.

\begin{theorem}\label{thm:rank1}
Suppose $\alpha\neq 1$. 
If $\kap_{\alpha,1}\in \rH^1(K,T)$ is non-zero, then $\epsilon=1$ and for $k\gg 0$, every element in $M^{(k)}(1)$ has order strictly less than $k$. In particular, $\rH^1_{\CF}(K,T)\simeq R$.
\end{theorem}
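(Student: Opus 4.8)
The plan is to recast the three conclusions as the single statement that $c:=\mathrm{corank}_R\rH^1_{\CF}(K,A_\alpha)$ equals $1$, to dispose of the inequality $c\geq 1$ by a soft argument, and to obtain $c\leq 1$ by running the Mazur--Rubin/Howard Kolyvagin-system descent with Proposition~\ref{prop:prime2} in place of the classical \v{C}ebotarev input.

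\emph{Reduction.} By (\ref{eq:modp}) and (\ref{eq:h1}) we have $\bar T^{G_K}=0$, hence $A_\alpha^{G_K}=0$, so $\rH^1(K,T_\alpha)$ is $R$-torsion-free and, for every $k$, multiplication by $\pi^k$ on $A_\alpha$ identifies $\rH^1(K,T^{(k)})$ with $\rH^1(K,A_\alpha)[\pi^k]$ compatibly with the propagation of the ordinary local conditions; in particular $\rH^1_{\CF}(K,T_\alpha)$ is torsion-free and $\rH^1_{\CF}(K,T^{(k)})\simeq\rH^1_{\CF}(K,A_\alpha)[\pi^k]$. Write $\rH^1_{\CF}(K,A_\alpha)\simeq(\Phi/R)^{c}\oplus D$ with $D$ finite. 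For $k>\mathrm{length}_R(D)$ this gives $\rH^1_{\CF}(K,T^{(k)})\simeq(R^{(k)})^{c}\oplus D$, which compared with Proposition~\ref{propstructure} shows $\epsilon\equiv c\pmod 2$ and that ``every element of $M^{(k)}(1)$ has order $<k$'' is equivalent to $M^{(k)}(1)$ having no $R^{(k)}$-summand, i.e.\ to $c=\epsilon$; since $\epsilon\in\{0,1\}$ and, the transition maps $\rH^1(K,T^{(k)})\to\rH^1(K,T^{(k-1)})$ becoming multiplication by $\pi$ on $\rH^1(K,A_\alpha)[\pi^k]\to\rH^1(K,A_\alpha)[\pi^{k-1}]$, one has $\rH^1_{\CF}(K,T)=\varprojlim_k\bigl(\rH^1_{\CF}(K,A_\alpha)[\pi^k],{\times\pi}\bigr)\simeq R^{c}$, all three conclusions of the theorem are equivalent to $c=1$. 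Finally, $\kap_1$ lies in $\rH^1_{\CF}(K,T_\alpha)$, which is torsion-free; if $\kap_1\neq0$ then this module has positive rank, i.e.\ $c\geq 1$. So it remains to prove $c\leq 1$.

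\emph{The corank bound.} I would prove $c\leq 1$ — equivalently $\mathrm{length}_R M^{(k)}(1)<k$ for $k\gg 0$, or $\mathrm{corank}_R\rH^1_{\CF^{*}}(K,A^{*}_\alpha)\leq 1$ — by the Kolyvagin-system machinery. Fix $k\gg 0$; since $\rH^1_{\CF}(K,T_\alpha)$ is torsion-free with $\kap_1\neq0$, the reduction $\kap^{(k)}_1\in\rH^1_{\CF}(K,T^{(k)})$ is nonzero with $\ord(\kap^{(k)}_1)=k-O(1)\to\infty$. One then builds $1=n_0\mid n_1\mid\cdots$ in $\cN^{(k)}$: at step $i$, with $\kap^{(k)}_{n_i}\neq 0$, one applies Proposition~\ref{prop:prime2} with $c_3=\kap^{(k)}_{n_i}$ and $c_1,c_2$ spanning a chunk matching the two largest elementary divisors of the relevant (dual) Selmer module, obtaining a prime $\ell=\ell_{i+1}\in\cL^{(k)}$ for which $\loc_\ell$ is near-surjective on $\rH^1_{\mathrm f}(K_\ell,T^{(k)})$ while $\ord(\loc_\ell\kap^{(k)}_{n_i})\geq\ord(\kap^{(k)}_{n_i})-r(C_1+C_2+C_\alpha)$; through local Tate duality at $\ell$, the finite--singular relation (\ref{eq:f-s}) — which makes $\loc_\ell\kap^{(k)}_{n_i\ell}$ correspondingly large in the singular quotient — and global duality, adjoining $\ell$ strictly reduces the dual Selmer module while, by the Kolyvagin relation, $\kap^{(k)}_{n_i\ell}$ stays nonzero. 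The number of primes that must be adjoined before the dual Selmer group over $T^{(k)}$ is annihilated is bounded \emph{independently of $k$}: the steps at which that module still contains a copy of $R^{(k)}$ each shrink its length by $k-O(1)$, so there are only a bounded (in terms of $c$ alone) number of them, after which a further $\leq\mathrm{length}_R(D)$ steps suffice; so, $k$ being large, $\ord(\kap^{(k)}_1)$ exceeds this bounded step-count times $r(C_1+C_2+C_\alpha)$ and the Kolyvagin classes survive throughout. Once $\rH^1_{\CF^{*}(\mathfrak n)}(K,A^{*(k)}_\alpha)=0$, global duality gives $M^{(k)}(\mathfrak n)=0$, and tracing the bounded, $k$-independent list of adjoined primes back to $n=1$ — keeping track that each step changes $\mathrm{length}_R$ by a quantity controlled by the relevant $\ord(\loc_\ell\kap)\geq k-O(1)$ — bounds $\mathrm{length}_R M^{(k)}(1)$ by a constant independent of $k$, forcing $c=\epsilon$; with $c\geq 1$ and $\epsilon\leq 1$ this gives $c=\epsilon=1$, hence all three assertions.

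\emph{Where the difficulty lies.} The one genuinely new ingredient is the control of the ``error terms'' $r(C_1+C_2+C_\alpha)$ that Proposition~\ref{prop:prime2} attaches to every descent step. In Howard's residually irreducible setting \cite{howard} these constants vanish and the classical descent applies verbatim; here one must instead (i) verify that the prime $\ell$ furnished by Proposition~\ref{prop:prime2} simultaneously lies in $\cL^{(k)}$, makes $\loc_\ell$ near-surjective on the two top elementary divisors of the Selmer module, and leaves $\kap$ with nonzero localization — which is exactly why the rank-two conclusion of Proposition~\ref{prop:prime2}, and the complex-conjugation/quadratic-form argument behind its proof, are indispensable rather than a single-class \v{C}ebotarev estimate — and (ii) ensure that the number of descent steps is bounded independently of $k$, so that the cumulative loss $(\text{number of steps})\cdot r(C_1+C_2+C_\alpha)$ is swallowed by $\ord(\kap^{(k)}_1)\to\infty$ and the Kolyvagin classes never vanish. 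This balancing of a bounded step-count against the per-step error is the crux, and it is the reason the hypothesis (\ref{eq:h1}) — forcing $\rH^1_{\CF}(K,T_\alpha)$ to be torsion-free, so that $\ord(\kap^{(k)}_1)\to\infty$ — is essential.
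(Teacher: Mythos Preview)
Your reduction to $c:=\mathrm{corank}_R\rH^1_{\CF}(K,A_\alpha)=1$ is clean and correct, and so is the soft argument for $c\geq 1$. The problem is your proof of $c\leq 1$.

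You propose a full iterative descent $1=n_0\mid n_1\mid\cdots$ to annihilate the dual Selmer module and then ``trace back'' to bound $\mathrm{length}_R M^{(k)}(1)$ by a constant independent of $k$. But this last step does not follow from what you have written. By your own accounting, each of the roughly $c$ ``large'' steps changes the length by $k-O(1)$, so tracing back from $M^{(k)}(\mathfrak{n})=0$ only gives $\mathrm{length}_R M^{(k)}(1)=O(ck)$, which is exactly what you would get from $M^{(k)}(1)^2\simeq (R^{(k)})^{c-\epsilon}\oplus D$ and tells you nothing about $c$. The iterative descent you sketch is in fact the engine of the \emph{subsequent} length bound in Theorem~\ref{thm:Zp-twisted}, and the paper's version of that descent already \emph{uses} Theorem~\ref{thm:rank1} (the structure $R^{(k)}\oplus M(n)\oplus M(n)$ with a \emph{single} free summand) as input. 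So your approach is both circular as stated and far heavier than necessary.

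The paper's proof is a one-prime argument. Write $\kappa_1=\pi^s c_1$ with $c_1$ generating a free $R^{(k)}$-summand, and take any $c_2\notin Rc_1$; let $d$ be the order of $c_2$ modulo $Rc_1$. Apply Corollary~\ref{cor:prime2} (not the full Proposition~\ref{prop:prime2}) to $c_1,c_2$ to find a single $\ell\in\cL^{(k)}$ with $\ord(\loc_\ell c_1)\geq k-e$ and $R\loc_\ell c_1+R\loc_\ell c_2\supset \fm^e R^{(k)}\oplus\fm^{k-d+2e}R^{(k)}$. The Kolyvagin relation then gives $\ord(\loc_{\ell,\mathrm s}\kappa_\ell)=\ord(\loc_\ell\kappa_1)\geq k-s-e$. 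Now global duality says the images of $\loc_\ell$ on $\rH^1_{\CF}(K,T_\alpha^{(k)})$ and of $\loc_{\ell,\mathrm s}$ on $\rH^1_{\CF^\ell}(K,T_{\alpha^{-1}}^{(k)})$ are mutual annihilators in $\rH^1_{\rm f}\times\rH^1_{\rm s}$; since $\tau\cdot\kappa_\ell$ lives on the $\alpha^{-1}$ side with singular localization of order $\geq k-s-e$, the rank-two submodule $\fm^e R^{(k)}\oplus\fm^{k-d+2e}R^{(k)}$ on the finite side forces $k-s-e\leq k-d+2e$, i.e.\ $d\leq s+3e$. Thus $\rH^1_{\CF}(K,T^{(k)})/Rc_1$ is killed by $\pi^{s+3e}$, so for $k>s+3e$ there is at most one $R^{(k)}$-summand, giving $\epsilon\leq 1$ directly. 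No iteration, no tracing back.
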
 

\begin{proof} 
Suppose $\kap_1:=\kap_{\alpha,1} \neq 0$.
The assumption $\bar{T}^{G_K}=0$ implies that $\rH^1_{\CF}(K,T)$ is torsion-free, so $\epsilon \geqslant 1$. 

If $k\gg 0$, then the image of $\kap_1$ in $\rH^1_{\CF}(K,T^{(k)})$, still denoted by $\kap_1$ by abuse of notation, 
is non-zero and $\ind(\kap_1, \rH^1_{\CF}(K,T))=\ind(\kap_1, \rH^1_{\CF}(K,T^{(k)}))$, where by the index $\ind(c,M)$
for $M$ a finitely generated $R$-module and $c\in M$ we mean the smallest integer $m\geqslant 0$ such that $c$ has non-zero
image in $M/\fm^{m+1}M$ (equivalently, $c\in \fm^mM$).  
Let $s= \ind(\kap_1, \rH^1_{\CF}(K,T))$. Let $e= r(C_1+C_2+C_\alpha)$. Suppose $k$ also satisfies
\begin{equation}\label{eq:kse}
k > s + 3e.
\end{equation}

By the definition of $s$, there exist $c_1\in \rH^1_{\CF}(K,T^{(k)})$ such that  
the image of $c_1$ in $\rH^1_{\CF}(K,T^{(k)})/\fm \rH^1_{\CF}(K,T^{(k)})$ is non-zero and $\kap_1 = \pi^{s}c_1$.
The assumption $\bar{T}^{G_K}=0$ implies that $\rH^1_{\CF}(K,T)$ is torsion-free, so $Rc_1 \simeq R^{(k)}$.
Suppose $c_2\in \rH^1_{\CF}(K,T^{(k)})$ is such that $c_2\not \in R c_1$. We will show that 
$\pi^{s+3e} c_2 \in Rc_1$. By \eqref{eq:kse} this implies
that $\rH^1_{\CF}(K,T^{(k)})/Rc_1$ is annihilated by $\pi^{k-1}$ and hence that $\epsilon \leqslant 1$.
It then follows that $\epsilon  = 1$ and every element in $M^{(k)}(1)$ has order strictly less than $k$.
This in turn implies $\rH^1_{\CF}(K,T)\simeq R$, since $\rH^1_{\CF}(K,T)$ is torsion-free. 

Let $d$ be the order of the image of $c_2$ in $\rH^1_{\CF}(K,T^{(k)})/Rc_1$. Then
$Rc_1 + Rc_2$ contains a submodule isomorphic to $R^{(k)} \oplus \fm^{k-d}R^{(k)}$.
By Corollary \ref{cor:prime2}, there exists $\ell\in \CL^{(k)}$ such that 
$\ord(\loc_\ell(c_1)) \geqslant k-e$ and 
\begin{equation}\label{eq:rk1}
\text{$R\loc_\ell(c_1) + R\loc_\ell(c_2)$ contains a submodule isomorphic to $\fm^{e}R^{(k)}\oplus \fm^{k-d+2e}R^{(k)}$.}
\end{equation}

We now make use of the assumption that $\kap_1$ belongs to a Kolyvagin system. 
The finite-singular relation of the definition of a Kolyvagin system implies that the image of $\kap_\ell:=\kappa_{\alpha,\ell}$ in $\rH^1_{\CF}(K,T^{(k)})$,
which we also denote by $\kap_\ell$, satisfies
\begin{equation}\label{eq:rk2}
\ord(\loc_{\ell,{\rm s}}(\kap_\ell)) = \ord(\loc_\ell(\kap_1)) = \ord(\loc_\ell(\pi^{s}c_1)) \geqslant k-s-e,
\end{equation}
where by $\loc_{\ell,{\rm s}}$ we mean the composition of $\loc_\ell$ with the projection to 
$\rH^1_{\rm s}(K_\ell, T^{(k)})$.

By global duality, the images of
$$
\rH^1_{\CF}(K,T_{\alpha}^{(k)}) \xrightarrow{\loc_\ell} \rH^1_{\rm ur}(K_\ell, T_{\alpha}^{(k)})
\ \ \text{and} \ \
\rH^1_{\CF^\ell}(K,T_{\alpha^{-1}}^{(k)}) \xrightarrow{\loc_{\ell,\rm{s}}} \rH^1_{\rm s}(K_\ell, T_{\alpha^{-1}}^{(k)})
$$
are mutual annihilators under local duality. 
Since $\tau\cdot \kap_\ell \in \rH^1_{\CF^\ell}(K,T_{\alpha^{-1}}^{(k)})$, we easily conclude from \eqref{eq:kse}, \eqref{eq:rk1}, 
and \eqref{eq:rk2} that
$$
k-s-e \leqslant k-d+2e.
$$
That is, $d \leqslant s+3e$, as claimed.
\end{proof}

\subsubsection{Some simple algebra} Our adaptation of Kolyvagin's arguments 
relies on the following simple results about finitely-generated torsion $R$-modules. 
For a finitely-generated torsion $R$-module $M$ we write 
\begin{displaymath}
\exp(M):=\min\{n\geqslant 0: \pi^n M=0\} = \max\{\ord(m) : m\in M\}.
\end{displaymath}

\begin{lemma}\label{lem:key0}
Let $N\subset M$ be finitely-generated torsion $R$-modules. 
Suppose $N \simeq \oplus_{i=1}^r R/\fm^{d_i(N)}$, $d_1(N)\geqslant \cdots \geqslant d_r(N)$,
and $M\simeq\oplus_{i=1}^s R/\fm^{d_i(M)}$, $d_1(M)\geqslant \cdots \geqslant d_s(M)$.
Then $r\leqslant s$ and
\[
d_i(N) \leqslant d_i(M), \ \ i=1,\dots,r.
\]
\end{lemma}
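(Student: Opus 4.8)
The plan is to translate the inclusion $N\subseteq M$ into inequalities between the numbers of elementary divisors of $N$ and of $M$ exceeding each given integer, and then to invert these. Since $\fm=(\pi)$ is principal and all modules in sight have finite length, every $\fm^{e}/\fm^{e+1}$ that occurs is a one-dimensional $R/\fm$-vector space, so all the $R/\fm$-dimensions below are finite and additive over direct sums.

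\textbf{Step 1: a dictionary.} For any finitely generated torsion $R$-module $P\simeq\bigoplus_{i=1}^{t}R/\fm^{e_i}$ with $e_1\geqslant\cdots\geqslant e_t\geqslant 1$, and any $j\geqslant 0$, one computes $\pi^{j}P\simeq\bigoplus_{i\,:\,e_i>j}R/\fm^{e_i-j}$, and hence $\dim_{R/\fm}(\pi^{j}P)[\pi]=a_j(P)$, where $a_j(P):=\#\{i:e_i>j\}$. Moreover, because the $e_i$ are listed in non-increasing order, for every $i$ one has $a_j(P)\geqslant i$ if and only if $e_i>j$, so that $e_i=\#\{j\geqslant 0:a_j(P)\geqslant i\}$ (with the convention $e_i=0$ for $i>t$).

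\textbf{Step 2: monotonicity of $a_j$.} From $N\subseteq M$ we get $\pi^{j}N\subseteq\pi^{j}M$ for all $j\geqslant 0$, hence $(\pi^{j}N)[\pi]\subseteq(\pi^{j}M)[\pi]$. Taking $R/\fm$-dimensions and applying Step 1 to $P=N$ and to $P=M$ yields $a_j(N)\leqslant a_j(M)$ for every $j\geqslant 0$.

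\textbf{Step 3: conclusion.} Taking $j=0$ in Step 2 gives $r=a_0(N)\leqslant a_0(M)=s$. For $1\leqslant i\leqslant r$, the inequalities $a_j(N)\leqslant a_j(M)$ give $\{j\geqslant 0:a_j(N)\geqslant i\}\subseteq\{j\geqslant 0:a_j(M)\geqslant i\}$, so by the second formula of Step 1, $d_i(N)\leqslant d_i(M)$. This is exactly the assertion.

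The argument has no genuinely difficult step; the one point to be careful about is that one should \emph{not} compare $\pi^{j}N/\pi^{j+1}N$ with $\pi^{j}M/\pi^{j+1}M$ directly through the map induced by $N\hookrightarrow M$, since that map need not be injective. Passing instead to the $\pi$-torsion submodules $(\pi^{j}N)[\pi]\subseteq(\pi^{j}M)[\pi]$, where injectivity is automatic, is what makes Step 2 immediate. (That $\fm$ is principal is used throughout; this holds here, $R$ being the ring of integers of a finite extension of $\Q_p$.)
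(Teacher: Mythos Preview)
Your proof is correct. The approach differs from the paper's: the paper argues by induction on $r$, replacing $N\subset M$ by $N/N[\pi^d]\hookrightarrow M/M[\pi^d]$ with $d=d_r(N)$ to knock off the smallest invariant, and handles the tail indices $r'<i\leqslant r$ via the injection $N[\pi^d]/N[\pi^{d-1}]\hookrightarrow M[\pi^d]/M[\pi^{d-1}]$. Your argument instead packages everything into the counting functions $a_j(P)=\dim_{R/\fm}(\pi^jP)[\pi]$ and the duality $e_i=\#\{j:a_j(P)\geqslant i\}$, giving a clean non-inductive proof. Both are standard; your version is slightly more transparent and your explicit remark that one must compare $(\pi^jN)[\pi]\subseteq(\pi^jM)[\pi]$ rather than the successive quotients is a good point, since the induced map on $\pi^jN/\pi^{j+1}N\to\pi^jM/\pi^{j+1}M$ need not be injective.
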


\begin{proof}
We have $r = \dim_{R/\fm} N[\pi] \leqslant \dim_{R/\fm} M[\pi] = s$,
which proves the first claim.

We prove the second claim by induction on $r$.
Let $d = d_r(N)$. Since $N[\pi^d] = N \cap M[\pi^d]$, the inclusion $N\subset M$
induces an inclusion 
$$
N' = N/N[\pi^d] \subset M/M[\pi^d] = M'.
$$
Clearly, $N' \simeq \oplus_{i=1}^{r'} R/\fm^{d_i(N)-d}$, where 
$r'$ is the smallest integer such that $d_i(N) = d$ for $r'+1\leqslant i \leqslant r$.
Similarly, $M'\simeq \oplus_{i=1}^{s'} R/\fm^{d_i(M)-d}$. 
Since $r'<r$, the induction hypothesis implies that
$d_i(M)\geqslant d_i(N)$ for $i=1,\dots,r'$.
To complete the induction step we just need to show that at least
$r$ of the $d_i(M)$'s are $\geqslant d$. But this is clear from the injection
$N[\pi^d]/N[\pi^{d-1}] \hookrightarrow  M[\pi^d]/M[\pi^{d-1}]$,
from which it follows that
$$
r = \dim_{R/\fm} N[\pi^d]/N[\pi^{d-1}] \leqslant \dim_{R/\fm} M[\pi^d]/M[\pi^{d-1}].
$$
\end{proof}

Next we consider two short exact sequences of finitely-generated torsion $R$-modules
\begin{equation}\label{ses1}
0 \rightarrow X \rightarrow R/\fm^k \oplus M \xrightarrow{\alpha} R/\fm^{k-a} \oplus R/\fm^{b} \rightarrow 0
\end{equation}
and
\begin{equation}\label{ses2}
0 \rightarrow X \rightarrow R/\fm^k \oplus M'\xrightarrow{\beta} R/\fm^{a'} \oplus R/\fm^{k-b'} \rightarrow 0
\end{equation}
satisfying:
\begin{equation}\label{ses-hyp}
k>\exp(M)+2a \ \ \ \text{and} \ \ \ a'\leqslant a.
\end{equation}
We further assume that both $M$ and $M'$ are the direct sum of two isomorphic $R$-modules. Let $2s:= \dim_{R/\fm} M[\pi], 2s':= \dim_{R/\fm} M'[\pi]$ and $d_1(M),\dots,d_{2s}(M)$ be the lengths of the $R$-summands in a decomposition of $M$ as a direct sum of cyclic $R$-modules,
ordered so that
$$
d_1(M) = d_2(M) \geqslant d_3(M)=d_4(M) \geqslant \cdots \geqslant d_{2s-1}(M) = d_{2s}(M).
$$
Note that $d_1(M) = \exp(M)$.
Fix a decomposition 
$$
M = \oplus_{i=1}^{2s} M_i, \ \ M_i \simeq R/\fm^{d_i(M)}.
$$
Let $d_1(M'),\dots,d_{2s'}(M')$ be similarly defined for $M'$. 
\begin{lemma}\label{lem:key} The following hold:
\begin{itemize}
\item[(i)]  $s-1\leqslant s'\leqslant s+1$,
\item[(ii)] $b\leqslant \exp(M)$,
\item[(iii)] $\exp(X) \leqslant \exp(M)+a$.
\end{itemize}
\end{lemma}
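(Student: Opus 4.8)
The plan is to prove the three parts almost independently, using only elementary commutative algebra over the DVR $R$; write $\fm=(\pi)$ and, for a finite-length $R$-module $N$, let $d_1(N)\geqslant d_2(N)\geqslant\cdots$ be its elementary divisors, so that $\exp(N)=d_1(N)$ and $\dim_{R/\fm}\pi^jN/\pi^{j+1}N=\#\{i:d_i(N)>j\}$. For (i) I would read off a two-sided bound on $\dim_{R/\fm}X[\pi]$. Applying the left-exact functor $(-)[\pi]=\Hom_R(R/\fm,-)$ to (\ref{ses1}) gives
\[
\dim_{R/\fm}X[\pi]\;\geqslant\;\dim_{R/\fm}(R/\fm^k\oplus M)[\pi]-\dim_{R/\fm}(R/\fm^{k-a}\oplus R/\fm^b)[\pi]\;\geqslant\;(1+2s)-2\;=\;2s-1,
\]
while the inclusion $X\hookrightarrow R/\fm^k\oplus M'$ coming from (\ref{ses2}) gives $\dim_{R/\fm}X[\pi]\leqslant 1+2s'$; hence $s\leqslant s'+1$, and exchanging the roles of (\ref{ses1}) and (\ref{ses2}) gives $s'\leqslant s+1$.

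For (ii) the main input is the general fact that a surjection $\phi\colon P\twoheadrightarrow Q$ of finite-length $R$-modules satisfies $d_i(Q)\leqslant d_i(P)$ for every $i$: since $\pi^jP\twoheadrightarrow\pi^jQ$ for all $j\geqslant 0$, one gets $\#\{i:d_i(Q)>j\}=\dim_{R/\fm}\pi^jQ/\pi^{j+1}Q\leqslant\dim_{R/\fm}\pi^jP/\pi^{j+1}P=\#\{i:d_i(P)>j\}$, and this for all $j$ forces $d_i(Q)\leqslant d_i(P)$. Apply this to $\alpha$ in (\ref{ses1}): since $k>\exp(M)$, the source $R/\fm^k\oplus M$ has $d_1=k$ and $d_2=\exp(M)$, while the target $R/\fm^{k-a}\oplus R/\fm^b$ has $d_2=\min(k-a,b)$. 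Hence $\min(k-a,b)\leqslant\exp(M)$, and since $k>\exp(M)+2a$ gives $k-a>\exp(M)+a\geqslant\exp(M)$, we must have $b\leqslant k-a$, so $b=\min(k-a,b)\leqslant\exp(M)$.

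For (iii) the crucial observation is that, writing $e_1$ for the generator of the $R/\fm^k$-summand of the source of (\ref{ses1}), the element $\alpha(e_1)$ has order exactly $k-a$ and spans a direct summand of the target isomorphic to $R/\fm^{k-a}$. Indeed, by (ii) we have $b\leqslant\exp(M)<k-a$, so $\exp(R/\fm^{k-a}\oplus R/\fm^b)=k-a$; since the exponent of a sum is the maximum of the exponents and $\exp(\alpha(M))\leqslant\exp(M)<k-a$, surjectivity of $\alpha$ forces $k-a\leqslant\max\{\ord(\alpha(e_1)),\exp(\alpha(M))\}=\ord(\alpha(e_1))\leqslant k-a$; and an element of order $k-a$ in $R/\fm^{k-a}\oplus R/\fm^b$ has unit first coordinate, hence spans a direct summand. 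Splitting the target as $R\alpha(e_1)\oplus Q_2$ with $Q_2\cong R/\fm^b$, identifying $R\alpha(e_1)\cong R/\fm^{k-a}$ by $\alpha(e_1)\mapsto 1$, and writing $\alpha|_M=(\phi,g)$ with $\phi\colon M\to R/\fm^{k-a}$ and $g\colon M\to Q_2$, one checks that $(c,m)\in X=\ker\alpha$ exactly when $g(m)=0$ and $c$ reduces to $-\phi(m)$ in $R/\fm^{k-a}$, which exhibits $X$ as an extension
\[
0\longrightarrow (R/\fm^k)[\pi^a]\longrightarrow X\longrightarrow\ker g\longrightarrow 0
\]
with $(R/\fm^k)[\pi^a]\cong R/\fm^a$ and $\ker g\subseteq M$, so $\exp(X)\leqslant a+\exp(\ker g)\leqslant a+\exp(M)$.

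I expect part (iii) to be the main obstacle: everything there depends on correctly isolating the ``large cyclic part'' $R\alpha(e_1)\cong R/\fm^{k-a}$ of the target and putting $\alpha$ into the resulting normal form, after which the bound on $\exp(X)$ drops out. Parts (i) and (ii) are purely formal once the elementary-divisor bookkeeping is in place, and the degenerate cases $M=0$, $a=0$, $b=0$ (where some summand or exponent vanishes) are dealt with trivially by the same inequalities.
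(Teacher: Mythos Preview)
Your proof is correct and follows essentially the same approach as the paper's: part (i) via counting $R/\fm$-dimensions (the paper uses minimal generators, you use $\pi$-torsion, which coincide for finite modules over a DVR), and part (iii) via the filtration $0\to X\cap(R/\fm^k)\to X\to(\text{submodule of }M)\to 0$, which is exactly your sequence $0\to R/\fm^a\to X\to\ker g\to 0$. For part (ii) your use of the elementary-divisor inequality $d_i(Q)\leqslant d_i(P)$ for surjections is a slightly cleaner packaging than the paper's direct argument (the paper shows $\alpha(R/\fm^k)\cong R/\fm^{k-a}$ and then quotients), but the content is the same.
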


\begin{proof} Let $r(-)$ denote the minimal number of $R$-generators of $(-)$. Then from \eqref{ses1} it follows 
that $r(M) -1\leqslant r(X) \leqslant r(M)+1$ (see Lemma \ref{lem:key0}). 
Similarly, it follows from \eqref{ses2} that
$r(M')-1\leqslant r(X) \leqslant r(M')+1$. From this we conclude that $r(M)-1\leqslant r(M')+1$ and $r(M')-1\leqslant r(M)+1$. 
Since $r(M) = 2s$ and $r(M') = 2s'$, this implies $2s\leqslant 2s'+2$ and $2s'\leqslant 2s+2$. That is,
$s-1\leqslant s'\leqslant s+1$, as claimed in part (i).

For part (ii) we note that since $k-a>\exp(M)$ by \eqref{ses-hyp}, the image under $\alpha$ of the summand $R/\fm^k$ in the middle
of \eqref{ses1} must be isomorphic to $R/\fm^{\max\{k-a,b\}}$ (else $\exp(\mathrm{im}(\alpha))\leqslant \max\{k-a,b\}-1$). 
It follows that $\alpha$ induces a surjection $M\twoheadrightarrow (R/\fm^ {k-a}\oplus R/\fm^b)/\alpha(R/\fm^k) \simeq R/\fm^{\min\{k-a,b\}}$.
In particular, $\min\{k-a,b\}\leq \exp(M)$. As $k-a>\exp(M)$, this implies part (ii).

For part (iii) we note that \eqref{ses1} induces an inclusion
$$
X/(X\cap R/\fm^k) \hookrightarrow (R/\fm^k\oplus M)/(R/\fm^k) \simeq M.
$$
It follows that $\exp(X)\leqslant \exp(M) + \exp(X\cap R/\fm^k)$.  As noted in the proof of part (ii), $\alpha(R/\fm^k) \simeq R/\fm^{k-a}$
so $X\cap R/\fm^k \simeq R/\fm^a$. Part (iii) follows. 
\end{proof}

\begin{prop}\label{prop:key} The following hold:\hfill
\begin{itemize}
\item[(i)] There exists $1\leqslant i_0\leqslant 2s$ such that there is an inclusion
$\oplus_{i=1, i\neq i_0}^{2s} M_i \hookrightarrow X. $
\item[(ii)] There exists an inclusion $ X \hookrightarrow M'\oplus R/\fm^{\exp(X)}$.
\item[(iii)] $d_{i}(M') \geqslant  d_{i+2}(M)$, for $i=1,\dots,2s-2$.
\end{itemize}
\end{prop}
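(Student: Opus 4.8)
The plan is to reduce all three parts to elementary‑divisor bookkeeping for finite‑length modules over the complete discrete valuation ring $R$, combining Lemma~\ref{lem:key0} and Lemma~\ref{lem:key} with the analysis of $\alpha$ carried out in the proof of the latter. For a finite‑length $R$‑module $Y$ write $d_1(Y)\geqslant d_2(Y)\geqslant\cdots$ for the lengths of the cyclic summands in a decomposition of $Y$ (so $d_1(Y)=\exp(Y)$ and the number of nonzero $d_j(Y)$ equals $\dim_{R/\fm}Y[\pi]$), and $r(-)$ for the minimal number of generators. I will use three elementary observations. \textbf{(a)} Both submodules and quotients $Y'$ of $Y$ satisfy $d_j(Y')\leqslant d_j(Y)$ for all $j$ and have no more nonzero elementary divisors than $Y$: for submodules this is Lemma~\ref{lem:key0}, and for a quotient one applies $\pi^t(-)$ and uses that $\pi^tY'$ is a quotient of $\pi^tY$, so $r(\pi^tY')\leqslant r(\pi^tY)$. \textbf{(b)} If $0\to K\to Y\to C\to 0$ is exact with $C$ \emph{cyclic}, then $d_j(K)\geqslant d_{j+1}(Y)$ for all $j$: lifting a generator of $C$ to $e\in Y$ gives $Y=Re+K$, hence $\pi^tY=R\pi^te+\pi^tK$ and $r(\pi^tY)\leqslant 1+r(\pi^tK)$ for every $t\geqslant0$, which translates into the asserted interlacing of elementary divisors. \textbf{(c)} $Y\oplus R/\fm^c$ has $\ell$-th elementary divisor $\leqslant d_{\ell-1}(Y)$ for $\ell\geqslant2$, while deleting a cyclic summand of maximal length from $Y$ leaves a module whose $\ell$-th elementary divisor is exactly $d_{\ell+1}(Y)$.

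I would dispatch part (ii) first. By \eqref{ses2}, $X\subseteq R/\fm^k\oplus M'$; projecting to the $R/\fm^k$‑summand, $\pi_1(X)$ is simultaneously a quotient of $X$ and a submodule of $R/\fm^k$, hence equals $\fm^{j}R/\fm^kR$ for some $j\geqslant k-\exp(X)$, so $\pi_1(X)\subseteq\fm^{k-\exp(X)}R/\fm^kR\simeq R/\fm^{\exp(X)}$. Therefore $X\subseteq\pi_1(X)\oplus M'\hookrightarrow R/\fm^{\exp(X)}\oplus M'$, which is (ii).

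Part (i) is the heart of the matter, and I would take $i_0=1$ (equivalently $i_0=2$, since $d_1(M)=d_2(M)$). Let $\pi_2\colon R/\fm^k\oplus M\to M$ be the projection. In the proof of Lemma~\ref{lem:key} it is shown that $\alpha(R/\fm^k\oplus0)$ is cyclic of exponent $k-a$; since $b\leqslant\exp(M)<k-a$ by Lemma~\ref{lem:key}(ii) and \eqref{ses-hyp}, this cyclic submodule projects isomorphically onto the first factor of $R/\fm^{k-a}\oplus R/\fm^{b}$, so the quotient $(R/\fm^{k-a}\oplus R/\fm^{b})/\alpha(R/\fm^k\oplus0)$ is cyclic, isomorphic to $R/\fm^{b}$. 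The surjection $m\mapsto\alpha(0,m)$ then identifies $M/\pi_2(X)$ with this quotient, so $M/\pi_2(X)\simeq R/\fm^{b}$. Applying (b) to $0\to\pi_2(X)\to M\to R/\fm^b\to0$ gives $d_j(\pi_2(X))\geqslant d_{j+1}(M)$, and since $\pi_2(X)$ is a quotient of $X$, (a) yields
\[
d_j(X)\;\geqslant\;d_j(\pi_2(X))\;\geqslant\;d_{j+1}(M)\qquad\text{for all }j,
\]
so in particular $X$ has at least $2s-1$ nonzero elementary divisors. By (c), the module $\bigoplus_{i=2}^{2s}M_i$ has $j$-th elementary divisor $d_{j+1}(M)\leqslant d_j(X)$, whence $\bigoplus_{i=2}^{2s}M_i\hookrightarrow X$. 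I expect this to be the main obstacle: one cannot in general realize $\bigoplus_{i\neq i_0}M_i$ as an actual submodule of $X$ (or even of $\pi_2(X)$), so the argument must proceed entirely through elementary divisors, and it rests on pinning down $M/\pi_2(X)$ exactly together with the one‑step interlacing (b).

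Part (iii) is then routine. The displayed inequality obtained in the course of proving (i) gives $d_{i+1}(X)\geqslant d_{i+2}(M)$; combining (ii) with (a) gives $d_{i+1}(X)\leqslant d_{i+1}\bigl(M'\oplus R/\fm^{\exp(X)}\bigr)$, and (c) bounds the right‑hand side by $d_i(M')$ for $i\geqslant1$. Hence $d_i(M')\geqslant d_{i+2}(M)$ for $1\leqslant i\leqslant 2s-2$, where Lemma~\ref{lem:key}(i) is used to guarantee $2s'\geqslant 2s-2\geqslant i$ so that $d_i(M')$ is a genuine elementary divisor of $M'$. This completes the plan.
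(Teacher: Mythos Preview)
Your argument is correct, and it takes a genuinely different---and in several respects cleaner---route than the paper.

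For (ii) the paper works harder: it first proves that $\beta(R/\fm^k)\simeq R/\fm^{k-b'}$, then passes to an auxiliary submodule $M''\subset M'$, obtains an exact sequence $0\to M''\to X\to X\cap R/\fm^k\to 0$ after dualizing, and finally assembles the inclusion. Your projection argument $X\subset\pi_1(X)\oplus M'\subset R/\fm^{\exp(X)}\oplus M'$ replaces all of this with one line and uses nothing about \eqref{ses2} beyond the bare inclusion $X\subset R/\fm^k\oplus M'$. One caveat: the paper's extra labor is not wasted---the fact $\beta(R/\fm^k)\simeq R/\fm^{k-b'}$ established there is invoked again later in the proof of Theorem~\ref{thm:Zp-twisted} (in the verification of condition (e), to compute $\ord(\loc_\ell(c))=k-b'$). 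Your proof of the proposition stands on its own, but that ingredient would have to be supplied separately.

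For (i) the paper chooses $i_0$ so that $\alpha|_{R/\fm^k\oplus M_{i_0}}$ is already surjective, gets a surjection $X\twoheadrightarrow\oplus_{i\neq i_0}M_i$, and dualizes. You instead show $M/\pi_2(X)\simeq R/\fm^b$ is cyclic and use the one-step interlacing (your observation (b)) to obtain $d_j(X)\geqslant d_{j+1}(M)$ directly; this pins down $i_0=1$ and bypasses the duality step entirely. Your remark that one cannot literally realize $\oplus_{i\neq i_0}M_i$ inside $X$ is a slight misreading---the paper's inclusion is also only abstract, obtained by dualizing a surjection---but your route to the same inequality is arguably more transparent. Part (iii) is handled the same way in both.
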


\begin{proof} 
As explained in the proof of Lemma \ref{lem:key}(ii),
 the image under $\alpha$ of the $R/\fm^k$ summand in the middle of \eqref{ses1} has exponent $k-a$.
In particular, we may assume that the $R/\fm^{k-a}$ summand on the right in \eqref{ses1} 
is the image under $\alpha$ of the $R/\fm^k$-summand in the middle.

Let $1\leqslant i_0\leqslant 2s$ be such that $\mathrm{im}(\alpha) = R/\fm^{k-a}+\alpha(M_{i_0})$. 
It follows from \eqref{ses1} that there is a surjection
$$
X\twoheadrightarrow (R/\fm^k\oplus M)/(R/\fm^k\oplus M_{i_0})\simeq \oplus_{i=1, i\neq i_0}^{2s} M_i.
$$
Taking duals we deduce the existence of an inclusion
$\oplus_{i=1, i\neq i_0}^{2s} M_i \hookrightarrow X$,
proving (i). (Here and in the following we are using that the (Pontryagin) dual of a torsion $R$-module is
isomorphic to itself as an $R$-module.)

For (ii), we first claim that $\beta(R/\fm^k) \simeq R/\fm^{k-b'}$. Suppose
that $\beta(R/\fm^k) \simeq R/\fm^{k-b''}$ for some $b''>b'$. This would imply that there exists $m'\in M$ such that
$\beta(1\oplus m' ) \in R/\fm^{a'}\oplus 0 \subset R/\fm^{a'}\oplus R/\fm^{k-b'}$. In particular, we would have
 $\pi^{a'}(1\oplus m') \in X$. But since $\ord(\pi^{a'}(1\oplus m')) = k-a'$ this would mean that $X$ contains
 a submodule isomorphic to $R/\fm^{k-a'}$. 
 But since $k-a'> \exp(M)+a \geqslant \exp(X)$ by  \eqref{ses-hyp} and Lemma \ref{lem:key}(iii), we reach a contradiction. 
Thus we may assume that 
the $R/\fm^{k-b'}$ summand on the right in \eqref{ses2} 
is the image under $\beta$ of the $R/\fm^k$-summand in the middle. 

Let $M''\subset M'$ be the submodule such that $\beta(M'')\subseteq R/\fm^{k-b'}$. 
Then \eqref{ses2} implies that there is an exact sequence
$$
0\rightarrow X \rightarrow R/\fm^k \oplus M'' \rightarrow \beta(R/\fm^k)\rightarrow 0.
$$
From this it follows that there is an exact sequence
$$
0\rightarrow X\cap R/\fm^k \rightarrow X \rightarrow M'' \rightarrow 0.
$$
Taking duals we conclude that there exists a short exact sequence
$$
0 \rightarrow M'' \rightarrow X \xrightarrow{\gamma} X\cap R/\fm^k \rightarrow 0.
$$
Note that $X\cap R/\fm^k$ is a cyclic $R$-module. 
Let $R/\fm^{d} \subset X$ be an $R$-summand that surjects
onto $X\cap R/\fm^k$ via $\gamma$. Then there is a surjection
$M''\oplus R/\fm^{d}\twoheadrightarrow X$.
Taking duals we deduce the existence of inclusions
$$
X\hookrightarrow M'' \oplus R/\fm^d \hookrightarrow M'\oplus R/\fm^{\exp(X)}.
$$
This proves (ii).

Let 
$d_1(X) \geqslant d_2(X) \geqslant \cdots \geqslant d_{t}(X)$
be the lengths of the summands in a decomposition of $X$ as a direct sum of cyclic $R$-modules.
Note that $d_1(X)=\exp(X)$. From part (i) we see that $t\geqslant 2s-1$.
From part (i) and Lemma \ref{lem:key0} we also easily conclude that $d_i(X) \geqslant d_{i+1}(M)$.
Similarly, from part (ii) we conclude that $d_i(M') \geqslant d_{i+1}(X)$. Combining these yields (iii).
\end{proof}

\subsubsection{Finishing the proof of Theorem \ref{thm:Zp-twisted}}
We now have all the pieces needed to prove Theorem \ref{thm:Zp-twisted}. 

Since the character $\alpha$ is fixed, for the rest of the proof we denote $\kap_{n}:=\kappa_{\alpha,n}$ for all $n\in\cN$. In particular, our assumption is that $\kap_1\neq 0$. Let 
$$
\ind(\kap_1) = \max\{m \ : \ \kap_1\in \fm^m\rH^1_\CF(K,T)\}.
$$
We can write $\rH^1_\CF(K,A)=(\Phi/R)^n \oplus M$, for $n\geq 0$ and $M$ a finite $R$-module. 
Since $\rH^1_\CF(K,A) = \varinjlim_k \rH^1_\CF(K,T^{(k)})$, it follows from Lemma \ref{lemmamod} that 
\begin{displaymath}
\rH^1_\CF(K,A)[\fm^k] \simeq \rH^1_\CF(K,T^{(k)}).
\end{displaymath}
Recall that by Theorem \ref{thm:rank1} (and its proof),
$\rH^1_\CF(K,T)$ has $R$-rank one and for $k\gg 0$
\begin{displaymath}
(R/\fm^k)^n \oplus M[\fm^k]\simeq R/\fm^k \oplus M^{(k)}(1) \oplus M^{(k)}(1),
\end{displaymath}
with $\exp(M^{(k)}(1))< k$
and hence 
$$
\rH^1_\CF(K,A) \simeq \Phi/R \oplus M, \ \ M \simeq M_0\oplus M_0,
$$
for some finitely-generated torsion $R$-module $M_0$ such that $M_0\simeq M^{(k)}(1)$ for $k\gg 0$.

Let $r(M)$ be the minimal number of $R$-generators of $M$ and let
\[
e = (C_1+C_2 + C_\alpha)\rank_{\Z_p}(R).
\]
We will show that 
\begin{equation}\label{eq:main}
\ind(\kap_1)+\frac{3}{2}r(M)e \geqslant \mathrm{length}_R(M_0).
\end{equation}
Since by Lemma~\ref{lemmamod} and (\ref{eq:modp}) we have
\begin{displaymath}
r(M)+1=\dim_{R/\fm} \rH^1_\CF(K,T^{(k)})[\fm] =\dim_{R/\fm} \rH^1_\CF(K,\bar{T})=\dim_{\mathbb{F}_p} \rH^1_\CF(K,E[p]),
\end{displaymath}
it follows that (\ref{eq:main}) yields the inequality in Theorem~\ref{thm:Zp-twisted} with an error term $E_\alpha=r(M)e$ 
that depends only on $C_{\alpha}$, $T_pE$, and $\rank_{\Z_p}(R)$. 

Let $s=r(M)/2$ and fix an integer $k>0$ such that 
\begin{equation}\label{eq:kbig}
k/2 > \mathrm{length}_R(M_0) + \ind(\kap_1) + (r(M)+1)e
\end{equation}
and $M_0\simeq M^{(k)}(1)$.
Our proof of \eqref{eq:main} relies on making a good choice of integers in $\CN^{(k)}$, which in turn relies on a good choice
of primes in $\CL^{(k)}$. 

Let $n\in \CN^{(k)}$.
By Proposition \ref{propstructure} and Theorem \ref{thm:rank1}, there exists a finite $R^{(k)}$-module $M(n)_0$ such that 
$$
\rH^1_{\CF(n)}(K,T^{(k)}) \simeq R^{(k)}\oplus M(n), \ \ M(n) \simeq M(n)_0\oplus M(n)_0.
$$
Let $r(M(n))$ be the minimal number of $R$-generators of $M(n)$ and let
\[
d_1(n)= d_2(n)\geqslant d_3(n)= d_4(n)\geqslant\cdots \geqslant d_{r(M(n))-1}(n)= d_{r(M(n))}(n)
\]
be the lengths of the cyclic $R$-modules appearing in an expression 
for $M(n)$ as a direct sum of such modules. 
Let $s(n) = r(M(n))/2$. In particular, $s(1) = r(M)/2 = s$.  
In what follows we write, in an abuse of notation, $\kap_n$ to mean its image 
in $\rH^1(K,T^{(k)})$.

Suppose we have a sequence of integers $1=n_0,n_1,n_2,\dots,n_s\in \CN^{(k)}$ satisfying
\begin{itemize} 
\item[(a)] $s(n_j)\geqslant s(n_{j-1})-1$,
\item[(b)] $d_t(n_j) \geqslant d_{t+2}(n_{j-1})$, $t=1,\dots,s(n_{j-1})-1$,
\item[(c)] $\mathrm{length}_R(M(n_j)_0) \leqslant \mathrm{length}_R(M(n_{j-1})_0) - 
d_1(n_{j-1}) + 3e$,
\item[(d)] $\ord(\kap_{n_j}) \geqslant \ord(\kap_{n_{j-1}}) - e$, and
\item[(e)] $\ord(\kap_{n_{j-1}}) \leqslant \ord(\kap_{n_{j}}) - d_1(n_{j-1}) + 3e$,
\end{itemize}
for all $1\leqslant j\leqslant s$. 
Since $\rH^1_{\CF}(K,T)$ is torsion free, $\ind(\kap_1) = k - \ord(\kap_1)$, and so
repeated combination of (b) and (e) yields
\begin{align*}
\ind(\kap_1) = k -\ord(\kap_{n_0}) &\geqslant d_1(n_0) + d_3(n_0) + \cdots + d_{2s-1}(n_0) - 3se  
+ (k-\ord(\kap_{n_s}))\\
& \geqslant \mathrm{length}_R(M(n_0)_0) - 3se.
\end{align*}
Since $M(n_0)_0 = M(1)_0 \simeq M^{(k)}(1)_0\simeq M_0$ by the choice of $k$ and $3se = \frac{3}{2}r(M)e$, this means (\ref{eq:main}) holds.  
So to complete the proof of the theorem it suffices to find such a sequence of $n_j$'s.
In the following we will define such a sequence by making repeated use of Corollary \ref{cor:prime2}
to choose suitable primes in $\CL^{(k)}$. Note that if $s=0$ then there is nothing to prove, so we assume $s>0$.

Suppose $1=n_0,n_1,\dots,n_i \in \CN^{(k)}$, $i<s$, are such that (a)--(e) hold for all $1\leqslant j\leqslant i$ (note that if $i=0$, then this is
vacuously true).  We will explain how to choose a prime $\ell\in \CL^{(k)}$ such that $n_0,\dots,n_i,n_{i+1}=n_i\ell$
satisfy (a)--(e) for all $1\leqslant j\leqslant i+1$. Repeating this process yields the desired sequence $n_0,\dots,n_s$.

From (a), $s(n_i) \geqslant s - i>0$, so $d_1(n_i)>0$. 
Let $c_1, c_2 \in \rH^1_{\CF(n_i)}(K,T^{(k)})$ be such that $c_1$ generates an $R^{(k)}$-summand complementary
to $M(n_i)$  and
$R c_2 \simeq R/\fm^{d_1(n_i)} = \fm^{k-d_1(n_i)}R^{(k)}$ is a direct summand of 
$M(n_i) = M(n_i)_0\oplus M(n_i)_0$. Then $Rc_1+R c_2\subset \rH^1_{\CF(n_i)}(K,T^{(k)})$ contains a submodule
isomorphic to $R^{(k)}\oplus \fm^{k-d_1(n_i)}R^{(k)}$. 
Let $\ell \in \CL^{(k)}$ be a prime as in Corollary \ref{cor:prime2} that does not divide $n_1\cdots n_i$. 
In particular,
$$
\ord(\loc_\ell(c_1)) \geqslant k-e
$$
and
$$
\text{$R\loc_\ell(c_1) + R\loc_\ell(c_2)$ contains a submodule isomorphic to 
$\fm^{e}R^{(k)}\oplus \fm^{k-d_1(n_i)+2e}R^{(k)}$.}
$$
It follows that there is a short exact sequence
\begin{equation}\label{ses3}
0 \rightarrow \rH^1_{\CF(n_i)_\ell}(K,T^{(k)}) \rightarrow \rH^1_{\CF(n_i)}(K,T^{(k)}) \xrightarrow{\loc_\ell} R/\fm^{k-a} \oplus R/\fm^{b}
\rightarrow 0, \ \ e \geqslant a, \ b\geqslant d_1(n_i)-2e.
\end{equation}
Global duality then implies that there is another exact sequence
\begin{equation}\label{ses4}
0 \rightarrow \rH^1_{\CF(n_i)_\ell}(K,T^{(k)}) \rightarrow \rH^1_{\CF(n_i\ell)}(K,T^{(k)}) \xrightarrow{\loc_\ell} R/\fm^{a'} \oplus R/\fm^{k-b'}
\rightarrow 0, \ \ e\geqslant a\geqslant a', \ b'\geqslant b.
\end{equation}
Here we have used that the arithmetic dual of $T^{(k)} = T^{(k)}_\alpha$ is $T^{(k)}_{\alpha^{-1}}$ and that the complex conjugation $\tau$ induces an isomorphism $\rH^1_{\CF(n)}(K,T^{(k)}_{\alpha^{-1}})\simeq \rH^1_{\CF(n)}(K,T^{(k)}_{\alpha})$.

Combining (c) for $1\leqslant j \leqslant i$ yields 
$$
\mathrm{length}_R(M(n_{i})_0) \leqslant \mathrm{length}_R(M(n_0)_0) + 3i e.
$$
From this, together with $r(M)=2s$, $i< s$, and the assumption (\ref{eq:kbig}), we find
$$
k> 2\,\mathrm{length}_R(M(n_0)_0) + 2r(M)e \geqslant 2\,\mathrm{length}_R(M(n_i)_0)+2r(M)e-3ie  
> \mathrm{length}_R(M(n_i))+2e.
$$
It follows that \eqref{ses3} and \eqref{ses4} satisfy the hypotheses \eqref{ses-hyp} for \eqref{ses1} and \eqref{ses2}
with 
\[
X =  \rH^1_{\CF(n_i)_\ell}(K,T^{(k)}),
\quad 
M = M(n_i),\quad M' = M(n_i\ell).
\]

Let $n_{i+1}=n_i\ell$.  Then (a) for $j={i+1}$ follows from Lemma \ref{lem:key}(i)
while (b) for $j=i+1$ follows from Proposition \ref{prop:key}(iii).
To see that (c) holds we observe that by \eqref{ses3} and \eqref{ses4}
$$
\mathrm{length}_R(M(n_{i+1})) = \mathrm{length}_R(M(n_{i})) - (b+b') + (a+a') 
\leqslant \mathrm{length}_R(M(n_{i})) - 2d_1(n_{i}) + 6e.
$$

To verify (d) for $j=i+1$ we first observe that by the Kolyvagin system relations under the finite singular map 
$$
\ord(\kap_{n_{i+1}}) = \ord(\kap_{n_{i}\ell}) \geqslant \ord(\loc_{\ell}(\kap_{n_{i}\ell}))= \ord(\loc_\ell(\kap_{n_{i}})).
$$
So (d) holds for $j=i+1$ if we can show that $\ord(\loc_\ell(\kap_{n_{i}})) \geqslant \ord(\kap_{n_{i}})-e$.
To check that this last inequality holds, we first note that
$\ord(\kap_{n_{i}}) \geqslant \ord(\kap_{n_0}) - ie$ by (d) for $1\leqslant j \leqslant i$. 
But $\ord(\kap_{n_0}) =\ord(\kap_1)= k-\ind(\kap_1)$ by the choice of $k$ (and the fact that $\rH^1_{\CF}(K,T)$ is torsion-free), 
and so by (\ref{eq:kbig}) and repeated application of (c) for $1\leqslant j\leqslant i$ we have
\begin{equation*}\begin{split}
\ord(\kap_{n_i}) \geqslant k-\ind(\kap_1) - ie & >  4\cdot\mathrm{length}_R(M(n_0)_0)+(4s-i+2)e \\
& > 3\cdot\mathrm{length}_R(M(n_0)_0) + \mathrm{length}_R(M(n_i)_0) +(4s-4i+2) e  \\
& > \mathrm{length}_R(M(n_i)_0) + 2e.
\end{split}\end{equation*}
Write $\kap_{n_{i}} = x c_1 + m$ with $x\in R^{(k)}$ and $m\in M(n_i)$. 
Since $\ord(\kap_{n_i}) > \exp(M(n_i)_0)$, it follows that 
$x = \pi^t u$ for $t = k-\ord(\kap_{n_{i}})$ and some $u\in R^\times$. Let $n = \exp(M(n_{i}))$.
It follows that
$$
\pi^n\loc_\ell(\kap_{n_i}) = \pi^{n+t} u\loc_\ell(c_1).
$$
By the choice of $\ell$, $\ord(\loc_\ell(c_1)) \geqslant k - e$. Since $n+t = k - \ord(\kap_{n_i}) + \exp(M(n_i)_0)
<k - 2e$, it then follows
that 
$$
\ord(\loc_\ell(\kap_{n_i})) = \ord(\loc_\ell(c_1)) - t \geqslant k - e -t = \ord(\kap_{n_i})-e.
$$

It remains to verify (e) for $j=i+1$. Let $c\in \rH^1_{\CF(n_{i+1})}(K,T^{(k)})$ be a generator of an $R^{(k)}$-summand
complementary to $M(n_{i+1})$. 
Write $\kap_{n_i} = u\pi^g c_1 + m$ and $\kap_{n_{i+1}} = v\pi^h c + m'$, where $u,v\in R^\times$, $m\in M(n_{i})$ and $m'\in M(n_{i+1})$. Arguing as in the proof that (d) holds shows that 
$\ord(\kap_{n_j})> \exp(M(n_j))+2e$ for $1\leqslant j\leqslant i+1$, hence 
$g = k - \ord(\kap_{n_i})$ and $h = k - \ord(\kap_{n_{i+1}})$.
Arguing further as in the proof that (d) holds also yields
$$
\ord(\loc_\ell(\kap_{n_i}))  = \ord(\loc_\ell(c_1))-g \ \ \text{and} \ \  \ord(\loc_\ell(\kap_{n_{i+1}})) = \ord(\loc_\ell(c))-h.
$$
From the Kolyvagin system relations under the finite singular map it then follows that
$$
h-g = \ord(\loc_\ell(c))-\ord(\loc_\ell(c_1)).
$$
We refer again to the exact sequences \eqref{ses3} and \eqref{ses4}.
By the choice of $\ell$, 
$\ord(\loc_\ell(c_1))\geqslant k-e> \exp(M(n_i)_0)\geq b$, the last inequality by Lemma \ref{lem:key}(ii). Hence we must have 
$\ord(\loc_\ell(c_1))=k-a$. As shown in the proof of Proposition \ref{prop:key} (ii), we also must have $\ord(\loc_\ell(c))=k-b'$. Hence we find
$$
h-g =  (k-b')-(k-a) = a-b' \leqslant 3e -d_1(n_{j-1}).
$$
Since $h-g = \ord(\kap_{n_i})-\ord(\kap_{n_{i+1}})$, this proves (e) holds for $j=i+1$ and so concludes the proof of 
Theorem \ref{thm:Zp-twisted}.

\subsection{Iwasawa theory} 
\label{subsec:Iw-proof}

Let $E$, $p$, and $K$ be as in $\S\ref{sec:Zp-twisted}$. Let $\Lambda=\Z_p\llbracket\Gamma\rrbracket$ be the anticyclotomic Iwasawa algebra, and consider the $\Lambda$-modules
\[
M_E:=(T_pE)\otimes_{\Z_p}\Lambda^\vee,\quad
\mathbf{T}:=M_E^\vee(1)\simeq(T_pE)\otimes_{\Z_p}\Lambda,
\]
where the $G_K$-action on $\Lambda^\vee$ is given by the inverse $\Psi^{-1}$ of the tautological character $\Psi: G_K\twoheadrightarrow\Gamma\hookrightarrow\Lambda^\times$.

For $w$ a prime of $K$ above $p$, put
\[
{\rm Fil}_w^+(M_E):={\rm Fil}_w^+(T_pE)\otimes_{\Z_p}\Lambda^\vee,\quad
{\rm Fil}_w^+\mathbf{T}:={\rm Fil}_w^+(T_pE)\otimes_{\Z_p}\Lambda.
\]
Define the \emph{ordinary} Selmer structure $\Fcal_\Lambda$ on $M_E$ and $\mathbf{T}$ by 
\[
\rH^1_{\Fcal_\Lambda}(K_w,M_E):=
\begin{cases}
{\rm im}\bigl\{\rH^1(K_w,{\rm Fil}_w^+(M_E))\rightarrow\rH^1(K_w,M_E)\bigr\} & \textrm{if $w\vert p$,}\\
0& \textrm{else,}
\end{cases}
\]
and
\[
\rH^1_{\Fcal_\Lambda}(K_w,\mathbf{T}):=
\begin{cases}
{\rm im}\bigl\{\rH^1(K_w,{\rm Fil}_w^+(\mathbf{T}))\rightarrow\rH^1(K_w,\mathbf{T})\bigr\} & \textrm{if $w\vert p$,}\\
\rH^1(K_w,\mathbf{T})& \textrm{else.}
\end{cases}
\]

Denote by 
\[
\mathcal{X}=\rH^1_{\Fcal_\Lambda}(K,M_E)^\vee={\rm Hom}_{\rm cts}(\rH^1_{\Fcal_\Lambda}(K,M_E),\Q_p/\Z_p) 
\] 
the Pontryagin dual of the 
associated Selmer group $\rH^1_{\Fcal_\Lambda}(K,M_E)$, and let $\cL_E\subset\cL_0$ be as in $\S\ref{sec:Zp-twisted}$.

Recall that $\gamma\in \Gamma$ is a topological generator. Then $\mathfrak{P}_0:=(\gamma-1)\subset \Lambda$ is a height one prime independent of the choice of $\gamma$.

\begin{theorem}\label{thm:howard}
Suppose there is a Kolyvagin system $\kappa\in\mathbf{KS}(\mathbf{T},\Fcal_\Lambda,\cL_E)$ with $\kappa_1\neq 0$. Then $\rH^1_{\Fcal_\Lambda}(K,\mathbf{T})$ has $\Lambda$-rank one, and there is a finitely generated torsion $\Lambda$-module $M$ such that
\begin{itemize}
\item[(i)] $\mathcal{X}\sim\Lambda\oplus M\oplus M$,
\item[(ii)] ${\rm char}_\Lambda(M)$ divides ${\rm char}_\Lambda\bigl(\rH^1_{\Fcal_\Lambda}(K,\mathbf{T})/\Lambda\kappa_1\bigr)$ in $\Lambda[1/p,1/(\gamma-1)]$.
\end{itemize}
\end{theorem}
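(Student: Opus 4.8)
The plan is to deduce Theorem~\ref{thm:howard} from Theorem~\ref{thm:Zp-twisted} by an Iwasawa descent argument in the style of \cite[\S2.2]{howard}, passing to the height one primes of $\Lambda$. For a height one prime $\mathfrak{P}\subset\Lambda$ with $\mathfrak{P}\neq(p)$ and $\mathfrak{P}\neq(\gamma-1)$, write $\Lambda_\mathfrak{P}$ for the localization: this is a discrete valuation ring in which $p$ is a \emph{unit} (as $\mathfrak{P}\neq(p)$), and the tautological character $\alpha_\mathfrak{P}\colon\Gamma\rightarrow\Lambda_\mathfrak{P}^\times$, $\gamma\mapsto\gamma$, is \emph{nontrivial} (as $\gamma-1=T\notin\mathfrak{P}\Lambda_\mathfrak{P}$) with $v_\mathfrak{P}(\alpha_\mathfrak{P}(\gamma)-1)=v_\mathfrak{P}(\gamma-1)=0$. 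These two facts are exactly what will neutralize the error terms of Theorem~\ref{thm:Zp-twisted} away from $p(\gamma-1)$, and explain the localization in part~(ii).

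First I would settle the rank statements and the shape of $X$. By the Iwasawa-theoretic form of Proposition~\ref{propstructure} --- that is, \cite[Prop.~1.5.5]{howard} applied over $\Lambda$, whose residue field is $\mathbb{F}_p$ and for which the required input $E[p]^{G_K}=0$ is precisely (\ref{eq:h1}) --- one obtains a finitely generated torsion $\Lambda$-module $M'$ and $\epsilon\in\{0,1\}$ with $\rH^1_{\Fcal_\Lambda}(K,\mathbf{T})\simeq\Lambda^\epsilon\oplus M'\oplus M'$. Specializing at any $\mathfrak{P}\nmid p(\gamma-1)$ at which the image of $\kappa_1$ in the $\Lambda_\mathfrak{P}/\mathfrak{P}$-coefficient Selmer group is nonzero --- all but finitely many such $\mathfrak{P}$ --- and invoking Theorem~\ref{thm:rank1} for the corresponding twist, together with a control isomorphism up to bounded error between $\rH^1_{\Fcal_\Lambda}(K,\mathbf{T})\otimes_\Lambda\Lambda_\mathfrak{P}/\mathfrak{P}$ and the $\Fcal_{\rm ord}$-Selmer group of $\mathbf{T}\otimes_\Lambda\Lambda_\mathfrak{P}/\mathfrak{P}$, one gets $\epsilon=1$, hence $\rH^1_{\Fcal_\Lambda}(K,\mathbf{T})\simeq\Lambda\oplus M'\oplus M'$. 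For $X=\rH^1_{\Fcal_\Lambda}(K,M_E)^\vee$, I would invoke the Poitou--Tate/Greenberg exact sequence linking $\rH^1_{\Fcal_\Lambda}(K,\mathbf{T})$ and $X$ --- the local conditions cutting out $\Fcal_\Lambda$ on $\mathbf{T}$ and on $M_E=\mathbf{T}^\vee(1)$ being exact orthogonal complements under local Tate duality --- to conclude that $\rank_\Lambda X=1$, and, using that complex conjugation identifies $\mathbf{T}$ with $\mathbf{T}^\iota$ so that the attendant Cassels--Tate-type pairing on $X_{\rm tors}$ is skew-Hermitian, that $X_{\rm tors}\sim M\oplus M$ for some torsion $\Lambda$-module $M$; combining gives $X\sim\Lambda\oplus M\oplus M$, which is (i).

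For part~(ii), since $\Lambda[1/p,1/(\gamma-1)]$ is a principal ideal domain it suffices, for each height one prime $\mathfrak{P}\neq(p),(\gamma-1)$, to prove $v_\mathfrak{P}({\rm char}_\Lambda M)\leq v_\mathfrak{P}\bigl({\rm char}_\Lambda(\rH^1_{\Fcal_\Lambda}(K,\mathbf{T})/\Lambda\kappa_1)\bigr)$. I would obtain this by carrying out the whole argument of $\S\ref{IMCIII}$ --- the \v{C}ebotarev input Proposition~\ref{prop:prime2} and Corollary~\ref{cor:prime2}, the commutative algebra of Lemmas~\ref{lem:key0} and~\ref{lem:key} and Proposition~\ref{prop:key}, and the inductive prime-choosing scheme that finishes the proof of Theorem~\ref{thm:Zp-twisted} --- but now with coefficient ring $R=\Lambda_\mathfrak{P}$, module $\mathbf{T}\otimes_\Lambda\Lambda_\mathfrak{P}$, character $\alpha=\alpha_\mathfrak{P}$, and the Kolyvagin system obtained from $\kappa$ by base change. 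Since $p$ is a unit in $\Lambda_\mathfrak{P}$, the analogues of $C_1$ and $C_2$ --- which enter only through annihilation by $p^{C_1}$, resp.\ $p^{C_2}$, of certain cohomology groups and of $\End_R(T)/\rho(R[G_{K_\infty}])$ --- are vacuous, while $C_{\alpha_\mathfrak{P}}=0$; thus the error term vanishes and Theorem~\ref{thm:Zp-twisted} in this setting reads ${\rm length}_{\Lambda_\mathfrak{P}}(M_\mathfrak{P})\leq{\rm length}_{\Lambda_\mathfrak{P}}\bigl(\rH^1_{\Fcal_\Lambda}(K,\mathbf{T})_\mathfrak{P}/\Lambda_\mathfrak{P}\kappa_1\bigr)$. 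As $v_\mathfrak{P}({\rm char}_\Lambda N)={\rm length}_{\Lambda_\mathfrak{P}}(N_\mathfrak{P})$ for torsion $N$, this is exactly the desired prime-by-prime bound, and letting $\mathfrak{P}$ range over all height one primes away from $p(\gamma-1)$ yields (ii).

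The main obstacles will be two packages of bookkeeping. The first is the family of control/specialization lemmas relating $\rH^1_{\Fcal_\Lambda}(K,\mathbf{T})$ and $\rH^1_{\Fcal_\Lambda}(K,M_E)$ to their $\alpha_\mathfrak{P}$-twisted counterparts with controlled kernels and cokernels; here (\ref{eq:h1}) kills the relevant $\rH^0$-terms and the ordinary/strict conditions are designed to reduce well, but the precise comparison of $\mathrm{Fil}^+$-cohomology at $w\mid p$ and of the conditions at the bad primes needs care. The second --- the genuinely delicate point --- is verifying that the machinery of $\S\ref{IMCIII}$ really transfers to the coefficient ring $\Lambda_\mathfrak{P}$: one must check that $\Gamma$ being pro-$p$ makes $\alpha_\mathfrak{P}$ continuous into $(\Lambda_\mathfrak{P}/\mathfrak{P}^k)^\times$, so that the \v{C}ebotarev step still takes place over a number field; that $\mathbf{T}\otimes_\Lambda\Lambda_\mathfrak{P}/\mathfrak{P}$ has no $G_K$-invariants --- automatic when $V_pE|_{G_K}$ is irreducible, and otherwise requiring a harmless finite set of $\mathfrak{P}$ to be excluded --- and that the finite-length statements over $R^{(k)}$ in $\S\ref{IMCIII}$ go through with $R^{(k)}$ replaced by the Artinian local ring $\Lambda_\mathfrak{P}/\mathfrak{P}^k$. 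Once these are in place the descent is formal; the identification of the square structure of $X_{\rm tors}$ in the second paragraph should be carried out exactly as in \cite[\S2.2]{howard}, using only (\ref{eq:h1}).
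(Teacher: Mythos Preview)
Your treatment of part~(i) is fine and matches the paper, which simply cites \cite[Thm.~2.2.10]{howard} for this step.

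For part~(ii), however, there is a genuine gap. You propose to re-run the whole argument of \S\ref{IMCIII} with coefficient ring $R=\Lambda_\mathfrak{P}$ and module $T=\mathbf{T}\otimes_\Lambda\Lambda_\mathfrak{P}$, and you note as the ``delicate point'' that one must check the \v{C}ebotarev step still takes place over a number field. It does not. Since $\mathfrak{P}\neq(p)$, the ring $\Lambda_\mathfrak{P}/\mathfrak{P}^k$ is a $\Q_p$-algebra, so $T^{(k)}=T_pE\otimes_{\Z_p}\Lambda_\mathfrak{P}/\mathfrak{P}^k$ is an \emph{infinite} $G_K$-module: the map $\rho_E(G_K)\hookrightarrow\GL_2(\Z_p)\hookrightarrow\GL_2(\Lambda_\mathfrak{P}/\mathfrak{P}^k)$ is injective, and for generic $\mathfrak{P}$ the image of $\Gamma$ in $(\Lambda_\mathfrak{P}/\mathfrak{P}^k)^\times$ is infinite as well (it is finite only when $\mathfrak{P}$ corresponds to a finite-order character). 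Thus the field $L$ in the proof of Proposition~\ref{prop:prime2} is an infinite extension of $\Q$, and the \v{C}ebotarev density theorem is unavailable. The very feature you want to exploit --- that $p$ becomes a unit so the ``error terms'' $C_1,C_2$ disappear --- is exactly what destroys the finiteness needed for \v{C}ebotarev.

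The paper avoids this by a different descent. Rather than localizing at $\mathfrak{P}$, it specializes to the integral closure $S_\mathfrak{Q}$ of $\Lambda/\mathfrak{Q}$ for a \emph{perturbation} $\mathfrak{Q}=(g+p^m)$ of $\mathfrak{P}=(g)$. Each $S_\mathfrak{Q}$ is the ring of integers of a finite extension of $\Q_p$, so Theorem~\ref{thm:Zp-twisted} applies verbatim with its error term $E_{\alpha_\mathfrak{Q}}$. The point is that for $\mathfrak{P}\neq\mathfrak{P}_0$ one has $C_{\alpha_\mathfrak{Q}}=C_{\alpha_\mathfrak{P}}$ for $m\gg0$ and ${\rm rank}_{\Z_p}S_\mathfrak{Q}={\rm rank}_{\Z_p}S_\mathfrak{P}$, so $E_{\alpha_\mathfrak{Q}}$ is bounded independently of $m$; meanwhile both ${\rm length}_{\Z_p}(M_{\alpha_\mathfrak{Q}})$ and ${\rm length}_{\Z_p}\bigl(\rH^1_{\cF_\mathfrak{Q}}(K,T_\mathfrak{Q})/S_\mathfrak{Q}\kappa_1^{(\mathfrak{Q})}\bigr)$ equal $m\cdot d\cdot\ord_\mathfrak{P}(-)$ up to $O(1)$. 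Dividing by $m$ and letting $m\to\infty$ kills the error and yields the inequality at $\mathfrak{P}$. So the correct route is not to make the error term vanish by changing rings, but to make it negligible by a limiting argument over genuine $p$-adic specializations.
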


\begin{proof} 
This follows by applying Theorem~\ref{thm:Zp-twisted} for the specializations of $\mathbf{T}$ at height one primes of $\Lambda$, similarly as in the proof of \cite[Thm.~2.2.10]{howard}. We only explain how to deduce the divisibility in part (ii), since part (i) is shown exactly as in \cite[Thm.~2.2.10]{howard}.

For any height one prime $\mathfrak{P}\neq p\Lambda$ of $\Lambda$, let $S_{\mathfrak{P}}$ be the integral closure of $\Lambda/\mathfrak{P}$ and consider the $G_K$-module
\[
T_\mathfrak{P}:=\mathbf{T}\otimes_\Lambda S_{\mathfrak{P}},
\]
where $G_K$ acts on $S_\mathfrak{P}$ via $\alpha_\mathfrak{P}:\Gamma\hookrightarrow\Lambda^\times\rightarrow S_\mathfrak{P}^\times$. Note that $T_\mathfrak{P}$ is a $G_K$-module of the type considered in $\S\ref{sec:Zp-twisted}$.
In particular, $S_\mathfrak{P}$ is the ring of integers of a finite extension of $\Q_p$, and 
$T_\mathfrak{P} = T_pE\otimes_{\Z_p}S_\mathfrak{P}(\alpha_\mathfrak{P})$, where 
$\alpha_\mathfrak{P} = \Psi^{-1} \,\mathrm{mod}\, \mathfrak{P}$.

Fix $\mathfrak{P}$ as above, write $\mathfrak{P}=(g)$, and set $\mathfrak{Q}:=(g+p^m)$ for some integer $m$.  
For $m\gg 0$, $\mathfrak{Q}$ is also a height one prime of $\Lambda$. As explained in \cite[p.~1463]{howard}, there is a specialization map 
\[
\mathbf{KS}(\mathbf{T},\Fcal_\Lambda,\cL_E)\rightarrow
\mathbf{KS}(T_{\mathfrak{Q}},\Fcal_{\rm ord},\cL_E).
\] 
Writing $\kappa^{(\mathfrak{Q})}$ for the image of $\kappa$ under this map, the hypothesis $\kappa_1\neq 0$ implies that $\kappa_1^{(\mathfrak{Q})}$ generates an infinite $S_{\mathfrak{Q}}$-submodule of $\rH^1_{\Fcal_{\rm ord}}(K,T_{\mathfrak{Q}})$ for $m\gg 0$. By Theorem~\ref{thm:Zp-twisted}, it follows that $X$ and $\rH^1_{\Fcal_\Lambda}(K,\mathbf{T})$ have both $\Lambda$-rank one, and letting $f_\Lambda$ be a characteristic power series for $\rH^1_{\Fcal_\Lambda}(K,\mathbf{T})/\Lambda\kappa_1$ we see as in \cite[p.~1463]{howard} that the equalities
\[
{\rm length}_{\bZ_p}\bigl(\rH^1_{\cF_{\mathfrak{Q}}}(K,T_{\mathfrak{Q}})/S_{\mathfrak{Q}}\kappa_1^{(\mathfrak{Q})}\bigr)=md\;{\rm ord}_{\mathfrak{P}}(f_\Lambda)
\]
and
\[
2\;{\rm length}_{\bZ_p}(M_{\mathfrak{Q}})=md\;{\rm ord}_{\mathfrak{P}}\bigl({\rm char}_\Lambda(\mathcal{X}_{\rm tors})\bigr)
\]
hold up to $O(1)$ as $m$ varies, where $d={\rm rank}_{\Z_p}(\Lambda/\mathfrak{P})$ and  $X_{\rm tors}$ denotes the $\Lambda$-torsion submodule of $X$. 

On the other hand, 
Theorem~\ref{thm:Zp-twisted} yields the inequality
\[
{\rm length}_{\bZ_p}(M_{\alpha_\mathfrak{Q}})\leqslant{\rm length}_{\bZ_p}\bigl(\rH^1_{\cF_{\mathfrak{Q}}}(K,T_{\mathfrak{Q}})/S_{\mathfrak{Q}}\kappa_1^{(\mathfrak{Q})}\bigr)+E_{\alpha_\mathfrak{Q}}.
\]
If $\mathfrak{P} \neq \mathfrak{P}_0$, then 
the error term $E_{\alpha_\mathfrak{Q}}$ is bounded independently of $m$, since 
${\rm rank}_{\Z_p}(S_{\mathfrak{Q}})={\rm rank}_{\Z_p}(S_{\mathfrak{P}})$ and the term $C_{\alpha_\mathfrak{Q}}$ in (\ref{eq:erralpha}) satisfies $C_{\alpha_\mathfrak{Q}}=C_{\alpha_\mathfrak{P}}$ for $m\gg 0$. Letting $m\to\infty$ we thus deduce
\[
{\rm ord}_{\mathfrak{P}}\bigl({\rm char}_\Lambda(\mathcal{X}_{\rm tors}\bigr)\bigr)\leqslant 2\;{\rm ord}_{\mathfrak{P}}(f_\Lambda),
\]
for $\mathfrak{P}\neq (p), \mathfrak{P}_0$, yielding the divisibility in part (ii).
\end{proof}

\begin{cor}\label{cor:howard}
Let the hypotheses be as in Theorem \ref{thm:howard}. Assume also that $\rH^1_{\CF}(K,E[p^\infty])$ has $\Z_p$-corank one (equivalently, $\rH^1_{\CF}(K,T_pE)$ has $\Z_p$-rank one).  Then 
${\rm char}_\Lambda(M)$ divides ${\rm char}_\Lambda\bigl(\rH^1_{\Fcal_\Lambda}(K,\mathbf{T})/\Lambda\kappa_1\bigr)$ in $\Lambda[1/p]$.
\end{cor}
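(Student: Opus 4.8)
The plan is to bootstrap from Theorem~\ref{thm:howard}, which already produces a finitely generated torsion $\Lambda$-module $M$ with $X\sim\Lambda\oplus M\oplus M$, shows that $\rH^1_{\Fcal_\Lambda}(K,\mathbf{T})$ has $\Lambda$-rank one, and proves that $\mathrm{char}_\Lambda(M)$ divides $f_\Lambda:=\mathrm{char}_\Lambda\bigl(\rH^1_{\Fcal_\Lambda}(K,\mathbf{T})/\Lambda\kappa_1\bigr)$ in $\Lambda[1/p,1/(\gamma-1)]$. Since $\Lambda$ is a unique factorization domain and $\mathfrak{P}_0=(\gamma-1)$ is the only height-one prime besides $(p)$ inverted in passing from $\Lambda[1/p]$ to $\Lambda[1/p,1/(\gamma-1)]$, to promote this divisibility to one in $\Lambda[1/p]$ it is enough to show $\mathfrak{P}_0\nmid\mathrm{char}_\Lambda(M)$; as $\mathrm{char}_\Lambda(X_{\mathrm{tors}})=\mathrm{char}_\Lambda(M)^2$, this is the same as $\mathfrak{P}_0\nmid\mathrm{char}_\Lambda(X_{\mathrm{tors}})$.

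To prove the latter, I would first record the elementary fact that for a finitely generated torsion $\Lambda$-module $Y$ the coinvariants $Y_\Gamma:=Y/(\gamma-1)Y$ are finite if and only if $\mathfrak{P}_0\nmid\mathrm{char}_\Lambda(Y)$; this follows from the structure theorem together with the computation $(\Lambda/\mathfrak{q}^e)_\Gamma=\Lambda/(\mathfrak{q}^e+(\gamma-1))$, which is finite exactly when $\mathfrak{q}\neq\mathfrak{P}_0$, and the observation that finite (i.e. pseudo-null) $\Lambda$-modules have finite coinvariants and finite $\mathrm{Tor}_1^\Lambda(-,\Z_p)$, so that pseudo-isomorphisms do not affect finiteness of coinvariants. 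It therefore suffices to show $(X_{\mathrm{tors}})_\Gamma$ is finite. From the tautological sequence $0\to X_{\mathrm{tors}}\to X\to X_f\to 0$, where $X_f$ is torsion-free of $\Lambda$-rank one, and the free resolution $0\to\Lambda\xrightarrow{\gamma-1}\Lambda\to\Z_p\to 0$, one gets $\mathrm{Tor}_1^\Lambda(X_f,\Z_p)=X_f[\gamma-1]=0$, hence a short exact sequence $0\to(X_{\mathrm{tors}})_\Gamma\to X_\Gamma\to(X_f)_\Gamma\to 0$, and $(X_f)_\Gamma$ has $\Z_p$-rank one (embed $X_f$ in its reflexive hull $\Lambda$ with finite cokernel and pass to coinvariants). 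Thus $\rank_{\Z_p}(X_\Gamma)=\rank_{\Z_p}\bigl((X_{\mathrm{tors}})_\Gamma\bigr)+1$, so $(X_{\mathrm{tors}})_\Gamma$ is finite precisely when $\rank_{\Z_p}(X_\Gamma)=1$.

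The remaining point is to compute $\rank_{\Z_p}(X_\Gamma)$, and here the hypothesis $\mathrm{corank}_{\Z_p}\rH^1_{\CF}(K,E[p^\infty])=1$ enters. By Pontryagin duality $X_\Gamma^\vee=\rH^1_{\Fcal_\Lambda}(K,M_E)[\gamma-1]$, and $M_E[\gamma-1]\simeq E[p^\infty]$ as $G_K$-modules. A standard control argument, comparing both sides with the Selmer group defined by the strict condition away from $p$ and the ordinary condition at $p$, then shows that $\rH^1_{\Fcal_{\mathrm{ord}}}(K,E[p^\infty])$ and $\rH^1_{\Fcal_\Lambda}(K,M_E)[\gamma-1]$ have the same $\Z_p$-corank: the relevant kernels are quotients of $\rH^0(K^\Sigma/K,M_E)$, which vanishes by \eqref{eq:h1}, and the discrepancy between the strict and unramified local conditions at $w\nmid p$ is finite, since $\rH^1_{\mathrm{ur}}(K_w,E[p^\infty])$ is finite for all such $w$ (because $T_pE$ has no nonzero $G_w$-fixed vectors). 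Hence $\rank_{\Z_p}(X_\Gamma)=\mathrm{corank}_{\Z_p}\rH^1_{\Fcal_{\mathrm{ord}}}(K,E[p^\infty])=\mathrm{corank}_{\Z_p}\rH^1_{\CF}(K,E[p^\infty])=1$, and combining with the previous paragraph yields $\mathfrak{P}_0\nmid\mathrm{char}_\Lambda(M)$. Therefore the divisibility of Theorem~\ref{thm:howard}(ii) already holds in $\Lambda[1/p]$, as claimed.

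The main obstacle I anticipate is the control statement at $\mathfrak{P}_0$, i.e. ensuring that the classical Selmer group $\rH^1_{\CF}(K,E[p^\infty])$ and the $(\gamma-1)$-torsion of the Iwasawa Selmer group $\rH^1_{\Fcal_\Lambda}(K,M_E)$ have equal $\Z_p$-coranks; but this reduces entirely to the vanishing of $\rH^0(K^\Sigma/K,M_E)$ under \eqref{eq:h1} and to the coranks of the local unramified cohomology groups outside $p$, with the rest being formal commutative algebra over $\Lambda$.
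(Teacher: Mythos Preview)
Your argument is correct and follows exactly the approach the paper takes: show $\ord_{\mathfrak{P}_0}(\mathrm{char}_\Lambda(X_{\rm tors}))=0$ under the corank-one hypothesis, which then removes the factor $1/(\gamma-1)$ from the divisibility of Theorem~\ref{thm:howard}(ii). The paper's own proof is a single sentence asserting that the hypothesis forces $X_{\rm tors}/\mathfrak{P}_0 X_{\rm tors}$ to be $\Z_p$-torsion; you have simply written out the commutative algebra and the control step that justify this implication, and all the details you supply (torsion-freeness of $X_f$ giving $X_f[\gamma-1]=0$, the rank computation for $(X_f)_\Gamma$ via pseudo-isomorphism with $\Lambda$, and the finiteness of the local discrepancies at $w\nmid p$ via $(T_pE)^{G_w}=0$) are correct.
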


\begin{proof} The assumption that $\rH^1_{\CF}(K,E[p^\infty])$ has $\Z_p$-corank one implies that 
$X_{\rm tors}/\mathfrak{P}_0X_{\rm tors}$ is a torsion $\Z_p$-module and hence that ${\rm ord}_{\mathfrak{P}_0}({\rm char}_\Lambda(X_{\rm tors})) =0$.
\end{proof}

\section{Proof of Theorem~\ref{thm:C} and Corollary~\ref{cor:D}}
\label{sec:CD}

\subsection{Preliminaries}
\label{subsec:HPKS}

Let $E$, $p$, and $K$ be as in $\S\ref{sec:Zp-twisted}$, and assume in addition that hypotheses (\ref{eq:intro-Heeg}) and (\ref{eq:intro-disc}) hold. Fix an integral ideal $\mathfrak{N}\subset\mathcal{O}_K$ with $\mathcal{O}_K/\mathfrak{N}=\Z/N\Z$. For each positive integer $m$ prime to $N$, let $K[m]$ be the ring class field of $K$ of conductor $m$, and set
\[
G[m]={\rm Gal}(K[m]/K[1]),\quad\quad\mathcal{G}[m]={\rm Gal}(K[m]/K).
\]
Let also $\mathcal{O}_m=\Z+m\mathcal{O}_K$ be the order of $K$ of conductor $m$.

By the theory of complex multiplication, the cyclic $N$-isogeny between complex CM elliptic curves
\[
\mathbb{C}/\mathcal{O}_K\rightarrow\mathbb{C}/(\mathfrak{N}\cap\mathcal{O}_m)^{-1}
\]
defines a point $x_m\in X_0(N)(K[m])$, and fixing a modular parameterization $\pi:X_0(N)\rightarrow E$ we define the \emph{Heegner point} of conductor $m$ by   
\[
P[m]:=\pi(x_m)\in E(K[m]).
\]
Building on this construction, one can prove the following result.

\begin{theorem}
\label{thm:howard-HPKS}
Assume $E(K)[p]=0$. Then there exists a Kolyvagin system $\kappa^{\rm Hg}\in\mathbf{KS}(\mathbf{T},\mathcal{F}_\Lambda,\cL_E)$ such that $\kappa_1^{\rm Hg}\in\rH^1_{\Fcal_\Lambda}(K,\mathbf{T})$ is nonzero.
\end{theorem}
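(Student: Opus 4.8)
The plan is to construct the Heegner point Kolyvagin system $\kappa^{\rm Hg}$ following Howard's recipe \cite{howard}, verifying that the arguments there go through under the weaker hypothesis $E(K)[p]=0$ in place of the big-image hypothesis. First I would recall the norm-compatibility properties of the Heegner points $P[m]\in E(K[m])$: for $\ell$ a prime inert in $K$ that is unramified in $E$, one has ${\rm Norm}_{K[m\ell]/K[m]}(P[m\ell]) = a_\ell\cdot P[m]$ when $\ell\nmid m$ (the ``vertical'' relation), together with the ``horizontal'' congruence ${\rm Frob}_{\lambda}(P[m]) \equiv P[m\ell] \pmod{\lambda'}$ for $\lambda$ the prime of $K[m]$ above $\ell$ and $\lambda'$ above it in $K[m\ell]$. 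These are exactly the inputs needed for Kolyvagin's derivative construction.

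Next I would form, for each squarefree product $n$ of primes in $\cL_E$ (so $a_\ell\equiv\ell+1\equiv 0\pmod p$ for $\ell\mid n$), the Kolyvagin derivative classes $\kappa_n^{\rm Hg}\in \rH^1(K,\mathbf{T}/I_n\mathbf{T})$ by applying the operators $D_\ell = \sum_{i=1}^{\ell-1} i\,\sigma_\ell^i \in \Z[G_\ell]$ to the Kummer images of $P[n]$ and descending via the (generalized) Galois-descent argument — the point being that $D_n P[n]$ is $\mathcal{G}[n]$-invariant modulo $I_n$, so after applying the restriction-corestriction / Shapiro machinery it defines a class over $K$ valued in $\mathbf{T}/I_n\mathbf{T}$. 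One checks these lie in the modified Selmer group $\rH^1_{\mathcal{F}_\Lambda(n)}(K,\mathbf{T}/I_n\mathbf{T})$ — the local conditions at $p$ being the ordinary ones, coming from the universal norm / Heegner-point local behavior, and the transverse condition at primes dividing $n$ being Kolyvagin's classical local computation. The finite-singular compatibility $(\phi_\lambda^{\rm fs}\otimes 1)({\rm loc}_\lambda(\kappa_n^{\rm Hg})) = {\rm loc}_\lambda(\kappa_{n\ell}^{\rm Hg})$ is then precisely the horizontal norm relation translated through \eqref{eq:f-s}. The nonvanishing of $\kappa_1^{\rm Hg}$ in $\rH^1_{\mathcal{F}_\Lambda}(K,\mathbf{T})$ follows from the work of Cornut--Vatsal \cite{cornut}, \cite{vatsal} on the nontriviality of Heegner points along the anticyclotomic tower (cited in the introduction as the statement that $\mathcal{S}$ is not $\Lambda$-torsion).

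The main obstacle I expect is the descent step: in the big-image setting one shows $H^1(\mathcal{G}[n], E[p^k])$ vanishes (or is controlled) to descend $D_nP[n]$ cleanly, and under only $E(K)[p]=0$ one must instead argue as in the proof of Lemma~\ref{lemmamod}, using just $\bar T^{G_K}=0$ together with a Sah's-lemma argument to kill the relevant cohomology of the $p$-part of $\mathcal{G}[n]$ — one does get an ``error'' but, crucially, it can be absorbed because the Heegner classes are defined integrally, not merely up to bounded denominators; alternatively one works with $\mathbf{T}/\mathfrak{m}^k\mathbf{T}$ and passes to the limit. I would also need to check that the ordinary local condition at $v$ and $\bar v$ is the correct one for the Heegner classes, i.e.\ that ${\rm loc}_v(\kappa_n^{\rm Hg})$ lands in ${\rm im}\{\rH^1(K_v,{\rm Fil}_v^+\mathbf{T})\to\rH^1(K_v,\mathbf{T})\}$; this is standard for $p$ ordinary (the Heegner point is a universal norm in the anticyclotomic direction, and Coates--Greenberg \cite{coates-greenberg} identifies the ordinary condition with the one cut out by $\mathrm{Fil}^+$). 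Everything else — the explicit form of $I_\ell$, the transverse-local computation, Shapiro's lemma over the tower — is formally identical to \cite[\S{1.7}]{howard}, and I would simply indicate that those verifications carry over verbatim since they never used more than $E(K)[p]=0$.
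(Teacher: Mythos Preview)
Your overall plan---follow Howard's construction in \cite[\S{2.3}]{howard} and verify the arguments survive under the weaker hypothesis $E(K)[p]=0$---is exactly what the paper does, and you have correctly located the descent from $K[n]$ to $K$ as the point requiring attention. The appeal to Cornut--Vatsal for nonvanishing is also right.

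Where your proposal diverges is in the handling of that descent, and here you over-engineer. You anticipate an ``error term'' to be absorbed, and reach for a Sah's-lemma argument on the $p$-part of $\mathcal{G}(n)$. The paper instead shows the restriction map
\[
\rH^1(K,\mathbf{T}/I_n\mathbf{T})\longrightarrow\rH^1(K[n],\mathbf{T}/I_n\mathbf{T})^{\mathcal{G}(n)}
\]
is a \emph{genuine isomorphism}, with no error at all: the inflation--restriction obstruction sits in $H^i\bigl(\mathcal{G}(n),(\mathbf{T}/I_n\mathbf{T})^{G_{K[n]}}\bigr)$, and the coefficient module vanishes. The vanishing comes from two observations: first, $E(K_\infty)[p]=0$ follows from $E(K)[p]=0$ because $K_\infty/K$ is pro-$p$; second, $K[n]$ and $\Q(E[p])$ are linearly disjoint (the former is unramified at $p$ and ramified only at primes dividing $n$, which are coprime to $Np$), so adjoining $K[n]$ to $K_\infty$ introduces no new $p$-torsion in $E$. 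Your Sah's-lemma sketch is not clearly applicable here---$\mathcal{G}(n)$ is abelian and its action on the invariants is not one where an obvious central element acts by a unit minus one---and in any case the vanishing of $H^0$ makes it unnecessary.

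One further point you omit: Howard also assumes $p\nmid h_K$, and the paper notes separately that the verification of the ordinary local condition at $v\mid p$ (Howard's Lemma~2.3.4) goes through without this assumption. You should flag this relaxation as well.
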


\begin{proof} 
Under the additional hypotheses that $p\nmid h_K$, the class number of $K$, and the representation $G_K\rightarrow{\rm Aut}_{\Z_p}(T)$ is surjective, this is \cite[Thm.~2.3.1]{howard}. In the following paragraphs, we explain how to adapt Howard's arguments to our situation. 

We begin by briefly recalling the construction of $\kappa^{\rm Hg}$ in \cite[\S{2.3}]{howard}. Let $K_k$ be the subfield of $K_\infty$ with $[K_k\colon K]=p^k$. For each $n\in\cN$ set
\[
P_k[n]:={\rm Norm}_{K[np^{d(k)}]/K_k[n]}(P[np^{d(k)}])\in E(K_k[n]),
\]
where $d(k)=\min\{d\in\Z_{\geqslant 0}\colon K_k\subset K[p^{d(k)}]\}$, and $K_k[n]$ denotes the compositum of $K_k$ and $K[n]$. Letting $H_k[n]\subset E(K_k[n])\otimes\Z_p$ be the $\Z_p[{\rm Gal}(K_k[n]/K)]$-submodule generated by $P[n]$ and $P_j[n]$ for $j\leqslant k$, it follows from the Heegner point norm relations \cite[\S{3.1}]{perrinriou} that one can form the $\mathcal{G}(n)$-module
\[
\mathbf{H}[n]:=\varprojlim_k H_k[n].
\]
By \cite[Lem.~2.3.3]{howard}, there is a family 
\[
\{Q[n]=\varprojlim_k Q_k[n]\in\mathbf{H}[n]\}_{n\in\cN}
\]
such that
\begin{equation}\label{eq:multiplier}
Q_0[n]=\Phi P[n],\quad\textrm{where}\;
\Phi=
\left\{
\begin{array}{ll}
(p-a_p\sigma_p+\sigma_p^2)(p-a_p\sigma_p^*+\sigma_p^{*2}) & \textrm{if $p$ splits in $K$,}\\[0.2em]
(p+1)^2-a_p^2 & \textrm{if $p$ is inert in $K$,}
\end{array}
\right.
\end{equation}
with $\sigma_p$ and $\sigma_p^*$ the Frobenius elements at the primes above $p$ in the split case, and 
\[
{\rm Norm}_{K_{\infty}[n\ell]/K_{\infty}[n]}Q[n\ell] = a_{\ell}Q[n] 
\]
for all $n\ell\in\cN$. Letting $D_n\in\Z_p[G(n)]$ be Kolyvagin's derivative operators, and choosing a set $S$ of representatives for $\mathcal{G}(n)/G(n)$, the class $\kappa_n\in \rH^1(K,\mathbf{T}/I_n\mathbf{T})$ is defined as the natural image of 
\begin{equation}\label{eq:Qn}
\tilde{\kappa}_n:=\sum_{s\in S}sD_nQ[n]\in\mathbf{H}[n]
\end{equation}
under the composite map
\[
\bigl(\mathbf{H}[n]/I_n\mathbf{H}[n]\bigr)^{\mathcal{G}(n)}\xrightarrow{\delta(n)}\rH^1(K[n],\mathbf{T}/I_n\mathbf{T})^{\mathcal{G}(n)}\overset{\simeq}\longleftarrow\rH^1(K,\mathbf{T}/I_n\mathbf{T}),
\]
where $\delta(n)$ is induced by the limit of Kummer maps $\delta_k(n):E(K_k[n])\otimes\bZ_p\rightarrow{\rm H}^1(K_k[n],T)$, and the second arrow is given by restriction. (In our case, that the latter is an isomorphism follows from the fact that the extensions $K[n]$ and $\Q(E[p])$ are linearly disjoint, and $E(K_\infty)[p]=0$.) 

The proof that the classes  $\kappa_n$ land in $\rH^1_{\Fcal_\Lambda}(K,\mathbf{T})$ and can be modified to a system $\kappa^{\rm Hg}=\{\kappa_n^{\rm Hg}\}_{n\in\cN}$ satisfying the Kolyvagin system relations is the same as in \cite[Lem.~2.3.4]{howard} \emph{et seq.}, noting that the arguments proving Lemma~2.3.4 (in the case $v\vert p$) apply almost \emph{verbatim} in the case when $p$ divides the class number of $K$. Finally, that $\kappa_1^{\rm Hg}$ is nonzero follows from the works of Cornut and Vatsal \cite{cornut, vatsal}.
\end{proof}

Applying Theorem~\ref{thm:howard} and Corollary~\ref{cor:howard} to the Kolyvagin system $\kappa^{\rm Hg}$ of Theorem~\ref{thm:howard-HPKS}, we thus obtain the following.

\begin{theorem}\label{thm:howard-HP}
Assume $E(K)[p]=0$. Then the module $\rH^1_{\Fcal_\Lambda}(K,\mathbf{T})$ has $\Lambda$-rank one, and there is a finitely generated torsion $\Lambda$-module $M$ such that
\begin{itemize}
\item[(i)] $\mathcal{X}\sim\Lambda\oplus M\oplus M$,
\item[(ii)] ${\rm char}_\Lambda(M)$ divides ${\rm char}_\Lambda\bigl(\rH^1_{\Fcal_\Lambda}(K,\mathbf{T})/\Lambda\kappa_1^{\rm Hg}\bigr)$ in $\Lambda[1/p,1/(\gamma-1)]$.
\end{itemize}
Moreover, if $\rH^1_{\CF}(K,E[p^\infty])$ has $\Z_p$-corank one,  then ${\rm char}_\Lambda(M)$ divides ${\rm char}_\Lambda\bigl(\rH^1_{\Fcal_\Lambda}(K,\mathbf{T})/\Lambda\kappa_1^{\rm Hg}\bigr)$ in $\Lambda[1/p]$.
\end{theorem}

\begin{remark}\label{rem:comparison}
For our later use, we compare the class $\kappa_1^{\rm Hg}\in\rH^1_{\Fcal_\Lambda}(K,\mathbf{T})$ from Theorem~\ref{thm:howard-HPKS} with the $\Lambda$-adic class constructed in \cite[\S{5.2}]{cas-hsieh1} (taking for $f$ the newform associated with $E$). 

Denote by $\alpha$ the $p$-adic unit root of $x^2-a_px+p$. With the notations introduced in the proof of Theorem~\ref{thm:howard-HPKS}, define the \emph{$\alpha$-stabilized Heegner point} $P[p^k]_\alpha\in E(K[p^k])\otimes\Z_p$ by 
\begin{equation}\label{eq:stabilized-1}
P[p^k]_\alpha:=\left\{
\begin{array}{ll}
P[p^k]-\alpha^{-1}P[p^{k-1}]&\textrm{if $k\geqslant 1$,}\\[0.2em]
u_K^{-1}\bigl(1-\alpha^{-1}\sigma_p\bigr)\bigl(1-\alpha^{-1}\sigma_p^*\bigr)P[1]&\textrm{if $k=0$ and $p$ splits in $K$,}\\
u_K^{-1}\bigl(1-\alpha^{-2}\bigr)P[1]&\textrm{if $k=0$ and $p$ is inert in $K$.}
\end{array}
\right.
\end{equation}

Using the Heegner point norm relations, a straightforward calculation shows that the points $\alpha^{-k}P[p^k]_\alpha$ are norm-compatible.  Letting $\delta:E(K_k)\otimes\Z_p\rightarrow\rH^1(K_k,T_pE)$ be the Kummer map, we may therefore set
\[
\kappa_\infty:=\varprojlim_k\delta(\kappa_k)\in\varprojlim_k\rH^1(K_k,T_pE)\simeq\rH^1(K,\mathbf{T}),
\]
where $\kappa_k=\alpha^{-d(k)}{\rm Norm}_{K[p^{d(k)}]/K_k}(P[p^{d(k)}]_\alpha)$. The inclusion $\kappa_\infty\in\rH^1_{\Fcal_\Lambda}(K,\mathbf{T})$ follows immediately from the construction. 
For the comparison with $\kappa_1^{\rm Hg}$, note that by (\ref{eq:Qn}) the projection ${\rm pr}_K({\kappa}_1^{\rm Hg})$ of $\kappa_1^{\rm Hg}$ to $\rH^1(K,T_pE)$ is given by the Kummer image of ${\rm Norm}_{K[1]/K}(Q_0[1])$, while $\kappa_0$ is the Kummer image of ${\rm Norm}_{K[1]/K}(P[1]_\alpha)$. Thus comparing $(\ref{eq:multiplier})$ and $(\ref{eq:stabilized-1})$ we see that 
\begin{equation}\label{eq:comparison}
{\rm pr}_K(\kappa_1^{\rm Hg})=
\begin{cases}
u_K\alpha^2(\beta-1)^2\cdot\kappa_0&\textrm{if $p$ splits in $K$},\\
u_K\alpha^2(\beta^2-1)\cdot\kappa_0&\textrm{if $p$ is inert in $K$,}
\end{cases}
\end{equation}
where $\beta=p\alpha^{-1}$. 
In particular, $\kappa_\infty$ and $\kappa_1^{\rm Hg}$ generate the same $\Lambda$-submodule of $\rH^1_{\Fcal_\Lambda}(K,\mathbf{T})$. 
\end{remark}

\subsection{Proof of the Iwasawa main conjectures}

Let $\kappa_\infty\in\rH^1_{\Fcal_\Lambda}(K,\mathbf{T})$ be the $\Lambda$-adic Heegner class introduced in Remark~\ref{rem:comparison}, and 
put $\Lambda_{\rm ac}=\Lambda\otimes_{\Z_p}\Q_p$.
{Let ${\rm H}^1_{\Fcal_{\rm Gr}}(K,\mathbf{T})$ be defined just as ${\rm H}^1_{\Fcal_{\rm Gr}}(K,M_E)$ but with 
$\mathbf{T}$ replacing $M_E$ and the conditions on $v$ and $\bar v$ switched.

\begin{prop}\label{prop:equiv-imc}
Assume that $p=v\bar{v}$ splits in $K$ and that $E(K)[p]=0$. 
Then the following statements are equivalent:
\begin{enumerate}
\item[(i)] Both ${\rm H}^1_{\Fcal_\Lambda}(K,\mathbf{T})$ and 
$\mathcal{X} = {\rm H}^1_{\Fcal_\Lambda}(K,M_E)^\vee$ have $\Lambda$-rank one, and the divisibility 
\[
\mathrm{char}_\Lambda(\mathcal{X}_{\rm tors}) \supset\mathrm{char}_\Lambda\bigl({\rm H}^1_{\Fcal_\Lambda}(K,\mathbf{T})/\Lambda\kappa_\infty\bigr)^2
\]
holds in $\Lambda_{\rm ac}$.
\item[(ii)]  Both ${\rm H}^1_{\Fcal_{\rm Gr}}(K,\mathbf{T})$ and $\mathfrak{X}_{E} = {\rm H}^1_{\Fcal_{\rm Gr}}(K,M_E)^\vee$ are $\Lambda$-torsion, and the divisibility 
\[
\mathrm{char}_\Lambda(\mathfrak{X}_{E})\Lambda^{\rm ur}\supset(\mathcal{L}_E)
\]
holds in $\Lambda^{\rm ur}\otimes_{\bZ_p}\bQ_p$.
\end{enumerate}
Moreover, the same result holds for the opposite divisibilities.
\end{prop}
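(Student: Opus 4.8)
We deduce the equivalence from three ingredients: Poitou--Tate global duality, in the form of the Greenberg-type exact sequences already used in \S\ref{sec:algebraic} and \S\ref{IMCIII} (cf. \cite[Prop.~A.2]{pollack2011anticyclotomic}); Howard's structural decomposition of $X$ recalled in Theorem~\ref{thm:howard}; and the explicit reciprocity law of Bertolini--Darmon--Prasanna and Castella--Hsieh \cite{BDP, cas-hsieh1}. The overall strategy follows \cite{castella-PhD} and \cite{wan-heegner} (see also \cite[App.]{castellaheights}). As a preliminary normalisation I would first use Lemma~\ref{lemmawneqp} and the last display of \S\ref{subsec:Sel-E} to reduce to a comparison in which the local conditions away from $p$ agree on both sides: passing between the unramified and the relaxed conditions at the split primes $w\nmid p$ of bad reduction alters each characteristic ideal only by the Euler factors $\mathcal{P}_w(E)$, which occur symmetrically in (i) and (ii) and cancel; this also reconciles the Greenberg/ordinary condition at $p$ defining $\mathfrak{X}_E$ and $\mathfrak{S}_E$ with the ${\rm Fil}^+$-condition defining $\rH^1_{\Fcal_\Lambda}$.

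Next I would exploit that $\mathbf{T}=M_E^\vee(1)$, that the ${\rm Fil}^+$ local condition at $p$ is its own orthogonal complement under local Tate duality, and that the relaxed and strict conditions are mutually dual; hence the Selmer structure $\Fcal_\Lambda$ on $M_E$ is the dual of the one on $\mathbf{T}$, so $X=\rH^1_{\Fcal_\Lambda}(K,M_E)^\vee$ is the strict counterpart of the Heegner Selmer group $\rH^1_{\Fcal_\Lambda}(K,\mathbf{T})$. Feeding this into the Poitou--Tate sequences, and successively relaxing the local condition at $\bar v$ on the compact side and at $v$ on the discrete side, produces a chain of short exact sequences whose extra terms are $\Lambda$-cofree of rank one by Lemma~\ref{lemma411} together with the rank-one-ness of ${\rm Fil}^+$. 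Counting ranks along this chain shows that $\rH^1_{\Fcal_\Lambda}(K,\mathbf{T})$ and $\rH^1_{\Fcal_\Lambda}(K,M_E)$ both have $\Lambda$-rank one if and only if $\mathfrak{X}_E$ and $\mathfrak{S}_E$ are both $\Lambda$-torsion; and, assuming this, the sequences give an identity of characteristic ideals relating $\mathrm{char}_\Lambda(\mathfrak{X}_E)$ and $\mathrm{char}_\Lambda(\mathfrak{S}_E)$, $\mathrm{char}_\Lambda(X_{\rm tors})$, the characteristic ideal of $\rH^1(K_{\bar v},{\rm Fil}^+_{\bar v}\mathbf{T})/\Lambda\cdot\loc_{\bar v}(\kappa_\infty)$, and local Euler factors at the primes above $p$. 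By Theorem~\ref{thm:howard}, $X\sim\Lambda\oplus M\oplus M$, so $\mathrm{char}_\Lambda(X_{\rm tors})=\mathrm{char}_\Lambda(M)^2$ is automatically a square.

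The last ingredient identifies the local term. There is a $\Lambda^{\rm ur}$-linear big-logarithm map
\[
{\rm Log}_{\bar v}\colon \rH^1\bigl(K_{\bar v},{\rm Fil}^+_{\bar v}\mathbf{T}\bigr)\otimes_\Lambda\Lambda^{\rm ur}\longrightarrow\Lambda^{\rm ur},
\]
injective with explicitly computable cokernel (a power of $p$ times a local Euler factor at $\bar v$), and the explicit reciprocity law of \cite{BDP, cas-hsieh1} shows that ${\rm Log}_{\bar v}(\loc_{\bar v}(\kappa_\infty))$ equals, up to a unit of $\Lambda^{\rm ur}$, the square root $\mathscr{L}_{v,\eta}$ of $\Lcal_E$ appearing in the proof of Theorem~\ref{thm:BDP}; squaring gives that ${\rm Log}_{\bar v}(\loc_{\bar v}(\kappa_\infty))^2$ and $\Lcal_E$ generate the same $\Lambda^{\rm ur}$-ideal. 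Substituting this into the characteristic-ideal identity of the previous step, the Euler factors at $v$ and $\bar v$ produced there cancel against those in the cokernel of ${\rm Log}_{\bar v}$ (and against the $\mathcal{P}_w$'s from the normalisation), leaving an equality of fractional ideals in $\Lambda'\hat\otimes_{\bZ_p}\bZ_p^{\rm ur}$ which says precisely that the divisibility in (i) holds if and only if the divisibility in (ii) holds; since every step is an equality rather than a one-sided divisibility, reversing the comparison yields the statement for the opposite divisibilities as well. The case $\Lambda'=\Lambda$ follows from the case $\Lambda'=\Lambda[1/p]$ (or $\Lambda^{\rm ur}$), since all modules in play are defined over $\Lambda$ and a divisibility between elements of $\Lambda$ can be tested after base change to $\Lambda^{\rm ur}$.

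The hard part is not the Iwasawa-theoretic algebra, which is by now routine, but controlling the various correction factors: one must pin down the exact cokernel of ${\rm Log}_{\bar v}$ and the precise local Euler factors appearing in the Poitou--Tate sequences, and check that they cancel without residue, and one must verify that the self-duality of the ordinary condition together with the doubling built into Howard's $M\oplus M$ decomposition reproduce exactly the square $\Lcal_E=\mathscr{L}_{v,\eta}^2$, with no spurious factor left over (in particular this is where the hypothesis $p=v\bar v$ split is genuinely used). The one deep analytic input — the explicit reciprocity law relating the $\Lambda$-adic Heegner class $\kappa_\infty$ to $\Lcal_E$ — is taken as a black box from \cite{BDP} and \cite{cas-hsieh1}.
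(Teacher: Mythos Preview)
The paper does not give an argument here: its proof of Proposition~\ref{prop:equiv-imc} is a one-line citation to \cite[Thm.~4.4]{BCK}, noting that the proof there still goes through after inverting $p$. Your proposal is a correct reconstruction of that argument (which in turn follows \cite{castella-PhD}, \cite{wan-heegner}, and \cite[App.]{castellaheights}): compare the Selmer structures $\Fcal_\Lambda$ and $\Fcal_{\rm Gr}$ via Poitou--Tate, use the $\Lambda$-freeness of the local cohomology at the primes above $p$ to match ranks, and then invoke the big-logarithm/explicit reciprocity law of \cite{BDP,cas-hsieh1} to identify $\loc_{\bar v}(\kappa_\infty)$ with a square root of $\mathcal{L}_E$. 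So your approach and the paper's (cited) approach coincide.

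Two small remarks. First, your appeal to Theorem~\ref{thm:howard} for the square structure $X_{\rm tors}\sim M\oplus M$ is not needed for the bare equivalence: the square on the Heegner side of (i) comes from squaring ${\rm char}_\Lambda(\rH^1_{\Fcal_\Lambda}(K,\mathbf{T})/\Lambda\kappa_\infty)$, and the square on the analytic side of (ii) comes from $\mathcal{L}_E=\mathscr{L}_{v,\eta}^2$; these match directly through the reciprocity law without knowing that $X_{\rm tors}$ itself has square characteristic ideal. Second, your final sentence is garbled: divisibility in $\Lambda$ is \emph{stronger} than in $\Lambda[1/p]$, so the case $\Lambda'=\Lambda$ does not follow from the case $\Lambda'=\Lambda[1/p]$. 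What is true (and what you presumably meant) is that the argument runs uniformly in $\Lambda'$, and that divisibility in $\Lambda$ can be checked after the faithfully flat base change to $\Lambda^{\rm ur}$; the two choices $\Lambda'=\Lambda$ and $\Lambda'=\Lambda[1/p]$ are parallel, not one a consequence of the other.
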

}

\begin{proof}
See \cite[Thm.~5.2]{BCK}, whose proof still applies after inverting $p$. 
\end{proof}

We can now conclude the proof of Theorem~\ref{thm:C} in the introduction. 

\begin{theorem}\label{thm:thmA}
Suppose $K$ satisfies hypotheses {\rm (\ref{eq:intro-Heeg})}, {\rm (\ref{eq:intro-spl})}, {\rm (\ref{eq:intro-disc})}, and 
{\rm (\ref{eq:intro-sel})}, and that
$E[p]^{ss}=\mathbb{F}_p(\phi)\oplus\mathbb{F}_p(\psi)$ as $G_\bQ$-modules, with $\phi\vert_{G_p}\neq\mathds{1},\omega$.
Then $\mathfrak{X}_E$ is $\Lambda$-torsion, and
\[
{\rm char}_\Lambda(\mathfrak{X}_E)\Lambda^{\rm ur}=(\mathcal{L}_E)
\]
as ideals in $\Lambda^{\rm ur}$. 
\end{theorem}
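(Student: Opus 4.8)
The plan is to assemble the main results of $\S\ref{sec:algebraic}$, $\S\ref{IMCII}$, and $\S\ref{IMCIII}$. By Proposition~\ref{propmodp} we already know that $\mathfrak{X}_E$ is $\Lambda$-torsion, and by Theorem~\ref{mulambda} we have the equalities of Iwasawa invariants $\mu(\mathcal{L}_E)=\mu(\mathfrak{X}_E)=0$ and $\lambda(\mathcal{L}_E)=\lambda(\mathfrak{X}_E)$. It therefore suffices to prove just \emph{one} of the two divisibilities between $\mathrm{char}_\Lambda(\mathfrak{X}_E)\Lambda^{\rm ur}$ and $(\mathcal{L}_E)$, and moreover it is enough to do so after inverting $p$: a divisibility in $\Lambda^{\rm ur}[1/p]$ between two elements having vanishing $\mu$-invariant and equal $\lambda$-invariant is forced by Weierstrass preparation over $\Lambda^{\rm ur}=\mathbb{Z}_p^{\rm ur}\llbracket T\rrbracket$ to be an equality in $\Lambda^{\rm ur}$ (the quotient is an element of $\Lambda^{\rm ur}[1/p]$ with trivial $\lambda$- and $\mu$-invariant, hence a unit of $\Lambda^{\rm ur}$). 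Note that the $\mu$- and $\lambda$-comparisons used here come from Theorem~\ref{MAINalgside}, Theorem~\ref{cor:Kriz}, and Rubin's main conjecture, none of which depends on the Kolyvagin system input below, so there is no circularity.

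First I would record that $(\ref{eq:h1})$ is automatic here: since $\psi=\omega\phi^{-1}$ and $\phi\vert_{G_p}\neq\mathds{1},\omega$, the restrictions of both $\phi$ and $\psi$ to the decomposition group $G_v\simeq G_{\mathbb{Q}_p}$ are non-trivial, so $E(K_v)[p]=0$ and hence $E(K)[p]=0$. Together with $(\ref{eq:intro-Heeg})$, $(\ref{eq:intro-disc})$, and hypothesis $(\ref{eq:intro-sel})$ — which, for $E$ with good ordinary reduction at the primes above $p$, says precisely that $\rH^1_{\Fcal_{\rm ord}}(K,E[p^\infty])=\mathrm{Sel}_{p^\infty}(E/K)$ has $\mathbb{Z}_p$-corank one — this places us in the setting of $\S\ref{sec:CD}$. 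Next I would invoke Theorem~\ref{thm:howard-HPKS} to produce a Kolyvagin system $\kappa^{\rm Hg}\in\mathbf{KS}(\mathbf{T},\Fcal_\Lambda,\cL_E)$ with $\kappa_1^{\rm Hg}\neq 0$, and recall from Remark~\ref{rem:comparison} that $\kappa_1^{\rm Hg}$ and the $\Lambda$-adic Heegner class $\kappa_\infty$ generate the same $\Lambda$-submodule of $\rH^1_{\Fcal_\Lambda}(K,\mathbf{T})$.

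Applying Theorem~\ref{thm:howard} together with Corollary~\ref{cor:howard} — whose additional hypothesis is exactly $(\ref{eq:intro-sel})$, used to kill the possible $\mathfrak{P}_0$-contribution — I obtain that $\rH^1_{\Fcal_\Lambda}(K,\mathbf{T})$ and $\rH^1_{\Fcal_\Lambda}(K,M_E)$ both have $\Lambda$-rank one, that $X:=\rH^1_{\Fcal_\Lambda}(K,M_E)^\vee\sim\Lambda\oplus M\oplus M$, and that $\mathrm{char}_\Lambda(M)$ divides $\mathrm{char}_\Lambda\bigl(\rH^1_{\Fcal_\Lambda}(K,\mathbf{T})/\Lambda\kappa_\infty\bigr)$ in $\Lambda[1/p]$. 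Since $\mathrm{char}_\Lambda(X_{\rm tors})=\mathrm{char}_\Lambda(M)^2$, this is precisely statement (i) of Proposition~\ref{prop:equiv-imc} with $\Lambda'=\Lambda[1/p]$, so by the equivalence in that proposition the corresponding statement (ii) holds, i.e. $\mathfrak{X}_E$ is $\Lambda$-torsion and $\mathrm{char}_\Lambda(\mathfrak{X}_E)\Lambda^{\rm ur}\supseteq(\mathcal{L}_E)$ in $\Lambda^{\rm ur}[1/p]$. Combined with the upgrade described in the first paragraph, this yields the asserted equality of ideals in $\Lambda^{\rm ur}$.

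The step I expect to require the most care — though it is the only genuinely ``hands-on'' point, since everything else is a matter of correctly chaining together the previously established inputs — is the passage from the divisibility in $\Lambda^{\rm ur}[1/p]$ to the equality in $\Lambda^{\rm ur}$, which hinges on having the full strength of Theorem~\ref{mulambda} (both $\mu=0$ and the equality of $\lambda$'s) rather than just $\Lambda$-torsionness. One should also double-check that the identifications $\rH^1_{\Fcal_\Lambda}(K,M_E)^\vee=\mathfrak{X}_E$ and $\Lambda\kappa_\infty=\Lambda\kappa_1^{\rm Hg}$ are in force so that Proposition~\ref{prop:equiv-imc} applies verbatim, but these are exactly the contents of $\S\ref{subsec:Iw-proof}$ and Remark~\ref{rem:comparison}.
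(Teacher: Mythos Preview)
Your proof is correct and follows essentially the same route as the paper's: produce the Heegner Kolyvagin system via Theorem~\ref{thm:howard-HPKS}, apply Theorem~\ref{thm:howard} and Corollary~\ref{cor:howard} to get the rank-one statements and the divisibility in $\Lambda[1/p]$, translate via Proposition~\ref{prop:equiv-imc} to the divisibility ${\rm char}_\Lambda(\mathfrak{X}_E)\Lambda^{\rm ur}\supset(\mathcal{L}_E)$ in $\Lambda^{\rm ur}[1/p]$, and then upgrade to an equality in $\Lambda^{\rm ur}$ using $\mu=0$ and the equality of $\lambda$-invariants from Theorem~\ref{mulambda}. One small slip in your closing remark: $\rH^1_{\Fcal_\Lambda}(K,M_E)^\vee$ is \emph{not} identified with $\mathfrak{X}_E$ (the former uses the ordinary condition at both primes above $p$, the latter is relaxed at $v$ and strict at $\bar v$); Proposition~\ref{prop:equiv-imc} is precisely the bridge between these two Selmer groups, not a statement that they coincide, but since you invoke that proposition correctly in the body of the argument this does not affect the proof.
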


\begin{proof}
By Theorem~\ref{thm:howard-HP}, 
the modules ${\rm H}^1_{\Fcal_\Lambda}(K,\mathbf{T})$ and 
$\rH^1_{\Fcal_\Lambda}(K,M_E)^\vee$ have both $\Lambda$-rank one, with
\[
{\rm char}_\Lambda\bigl(\rH^1_{\Fcal_\Lambda}(K,M_E)^\vee_{\rm tors}\bigr)\supset{\rm char}_\Lambda\bigl({\rm H}^1_{\Fcal_\Lambda}(K,\mathbf{T})/\Lambda\kappa_1^{\rm Hg}\bigr)^2
\]
as ideals in $\Lambda_{\rm ac}=\Lambda[1/p]$. Since by Remark~\ref{rem:comparison} 
the classes $\kappa_1^{\rm Hg}$ and $\kappa_\infty$ generate the same $\Lambda$-submodule of $\rH^1_{\Fcal_\Lambda}(K,\mathbf{T})$, by Proposition~\ref{prop:equiv-imc} it follows that $\mathfrak{X}_E$ is $\Lambda$-torsion, with
\[
{\rm char}_\Lambda(\mathfrak{X}_E)\Lambda^{\rm ur}\supset(\mathcal{L}_E)
\]
as ideals in $\Lambda_{\rm ac}\hat{\otimes}_{\bZ_p}\bZ_p^{\rm ur}$. This divisibility, together with the equalities $\mu(\mathfrak{X}_E)=\mu(\mathcal{L}_E)=0$ and $\lambda(\mathfrak{X}_E)=\lambda(\mathcal{L}_E)$ in Theorem~\ref{mulambda}, yields the result.
\end{proof}

As a consequence, we can also deduce the first cases of Perrin-Riou's Heegner point main conjecture \cite{perrinriou} in the residually reducible case. More precisely, together with Theorem~\ref{thm:howard-HP}, the following yields Corollary~\ref{cor:D} in the introduction. 

\begin{cor}\label{cor:PR} Suppose $K$ satisfies hypotheses {\rm (\ref{eq:intro-Heeg})}, {\rm (\ref{eq:intro-spl})},  {\rm (\ref{eq:intro-disc})}, and {\rm (\ref{eq:intro-sel})}, and that
$E[p]^{ss}=\mathbb{F}_p(\phi)\oplus\mathbb{F}_p(\psi)$ as $G_\bQ$-modules, with $\phi\vert_{G_p}\neq\mathds{1},\omega$. 
Then both ${\rm H}^1_{\Fcal_\Lambda}(K,\mathbf{T})$ and 
$\rH^1_{\Fcal_\Lambda}(K,M_E)^\vee$ have $\Lambda$-rank one, and
\[
\mathrm{char}_\Lambda(\rH^1_{\Fcal_\Lambda}(K,M_E)^\vee_{\rm tors})=\mathrm{char}_\Lambda\bigl({\rm H}^1_{\Fcal_\Lambda}(K,\mathbf{T})/\Lambda\kappa_\infty\bigr)^2
\]
as ideals in $\Lambda_{\rm ac}$.
\end{cor}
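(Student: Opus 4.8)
The plan is to deduce Corollary~\ref{cor:PR} directly from Theorem~\ref{thm:thmA} by passing through the equivalence between the two formulations of the main conjecture recorded in Proposition~\ref{prop:equiv-imc}. First I would check that the hypotheses of Proposition~\ref{prop:equiv-imc} are in force: $p=v\bar v$ splits in $K$ by \eqref{eq:intro-spl}, and $E(K)[p]=0$ because $\phi\vert_{G_p}\neq\mathds{1},\omega$ forces both $\phi\vert_{G_v}$ and $\psi\vert_{G_v}=\omega\phi^{-1}\vert_{G_v}$ to be nontrivial (note $\omega$ is ramified at $p$), whence $E[p]^{G_K}=0$. Then I would invoke Theorem~\ref{thm:thmA}, which asserts that $\mathfrak{X}_E$ is $\Lambda$-torsion and
\[
{\rm char}_\Lambda(\mathfrak{X}_E)\Lambda^{\rm ur}=(\mathcal{L}_E)
\]
as ideals in $\Lambda^{\rm ur}$. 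In particular statement (ii) of Proposition~\ref{prop:equiv-imc} holds with $\Lambda'=\Lambda$, and so does the statement obtained by reversing the divisibility there.

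Next I would apply Proposition~\ref{prop:equiv-imc} in both divisibility directions. This yields that ${\rm H}^1_{\Fcal_\Lambda}(K,\mathbf{T})$ and ${\rm H}^1_{\Fcal_\Lambda}(K,M_E)$ both have $\Lambda$-rank one, together with
\[
{\rm char}_\Lambda\bigl(\rH^1_{\Fcal_\Lambda}(K,M_E)^\vee_{\rm tors}\bigr)={\rm char}_\Lambda\bigl({\rm H}^1_{\Fcal_\Lambda}(K,\mathbf{T})/\Lambda\kappa_\infty\bigr)^2
\]
as ideals in $\Lambda$; alternatively the two $\Lambda$-rank-one assertions can be quoted directly from Theorem~\ref{thm:howard-HPKS} and Corollary~\ref{cor:howard}, as already used in the proof of Theorem~\ref{thm:thmA}. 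Finally I would recall from Remark~\ref{rem:comparison} that $\kappa_1^{\rm Hg}$ and $\kappa_\infty$ generate the same $\Lambda$-submodule of $\rH^1_{\Fcal_\Lambda}(K,\mathbf{T})$, so the right-hand side above is unchanged upon replacing $\kappa_\infty$ by $\kappa_1^{\rm Hg}$, which is exactly the asserted equality.

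There is essentially no new obstacle: all the substantive content lies in Theorem~\ref{thm:thmA} (the Kolyvagin system divisibility of $\S\ref{IMCIII}$ combined with the Iwasawa-invariant comparisons of $\S\ref{sec:algebraic}$ and $\S\ref{IMCII}$) and in Proposition~\ref{prop:equiv-imc}. The only point requiring a little care is the bookkeeping on the level of ideals: one must make sure that the ``opposite divisibility'' clause of Proposition~\ref{prop:equiv-imc} upgrades the equality of Theorem~\ref{thm:thmA} to an equality of characteristic ideals in $\Lambda$ rather than merely in $\Lambda[1/p]$. This is automatic here, since Theorem~\ref{mulambda} already gives $\mu(\mathfrak{X}_E)=\mu(\mathcal{L}_E)=0$, so there is no stray power of $p$ obstructing the passage from $\Lambda[1/p]$ to $\Lambda$.
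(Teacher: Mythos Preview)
Your proposal is correct and follows essentially the same route as the paper: both deduce the equality of characteristic ideals from Theorem~\ref{thm:thmA} via Proposition~\ref{prop:equiv-imc}, with the rank-one assertions coming along for free (the paper phrases it as getting one divisibility from Theorem~\ref{thm:howard} and the missing one from Theorem~\ref{thm:thmA}, but since the former is already an input to the latter this is the same argument). One small bookkeeping point: your final paragraph about swapping $\kappa_\infty$ for $\kappa_1^{\rm Hg}$ is unnecessary here, since both Proposition~\ref{prop:equiv-imc} and the statement of the corollary are already formulated in terms of $\kappa_\infty$.
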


\begin{proof}
In light of Remark~\ref{rem:comparison}, this is the combination of Theorem~\ref{thm:thmA} and Proposition~\ref{prop:equiv-imc}.
\end{proof}

\begin{remark}\label{rmk:hyp-sel}
If the Heeger point $P_K = {\rm Norm}_{K[1]/K}(P[1])\in E(K)$ is non-torsion, then
{\rm (\ref{eq:intro-sel})} holds by the main results of \cite{kolyvagin-mw-sha}.
In particular, this is so if the image ${\rm pr}_K(\kappa_1^{\rm Hg})$ of $\kappa_1^{\rm Hg}$ 
(equivalently, the class $\kappa_0$) in $\rH^1_{\CF}(K,T_p(E))$ is non-zero 
(as $\rH^1_{\CF}(K,T_p(E))$ is non-torsion since $E(K)[p]= 0$). 
\end{remark}

\section{Proof of Theorem~\ref{thm:E} and Theorem~\ref{thm:F}}
\label{sec:EF}

\subsection{Preliminaries}

Here we collect the auxiliary results we shall use in the next  sections to deduce Theorems~\ref{thm:E} and~\ref{thm:F} in the introduction from our main result, Theorem~\ref{thm:thmA}.

\subsubsection{Anticyclotomic control theorem}

Assume that $p=v\bar{v}$ splits in $K$, and as in \cite[\S{2.2.3}]{jsw}, define the \emph{anticyclotomic Selmer group} of $W=E[p^\infty]$ by
\[
\rH^1_{\Fcal_{\rm ac}}(K,W)={\rm ker}\biggl\{
\rH^1(K^\Sigma/K,W)\rightarrow\prod_{w\in \Sigma}\rH^1(K_w,W)\times\frac{\rH^1(K_v,W)}{\rH^1(K_v,W)_{\rm div}}\times\rH^1(K_{\bar{v}},W)\biggr\},
\]
where $\rH^1(K_v,W)_{\rm div}\subset\rH^1(K_v,W)$ denotes the maximal divisible submodule and $\Sigma=\{w: \ w\vert N\}$. 

The following result is a special case of the ``anticyclotomic control theorem'' of \cite[\S{3}]{jsw}.

\begin{theorem}\label{controlthm}
Assume that
\begin{itemize}
\item $E(\bQ_p)[p]=0$,
\item{} ${\rm rank}_{\Z}E(K)=1$,
\item{} $\#\sha(E/K)[p^\infty]<\infty$.
\end{itemize}
Then $\mathfrak{X}_E$ is a torsion $\Lambda$-module, and letting $\Fcal_E\in\Lambda$ be a generator of ${\rm char}_\Lambda(\mathfrak{X}_E)$, we have
\[
\# \Z_p /\Fcal_E(0) = 
\# \sha(E/K)[p^\infty]\cdot\biggl(  \dfrac{ \# (\Z_p / ( \frac{1-a_p+p}{p}) \cdot \mathrm{log}_{\omega_E} P) }{ [E(K): \Z\cdot P]_p }\biggr)^2\cdot\prod_{w\mid N} c_w^{}(E/K)_p,
\]	
where
\begin{itemize}
\item $P\in E(K)$ is any point of infinite order, 
\item $\log_{\omega_E}:E(K_v)_{/{\rm tors}}\rightarrow\bZ_p$ is the formal group logarithm associated to a N\'{e}ron differential $\omega_E$, 
\item $[E(K):\Z\cdot P]_p$ denotes the $p$-part of the index $[E(K):\Z\cdot P]$,
\item $c_w^{}(E/K)_p$ is the $p$-part of the Tamagawa number of $E/K_w$.
\end{itemize}
\end{theorem}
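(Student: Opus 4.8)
The plan is to deduce this from the general anticyclotomic control theorem of \cite[\S3]{jsw}, of which it is an instance, so the work is mainly bookkeeping to verify hypotheses and to compute the right-hand side explicitly. First I would recall that $\mathfrak{X}_E = \rH^1_{\Fcal_{\rm Gr}}(K,M_E)^\vee$, where the Greenberg local condition at $v$ is the strict (zero) condition and at $\bar v$ is the relaxed one; as discussed in \S\ref{subsec:Sel-E} this agrees, after the choices made here, with the Selmer group to which the control theorem applies, the key point being that $E(K_v)[p]=0$ forces $\rH^1(K_v,W)_{\rm div}=\rH^1(K_v,W)$ at the bad-type places and makes the various local cohomology groups behave well. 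Under the three displayed hypotheses --- $E(K)[p]=0$, $E(\Q_p)[p]=0$ (equivalently $E(K_v)[p]=0$ since $v$ is split), $\operatorname{rank}_\Z E(K)=1$, and $\#\sha(E/K)[p^\infty]<\infty$ --- \cite[\S3]{jsw} gives that $\mathfrak{X}_E$ is $\Lambda$-torsion and that a generator $\Fcal_E$ of $\operatorname{char}_\Lambda(\mathfrak{X}_E)$ satisfies a specialization formula relating $\#\Z_p/\Fcal_E(0)$ to the order of the classical $p^\infty$-Selmer group $\operatorname{Sel}_{p^\infty}(E/K)$, twisted by local correction factors at $p$ and at the bad primes.

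Next I would run the descent at the trivial character: evaluating at $T=0$ (i.e.\ modulo $\mathfrak{P}_0=(\gamma-1)$) relates $\mathfrak{X}_E/\mathfrak{P}_0\mathfrak{X}_E$ to the Pontryagin dual of $\rH^1_{\Fcal_{\rm ac}}(K,W)$, and the control theorem of \cite{jsw} identifies the latter, under our hypotheses, with (the dual of) the classical Selmer group up to the stated error. The finiteness of $\sha(E/K)[p^\infty]$ together with $\operatorname{rank}_\Z E(K)=1$ gives $\operatorname{corank}_{\Z_p}\operatorname{Sel}_{p^\infty}(E/K)=1$, so passing to the cotorsion part of $\rH^1_{\Fcal_{\rm ac}}(K,W)$ produces a finite group whose order, by the $p$-adic Birch--Swinnerton-Dyer-style formula proved in \cite[\S3]{jsw}, is
\[
\#\sha(E/K)[p^\infty]\cdot\Bigl(\tfrac{\#\bigl(\Z_p/(\tfrac{1-a_p+p}{p})\cdot\log_{\omega_E}P\bigr)}{[E(K):\Z\cdot P]_p}\Bigr)^2\cdot\prod_{w\mid N}c_w(E/K)_p.
\]
Here the square arises because the anticyclotomic Selmer group sees the regulator pairing twice (once from each of the two ``halves'' coming from the places $v,\bar v$ over $p$ with opposite local conditions), the factor $(1-a_p+p)/p$ is the modified Euler factor at $p$ in the split ordinary case (note $v$ splits, so the local term at $v$ contributes $1-a_p\xi(\bar v)p^{-1}+\xi(\bar v)^2p^{-1}$ specialized at the trivial character, which is $1-a_p/p+1/p=(1-a_p+p)/p$), and $\log_{\omega_E}P$ records the image of the chosen point under the formal group logarithm. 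I would be careful that the normalizations of $\omega_E$, of $\log_{\omega_E}$, and of the index $[E(K):\Z\cdot P]_p$ match those in \cite{jsw} exactly, so that no stray power of $p$ appears; this is the routine but error-prone part.

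The main obstacle I anticipate is purely expository: confirming that the Selmer group $\mathfrak{X}_E$ defined in \S\ref{subsec:Sel-E} (with the strict condition at $v$, relaxed at $\bar v$, and unramified-type at the bad primes) is literally the one for which the control theorem of \cite{jsw} is stated, and then tracking the local terms at $p$ and at $w\mid N$ through the specialization at $T=0$ so that the displayed formula comes out with exactly the constants shown --- in particular getting the exponent $2$ on the $p$-adic logarithm term and on the index, and the precise shape $(1-a_p+p)/p$ of the $p$-adic multiplier, rather than, say, its square or its inverse. Once the hypotheses are matched and the normalizations are pinned down, the statement is a direct transcription of \cite[\S3]{jsw}, so I would present the proof as: (1) identify $\mathfrak{X}_E$ with the Selmer group of \cite{jsw}; (2) invoke their control theorem to get $\Lambda$-torsionness and the specialization formula; (3) use $\operatorname{rank}_\Z E(K)=1$ and $\#\sha(E/K)[p^\infty]<\infty$ to evaluate the finite group on the right, reading off the displayed expression.
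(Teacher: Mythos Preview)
Your overall approach is the same as the paper's: the result is quoted from \cite[\S3]{jsw}, specifically the combination of \cite[Thm.~3.3.1]{jsw} and \cite[(3.5.d)]{jsw}, and the proof is essentially a citation plus bookkeeping. However, you miss the one substantive point the paper does flag.

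The control theorem in \cite{jsw} is stated under the running hypothesis that $E[p]$ is irreducible as a $G_K$-module. In the setting of this paper --- Eisenstein primes --- that hypothesis \emph{fails by design}: we have $E[p]^{ss}=\mathbb{F}_p(\phi)\oplus\mathbb{F}_p(\psi)$. So you cannot simply ``invoke their control theorem''; you must observe, as the paper does, that the arguments in the proofs of \cite[Thm.~3.3.1]{jsw} and \cite[(3.5.d)]{jsw} go through unchanged when irreducibility is replaced by the weaker hypothesis $E(K)[p]=0$. This is not automatic and is exactly the content of the proof here. In practice, the places in \cite{jsw} where irreducibility is used are to ensure things like $\rH^0(K,E[p^\infty])=0$ and injectivity of certain restriction maps, all of which follow already from $E(K)[p]=0$; but you should say this rather than treat the citation as a black box. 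Your proposal identifies the obstacle as ``purely expository'' (matching Selmer groups and normalizations), which understates the issue.

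A smaller point: you have the local conditions at $v$ and $\bar v$ swapped. In the definition of $\mathfrak{X}_E=\rH^1_{\Fcal_{\rm Gr}}(K,M_E)^\vee$ in \S\ref{subsec:Sel-E}, the condition at $\bar v$ is strict and the condition at $v$ is relaxed, not the other way around.
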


\begin{proof}
This follows from the combination of Theorem~3.3.1 and equation (3.5.d) in \cite[(3.5.d)]{jsw}, noting that the arguments in the proof of those results apply without change with the $G_K$-irreducibility of $E[p]$ assumed in \emph{loc.\,cit.} replaced by the weaker hypothesis that $E(K)[p]=0$, {which 
is implied by the hypothesis $E(\bQ_p)[p]=0$ since $p$ splits in $K$}. 
\end{proof}

\subsubsection{Gross--Zagier formulae}

Let $E/\bQ$ be an elliptic curve of conductor $N$, and fix a  parametrization
\[
\pi:X_0(N)\rightarrow E.
\]

Let $K$ be an imaginary quadratic field 
satisfying the Heegner hypothesis relative to $N$, and fix an integral ideal $\mathfrak{N}\subset\mathcal{O}_K$ with $\mathcal{O}_K/\mathfrak{N}=\Z/N\Z$. Let $x_1=[\mathbb{C}/\mathcal{O}_K\rightarrow\mathbb{C}/\mathfrak{N}^{-1}]\in X_0(N)$ 
be the Heegner point of conductor $1$ on $X_0(N)$, which is defined over the Hilbert class field $H=K[1]$ of $K$, and set
\[
P_K=\sum_{\sigma\in{\rm Gal}(H/K)}\pi(x_1)^\sigma\in E(K).
\]

Let $f\in S_2(\Gamma_0(N))$ be the newform associated with $E$, so that $L(f,s)=L(E,s)$, and consider the differential $\omega_f:=2\pi i f(\tau)d\tau$ on $X_0(N)$. Let also $\omega_E$ be a N\'{e}ron differential on $E$, and let $c_E\in\mathbb{Z}$ be the associated \emph{Manin constant}, so that $\pi^*(\omega_E)=c_E\cdot\omega_f$.
  
\begin{theorem}
\label{thmGZ} 
Under the above hypotheses, $L(E/K,1)=0$ and 
\begin{displaymath}
L'(E/K,1)=u_K^{-2}c_E^{-2}\cdot\sqrt{\vert D_K\vert}^{-1}\cdot\Vert \omega_E \Vert^2\cdot \hat{h}(P_K),
\end{displaymath}
where $u_K=\#(\mathcal{O}_K^\times/{\pm{1}})$, 
$\hat{h}(P_K)$ is the canonical height of $P_K$, and $\Vert\omega_E\Vert^2=\iint_{E(\C)}|\omega_E \wedge \bar{\omega}_E|$. 
\end{theorem}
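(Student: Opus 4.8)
The plan is to deduce this from the Gross--Zagier formula together with the bookkeeping that relates the automorphic and N\'eron normalizations of the period.

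First I would record the vanishing $L(E/K,1)=0$. Writing $L(E/K,s)=L(E,s)\,L(E^K,s)$, the completed $L$-function of $E/K$ satisfies a functional equation $s\leftrightarrow 2-s$ whose sign is a product of local root numbers; it is classical (and is precisely what makes Heegner points relevant here) that under the Heegner hypothesis~(\ref{eq:intro-Heeg}) — every prime $\ell\mid N$ splits in $K$, and $K$ is imaginary quadratic — this global sign equals $-1$. Hence $\mathrm{ord}_{s=1}L(E/K,s)$ is odd, and in particular $L(E/K,1)=0$. I would also note that~(\ref{eq:intro-Heeg}) forces $(D_K,N)=1$, since the primes ramified in $K$ are exactly those dividing $D_K$; combined with~(\ref{eq:intro-disc}) this puts us in the classical setting of \cite{gross-zagier}, though the coprimality hypothesis is not really needed, as one can instead invoke the Gross--Zagier formula in the generality of Yuan--Zhang--Zhang.

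Next I would invoke the Gross--Zagier theorem in the shape
\[
L'(E/K,1)=\frac{\Vert\omega_f\Vert^2}{u_K^2\,\sqrt{\vert D_K\vert}\,\deg(\pi)}\cdot\hat h(P_K),
\]
where $\omega_f=2\pi i\,f(\tau)\,d\tau$ and $\Vert\omega_f\Vert^2=\iint_{X_0(N)(\C)}\vert\omega_f\wedge\bar\omega_f\vert=8\pi^2\langle f,f\rangle$ in terms of the Petersson norm, $\hat h$ is the N\'eron--Tate canonical height, and $P_K=\sum_{\sigma\in\Gal(H/K)}\pi(x_1)^\sigma$ is exactly the point in the statement, namely the trace to $K$ of the image under $\pi$ of the conductor-one Heegner point $x_1\in X_0(N)(H)$. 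Here I would be careful to fix one reference normalization and to pin down the height convention and the constant $8\pi^2/u_K^2$ against it; this matching of explicit powers of $2$, $\pi$, and $u_K$ across the available forms of the Gross--Zagier formula is the only genuinely delicate point, and I expect it to be the main obstacle.

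Finally I would translate the period. From $\pi^*(\omega_E)=c_E\,\omega_f$ (with $c_E\in\Z$ the Manin constant, so $c_E\neq 0$) and the identity $\iint_{X_0(N)(\C)}\pi^*\eta=\deg(\pi)\iint_{E(\C)}\eta$ for a top form $\eta$ on $E$, applied to $\eta=\omega_E\wedge\bar\omega_E$ and using $\pi^*(\omega_E\wedge\bar\omega_E)=c_E^2\,\omega_f\wedge\bar\omega_f$, one obtains
\[
\Vert\omega_f\Vert^2=c_E^{-2}\,\deg(\pi)\,\Vert\omega_E\Vert^2.
\]
Substituting this into the displayed Gross--Zagier formula cancels $\deg(\pi)$ and yields
\[
L'(E/K,1)=u_K^{-2}c_E^{-2}\,\sqrt{\vert D_K\vert}^{-1}\,\Vert\omega_E\Vert^2\,\hat h(P_K),
\]
which is the assertion. (The right-hand side is automatically independent of the chosen parametrization $\pi$, as the left-hand side is, and this can also be checked directly from the way $c_E$, $\deg(\pi)$, and $P_K$ transform.)
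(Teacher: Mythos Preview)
Your proposal is correct and takes essentially the same approach as the paper: both simply invoke the Gross--Zagier theorem \cite[Thm.~V.2.1]{grosszagier}, with the paper giving only the bare citation while you spell out the root-number argument for the vanishing and the conversion $\Vert\omega_f\Vert^2=c_E^{-2}\deg(\pi)\Vert\omega_E\Vert^2$ that passes from the automorphic to the N\'eron normalization. Your added detail is accurate and useful, though strictly speaking the paper treats all of it as contained in the cited reference.
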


\begin{proof}
This is \cite[Thm.~V.2.1]{grosszagier}.
\end{proof}

\begin{theorem}
\label{thmpadicGZ} 
Under the above hypotheses, let $p>2$ be a prime of good reduction for $E$ such that $p=v\bar{v}$ splits in $K$. Then
\[
\mathcal{L}_E(0)=c_E^{-2}\cdot\bigl(1-a_pp^{-1}+p^{-1}\bigr)^2\cdot{\rm log}_{\omega_E}(P_K)^2.
\]
where ${\rm log}_{\omega_E}:E(K_v)\rightarrow K_v$ is the formal group logarithm associated to $\omega_E$.
\end{theorem}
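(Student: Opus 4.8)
\emph{Proof proposal.} The plan is to evaluate the $p$-adic $L$-function $\mathcal{L}_E$ directly at the trivial character $\xi=\mathbf{1}$ (the point $s=1$), which lies outside the range of interpolation of Theorem~\ref{thm:BDP}; this is precisely why the refined \emph{measure-theoretic} construction recalled in the proof of that theorem is needed. Since $\mathcal{L}_E(\xi)=\mathscr{L}_{v,\eta}(\eta^{-1}\xi)^2$ and $\mathcal{L}_E$ is independent of the auxiliary anticyclotomic character $\eta$, it suffices to compute $\mathscr{L}_{v,\eta}(\eta^{-1})$ and to show that, up to the factor $c_E^{-1}\cdot(1-a_pp^{-1}+p^{-1})$, it equals $\log_{\omega_E}(P_K)$; squaring then gives the stated formula.

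First I would unwind the defining expression
$\mathscr{L}_{v,\eta}(\eta^{-1})=\sum_{[\mathfrak{a}]\in\mathrm{Pic}(\mathcal{O}_c)}\eta(\mathfrak{a})\mathbf{N}(\mathfrak{a})^{-1}\int_{\Z_p^\times}\eta_v(\eta^{-1}\vert[\mathfrak{a}])\,\mathrm{d}\mu_{f^\flat_\mathfrak{a}}$. At $\xi=\mathbf{1}$ the infinity type of $\eta$ and its inverse cancel, so that the integrand reduces (as a function on $\Z_p^\times$) to a translate of the constant function; by the characterizing property $(\ref{eq:def-meas})$ of the Serre--Tate measure, and since $f^\flat$ is supported away from $p$, the resulting integral over $\Z_p^\times$ computes the value of the $p$-depleted form $f^\flat$ at the CM point $x_\mathfrak{a}\in\mathrm{Ig}(N)(\mathbb{C})$ through its Serre--Tate coordinate. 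Because $\mathcal{L}_E$ does not depend on the auxiliary conductor, I may take $c=1$, so that the class-group sum is indexed by ${\rm Gal}(H/K)$ via $[\mathfrak{a}]\mapsto\sigma_\mathfrak{a}$ and $x_\mathfrak{a}=x_1^{\sigma_\mathfrak{a}}$, matching the definition of $P_K$.

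The key step is the local computation at $p$: by Coleman integration on the ordinary locus (the $p$-adic Abel--Jacobi computation of Bertolini--Darmon--Prasanna; see \cite[\S3]{BDP} and \cite[\S3, Prop.~3.8]{cas-hsieh1}), the value $f^\flat(x_\mathfrak{a})$ is identified, in the weight-two case, with $(1-a_pp^{-1}+p^{-1})$ times the formal-group logarithm of the Heegner point $\pi(x_\mathfrak{a})=\pi(x_1)^{\sigma_\mathfrak{a}}\in E(H)$, where the Euler factor at $p$ is exactly the ratio between $f^\flat$ and the Coleman primitive of $\omega_f$ evaluated along the ordinary CM locus, and where $\pi^\ast\omega_E=c_E\omega_f$ converts the automorphic normalization into the Néron one. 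Summing over $[\mathfrak{a}]\in\mathrm{Pic}(\mathcal{O}_K)$ and invoking Shimura's reciprocity law (as in the proof of Theorem~\ref{thm:BDP}) to identify $\{\sigma_\mathfrak{a}\}$ with ${\rm Gal}(H/K)$, the sum $\sum_{[\mathfrak{a}]}\log_{\omega_E}\bigl(\pi(x_1)^{\sigma_\mathfrak{a}}\bigr)$ assembles into $\log_{\omega_E}(P_K)$ by linearity of the logarithm. The main obstacle is this local identification at $p$ of the Serre--Tate expansion of $f^\flat$ at CM points with the formal-group logarithm of the associated Heegner point: this is the heart of the Bertolini--Darmon--Prasanna $p$-adic Gross--Zagier formula, and the cleanest route is to observe that the construction of $\mathcal{L}_E$ in \cite{cas-hsieh1} is set up precisely so that $\mathcal{L}_E$ recovers the BDP $p$-adic $L$-function, and then to quote the $p$-adic Gross--Zagier formula of \cite{BDP} (in its measure-theoretic reformulation in \emph{loc.\ cit.}) after matching normalizations; alternatively, one reruns the calculation of \cite[Prop.~3.8]{cas-hsieh1} at infinity type $(0,0)$, tracking the $U_p$-eigenvalue bookkeeping that produces the factor $(1-a_pp^{-1}+p^{-1})$.
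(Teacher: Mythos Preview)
Your proposal is correct in spirit and, by its end, converges to exactly what the paper does: the paper's proof is the two-line version of your ``cleanest route,'' namely a direct citation of \cite[Thm.~5.13]{BDP} (specialized to $k=2$, $r=j=0$, $\chi=\mathbf{N}_K^{-1}$), which gives
\[
\mathcal{L}_E(0)=\bigl(1-a_pp^{-1}+p^{-1}\bigr)^2\cdot\Bigl(\textstyle\sum_{\sigma\in{\rm Gal}(H/K)}\log_{\omega_f}(\Delta_1^\sigma)\Bigr)^2,
\]
followed by the conversion $\log_{\omega_f}(\Delta_1)=c_E^{-1}\log_{\omega_E}(\pi(\Delta_1))$ coming from $\pi^*\omega_E=c_E\,\omega_f$.

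The difference is only one of presentation: your first several paragraphs sketch a re-derivation of the BDP formula by unwinding the Serre--Tate measure $\mathrm{d}\mu_{f^\flat_\mathfrak{a}}$ at the trivial character and identifying $f^\flat(x_\mathfrak{a})$ with a formal-group logarithm via Coleman integration. That is exactly the content of \cite[Thm.~5.13]{BDP} and the compatibility shown in \cite{cas-hsieh1}, so there is no need to reprove it here; the paper treats it as a black box. Your more hands-on description would be useful if one wanted to make the proof self-contained, but it buys nothing new logically, and the step you flag as the ``main obstacle'' is precisely the theorem being cited.
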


\begin{proof}
Let $J_0(N)$ be the Picard variety of $X_0(N)$, and set $\Delta_1=(x_1)-(\infty)\in J_0(N)(H)$. 
By \cite[Thm.~5.13]{BDP} specialized to the case $k=2$, $r=j=0$, and $\chi=\mathbf{N}_K^{-1}$, we have
\[
\mathcal{L}_E(0)=\bigl(1-a_pp^{-1}+p^{-1}\bigr)^2\cdot\biggl(\sum_{\sigma\in{\rm Gal}(H/K)}\log_{\omega_f}(\Delta_1^\sigma)\biggr)^2,
\]
where $\log_{\omega_f}:J_0(N)(H_v)\rightarrow H_v$ is the formal group logarithm associated to $\omega_f$. Since $\log_{\omega_f}(\Delta_1)=
c_E^{-1}\cdot\log_{\omega_E}(\pi(\Delta_1))$, this yields the result.
\end{proof}

\subsubsection{A result of Greenberg--Vatsal}

\begin{theorem}
\label{thmGV}
Let $A/\Q$ be an elliptic curve, and let $p>2$ be a prime of good ordinary reduction for $A$. Assume that $A$ admits a cyclic $p$-isogeny with kernel $\Phi_A$, with the $G_\bQ$-action on $\Phi_A$ given by a character which is either ramified at $p$ and even, or unramified at $p$ and odd. If $L(A,1)\neq 0$ then
\begin{displaymath}
\ord_{p}\biggl(\frac{L(A, 1)}{\Omega_{A}}\biggr)=\ord_{p}\biggl(\frac{\# \sha(A / \Q)\cdot{\rm Tam}(A/\Q)}{\#(A(\bQ)_{\rm tors})^2}\biggr),
\end{displaymath}
where ${\rm Tam}(A/\bQ)=\prod_{\ell} c_\ell(A/\bQ)$  
is the product over the bad primes $\ell$ of $A$ of the Tamagawa numbers of $A/\bQ_\ell$.
\end{theorem}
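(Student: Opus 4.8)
We deduce Theorem~\ref{thmGV} from the cyclotomic Iwasawa main conjecture for $A$ at $p$ --- which holds under the stated hypotheses by Greenberg--Vatsal \cite{greenvats} --- combined with Mazur's control theorem and the interpolation formula for the $p$-adic $L$-function. Write $\Phi_A=\mathbb{F}_p(\theta)$, so that $A[p]/\Phi_A=\mathbb{F}_p(\omega\theta^{-1})$ by the Weil pairing. Comparing parities with $\omega$ and the ramification behaviour at $p$, either of the two hypotheses on $\theta$ forces $\theta\neq\mathds{1},\omega$ and $\omega\theta^{-1}\neq\mathds{1},\omega$, so the only $G_\Q$-stable line in $A[p]$ is $\Phi_A$, on which $G_\Q$ acts nontrivially; hence $A(\Q)[p]=0$ and in particular $p\nmid\#A(\Q)_{\rm tors}$. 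Let $\Gamma^{\rm cyc}={\rm Gal}(\Q_\infty/\Q)$ with topological generator $\gamma$, put $\Lambda^{\rm cyc}=\Z_p\llbracket\Gamma^{\rm cyc}\rrbracket$, let $L_p(A)\in\Lambda^{\rm cyc}$ be the Mazur--Swinnerton-Dyer $p$-adic $L$-function of $A$ (built from the modular symbol and normalized by the N\'eron period $\Omega_A$ and the unit root $\alpha$ of $X^2-a_pX+p$), and let $X_\infty={\rm Sel}_{p^\infty}(A/\Q_\infty)^\vee$.

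The hypothesis on $\theta$ is precisely the one under which Greenberg--Vatsal reduce the main conjecture for $A$ to the Iwasawa main conjectures for the Dirichlet characters $\theta$ and $\omega\theta^{-1}$ (Mazur--Wiles), checking that the algebraic and analytic $\mu$- and $\lambda$-invariants coincide; the upshot is that $X_\infty$ is $\Lambda^{\rm cyc}$-torsion, has no nonzero finite $\Lambda^{\rm cyc}$-submodule, and satisfies ${\rm char}_{\Lambda^{\rm cyc}}(X_\infty)=(L_p(A))$. Now suppose $L(A,1)\neq 0$. By the interpolation formula at the trivial character, $L_p(A)(\mathds{1})$ equals $(1-\alpha^{-1})^2\cdot L(A,1)/\Omega_A$ up to a $p$-adic unit, and since $A$ has good reduction at $p$ one has $\alpha\neq 1$, so $L_p(A)(\mathds{1})\neq 0$; hence $(\gamma-1)\nmid L_p(A)$ and $\#\bigl(X_\infty/(\gamma-1)X_\infty\bigr)=\#\bigl(\Z_p/L_p(A)(\mathds{1})\Z_p\bigr)$. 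Mazur's control theorem then shows ${\rm Sel}_{p^\infty}(A/\Q)$ is finite and identifies $\#{\rm Sel}_{p^\infty}(A/\Q)$ with $\#\bigl(X_\infty/(\gamma-1)X_\infty\bigr)$ up to explicit local corrections: the Tamagawa numbers $c_\ell(A/\Q)$ at the bad $\ell\neq p$, a factor from $A(\Q)_{\rm tors}$ (a $p$-adic unit here), and a local term at $p$ --- a power of $\#\widetilde{A}(\mathbb{F}_p)/p$ --- which exactly cancels the contribution of the Euler factor $(1-\alpha^{-1})^2$ (this cancellation being the substance of the good-ordinary $p$-part of the Birch--Swinnerton-Dyer formula, and requiring no modification in the reducible case beyond $A(\Q)[p]=0$).

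Finiteness of ${\rm Sel}_{p^\infty}(A/\Q)$ forces ${\rm rank}_\Z A(\Q)=0$, so the Kummer sequence $0\to A(\Q)\otimes\Q_p/\Z_p\to{\rm Sel}_{p^\infty}(A/\Q)\to\sha(A/\Q)[p^\infty]\to 0$ collapses to an isomorphism ${\rm Sel}_{p^\infty}(A/\Q)\simeq\sha(A/\Q)[p^\infty]$. Substituting the interpolation formula for $L_p(A)(\mathds{1})$ into the order formula and collecting terms --- with $\#(A(\Q)_{\rm tors})^2$ a $p$-adic unit --- yields $\ord_p\bigl(L(A,1)/\Omega_A\bigr)=\ord_p\bigl(\#\sha(A/\Q)\cdot{\rm Tam}(A/\Q)\bigr)$, which is the assertion.

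The main obstacle is the bookkeeping concentrated in the second paragraph: reconciling the archimedean period normalization implicit in $L_p(A)$ with the N\'eron period $\Omega_A$, and matching this --- together with the local correction terms of Mazur's control theorem --- against the prediction $\#\sha(A/\Q)\cdot{\rm Tam}(A/\Q)$, in particular verifying that the exceptional Euler factor $(1-\alpha^{-1})^2$, which need not be a $p$-adic unit (e.g.\ when $a_p\equiv 1\pmod p$), is exactly absorbed by the local term at $p$. A subsidiary point, handled in \cite{greenvats}, is that the main conjecture and the control theorem are applied here with $A[p]$ only an extension of $\mathbb{F}_p(\omega\theta^{-1})$ by $\mathbb{F}_p(\theta)$; this is legitimate because, beyond the reduction to abelian main conjectures, the sole additional input needed is $A(\Q)[p]=0$, which we verified above.
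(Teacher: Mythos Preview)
Your approach is essentially the same as the paper's: deduce the $p$-part of BSD in rank~$0$ from the cyclotomic main conjecture together with a control theorem, matching the interpolation factor $(1-\alpha^{-1})^2$ against the local term at $p$ coming from $\#\widetilde{A}(\mathbb{F}_p)=1-a_p+p$. The paper packages the algebraic side by citing Greenberg's formula \cite[Thm.~4.1]{greenberg-cetraro} for $\#\Z_p/\mathcal{F}_A(0)$ directly, rather than rederiving it from Mazur's control theorem, and makes the cancellation explicit via $\ord_p(1-a_p+p)=\ord_p(1-\alpha^{-1})$; it also attributes the main conjecture to the combination of Kato's Euler system divisibility \cite[Thm.~12.5]{kato-euler-systems} and \cite[Thm.~1.3]{greenvats}, whereas you cite only Greenberg--Vatsal (who do, of course, use Kato as input). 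One small correction: your claim that ``the only $G_\bQ$-stable line in $A[p]$ is $\Phi_A$'' need not hold if the extension splits, but your conclusion $A(\Q)[p]=0$ is still correct since neither $\theta$ nor $\omega\theta^{-1}$ is trivial.
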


\begin{proof}
By \cite{kolyvagin-mw-sha}, if $L(A,1)\neq 0$ then ${\rm rank}_\Z A(\Q)=0$ and $\#\sha(A/\Q)<\infty$; in particular, $\#{\rm Sel}_{p^\infty}(A/\bQ)=\#\sha(A/\bQ)[p^\infty]<\infty$. Letting $\Lambda_{\rm cyc}=\Z_p\llbracket{\rm Gal}(\Q_\infty/\Q)\rrbracket$ be the cyclotomic Iwasawa algebra, by \cite[Thm.~4.1]{greenberg-cetraro} we therefore have
\begin{equation}\label{eq:greenberg-4.1}
\#\Z_p/\mathcal{F}_A(0)=\frac{\#\bigl(\Z_p/(1-a_p(A)+p)^2\cdot\#\sha(A/\Q)\cdot{\rm Tam}(A/\bQ)\bigr)}{\#\bigl(\Z_p/(A(\Q)_{\rm tors})^2\bigr)},
\end{equation}
where $\mathcal{F}_A\in\Lambda_{\rm cyc}$ is a generator of the characteristic ideal of the dual Selmer group $Sel_{\Q_\infty}(T_pA,T_p^+A)^\vee$ in the notations of \cite[\S{3.6.1}]{skinner-urban}. Under the given assumptions, the cyclotomic main conjecture for $A$, i.e., the equality
\begin{equation}\label{eq:cyc-IMC}
(\mathcal{F}_A)=(\mathcal{L}_A)\subset\Lambda_{\rm cyc}
\end{equation}
where $\mathcal{L}_A$ is the $p$-adic $L$-function of Mazur--Swinnerton-Dyer, follows from the combination of \cite[Thm.~12.5]{kato-euler-systems} and\cite[Thm.~1.3]{greenvats}. By the interpolation property of $\mathcal{L}_A$, 
\begin{equation}\label{eq:interp-MSW}
\mathcal{L}_A(0)=\bigl(1-\alpha_p^{-1}\bigr)^2\cdot\frac{L(A,1)}{\Omega_A},
\end{equation}
where $\alpha_p\in\Z_p^\times$ is the unit root of $x^2-a_p(A)x+p$. Noting that ${\rm ord}_p(1-a_p(A)+p)={\rm ord}_p(1-\alpha_p^{-1})$, the result thus follows from the combination of (\ref{eq:greenberg-4.1}), (\ref{eq:cyc-IMC}), and (\ref{eq:interp-MSW}).
\end{proof}

\subsection{Proof of the \texorpdfstring{$p$}{p}-converse} 

The next result is Theorem~\ref{thm:E} in the introduction.
Note that a result for $r=0$ can be obtained from the cyclotomic main conjecture proved by combining \cite[Thm.~1.3]{greenvats} and Kato's divisibility in \cite{kato-euler-systems}. However, our assumptions are less restrictive, since in \cite{greenvats} the character $\phi$ is assumed to be ramified at $p$ and even or unramified at $p$ and odd.
\begin{theorem}\label{thm:thmB}
Assume that $E[p]^{ss}=\mathbb{F}_p(\phi)\oplus\mathbb{F}_p(\psi)$ with $\phi\vert_{G_p}\neq\mathds{1},\omega$. Let $r\in\{0,1\}$. Then 
\[
{\rm corank}_{\Z_p}{\rm Sel}_{p^\infty}(E/\Q) = r 
\quad\Longrightarrow\quad
{\rm ord}_{s=1}L(E,s)=r,
\]
and so ${\rm rank}_\Z E(\Q)=r$ and $\#\sha(E/\Q)<\infty$.
\end{theorem}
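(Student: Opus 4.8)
The plan is to deduce this $p$-converse theorem from the main result, Theorem~\ref{thm:thmA}, by a standard argument combining a judicious choice of auxiliary imaginary quadratic field $K$ with the $p$-adic Gross--Zagier formula and a $\Lambda$-adic analogue of the control theorem. First I would fix, using the assumption $\mathrm{corank}_{\Z_p}\mathrm{Sel}_{p^\infty}(E/\Q)=1$, an imaginary quadratic field $K$ satisfying the Heegner hypothesis (\ref{eq:intro-Heeg}) relative to $N$, the splitting hypothesis (\ref{eq:intro-spl}) at $p$, the discriminant hypothesis (\ref{eq:intro-disc}), and such that the quadratic twist $E^K$ has $\mathrm{corank}_{\Z_p}\mathrm{Sel}_{p^\infty}(E^K/\Q)=0$. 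The existence of such a $K$ is guaranteed by the nonvanishing results of Bump--Friedberg--Hoffstein / Murty--Murty (or by the version of these results incorporating the local conditions at bad primes); crucially, since $E^K$ has analytic rank $0$ after twisting, combined with $\mathrm{corank}_{\Z_p}\mathrm{Sel}_{p^\infty}(E/\Q)=1$ this forces $\mathrm{corank}_{\Z_p}\mathrm{Sel}_{p^\infty}(E/K)=1$, i.e.\ hypothesis (\ref{eq:intro-sel}) holds. One also checks that $\phi|_{G_p}$ and $\psi|_{G_p}$ are unchanged (or controlled) so that the hypothesis $\phi|_{G_p}\neq\mathds{1},\omega$ persists for $E/K$ relative to $K$.

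Next I would invoke Theorem~\ref{thm:thmA} to conclude that $\mathfrak{X}_E$ is $\Lambda$-torsion with $\mathrm{char}_\Lambda(\mathfrak{X}_E)\Lambda^{\mathrm{ur}}=(\mathcal{L}_E)$. The point is then to extract arithmetic information over $K$ by specializing at the trivial character, i.e.\ evaluating at $T=0$ (equivalently $\mathfrak{P}_0=(\gamma-1)$). On the algebraic side, an anticyclotomic control theorem in the style of \cite{jsw} relates $\mathcal{F}_E(0)$ (a generator of $\mathrm{char}_\Lambda(\mathfrak{X}_E)$ evaluated at $0$) to $\#\sha(E/K)[p^\infty]$, the Heegner point index, and Tamagawa factors --- but one must be careful: the full control theorem (Theorem~\ref{controlthm}) presupposes $\mathrm{rank}_\Z E(K)=1$ and finiteness of $\sha$, which is what we are trying to prove. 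Instead I would use the weaker, unconditional half: the $\Lambda$-torsionness of $\mathfrak{X}_E$ already implies, via a control sequence, that $\mathrm{Sel}_{p^\infty}(E/K)^\vee$ modulo $(\gamma-1)$ is a torsion $\Z_p$-module up to a contribution from the Heegner class, so $\mathrm{corank}_{\Z_p}\mathrm{Sel}_{p^\infty}(E/K)\leqslant 1$ with equality forcing $\mathcal{F}_E$ to vanish to order exactly $1$ at $0$, hence $\mathcal{L}_E$ to vanish to order $\leqslant 1$ at the trivial character. On the analytic side, the $p$-adic Gross--Zagier formula (Theorem~\ref{thmpadicGZ}) gives $\mathcal{L}_E(0)=c_E^{-2}(1-a_pp^{-1}+p^{-1})^2\log_{\omega_E}(P_K)^2$, so $\mathcal{L}_E(0)\neq 0$ if and only if the Heegner point $P_K\in E(K)$ is non-torsion.

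The decisive step is to show $P_K$ is non-torsion. By the comparison in Remark~\ref{rem:comparison}, the class $\kappa_0$ (equivalently $\mathrm{pr}_K(\kappa_1^{\mathrm{Hg}})$) in $\rH^1_{\CF}(K,T_pE)$ is, up to the nonzero factor $u_K\alpha^2(\beta-1)^2$, the Kummer image of $P_K$; and by Theorem~\ref{thm:howard-HPKS} the Heegner Kolyvagin system has $\kappa_1^{\mathrm{Hg}}\neq 0$ in $\rH^1_{\Fcal_\Lambda}(K,\mathbf{T})$. Now the hypothesis $\mathrm{corank}_{\Z_p}\mathrm{Sel}_{p^\infty}(E/K)=1$ together with Corollary~\ref{cor:howard} (whose extra hypothesis is precisely this corank-one condition) shows that $\mathrm{char}_\Lambda(M)$ divides $\mathrm{char}_\Lambda(\rH^1_{\Fcal_\Lambda}(K,\mathbf{T})/\Lambda\kappa_\infty)$ in $\Lambda[1/p]$, and, combined with Theorem~\ref{thm:thmA} giving the reverse divisibility, we get the precise index formula of Corollary~\ref{cor:PR}. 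Specializing this main-conjecture equality at $\mathfrak{P}_0$ and using that $\rH^1_{\CF}(K,T_pE)$ is non-torsion of rank one while $\mathfrak{X}_E$ has $\lambda$-invariant forcing the torsion part to specialize finitely, one sees that $\kappa_0$ must have nonzero image in $\rH^1_{\CF}(K,T_pE)\otimes\Q_p$, hence $P_K$ is non-torsion. Then $\mathcal{L}_E(0)\neq 0$, so $\mathrm{ord}_{s=1}L(E/K,s)=1$ by the (complex) Gross--Zagier formula Theorem~\ref{thmGZ}; combined with $L(E^K,1)\neq 0$ from the choice of $K$ and the factorization $L(E/K,s)=L(E,s)L(E^K,s)$, this gives $\mathrm{ord}_{s=1}L(E,s)=1$. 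The theorem of Gross--Zagier--Kolyvagin (\ref{eq:GZK}) then yields $\mathrm{rank}_\Z E(\Q)=1$ and $\#\sha(E/\Q)<\infty$. The main obstacle I anticipate is the careful bookkeeping at the trivial character: ensuring the control/specialization argument correctly separates the rank-one Heegner contribution from the torsion part of $\mathfrak{X}_E$ without already assuming the finiteness of $\sha(E/K)$, which is where the corank-one hypothesis (\ref{eq:intro-sel}) and the error-term analysis in Corollary~\ref{cor:howard} must be deployed with care.
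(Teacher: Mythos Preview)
Your approach is essentially the same as the paper's: choose $K$ with $L(E^K,1)\neq 0$ (the paper invokes Monsky's parity result plus \cite[Thm.~B]{twist} for this), verify (\ref{eq:intro-sel}), use Corollary~\ref{cor:PR} together with ${\rm ord}_{\mathfrak{P}_0}(X_{\rm tors})=0$ (which is exactly what the corank-one hypothesis gives, as in the proof of Corollary~\ref{cor:howard}) to conclude that the image of $\kappa_1^{\rm Hg}$ in $\rH^1(K,T_pE)$ is nonzero, hence $P_K$ is non-torsion, and then apply complex Gross--Zagier and the factorization $L(E/K,s)=L(E,s)L(E^K,s)$. Your detours through the anticyclotomic control theorem and the $p$-adic Gross--Zagier formula are unnecessary here---the paper goes directly from $P_K$ non-torsion to ${\rm ord}_{s=1}L(E/K,s)=1$ via Theorem~\ref{thmGZ}---and your phrasing ``$\mathfrak{X}_E$ has $\lambda$-invariant forcing the torsion part to specialize finitely'' is not the right mechanism: what matters is ${\rm ord}_{\mathfrak{P}_0}(X_{\rm tors})=0$, not the $\lambda$-invariant.
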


\begin{proof}
The proof of this result is a consequence of Corollary~\ref{cor:PR} for suitable choices of a quadratic imaginary field $K$ depending on $r\in\{0,1\}$.

First we suppose ${\rm corank}_{\Z_p}{\rm Sel}_{p^\infty}(E/\Q) = 1$. It follows from \cite[Theorem 1.5]{monsky} that the root number $w(E/\Q)=-1$.  Choose an imaginary quadratic field $K$ of discriminant $D_K$ such that
\begin{itemize}
\item[(a)] $D_K<-4$ is odd,
\item[(b)] every prime $\ell$ dividing $N$ splits in $K$,
\item[(c)] $p$ splits in $K$, say $p=v\bar{v}$,
\item[(d)] $L(E^K,1)\neq 0$.
\end{itemize}
The existence of such $K$ (in fact, of an infinitude of them) is ensured by \cite[Thm.~B.1]{twist}, since (a), (b), and (c) impose only a finite number of congruence conditions on $D_K$, and any $K$ satisfying (b) is such that $E/K$ has root number $w(E/K)=w(E/\Q)w(E^K/\bQ)=-1$, and therefore $w(E^K/\bQ)=+1$. By work of Kolyvagin \cite{kolyvagin-mw-sha} (or alternatively, Kato \cite{kato-euler-systems}), the non-vanishing of $L(E^K,1)$ implies that ${\rm Sel}_{p^\infty}(E^K/\Q)$ is finite and therefore
\[
{\rm corank}_{\Z_p}{\rm Sel}_{p^\infty}(E/K) = 1,
\]
and hence $E$ satisfies \eqref{eq:intro-sel}.  In particular, all the hypotheses of Corollary~\ref{cor:PR} hold. 
As in the proof of Corollary~\ref{cor:howard}, the condition that ${\rm corank}_{\Z_p}{\rm Sel}_{p^\infty}(E/K) = 1$ easily implies that 
$\ord_{\mathfrak{P}_0}(X_{\rm tors}) = 0$, and so it then follows from Corollary \ref{cor:PR} that the image of $\kappa_1$ in 
$\rH^1(K,T_pE)$ is non-zero. This implies that the Heegner point $P_K\in E(K)$ is non-torsion and hence, by the Gross--Zagier formula that $\ord_{s=1}L(E/K,s) = 1$. Since $L(E/K,s)= L(E,s)L(E^K,s)$ and $\ord_{s=1}L(E^K,s) = 0$ by the choice of $K$,
it follows that $\ord_{s=1}L(E,s) = 1$.

We now assume ${\rm corank}_{\Z_p}{\rm Sel}_{p^\infty}(E/\Q) = 0$. The result of Monsky used above implies in this case that $w(E/\Q)=+1$. We now choose an imaginary quadratic field $K$ satisfying the same conditions (a), (b), (c) as above, in addition to the condition
\begin{itemize}
\item[(d')] $\ord_{s=1}L(E^K,1)=1$.
\end{itemize} 
The existence of infinitely many such $K$ follows from \cite[Thm.~B.2]{twist}, since any $K$ satisfying (b) is such that $w(E^K/\Q)=-1$. The Gross--Zagier--Kolyvagin theorem 
implies that ${\rm corank}_{\Z_p}{\rm Sel}_{p^\infty}(E^K/\Q) = 1$ and therefore 
\[
{\rm corank}_{\Z_p}{\rm Sel}_{p^\infty}(E/K) = 1.
\]
Thus, as above, $E$ satisfies \eqref{eq:intro-sel}, and we can apply Corollary~\ref{cor:PR} and the Gross--Zagier formula to obtain $\ord_{s=1}L(E/K,s) = 1$, which implies by our choice of $K$ that $L(E,1)\neq 0$. 
\end{proof}

Since the hypotheses of Theorem \ref{thm:thmB} imply $E(\Q)[p]= 0$, we see that ${\rm Sel}_{p^\infty}(E/\Q)[p]={\rm Sel}_p(E/\Q)$, whence the following mod $p$ version of the theorem.

\begin{cor}\label{cor:thmB} Suppose $E$ is as in Theorem \ref{thm:thmB} and $r\in\{0,1\}$. Then 
\[
\dim_{\mathbb{F}_p}{\rm Sel}_p(E/\Q) = r 
\quad\Longrightarrow\quad
{\rm ord}_{s=1}L(E,s)=r,
\]
and so ${\rm rank}_\Z E(\Q)=r$ and $\#\sha(E/\Q)<\infty$.
\end{cor}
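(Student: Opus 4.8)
The plan is to reduce the mod-$p$ hypothesis to the $p^\infty$-hypothesis of Theorem~\ref{thm:thmB} and then quote that theorem. First I would recall that the standing assumption $\phi\vert_{G_p}\neq\mathds{1},\omega$ forces $E(\Q)[p]=0$: since $\psi=\omega\phi^{-1}$, the excluded values of $\phi\vert_{G_p}$ make $\psi\vert_{G_p}\neq\mathds{1},\omega$ as well, so neither Jordan--H\"older factor of $E[p]$ as a $G_{\Q_p}$-module has nonzero invariants, whence $E[p]^{G_p}=0$ and a fortiori $E(\Q)[p]=0$; in particular $E(\Q)[p^\infty]=0$. Granting this, the whole of Theorem~\ref{thm:thmB} becomes available once we know $\mathrm{corank}_{\Z_p}\mathrm{Sel}_{p^\infty}(E/\Q)=1$, and it delivers $\ord_{s=1}L(E,s)=1$, hence $\rk_\Z E(\Q)=1$ and $\#\sha(E/\Q)<\infty$.

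Next I would compare $\mathrm{Sel}_p(E/\Q)$ with $\mathrm{Sel}_{p^\infty}(E/\Q)[p]$ by a standard descent argument. On one hand $p$-descent gives $0\to E(\Q)/p\to\mathrm{Sel}_p(E/\Q)\to\sha(E/\Q)[p]\to 0$. On the other hand, applying $(-)[p]$ to the $p^\infty$-descent sequence yields $0\to(E(\Q)\otimes\Q_p/\Z_p)[p]\to\mathrm{Sel}_{p^\infty}(E/\Q)[p]\to\sha(E/\Q)[p^\infty][p]\to 0$, exact on the right because $E(\Q)\otimes\Q_p/\Z_p$ is $p$-divisible. Using $E(\Q)[p]=0$ one has $E(\Q)/p\simeq(\Z/p)^{\rk_\Z E(\Q)}\simeq(E(\Q)\otimes\Q_p/\Z_p)[p]$, while $\sha(E/\Q)[p^\infty][p]=\sha(E/\Q)[p]$; so the two exact sequences have identical outer terms, and therefore $\dim_{\mathbb{F}_p}\mathrm{Sel}_{p^\infty}(E/\Q)[p]=\dim_{\mathbb{F}_p}\mathrm{Sel}_p(E/\Q)=1$.

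Finally, write $\mathrm{Sel}_{p^\infty}(E/\Q)\simeq(\Q_p/\Z_p)^r\oplus T$ with $T$ finite; then $r+\#\{\text{cyclic summands of }T\}=1$, so either $r=1$ and $T=0$, or $r=0$ and $T$ is cyclic nontrivial. In the second case $\mathrm{Sel}_{p^\infty}(E/\Q)=T$ is finite, hence $\sha(E/\Q)[p^\infty]$ is finite and, since $E(\Q)[p^\infty]=0$, equal to $T$; but for $p$ odd the non-degenerate alternating Cassels--Tate pairing on a finite $\sha(E/\Q)$ forces $\sha(E/\Q)[p^\infty]\simeq H\oplus H$, so $\dim_{\mathbb{F}_p}T[p]$ is even, contradicting $\dim_{\mathbb{F}_p}T[p]=1$. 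Thus $r=1$ and $T=0$, i.e. $\mathrm{Sel}_{p^\infty}(E/\Q)\simeq\Q_p/\Z_p$ and $\mathrm{corank}_{\Z_p}\mathrm{Sel}_{p^\infty}(E/\Q)=1$, and Theorem~\ref{thm:thmB} finishes the proof.

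Every step here is soft linear algebra and standard descent; the only external input of substance is the parity of $\dim_{\mathbb{F}_p}\sha(E/\Q)[p]$ for finite $\sha$, i.e. the alternating property of the Cassels--Tate pairing (and the resulting hyperbolic form $H\oplus H$ for $p$ odd). That is the "hard part" only in the sense of being the one nontrivial cited fact, and it is needed precisely to exclude the $\mathrm{corank}=0$ possibility left open by the mod-$p$ hypothesis.
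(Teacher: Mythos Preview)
Your proof is correct and follows the same approach as the paper: reduce to Theorem~\ref{thm:thmB} by showing that $E[p](\Q)=0$ together with $\dim_{\mathbb{F}_p}\mathrm{Sel}_p(E/\Q)=1$ forces $\mathrm{Sel}_{p^\infty}(E/\Q)\simeq\Q_p/\Z_p$. The paper compresses this to a single sentence (``it is easily seen''), suppressing the Cassels--Tate parity argument that you make explicit to exclude the finite cyclic case.
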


For $p=3$, Corollary~\ref{cor:thmB} together with the work of Bhargava--Klagsbrun--Lemke Oliver--Shnidman \cite{BKLS} on the average $3$-Selmer rank in quadratic twist families, leads to the following result in the direction of Goldfeld's conjecture \cite{goldfeld}. 

\begin{cor}\label{cor:goldfeld}
Let $E$ be an elliptic curve over $\bQ$ with a rational $3$-isogeny. Then 
a positive proportion of quadratic twists of $E$ have algebraic and analytic rank equal to $1$
and a positive proportion of quadratic twists of $E$ have algebraic and analytic rank equal to $0$.
\end{cor}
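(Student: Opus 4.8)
The strategy is to combine our mod $p$ $p$-converse theorem (Corollary~\ref{cor:thmB}) with the analytic rank~$1$ input coming from root numbers, for the prime $p=3$. Fix an elliptic curve $E/\bQ$ admitting a rational $3$-isogeny, so $E[3]^{ss}=\mathbb{F}_3(\phi)\oplus\mathbb{F}_3(\psi)$ as $G_\bQ$-modules, and consider the quadratic twist family $\{E^{(d)}\}_d$. The plan is as follows. First, I would isolate a congruence condition on $d$: since $E$ has a rational $3$-isogeny, so does every quadratic twist $E^{(d)}$, and the character describing the kernel of the isogeny on $E^{(d)}$ is $\phi\chi_d$ (resp.\ $\psi\chi_d$), where $\chi_d$ is the quadratic character attached to $\bQ(\sqrt d)$. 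By imposing finitely many congruence/splitting conditions on $d$ (to control ramification of $\phi\chi_d$ at $3$ and to ensure $\phi\chi_d|_{G_3}\neq\mathds{1},\omega$, and likewise at the bad primes appearing in the application of \cite[Thm.~B.2]{twist}), I restrict to a subfamily $\mathcal D$ of squarefree integers of positive density for which Corollary~\ref{cor:thmB} applies to $E^{(d)}$.

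Second, I would invoke the result of Bhargava--Klagsbrun--Lemke--Shnidman \cite{BKLS}: among quadratic twists of an elliptic curve with a rational $3$-isogeny (and more generally in the relevant twist families), a positive proportion have $3$-Selmer rank exactly $1$ when ordered by the absolute value of the discriminant of the twisting field; moreover this positivity persists after restricting to any subfamily defined by finitely many congruence conditions (since the density results in \emph{loc.~cit.} are computed with such congruence constraints, or at least are insensitive to them by the usual sieving arguments). Thus a positive proportion of $d\in\mathcal D$ satisfy $\dim_{\mathbb{F}_3}\mathrm{Sel}_3(E^{(d)}/\bQ)=1$. For each such $d$, since $E^{(d)}[3](\bQ)=0$ (as $\phi\chi_d|_{G_3}\neq\mathds{1}$ forces $E^{(d)}(\bQ)[3]=0$), Corollary~\ref{cor:thmB} gives $\mathrm{ord}_{s=1}L(E^{(d)},s)=1$, hence $\mathrm{rank}_\bZ E^{(d)}(\bQ)=1$ and $\#\sha(E^{(d)}/\bQ)<\infty$ by Gross--Zagier--Kolyvagin; in particular the algebraic and analytic ranks of $E^{(d)}$ both equal $1$.

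Finally, I would package this: a positive proportion of $d\in\mathcal D$ yields twists of analytic and algebraic rank~$1$, and since $\mathcal D$ itself has positive density inside all squarefree integers, a positive proportion of \emph{all} quadratic twists of $E$ have algebraic and analytic rank equal to~$1$, as claimed. The main obstacle — and the only genuinely nontrivial point beyond bookkeeping — is ensuring that the positive-proportion statement of \cite{BKLS} survives the passage to the subfamily $\mathcal D$ cut out by our congruence conditions at $3$ and at the bad primes; this should follow from the fact that the Selmer-rank distribution results there are proved for twist families with prescribed local conditions, but one must check that the particular conditions we need (dictated by $\phi|_{G_3}\neq\mathds{1},\omega$ and by \cite[Thm.~B.2]{twist}) are among the admissible ones and are compatible with a set of $d$ of positive density. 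Everything else is a direct concatenation of Corollary~\ref{cor:thmB}, the Gross--Zagier formula (Theorem~\ref{thmGZ}), and Kolyvagin's theorem.
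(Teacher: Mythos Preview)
Your approach is essentially the same as the paper's, which is extremely terse: it simply observes that the hypothesis $\phi|_{G_3}\neq\mathds{1},\omega$ can be arranged by a single quadratic twist (since $\mathbb{F}_3^\times=\mu_2$ is order two, so $\phi$ is itself quadratic and twisting permutes the possibilities), and then invokes \cite[Thm.~2.6]{BKLS} together with Corollary~\ref{cor:thmB}. Your phrasing in terms of restricting to a congruence-defined subfamily $\mathcal{D}$ is equivalent to the paper's ``twist once and then apply BKLS to the new curve'', since the quadratic twist family of a twist of $E$ is the same family.

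Two small points. First, your invocation of \cite[Thm.~B.2]{twist} is a red herring: that result is used \emph{internally} in the proof of Theorem~\ref{thm:thmB} (to choose the auxiliary imaginary quadratic field $K$), and there is nothing further to check when you apply Corollary~\ref{cor:thmB} as a black box. You should drop that from your list of congruence constraints. Second, your identification of the ``main obstacle'' --- that the positive-proportion statement of \cite{BKLS} must survive restriction to $\mathcal{D}$ --- is exactly right, and the paper's proof does not spell out this step either; it is implicit in the appeal to \cite[Thm.~2.6]{BKLS}, whose Selmer-rank distribution results are stable under finitely many local conditions of this type. So your proof plan is complete modulo that citation, just as the paper's is.
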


\begin{proof}
Denote by $\phi:G_\bQ\rightarrow\mathbb{F}_3^\times=\mu_2$ the character giving the Galois action on the kernel of a rational $3$-isogeny of $E$. As the condition $\phi\vert_{G_p}\neq\mathds{1},\omega$ can be arranged by a quadratic twist, combining \cite[Thm.~2.6]{BKLS} and Corollary~\ref{cor:thmB}, the result follows.
\end{proof}

\begin{remark}\label{rem:BKLS}
The qualitative result of Corollary~\ref{cor:goldfeld} was first obtained by Kriz--Li (see \cite[Thm.~1.5]{kriz-li}), but thanks to \cite{BKLS} (see esp. [\emph{op.\,cit.}, p.~2957]) our result {can lead} to better lower bounds on the proportion of rank~1 twists. In particular the proportions provided by \cite[Thm.~2.5]{BKLS} are the largest when the parity of the logarithmic Selmer ratios is equidistributed in quadratic families. The elliptic curve of smallest conductor over $\Q$ for which this happens is the elliptic curve having Cremona label $19a3$ given by the affine equation $y^2+y=x^3+x^2+x$. The explicit bounds of \cite{BKLS} and our result give that at least $41.6\%$ of its quadratic twists have analytic and algebraic rank equal to $1$ and at least $25\%$ have analytic and algebraic rank equal to $0$.
\end{remark}

\subsection{Proof of the \texorpdfstring{$p$}{p}-part of BSD formula} 

The following is Theorem~\ref{thm:F} in the introduction.

\begin{theorem}\label{thm:thmC}
Let $E/\bQ$ be an elliptic curve, and let $p>2$ be a prime of good ordinary reduction for $E$. Assume that $E$ admits a cyclic $p$-isogeny with kernel $C=\mathbb{F}_p(\phi)$, with $\phi:G_\bQ\rightarrow\mathbb{F}_p^\times$ such that
\begin{itemize}
\item $\phi\vert_{G_p}\neq\mathds{1},\omega$,
\item $\phi$ is either ramified at $p$ and odd, or unramified at $p$ and even.
\end{itemize}
If ${\rm ord}_{s=1}L(E,s)=1$, then 
\[
\ord_{p}\biggl(\frac{L'(E, 1)}{\operatorname{Reg}(E / \Q) \cdot \Omega_{E}}\biggr)=\ord_{p}\biggl(\# \sha(E / \Q) \prod_{\ell \nmid \infty} c_{\ell}(E / \Q)\biggr).
\]
In other words, the $p$-part of the Birch--Swinnerton-Dyer formula for $E$ holds.
\end{theorem}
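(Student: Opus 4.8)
The plan is to deduce the $p$-part of the Birch--Swinnerton-Dyer formula for $E/\Q$ in analytic rank $1$ from our main result Theorem~\ref{thm:thmA}, applied over a judiciously chosen imaginary quadratic field $K$, in the spirit of the non-Eisenstein arguments of \cite{jsw}; the rank $0$ part over $K$ will be supplied by the Greenberg--Vatsal result Theorem~\ref{thmGV}.

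\emph{Choice of $K$.} As in the proof of Theorem~\ref{thm:thmB}, using \cite[Thm.~B]{twist} one finds an imaginary quadratic field $K$ satisfying {\rm(\ref{eq:intro-disc})}, the Heegner hypothesis {\rm(\ref{eq:intro-Heeg})} relative to $N$, with $p=v\bar v$ split in $K$, and with $L(E^K,1)\neq 0$. The hypothesis $\phi\vert_{G_p}\neq\mathds 1,\omega$ gives $E[p]^{G_p}=0$ (so $E(\Q_p)[p]=0$) and, since $p$ splits in $K$, also $\phi\vert_{G_K}\neq\mathds 1$ and $\psi\vert_{G_K}\neq\mathds 1$, whence $E(K)[p]=0$; in particular $E(\Q)$, $E^K(\Q)$, and $E(K)$ have trivial $p$-torsion. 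Since $\ord_{s=1}L(E,s)=1$, Gross--Zagier and Kolyvagin \eqref{eq:GZK} give $\rank_\Z E(\Q)=1$ and $\#\sha(E/\Q)<\infty$; by the choice of $K$ together with Kolyvagin \cite{kolyvagin-mw-sha} (or Kato \cite{kato-euler-systems}) one gets $\rank_\Z E^K(\Q)=0$ and $\#\sha(E^K/\Q)<\infty$, hence $\rank_\Z E(K)=1$, $\#\sha(E/K)<\infty$, and $\ord_{s=1}L(E/K,s)=1$.

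\emph{The $p$-part of BSD over $K$.} All hypotheses of the control theorem Theorem~\ref{controlthm} hold, so with $\Fcal_E$ a generator of $\mathrm{char}_\Lambda(\mathfrak X_E)$ we may compute $\ord_p(\Fcal_E(0))$ in terms of $\#\sha(E/K)[p^\infty]$, $\log_{\omega_E}(P_K)$, $[E(K):\Z\cdot P_K]_p$, the factor $(1-a_p+p)/p$, and $\prod_{w\mid N}c_w(E/K)_p$ (taking $P=P_K$). By Theorem~\ref{thm:thmA} one has $(\Fcal_E)=(\Lcal_E)$ in $\Lambda^{\rm ur}$, hence $\ord_p(\Fcal_E(0))=\ord_p(\Lcal_E(0))$; and by the $p$-adic Gross--Zagier formula Theorem~\ref{thmpadicGZ}, $\ord_p(\Lcal_E(0))=-2\ord_p(c_E)+2\ord_p((1-a_p+p)/p)+2\ord_p(\log_{\omega_E}(P_K))$. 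Equating and cancelling the common $\log_{\omega_E}(P_K)$ and $(1-a_p+p)/p$ contributions, which occur squared on both sides, yields an identity that, combined with the classical Gross--Zagier formula Theorem~\ref{thmGZ}, the relation $\hat h(P_K)=[E(K):\Z\cdot P_K]^2\,\#E(K)_{\mathrm{tors}}^{-2}\,\operatorname{Reg}(E/K)$, the equality $u_K=1$ (from {\rm(\ref{eq:intro-disc})}), and the standard period comparison $\Omega_{E/K}\sim_p|D_K|^{-1/2}\Vert\omega_E\Vert^2$ (an equality up to a $p$-adic unit, since $p$ is odd and $p\nmid D_K$), collapses to
\[
\ord_p\!\left(\frac{L'(E/K,1)}{\operatorname{Reg}(E/K)\cdot\Omega_{E/K}}\right)=\ord_p\!\left(\#\sha(E/K)\prod_{w}c_w(E/K)\right).
\]
In this step the Manin constant $c_E$ cancels, so no hypothesis of the form $p\nmid c_E$ is needed.

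\emph{Descent to $\Q$.} From $L(E/K,s)=L(E,s)L(E^K,s)$ and $L(E,1)=0$ we get $L'(E/K,1)=L'(E,1)L(E^K,1)$. Since $p>2$, the $\Gal(K/\Q)$-action splits the relevant $\Z_p$-modules into eigenspaces, giving $\sha(E/K)[p^\infty]\simeq\sha(E/\Q)[p^\infty]\oplus\sha(E^K/\Q)[p^\infty]$ and $\operatorname{Reg}(E/K)\sim_p\operatorname{Reg}(E/\Q)$ (as $E^K(\Q)$ has rank $0$), while $\Omega_{E/K}\sim_p\Omega_E\cdot\Omega_{E^K}$ and $\prod_{w\mid N}c_w(E/K)\sim_p\prod_\ell c_\ell(E/\Q)\cdot\prod_\ell c_\ell(E^K/\Q)$ (using that every prime dividing $N$ splits in $K$, that $(D_K,Np)=1$, and that the Tamagawa numbers of $E^K$ at the additive primes $\ell\mid D_K$ are powers of $2$); all $p$-torsion being trivial, the $p$-part of BSD for $E/K$ in rank $1$ is equivalent to the conjunction of the $p$-part of BSD for $E/\Q$ in rank $1$ and for $E^K/\Q$ in rank $0$. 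For the latter I would apply Theorem~\ref{thmGV} to $A=E^K$, whose isogeny character is $\phi\chi_K$ (with $\chi_K$ the quadratic character of $K$): as $\chi_K$ is odd and unramified at $p$ (since $p$ splits in $K$), the hypothesis on $\phi$ ("ramified at $p$ and odd, or unramified at $p$ and even") translates precisely into the hypothesis of Theorem~\ref{thmGV} for $\phi\chi_K$, and since $L(E^K,1)\neq 0$ its conclusion gives the $p$-part of BSD for $E^K/\Q$. Subtracting it from the formula over $K$ yields Theorem~\ref{thm:thmC}. I expect the main obstacle to be the bookkeeping of periods, Tamagawa numbers, torsion, and the Manin constant in the two comparison steps: one must verify that every auxiliary factor either cancels or lies in $\Z_p^\times$ (which is where the oddness of $p$ and the coprimality $(D_K,Np)=1$ enter), the new conceptual inputs (Theorem~\ref{thm:thmA} and the control theorem) being already in hand.
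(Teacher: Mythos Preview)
Your proposal is correct and follows essentially the same route as the paper: choose $K$ via \cite{twist} so that (\ref{eq:intro-Heeg}), (\ref{eq:intro-spl}), (\ref{eq:intro-disc}) hold and $L(E^K,1)\neq 0$; verify (\ref{eq:intro-sel}) via Kolyvagin; combine Theorem~\ref{thm:thmA} with the control theorem (Theorem~\ref{controlthm}) and the $p$-adic Gross--Zagier formula (Theorem~\ref{thmpadicGZ}) to obtain the index formula for $\#\sha(E/K)[p^\infty]$; rewrite the archimedean Gross--Zagier formula and split $\sha$, periods, and Tamagawa numbers over $\Q$; then kill the $E^K$-side using Theorem~\ref{thmGV} applied to the twist (your observation that $\phi\chi_K$ satisfies the Greenberg--Vatsal parity hypothesis precisely because $\chi_K$ is odd and unramified at $p$ is exactly the point). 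The paper organizes the bookkeeping slightly differently---it writes the intermediate identity (\ref{eq:thmA-bisbis}) directly rather than passing through a ``BSD over $K$'' statement, and it cites \cite[Cor.~9.2]{skinner-zhang} for the Tamagawa comparison---but the content is the same.
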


\begin{proof}
Suppose ${\rm ord}_{s=1}L(E,s)=1$ and choose, as in the proof of Theorem~\ref{thm:thmB}, an imaginary quadratic field $K$ of discriminant $D_K$ such that
\begin{itemize}
\item[(a)] $D_K<-4$ is odd,
\item[(b)] every prime $\ell$ dividing $N$ splits in $K$,
\item[(c)] $p$ splits in $K$, say $p=v\bar{v}$,
\item[(d)] $L(E^K,1)\neq 0$.
\end{itemize}
Then ${\rm ord}_{s=1}L(E/K,s)=1$, which by Theorem~\ref{thmGZ} implies that the Heegner point $P_K\in E(K)$ has infinite order, and therefore ${\rm rank}_\Z E(K)=1$ and $\#\sha(E/K)<\infty$ by \cite{kolyvagin-mw-sha}. 
In particular, \eqref{eq:intro-sel} holds, and so all the hypotheses of Theorem \ref{thm:thmA} are satisfied. Thus there is a $p$-adic unit $u\in(\Z_p^{{\rm ur}})^\times$ for which
\begin{equation}\label{eq:thmA}
\mathcal{F}_E(0)=u\cdot\mathcal{L}_E(0),
\end{equation}
where $\mathcal{F}_E\in\Lambda$ is a generator of ${\rm char}_\Lambda(\mathfrak{X}_E)$. The hypotheses on $\phi$ imply that $E(K)[p]=0$, and so Theorem~\ref{controlthm} applies with $P=P_K$, which combined with Theorem~\ref{thmpadicGZ} and the relations $(\ref{eq:comparison})$ and $(\ref{eq:thmA})$ 
yields the equality
\begin{equation}\label{eq:thmA-bis}
{\rm ord}_p\bigl(\#\sha(E/K)\bigr)=2\;{\rm ord}_p\bigl(c_E^{-1}u_K^{-1}\cdot[E(K):\Z.P_K]\bigr)-\sum_{w\in S}{\rm ord}_p\bigl(c_w(E/K)\bigr),
\end{equation}
 
On the other hand, the Gross--Zagier formula of Theorem~\ref{thmGZ} can be rewritten (see \cite[p.~312]{grosszagier}) as
\[
L'(E/K,1)=2^tc_E^{-2}u_K^{-2}\cdot\hat{h}(P_K)\cdot
\Omega_E\cdot\Omega_{E^K},
\]
where the power of $2$ is given by the number of connected components $[E(\mathbb{R}):E(\mathbb{R})^0]$.
This, together with the relations $L(E/K,s)=L(E,s)\cdot L(E^K,s)$ and
\[
\hat{h}(P_K)=[E(K):\Z\cdot P_K]^2\cdot{\rm Reg}_{}(E/K)=[E(K):\Z\cdot P_K]^2\cdot{\rm Reg}_{}(E/\Q),
\]
using that ${\rm rank}_\Z E^K(\Q)=0$ for the last equality, amounts to the formula
\begin{equation}\label{eq:GZ}
\frac{L'(E,1)}{{\rm Reg}(E/\bQ)\cdot\Omega_E}\cdot\frac{L(E^K,1)}{\Omega_{E^K}}=2^tc_E^{-2}u_K^{-2}\cdot[E(K):\Z\cdot P_K]^2.
\end{equation}
Note that $u_K=1$, since $D_K<-4$. Since $\sha(E/K)[p^\infty] \simeq\sha(E/\bQ)[p^\infty]\oplus\sha(E^K/\bQ)[p^\infty]$ as $p$ is odd, and   
\[
\sum_{w\vert\ell}{\ord}_p(c_w(E/K))={\rm ord}_p(c_\ell(E/\bQ))+{\rm ord}_p(c_\ell(E^K/\Q))
\]
for any prime $\ell$ (see \cite[Cor.~9.2]{skinner-zhang}), combining $(\ref{eq:thmA-bis})$ and (\ref{eq:GZ}) we arrive at
\begin{equation}\label{eq:thmA-bisbis}
\begin{aligned}
{\rm ord}_p\biggl(\frac{L'(E,1)}{{\rm Reg}(E/\Q)\cdot\Omega_E\cdot\prod_\ell c_\ell(E/\bQ)}\biggr)&-{\rm ord}_p(\#\sha(E/\Q))\\
&=
{\rm ord}_p\biggl(\frac{L(E^K,1)}{\Omega_{E^K}\cdot\prod_\ell c_\ell(E^K/\Q)}\biggr)-{\rm ord}_p\bigl(\#\sha(E^K/\Q)\bigr).
\end{aligned}
\end{equation}

Finally, by our hypotheses on $\phi$ the curve $E^K$ satisfies the hypotheses of Theorem~\ref{thmGV}, and hence the right-hand side of $(\ref{eq:thmA-bisbis})$ vanishes, concluding the proof of Theorem~\ref{thm:thmC}.
\end{proof}

\bibliographystyle{alpha}
\bibliography{references}

\end{document}